\newtheorem{proposition}{Proposition}[subsection]
\newtheorem{theorem}[proposition]{Theorem}
\newtheorem{lemma}[proposition]{Lemma}
\newtheorem{corollary}[proposition]{Corollary}
\theoremstyle{definition}
\newtheorem{definition}[proposition]{Definition}
\newtheorem*{claim}{Claim}
\theoremstyle{remark}
\newtheorem{remark}[proposition]{Remark}
\newcommand{\R}{\mathbb{R}}
\newcommand{\Z}{\mathbb{Z}}
\newcommand{\Hom}{\operatorname{Hom}}
\newcommand{\PSL}{\mathrm{PSL}}
\newcommand{\Tr}{\operatorname{Tr}}
\renewcommand{\phi}{\varphi}
\newcommand{\trace}{\operatorname{Tr}}
\newcommand{\Flag}{\mathrm{Flag}}
\newcommand{\Ad}{\operatorname{Ad}}
\newcommand{\Rep}{\mathcal{X}}
\newcommand{\Hit}{\operatorname{Hit}}
\renewcommand{\epsilon}{\varepsilon}
\renewcommand{\Tr}{\operatorname{Tr}}
\newcommand{\der}{\mathrm{d}}
\newcommand{\acts}{\cdot}
\renewcommand{\approx}{\cong}
\begin{document}

\title{Symplectic Coordinates on $\PSL_3(\R)$-Hitchin Components}
\author{Suhyoung Choi}
\address{Department of Mathematical Sciences, KAIST
	, Daejeon, Korea}
\email{schoi@math.kaist.ac.kr}
\thanks{The first and second authors were supported in part by NRF-2016R1D1A1B03932524.}

\author{Hongtaek Jung}
\address{Department of Mathematical Sciences, KAIST
	, Daejeon, Korea}
\email{htjung@kaist.ac.kr}

\author{Hong Chan Kim}
\address{Department of Mathematics Education,
	Korea University, Seoul, Korea}
\email{hongchan@korea.ac.kr}

\subjclass[2010]{Primary 57M50; Secondary 53D30}

\date{\today}


\keywords{Hitchin component, Goldman coordinates, Darboux coordinates}

\begin{abstract}
Goldman parametrizes the $\PSL_3(\R)$-Hitchin component of a closed oriented hyperbolic surface of genus $g$  by  $16g-16$ parameters. Among them,  $10g-10$ coordinates are canonical. We prove that the $\PSL_3(\R)$-Hitchin component equipped with the Atiyah-Bott-Goldman symplectic form admits a global Darboux coordinate system such that the half of its coordinates are canonical Goldman coordinates.  To this end, we show a version of the action-angle principle and the Zocca-type decomposition formula for the symplectic form of H. Kim and Guruprasad-Huebschmann-Jeffrey-Weinstein given to  symplectic leaves of the Hitchin component.
\end{abstract}

\maketitle
\section{Introduction}
\subsection{History and motivation} Let $\Sigma$ be a closed oriented surface of genus at least 2. The Teichm\"uller space $\mathcal{T}(\Sigma)$ is the space of discrete faithful representations of $\pi_1(\Sigma)$ into the Lie group $\PSL_2(\R)$ modulo conjugation. It forms a connected component of the space of representations 
\[
\Rep(\pi_1(\Sigma), \PSL_2(\R))= \Rep_2(\pi_1(\Sigma)):=\Hom(\pi_1(\Sigma), \PSL_2(\R)) / \PSL_2(\R).
\] 
By replacing $2$ in $\PSL_2(\R)$ with a general natural number $n$, we can obtain the space $\Rep_n(\pi_1(\Sigma)):=\Rep(\pi_1(\Sigma), \PSL_n(\R))$. Observe that $\mathcal{T}(\Sigma)$ can be naturally embedded into $\mathcal{X}_n(\pi_1(\Sigma))$ as Fuchsian representations, that is, by definition,  representations of the form $\iota_n \circ \rho$ where $\rho\in \mathcal{T}(\Sigma)$ and $\iota_n:\PSL_2(\R) \to \PSL_n(\R)$ the unique irreducible representation of $\PSL_2(\R)$ into $\PSL_n(\R)$.  Then one may expect that a connected component containing a Fuchsian representation resembles the Teichm\"uller space. The first answer to this guess is given by Hitchin \cite{hitchin1992} in 1992. Indeed he shows that any component containing a Fuchsian representation is diffeomorphic to the $(n^2-1) (2g-2)$ dimensional cell. We call a connected component of $\Rep_n(\pi_1(\Sigma))$ containing a Fuchsian representation the $\PSL_n(\R)$-Hitchin component.  

Besides Hitchin's result, it is known the Hitchin component $\Hit_n(\Sigma)$ enjoys a lot of properties that the classical Teichm\"uller space has. Labourie \cite{labourie2006}, for instance, gives a dynamical characterization of $\Hit_n(\Sigma)$ and shows that each Hitchin representation is discrete  and faithful. 

There are many known global parametrizations of the Teichm\"uller space. Because $\Hit_n(\Sigma)$ is also a cell, we may expect the existence of global coordinate system for $\Hit_n(\Sigma)$. For the Hitchin component $\Hit_3(\Sigma)$, Goldman \cite{goldman1990} finds such a global coordinate system. However Goldman's argument cannot be applied to general $\Hit_n(\Sigma)$ cases because construction of Goldman coordinates essentially relies on the fact that $\Hit_3(\Sigma)$ represents the deformation space of convex projective structures on the surface $\Sigma$. See Choi-Goldman \cite{choi1993}. A uniform parametrization scheme  for general $\Hit_n(\Sigma)$ is obtained by  Bonahon-Dreyer \cite{bonahon2014}. Their method is based on Fock-Goncharov's theory \cite{fock2006} or, Thurston's construction of shearing coordinates. In $\Hit_3(\Sigma)$ Goldman coordinates and Bonahon-Dreyer's are related and the explicit coordinates transformation is given by Bonahon and I. Kim \cite{bonahon2018}.

It is well-known that the Teichm\"uller space $\mathcal{T}(\Sigma)$ carries the natural symplectic structure called the Weil-Petersson form $\omega_{WP}$. As a symplectic manifold, the Teichm\"uller space $(\mathcal{T}(\Sigma), \omega_{WP})$ has been studied by many mathematicians. One of the remarkable results is due to Wolpert \cite{wolpert1985} which states that the Fenchel-Nielsen coordinates are Darboux coordinates, namely,
\[
\omega_{WP}= \sum_{i=1} ^{3g-3} \der \ell_i \wedge \der\theta_i.
\]
$\Hit_n(\Sigma)$ also carries a symplectic form as the classical Teichm\"uller space does. Indeed it is work of Goldman \cite{goldman1984} that extends the Weil-Petersson symplectic form on $\mathcal{T}(\Sigma)$ into the Atiyah-Bott-Goldman symplectic form $\omega_G$ on $\Hit_n(\Sigma)$. Now, it is  natural to ask whether there is any global Darboux coordinate system with respect to $\omega_G$, analogues to the Fenchel-Nielsen coordinates.

For the Hitchin component $\Hit_3(\Sigma)$, H. Kim \cite{kim1999} claims that the Goldman coordinates \cite{goldman1990} are indeed Darboux coordinates for $\omega_G$. He first studies $\Hit_3(\Sigma)$ where $\Sigma$ is a compact surface with boundary. Although $\Hit_3(\Sigma)$ itself is not a symplectic manifold, it admits a foliation whose leaves are of the form $\Hit_3 ^{\mathscr{B}}(\Sigma)$,  a subspace of $\Hit_3(\Sigma)$ whose holonomies of boundary components are in prescribed conjugacy classes $\mathscr{B}$. H. Kim, as well as Guruprasad-Huebschmann-Jeffrey-Weinstein \cite{guruprasad1997}, show that each leaf $\Hit_3 ^{\mathscr{B}}(\Sigma)$ can be given a symplectic form $\omega_K ^{\Sigma}$.  When $\Sigma$ is a pants $P$, the space $\Hit_3 ^{\mathscr{B}}(P)$ can be parametrized by Goldman's coordinates $s$ and $t$. H. Kim shows that Goldman's $(s,t)$ parameters on $\Hit_3 ^{\mathscr{B}} (P)$ form  Darboux coordinates  with respect to $\omega_K ^P$. After then H. Kim tries to glue various $\Hit_3 ^{\mathscr{B}}(P)$ as Goldman does in \cite{goldman1990}. In the smooth category, this gluing process is relatively easy. In the symplectic category,  however,  it is more technical and his proof misses crucial intermediate steps. One goal of this paper is to fill the missing links and make the proof of H. Kim \cite{kim1999} more complete and clear. 

More recently, Sun-Zhang \cite{sz2017} and Sun-Wienhard-Zhang \cite{swz2017} construct Darboux coordinates for the $\PSL_n(\R)$-Hitchin components. Their tool is the deformation theory of Frenet curves.  

\subsection{Statement of results} Our first result is, roughly speaking, that the symplectic manifold $(\Hit_n ^{\mathscr{B}}(\Sigma), \omega_K ^\Sigma)$ can be decomposed into a product of simpler symplectic manifolds. 

Let $\Sigma$ be a compact oriented surface with negative Euler characteristic and possibly with boundary. By an \emph{essential simple closed curve}, we mean an embedded $S^1$ in $\Sigma$ that is not homotopic to a point nor a boundary component. Let $\xi$ be an essential simple closed curve. Given a path  $\eta$ from a base point $p$ to a point in $\xi$, we write $\xi^\eta$ to denote  the loop $\eta \ast \xi \ast \eta^{-1}$ at $p$, where $\ast$ is the concatenation. We sometimes regard $\xi$ as  an element of $\pi_1(\Sigma,p)$ up to conjugation by considering  $\xi^\eta$ for some implicitly chosen path $\eta$ from $p$ to a point in $\xi$. We abuse the notation $\langle \xi \rangle$ to denote the subgroup of $\pi_1(\Sigma, p)$ generated by $\xi^\eta$ when we do not care about particular choice of an element in its conjugacy class. 


Throughout this paper, $G$  denotes the Lie group $\PSL_n(\R)$ and $\mathfrak{g}$ its Lie algebra $\mathfrak{sl}_n(\R)$. If we have a representation $\rho : \pi_1(\Sigma) \to G$, $\mathfrak{g}$ becomes a $\pi_1(\Sigma)$-module $\mathfrak{g}_{\rho}$ via the action $\Ad_{\rho(\gamma)} (X)$, $\gamma\in \pi_1(\Sigma)$, $X\in \mathfrak{g}$. We sometimes write this action simply $\gamma\acts X$ if its meaning is clear from the context.  If there is no chance of confusion, we omit the subscript $\rho$ and simply write $\mathfrak{g}$ instead of $\mathfrak{g}_\rho$. 

We denote by $\Rep(\pi_1(\Sigma), \PSL_n(\R))=\Rep_n(\pi_1(\Sigma))$  the space of representations. Although $\Rep_n(\pi_1(\Sigma))$ itself is a  singular space it contains, as an open set, a smooth manifold 
\[
\overline{\Rep}_n(\pi_1(\Sigma)):=\{\rho\in \Hom(\pi_1(\Sigma), G)\,|\,\rho \text{ is irreducible and }Z_G(\rho) = \{1\}\}/G
\]
where 
\[
Z_G(\rho) = \{g\in G\,|\, g\rho(\gamma) g^{-1} = \rho (\gamma) \text{ for all } \gamma\in \pi_1(\Sigma)\}.
\]
We mostly focus on the smooth manifold $\overline{\Rep}_n(\pi_1(\Sigma))$ because  $\overline{\Rep}_n(\pi_1(\Sigma))$ contains $\Hit_n(\Sigma)$ as a connected component. 

Suppose that $\Sigma$ has boundary components say $\zeta_1, \cdots, \zeta_b$. Let 
\[
\mathscr{B}=\{(\zeta_1,B_1), \cdots, (\zeta_b,B_b)\}
\]
 be a set of pairs each of which consists of a boundary component and a conjugacy class of a purely loxodromic element. Then we can define the following subspace of  $\overline{\Rep}_n(\pi_1(\Sigma))$: 
\[
\overline{\Rep}_n ^{\mathscr{B}}(\pi_1(\Sigma))= \{ [\rho]\in \overline{\Rep}_n(\pi_1(\Sigma))\,|\, \rho(\zeta_i)\in B_i\text{, }i=1,2,\cdots, b \}.
\]
We may define similarly $\Hit^{\mathscr{B}} _n (\Sigma)$. $\overline{\Rep}_n ^{\mathscr{B}}(\pi_1(\Sigma))$ and $\Hit^{\mathscr{B}} _n (\Sigma)$ are interesting because they admit a natural symplectic form $\omega_K ^{\Sigma}$. See Theorem \ref{Ksymp} or \cite{kim1999,guruprasad1997}. 

Let $\mathcal{C}=\{\xi_1, \cdots, \xi_m\}$ be a family of mutually disjoint, non-isotopic essential simple closed curves. If we subtract these curves from $\Sigma$, we get a collection of subsurfaces $\Sigma_1, \cdots, \Sigma_l$.  We assume that $\Sigma_i$ are all  of hyperbolic type. 

 Define
\[
\Hit_n ^{\mathscr{B}}(\Sigma, \mathscr{C})=\{[\rho]\in \Hit_n ^{\mathscr{B}}(\Sigma) 
\,|\,\rho(\xi_i)\in C_i\text{, }i=1,2,\cdots,m\}
\]
where  $\mathscr{C}=\{(\xi_1,C_1), \cdots, (\xi_m,C_m)\}$ is a family of pairs each of which consists of an element of $\mathcal{C}$ and the conjugacy class of a purely loxodromic element. We know that there is a Hamiltonian $\R^{m(n-1)}$-action on $\Hit_n ^{\mathscr{B}} (\Sigma)$ and the moment map of this action takes $\Hit_n ^{\mathscr{B}}(\Sigma,\mathscr{C})$ as a level set over a regular value  (see Goldman \cite{goldman1986}).

Now we consider the quotient $q:\Hit_n ^{\mathscr{B}}(\Sigma,\mathscr{C})\to  \Hit_n ^{\mathscr{B}}(\Sigma,\mathscr{C})/\R^{m(n-1)}$. The restriction map $\Phi=(\iota^* _{\Sigma_1}, \cdots, \iota^* _{\Sigma_l})$ identifies this quotient space with an open subspace of the product space $\Hit^{\mathscr{B}_1} _n (\Sigma_1)\times \cdots \times \Hit^{\mathscr{B}_l} _n (\Sigma_l)$ where 
\[
\mathscr{B}_i=\{(\xi,B)\,|\,\xi\text{ is a component of } \partial \overline{\Sigma_i}\text{ and } (\iota_{\Sigma_i}(\xi),B)\in \mathscr{B}\cup \mathscr{C}\}.
\] 

The quotient $q$ is not only topological but also symplectic in the following sense: $q$ pushes forward the symplectic form $\omega_K ^\Sigma$  and  induces the symplectic form $\widetilde{\omega}_K ^\Sigma$ on the quotient space. On the other hand the the product space  $\Hit^{\mathscr{B}_1} _n (\Sigma_1)\times \cdots \times \Hit^{\mathscr{B}_l} _n (\Sigma_l)$ carries the symplectic form $\omega^{\Sigma_1}_K \oplus \cdots \oplus \omega^{\Sigma_l} _K$. Now we can state our first main theorem. We remark that the theorem holds for general $n$.

\begin{theorem}\label{decompthmintro}
	Let  $\Sigma$ be a compact oriented hyperbolic surface. Then  the map $\Phi$ is a symplectic diffeomorphism from  $\Hit^{\mathscr{B}} _n(\Sigma,\mathscr{C})/\R^{m(n-1)}$ onto  an open submanifold of $\Hit^{\mathscr{B}_1} _n (\Sigma_1)\times \cdots \times \Hit^{\mathscr{B}_l} _n (\Sigma_l)$. 
\end{theorem}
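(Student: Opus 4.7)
The plan has three parts: show $\Phi$ descends to the quotient, show the induced map is a diffeomorphism onto an open submanifold, and verify that it intertwines the symplectic forms. The descent step is the easiest. The Hamiltonian $\R^{m(n-1)}$-action is generated by the $n-1$ symmetric functions of the eigenvalues of $\rho(\xi_j)$ for each $j$; by Goldman's twist-flow description \cite{goldman1986}, the associated Hamiltonian vector fields deform $\rho$ only by a conjugation performed on one side of a regular neighborhood of $\xi_j$. Restricted to $\pi_1(\Sigma_i)$ such a deformation becomes an inner conjugation, hence vanishes in $\overline{\Rep}_n(\pi_1(\Sigma_i))$. Therefore $\Phi$ factors through $\Hit_n^{\mathscr{B}}(\Sigma,\mathscr{C})/\R^{m(n-1)}$.

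That the induced map is a diffeomorphism onto an open subset follows from a Van Kampen argument. Any tuple $([\rho_i])$ in $\prod_i \Hit_n^{\mathscr{B}_i}(\Sigma_i)$ whose boundary holonomies along each $\xi_j$ agree with the prescribed conjugacy class $C_j \in \mathscr{C}$ can be amalgamated into a representation $\rho$ of $\pi_1(\Sigma)$, with the amalgamation depending on a choice of conjugator in $Z_G(\rho(\xi_j))$. Since $\rho(\xi_j)$ is regular loxodromic, each such centralizer is a maximal torus of dimension $n-1$, which accounts for precisely the $\R^{m(n-1)}$ indeterminacy. A dimension count together with the inverse function theorem upgrades this local bijection to an open embedding of the quotient.

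The real work is the identity $\widetilde{\omega}_K^\Sigma = \Phi^*(\omega_K^{\Sigma_1} \oplus \cdots \oplus \omega_K^{\Sigma_l})$. The approach is cohomological. Tangent vectors at $[\rho]$ are represented by group $1$-cocycles $u, v : \pi_1(\Sigma) \to \mathfrak{g}_\rho$, and by the construction of \cite{kim1999,guruprasad1997} the value $\omega_K^\Sigma(u,v)$ is computed from the cup-product pairing with the Killing form, modified by a boundary-type contribution of antiderivative-form $\antiderivative$ that accounts for the prescribed conjugacy classes along $\partial \Sigma$. Cutting along $\mathcal{C}$ and applying Mayer--Vietoris, the restrictions $u_i := u|_{\pi_1(\Sigma_i)}$ and $v_i := v|_{\pi_1(\Sigma_i)}$ represent tangent vectors at $[\rho \circ \iota_{\Sigma_i}] \in \Hit_n^{\mathscr{B}_i}(\Sigma_i)$. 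Evaluating the cup product on the fundamental class of $\Sigma$ then decomposes $\omega_K^\Sigma(u,v)$ as $\sum_i \omega_K^{\Sigma_i}(u_i, v_i)$ plus residual terms localised along the cuts $\xi_j$.

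The main obstacle is to verify that these residual terms cancel. Since $u, v$ are tangent to the level set $\Hit_n^{\mathscr{B}}(\Sigma,\mathscr{C})$, the values $u(\xi_j), v(\xi_j)$ lie, modulo coboundary, in the image of $\Ad_{\rho(\xi_j)} - \mathrm{id}$; passing to the symplectic quotient by the twist flows lets one choose cocycle representatives within each cohomology class so that the boundary-correction terms produced by $\antiderivative$ at the newly created boundary circles $\xi_j$ inside each $\Sigma_i$ precisely compensate the Mayer--Vietoris error terms on $\Sigma$. Making this cochain-level cancellation explicit, using the Killing-form pairing on $\mathfrak{g}_\rho$ and the twist-invariance of the reduced tangent vectors, is the Zocca-type decomposition alluded to in the abstract; it constitutes the technical heart of the argument, and is precisely the step whose formal justification was missing in \cite{kim1999}.
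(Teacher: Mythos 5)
Your outline follows the paper's strategy: show $\Phi$ descends, show the induced map is an open symplectic embedding, and establish the symplectic identity by decomposing the relative fundamental class and choosing cocycle representatives that eliminate the terms localized along the cuts. The paper implements this last step by an explicit Fox-calculus computation split into separating and non-separating cases (Theorems \ref{decomppairingsep} and \ref{decomppairingnonsep}), combined by induction on the number of cutting curves (Corollary \ref{localdecomp}); you correctly identify this as the technical heart and leave it as a sketch, which is fair.

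There is, however, a genuine gap in your injectivity argument. You claim the amalgamation indeterminacy is ``precisely $\R^{m(n-1)}$'' because $Z_G(\rho(\xi_j))$ is a maximal torus of dimension $n-1$. But in $\PSL_n(\R)$ the centralizer of a purely loxodromic element is the image of the full real diagonal torus, which has several connected components once $n\ge 3$; only its identity component is $\R^{n-1}$, and only that component is swept out by the twist flow. Thus two points of $\Hit_n^{\mathscr{B}}(\Sigma,\mathscr{C})$ could a priori have the same image under $\overline{\Phi}$ while differing by an amalgamation conjugator in a non-identity component, in which case $\Phi$ would not be injective, and ``local bijection plus inverse function theorem'' would give only an immersion. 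The paper closes exactly this gap in Lemma \ref{connectedfiber}, which shows via Bonahon--Dreyer coordinates that each fiber of $\overline{\Phi}$ is connected, and combines it with freeness (Lemma \ref{freeaction}) to conclude that each fiber is a single $\R^{m(n-1)}$-orbit. You need an argument of this kind — a direct connectedness statement, or a demonstration that Hitchin amalgamations occur only over the identity component of the centralizer — before the open-embedding conclusion is justified. Relatedly, your sketch assumes the quotient is a symplectic manifold; the paper establishes freeness, properness, and Hamiltonianity of the action (Lemmas \ref{freeaction}, \ref{properness}, and Theorem \ref{twist}), which is not entirely routine when $\Sigma$ has boundary.
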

For the precise statement, see Theorem \ref{gendecomp}. 

Theorem \ref{decompthmintro} decomposes $\widetilde{\omega}_K ^\Sigma$ into a sum of  symplectic forms and allows us to obtain some information about the symplectic structure on $\Hit^{\mathscr{B}} _n(\Sigma,\mathscr{C})$ by studying smaller symplectic manifolds individually.  We apply Theorem \ref{decompthmintro} to the case when $\Sigma$ is closed and $\Sigma_i$'s come from a pants decomposition. 

Let $\Sigma$ be a closed oriented hyperbolic surface. Take a pants decomposition of $\Sigma$. That is a choice of a maximal collection of mutually disjoint, non-isotopic essential simple closed oriented curves $\{\xi_1, \cdots, \xi_{3g-3}\}$. Goldman \cite{goldman1990} proves that $\Hit_3(\Sigma)$ can be parametrized by $16g-16$ global parameters which can be classified into three types
\begin{itemize}
	\item \emph{internal parameters}  $(\mathbf{s}_i,\mathbf{t}_i)$ parametrize  $\Hit_3 ^{\mathscr{B}}(P_i)$ for each pants $P_i$. 
	\item \emph{length parameters} $(\ell_i,m_i)$ are positive  numbers associated to each $\xi_i$. 
	\item \emph{twist-bulge parameters} $(u_i,v_i)$ are dual of the length parameters. 
\end{itemize}

Internal and length parameters are canonical. Whereas twist-bulge parameters $u_i$, $v_i$ are rather ambiguous. These $(u_i,v_i)$ coordinates measure the amount of twist-bulge along a curve $\xi_i$ with respect to a certain origin and there is no canonical choice of such a reference point. To remove this ambiguity, we use the relationship between Goldman coordinates and Bonahon-Dreyer coordinates \cite{bonahon2018}. 

After we obtain the well-defined Goldman coordinates, we prove that a canonical part of Goldman coordinates 
\[
(\mathbf{s}_1, \cdots, \mathbf{s}_{2g-2}, \ell_1, m_1, \cdots, \ell_{3g-3},m_{3g-3})
\]
can be completed to  a global Darboux coordinate system.  A  version of the action-angle principle (Theorem \ref{existenceofdarboux}) is essentially used to prove the result. 

\begin{theorem}\label{globaldarbouxintro}
Let $\Sigma$ be a closed oriented surface with genus $g>1$. There is a smooth $\R^{8g-8}$-valued function 
	\[
	(\overline{\mathbf{s}}_1, \cdots, \overline{\mathbf{s}}_{2g-2},  \overline{\ell}_1, \overline{m}_1, \cdots, \overline{\ell}_{3g-3}, \overline{m}_{3g-3})
	\]
on  $(\Hit_3(\Sigma),\omega_G)$ such that  
\[
(\mathbf{s}_1,\cdots, \mathbf{s}_{2g-2}, \ell_1,m_1  \cdots, \ell_{3g-3},m_{3g-3}, \overline{\mathbf{s}}_1, \cdots, \overline{\mathbf{s}}_{2g-2},  \overline{\ell}_1, \overline{m}_1, \cdots, \overline{\ell}_{3g-3}, \overline{m}_{3g-3})
\]
becomes a global Darboux coordinate system. 
\end{theorem}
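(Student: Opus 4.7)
The plan is to derive the theorem by combining the Zocca-type decomposition (Theorem \ref{decompthmintro}), H. Kim's Darboux coordinates on pants, and the abstract action-angle principle (Theorem \ref{existenceofdarboux}). Since $\Sigma$ is closed, $\Hit_3(\Sigma) = \Hit_3^{\emptyset}(\Sigma)$ and $\omega_K^{\Sigma} = \omega_G$. The $2(3g-3)$ length coordinates $\ell_j, m_j$ of the pants curves $\xi_1,\ldots,\xi_{3g-3}$ are smooth, mutually Poisson-commuting functions on $\Hit_3(\Sigma)$, and by Goldman \cite{goldman1986} their Hamiltonian flows generate a free Hamiltonian $\R^{2(3g-3)}$-action with moment map $\mu = (\ell_j, m_j)_{j=1}^{3g-3}$.

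Fix a regular value $\mathscr{C}$ of $\mu$. By Theorem \ref{decompthmintro}, the symplectic reduction $\Hit_3^{\emptyset}(\Sigma, \mathscr{C})/\R^{2(3g-3)}$ embeds symplectically as an open submanifold of the product $\prod_{i=1}^{2g-2} \Hit_3^{\mathscr{B}_i}(P_i)$ over the pants $P_i$ of the decomposition. On each factor, H. Kim \cite{kim1999} shows that the internal parameters $(\mathbf{s}_i, \mathbf{t}_i)$ are Darboux for $\omega_K^{P_i}$, so on the reduced space the symplectic form equals $\sum_{i=1}^{2g-2} \der \mathbf{s}_i \wedge \der \mathbf{t}_i$. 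Since $\mathbf{s}_i$ and $\mathbf{t}_i$ depend only on the conjugacy class of $\rho|_{\pi_1(P_i)}$, and the twist-bulge flow along any boundary curve of $P_i$ preserves this class, both $\mathbf{s}_i$ and $\mathbf{t}_i$ extend to smooth functions on the whole of $\Hit_3(\Sigma)$ which Poisson-commute with every $\ell_j$ and $m_j$.

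Applying the action-angle principle of Theorem \ref{existenceofdarboux} to these data then produces globally defined smooth functions $\overline{\ell}_j, \overline{m}_j$ on $\Hit_3(\Sigma)$ satisfying
\[
\omega_G \;=\; \sum_{i=1}^{2g-2} \der \mathbf{s}_i \wedge \der \mathbf{t}_i \;+\; \sum_{j=1}^{3g-3}\bigl(\der \ell_j \wedge \der \overline{\ell}_j + \der m_j \wedge \der \overline{m}_j\bigr).
\]
Setting $\overline{\mathbf{s}}_i := \mathbf{t}_i$ converts this identity into the required Darboux coordinate system. The principal obstacle is guaranteeing that the conjugate variables $\overline{\ell}_j, \overline{m}_j$ exist \emph{globally} on $\Hit_3(\Sigma)$ rather than only in a neighbourhood of a single level set of $\mu$; this is exactly what Theorem \ref{existenceofdarboux} is designed to supply, and it is available in our setting because the vector group $\R^{2(3g-3)}$ acts freely with contractible orbits, so each level set of $\mu$ forms a trivial principal $\R^{2(3g-3)}$-bundle that admits a global section from which the angles are produced by integration without monodromy.
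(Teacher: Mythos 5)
Your proposal shares the paper's high-level plan (use Theorem~\ref{decompthmintro}, H.\ Kim's Darboux coordinates on pants, and Theorem~\ref{existenceofdarboux}), but it misapplies the action-angle principle and thereby leaves the essential work undone. Theorem~\ref{existenceofdarboux} requires a fibration $f:M^{2n}\to B\subset\R^n$ whose fibers are $n$-dimensional Lagrangian submanifolds. You try to apply it to the moment map $\mu=(\ell_j,m_j)$ alone, but $\mu^{-1}(c)$ has dimension $16g-16-(6g-6)=10g-10>8g-8$ for $g>1$; these level sets are coisotropic, not Lagrangian, so the theorem says nothing about them. The paper instead forms the map $F=(\mathbf{s}_1,\ldots,\mathbf{s}_{2g-2},\ell_1,m_1,\ldots,\ell_{3g-3},m_{3g-3}):\Hit_3(\Sigma)\to\R^{8g-8}$ and proves that $F$ is a Lagrangian fibration (Lemma~\ref{lagrangian}), which hinges on showing $\{\mathbf{s}_i,\mathbf{s}_j\}=\{\mathbf{s}_i,\ell_j\}=\{\mathbf{s}_i,m_j\}=0$ via Corollary~\ref{comm} and Lemma~\ref{form}, together with verifying the other hypotheses of Theorem~\ref{existenceofdarboux}: completeness of the flows $\mathbb{X}_{\mathbf{s}_i}$ (Lemma~\ref{complete}) and simple connectivity of the fibers. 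None of this appears in your proposal, and it is precisely the nontrivial content beyond the base cases handled by Kim and the decomposition theorem.

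Two further claims in your argument are unsupported. First, you assert one may set $\overline{\mathbf{s}}_i:=\mathbf{t}_i$. The theorem deliberately introduces new functions $\overline{\mathbf{s}}_i$ and does not identify them with Goldman's $\mathbf{t}_i$: by Lemma~\ref{form} the Hamiltonian vector field $\mathbb{X}_{\mathbf{s}_i}$ has nontrivial $\partial/\partial u_j,\partial/\partial v_j$ components in Goldman's chart, so the identity $\omega_G=\sum_i\der\mathbf{s}_i\wedge\der\mathbf{t}_i+\sum_j(\der\ell_j\wedge\der\overline{\ell}_j+\der m_j\wedge\der\overline{m}_j)$ would require the Lagrangian section produced by Theorem~\ref{existenceofdarboux} to be normalizable so that $\mathbf{t}_i$ vanishes on its image, which you do not establish and the paper does not claim. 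Second, your appeal to ``integration without monodromy'' on a trivial principal bundle only produces a smooth topological section of $F$; the obstruction that actually matters lies in finding a \emph{Lagrangian} section, which is why the paper invokes the sheaf-cohomological argument of Proposition~\ref{existsection} (and needs $H^2(B;\R)=0$, verified via Goldman's description of the image of $F$).
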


Recently, Casella-Tate-Tillmann \cite{casella2018} shows that the Goldman bracket and  Fock-Goncharov bracket of $\PSL_3(\R)$ character variety of an open surface coincide on the trace algebra. Their results may cooperate with ours to give a further generalization of Theorem \ref{globaldarbouxintro}.

\subsection{About the proofs}
We first prove a variant of the action-angle principle.  Suppose that we are given a Lagrangian fiber bundle $f: M\to B$ over a connected open subset $B$ of $\R^n$ such that $H^2(B;\R)=0$.  Under certain conditions, the bundle map $f=(f_1, \cdots, f_n)$ has complementary coordinates $g=(g_1, \cdots, g_n)$ such that $(f,g)$ forms a global Darboux coordinate system (Theorem \ref{existenceofdarboux}). Indeed  if the bundle map $f$ has a global Lagrangian section, then we can find complementary coordinate functions (Lemma \ref{sectionimpliescoordinates}). So it is enough to prove the existence of a global Lagrangian section under the given conditions.  We borrow the idea of Duistermaat \cite{duistermaat1980} to show this. We prove that one can find a Lagrangian section locally (Lemma \ref{local}) and then, using  sheaf cohomology theory, we show that the obstruction for gluing local Lagrangian sections vanishes. 

Then we prove the decomposition theorem. We prove Theorem \ref{decompthmintro} by the induction on the number of cutting curves and  we eventually end up with the situation where we cut the surface by a single simple closed curve. There are two cases depending on whether the curve separates the surface or not. We prove the decomposition formulas for each of these cases. 

To this end, we first decompose the tangent space of the Hitchin component. It can be done by  means of the Mayer-Vietoris sequence which is known for the cohomology of group systems  \cite{bieri1978} .  We construct a similar sequence for the parabolic cohomology.

Suppose that $\xi$ is a separating essential simple closed curve in $\Sigma$, so that $\Sigma\setminus \xi = \Sigma_1 \cup \Sigma_2$. The Mayer-Vietoris sequence tells us that the natural inclusion map $\iota_{\Sigma_i}: \pi_1(\Sigma_i)\to \pi_1(\Sigma)$,  $i=1,2$ induces a homomorphism between  tangent spaces 
\[
(\iota_{\Sigma_1} ^* , \iota_{\Sigma_2} ^*):T_{[\rho]} \Hit_n ^{\mathscr{B}}(\Sigma,\mathscr{C})\to T_{[ \rho\circ \iota_{\Sigma_1}]}\Hit^{\mathscr{B}_1} _n (\Sigma_1) \oplus T_{[ \rho\circ \iota_{\Sigma_2}]} \Hit_n ^{\mathscr{B}_2} (\Sigma_2)
\]
whose kernel is spanned by the tangent vectors along twist flows.  Given two vectors $\alpha, \beta \in  T_{[\rho]} \Hit_n ^{\mathscr{B}}(\Sigma,\mathscr{C})$, we  prove  in Theorem \ref{decomppairingsep} that 
\begin{equation}\label{decompgeneralintro}
	\omega^{\Sigma}_K(\alpha,  \beta) =  \omega_K ^{\Sigma_1}(\iota_{\Sigma_1} ^*\alpha, \iota_{\Sigma_1} ^*\beta) +\omega_K ^{\Sigma_2} (\iota_{\Sigma_2} ^* \alpha, \iota_{\Sigma_2} ^* \beta).
\end{equation}

When $\xi$ is non-separating, $\Sigma\setminus \xi = \Sigma_0$, we have a similar homomorphism 
\[
\iota_{\Sigma_0} ^* : T_{[\rho]}\Hit_n ^{\mathscr{B}} (\Sigma) \to T_{[ \rho\circ \iota_{\Sigma_0}]} \Hit_n ^{\mathscr{B}_0}(\Sigma_0)
\]
induced from $\iota_{\Sigma_0} : \pi_1(\Sigma_0) \to \pi_1(\Sigma)$ whose kernel is again spanned by twist flows.  Then, we  show in Theorem \ref{decomppairingnonsep} that
\begin{equation}\label{decompgeneralintrononsep} 
	\omega_K ^{\Sigma}(\alpha, \beta) = \omega_K ^{\Sigma_0} (\iota_{\Sigma_0} ^* \alpha, \iota_{\Sigma_0} ^*\beta). 
\end{equation}
In fact, (\ref{decompgeneralintro}) and  (\ref{decompgeneralintrononsep}) hold under weaker assumptions on $[\rho]$. See Theorem \ref{decomppairingsep} and   Theorem \ref{decomppairingnonsep} for  precise statements. 

We  prove  (\ref{decompgeneralintro}) and (\ref{decompgeneralintrononsep})  by using the Fox calculus.  The key point, which stems from Zocca \cite{zocca1998}, is that we can decompose a relative fundamental class  (Lemma \ref{funclcpt}) into a sum of relative fundamental classes of subsurfaces together with some extra terms. Roughly writing $[\Sigma] = [\Sigma_1]+[\Sigma_2] + \text{extra}$ for the separating case and $[\Sigma] =[\Sigma_0] + \text{extra}$ for the non-separating case.  Then we choose a nice representative in the given cohomology class in such a way that all the extra terms vanish. Applying the decomposition formulas inductively, we can prove Theorem \ref{decompthmintro}. 

Lemma \ref{lagrangian} implies that the map $F:\Hit_3(\Sigma)\to \R^{8g-8}$  assigning to each $[\rho]$ the coordinates 
\[
(\mathbf{s}_1([\rho]),\cdots,\mathbf{s}_{2g-2}([\rho]), \ell_1([\rho]), m_1([\rho]),\cdots, \ell_{3g-3}([\rho]),m_{3g-3}([\rho]))
\]
 is a Lagrangian fiber bundle. Then we show that $F$ satisfies all the conditions of Theorem \ref{existenceofdarboux}. Therefore Theorem \ref{globaldarbouxintro} follows as a consequence of Theorem \ref{existenceofdarboux}.

\subsection{Acknowledgements}
Discussions with Francis Bonahon, William Goldman, Michael Kapovich, Inkang Kim, Ana Cannas da Silva, and Tengren Zhang were very helpful for us to complete our paper.  Sun Zhe  and Johannes Huebschmann kindly explained their work and gave many constructive comments. We specially appreciate their help. The second author would like to give special thanks to Francis Bonahon, Daniel Douglas and Hatice Zeybek for their hospitality and helpful conversations during the visit to USC. Finally the first and second author visited Stanford University for GEAR retreat in 2017 and UC Davis in 2015, where we made large progression on this paper.

\section{The space of representations and Hitchin components}
In this section, we review basic facts on  Hitchin components for both closed and compact surfaces. To describe their tangent spaces, we introduce the group cohomology and the parabolic group cohomology which represent the tangent spaces of $\Hit_n(\Sigma)$ and $\Hit_n ^{\mathscr{B}}(\Sigma)$ respectively. 

\subsection{Definition and properties}

Let $\Sigma$ denote a closed oriented hyperbolic surface. Throughout this paper, the Lie group $G$ always denotes $\PSL_n(\R)$ and $\mathfrak{g}$ the Lie algebra $\mathfrak{sl}_n(\R)$ of $\PSL_n(\R)$. Let
 \[
\Rep(\pi_1(\Sigma), \PSL_n(\R))=\Rep_n(\pi_1(\Sigma)):=\Hom(\pi_1(\Sigma),\PSL_n(\R))/\PSL_n(\R)
\]
be the space of representations. We sometimes consider the GIT quotient instead of the usual one. However  do not have to distinguish them because these two quotients coincide on a subspace $\overline{\Rep}_n(\pi_1(\Sigma))$ defined below and we focus only on $\overline{\Rep}_n(\pi_1(\Sigma))$  throughout this paper.

\begin{definition}
	A \emph{Hitchin component} $\Hit_n(\Sigma)$ is a connected component of $\Rep(\pi_1(\Sigma), \PSL_n(\R))$ that contains a Fuchsian representation.
\end{definition}

When $n=2$, $\Hit_2(\Sigma)$ coincides with the usual Teichm\"uller space. It is known that the Teicum\"uller space is homeomorphic to the cell of dimension $6g-6$ where $g$ is the genus of $\Sigma$. Similar result holds for Hitchin components. Indeed Hitchin \cite{hitchin1992} himself shows that  $\Hit_n(\Sigma)$ is homeomorphic to the cell of dimension $(n^2 -1) \cdot (2g-2)$.

Now suppose that $\Sigma$ is a compact hyperbolic surface possibly with boundary. We can naturally generalize the notion of Hitchin components for such a non-closed surface.

\begin{definition}[Labourie-McShane \cite{labourie2009}]
	Let $\Sigma$ be a compact oriented hyperbolic surface. $[\rho]\in \Rep_n(\pi_1(\Sigma))$ is said to be a \emph{Hitchin representation} if $\rho$ can be continuously deformed into a Fuchsian representation in such a way that the holonomies of boundary components are purely loxodromic (i.e., it is diagonalizable and all eigenvalues are distinct positive real numbers) in the course of the deformation. A connected component of $\Rep_n (\pi_1(\Sigma))$ that consists of Hitchin representations is denoted by the same notation $\Hit_n(\Sigma)$. 
\end{definition}

Suppose that $\Sigma_0$ is a hyperbolic incompressible subsurface of a closed hyperbolic surface $\Sigma$. Given $[\rho]\in \Hit_n(\Sigma)$, its restriction $\rho|_{\pi_1(\Sigma_0)}$ to  the subgroup $\pi_1(\Sigma_0)$ is also in $\Hit_n (\Sigma_0)$. See Theorem 9.1 of Labourie-McShane \cite{labourie2009}. 

The space $\Rep_n(\pi_1(\Sigma))$ contains an open subspace, the space of `good' representations 
\[
\overline{\Rep}_n (\pi_1(\Sigma)):=\Hom_s(\pi_1(\Sigma),G)/G
\]
where 
\[
\Hom_s(\pi_1(\Sigma),G):=\{\rho\in \Hom(\pi_1(\Sigma),G)\,|\,\rho\text{ is irreducible and } Z_G(\rho)=\{1\} \}.
\]

Suppose that $\rho\in \Hom_s(\pi_1(\Sigma),G)$ is given. Let $X \in \mathfrak{sl}_n(\R)$ be an $\Ad_\rho$-invariant element. Then $\exp X$ is in $Z_G(\rho)$. So by definition of $\overline{\Rep}_n(\pi_1(\Sigma))$, we have $\exp X=1$ or, equivalently, $X=0$. It follows that $\mathfrak{g}_{\rho}$ has no nontrivial $\Ad_\rho$ invariant element. Therefore $\Hom_s(\pi_1(\Sigma),G)$ is a smooth manifold. Moreover,  it is proven by Johnson-Millson \cite{Johnson1987} that the $G$-action on $\Hom_s(\pi_1(\Sigma),G)$ is proper and free. Consequently the quotient space $\overline{\Rep}_n(\pi_1(\Sigma))$ is also a smooth manifold. 

Hitchin representations for a compact surface have many interesting properties. We summarize them as the following lemma, which is implicitly used several times throughout this paper. 

\begin{lemma}\label{NoInvariantElement} Let $\Sigma$ be a compact oriented hyperbolic surface. Let $[\rho]\in \Hit_n(\Sigma)$. 
\begin{itemize}
\item $\rho$ is faithful, irreducible and discrete.
\item For each nontrivial $\gamma\in \pi_1(\Sigma)$, $\rho(\gamma)$ is  purely loxodromic.
\item The centralizer of $\rho$, $Z_G(\rho)$, is trivial. 
\end{itemize}
In particular $\Hit_n(\Sigma)$ is a connected component of $\overline{\Rep}_{n}(\pi_1(\Sigma))$.
\end{lemma}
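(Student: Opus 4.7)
My plan is to verify the three bullets and then read off the last assertion, splitting into cases according to whether $\partial\Sigma=\emptyset$.

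For $\Sigma$ closed, the first two bullets are Labourie's theorem: every Hitchin representation is Anosov with respect to a minimal parabolic of $\PSL_n(\R)$, which directly yields that $\rho$ is discrete and faithful and that $\rho(\gamma)$ is biproximal with $n$ distinct positive real eigenvalues (hence purely loxodromic in our sense) for every nontrivial $\gamma\in\pi_1(\Sigma)$. Irreducibility also comes from the Anosov property via the associated $\rho$-equivariant hyperconvex Frenet curve $\xi:\partial\pi_1(\Sigma)\to \Flag(\R^n)$: any $\rho$-invariant proper subspace $V\subset\R^n$ would force $\xi(x)^{(\dim V)}=V$ for all $x$, contradicting hyperconvexity (the images of $n$ distinct boundary points must be in general position).

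When $\Sigma$ has boundary, the same three conclusions are Labourie-McShane's extension. A conceptual alternative route is to double $\Sigma$ along $\partial\Sigma$ to a closed surface $D\Sigma$: since the boundary holonomies are purely loxodromic by the very definition of $\Hit_n(\Sigma)$, the representation $\rho$ glues with its mirror image into a Hitchin representation of $D\Sigma$, and then discreteness, faithfulness, purely-loxodromic-ness at every nontrivial element, and irreducibility all descend to $\pi_1(\Sigma)\subset\pi_1(D\Sigma)$ via the closed-surface case.

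For the third bullet, let $g\in Z_G(\rho)$ and pick any nontrivial $\gamma_0\in\pi_1(\Sigma)$. Because $\rho(\gamma_0)$ is purely loxodromic, its $n$ eigenlines in $\R^n$ are well-defined and distinct, and any commuting element of $\PSL_n(\R)$ must preserve each of them; hence (any lift of) $g$ is diagonal in this eigenbasis. Using the hyperconvex Frenet curve, choose further nontrivial $\gamma_j$ whose eigenbases are in general position with that of $\gamma_0$; commuting with all $\rho(\gamma_j)$ rigidly pins $g$ down to the class of a scalar matrix, i.e.\ $g=1$ in $\PSL_n(\R)$. Equivalently, irreducibility together with the presence of an $\R$-split element forces the real commutant of $\rho$ to be $\R$, so Schur's lemma gives the claim. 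Combining the three bullets yields $\Hit_n(\Sigma)\subset\overline{\Rep}_n(\pi_1(\Sigma))$, and because $\overline{\Rep}_n(\pi_1(\Sigma))$ is open in $\Rep_n(\pi_1(\Sigma))$, the connected component $\Hit_n(\Sigma)$ of the latter is automatically a connected component of the former. The main obstacle is handling the boundary case airtight: I would cite Labourie-McShane directly rather than carry out the doubling argument, since the latter requires separately verifying that the mirrored representation actually lies in $\Hit_n(D\Sigma)$, a connectedness point that Labourie-McShane already package inside their main theorem.
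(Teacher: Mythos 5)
Your overall plan matches the paper's: closed case from Labourie, boundary case via doubling, and Schur's lemma for the centralizer. The differences worth noting are in the boundary case.

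Your doubling argument claims that ``discreteness, faithfulness, purely-loxodromic-ness at every nontrivial element, and irreducibility all descend to $\pi_1(\Sigma)\subset\pi_1(D\Sigma)$.'' The first three do descend automatically to any subgroup, and the paper does exactly this, citing Corollary 9.2.2.4 of Labourie--McShane to place the double $\widehat\rho$ in $\Hit_n(\widehat\Sigma)$ (so the worry you raise about verifying the double is Hitchin is already resolved by that cited result). But irreducibility of a representation does \emph{not} descend to subgroups in general, so this particular claim needs a separate argument. The paper handles it differently: it uses that Hitchin representations of compact surfaces are \emph{positive} (Labourie--McShane, Theorem 9.1) and then invokes Lemma 5.12 of Guichard--Wienhard, which says positive representations are irreducible. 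Your alternative route for irreducibility in the closed case -- the hyperconvex Frenet curve argument -- is fine and essentially what sits behind that cited lemma, but the way you inserted irreducibility into the doubling-and-restrict step is a genuine gap. Your final hedge (``cite Labourie--McShane directly'') would also work, though you should be explicit that what you need for irreducibility is positivity plus Guichard--Wienhard, not merely the extension of the first two bullets. Your treatment of the third bullet and the concluding topological remark match the paper.
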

\begin{proof}
First assume that $\Sigma$ is closed. Then first two statements are nothing but Proposition 3.4 of Labourie \cite{labourie2006}. 

Suppose that $\Sigma$ has a nonempty boundary component. We consider the Hitchin double $\widehat{\rho}$. It is known that $\widehat{\rho}$ is in the Hitchin component $\Hit_n(\widehat{\Sigma})$ of the double $\widehat{\Sigma}$ of $\Sigma$. See Corollary 9.2.2.4 of \cite{labourie2009}.  As $\widehat{\Sigma}$ is closed, we  know that  $\widehat{\rho}$ is discrete, faithful and  $\widehat{\rho}(\gamma)$ is purely loxodromic for any nontrivial element $\gamma$. It follows that the $\rho$ has the same properties. 

We now show that $\rho$ is irreducible. By Theorem 9.1 of \cite{labourie2009}, $\rho$ is a positive representation. Therefore by Lemma 5.12 of Guichard-Wienhard \cite{guichard2012}, $\rho$ is irreducible. 

	For the third statement, suppose that $X$ is in the center of $\rho(\pi_1(\Sigma))$. Since $\rho(\gamma)$ is purely loxodromic, we observe that $X$ must be diagonal. Therefore, Schur's lemma is applied so we can conclude that $X$ must be a scalar. 
\end{proof}

Let us introduce a nice submanifold of $\Hit_n(\Sigma)$ which is essential for our discussion. 
\begin{definition}
	Let  $\Sigma$ be compact oriented hyperbolic surface with boundary components $\{\zeta_1, \cdots, \zeta_b\}$.  By a \emph{boundary frame}, we mean a collection  $\mathscr{B}=\{(\zeta_1,C_1), \cdots, (\zeta_b,C_b)\}$ of pairs each of which consists of a boundary component and conjugacy class in $G$.  Given a boundary frame $\mathscr{B}$, we define the following space
	\[
	\Hit^{\mathscr{B}} _n (\Sigma)= \{[\rho]\in \Hit_n(\Sigma)\,|\, \rho(\zeta_i) \in C_i\text{ for }i=1,2,\cdots, b\}. 
	\] 
We also define $\overline{\Rep}_n ^{\mathscr{B}} (\pi_1(\Sigma))\subset \overline{\Rep}_n  (\pi_1(\Sigma))$  in the same fashion.

Let $\mathcal{C}= \{\xi_1, \cdots, \xi_m\}$ be a collection of pairwise disjoint, non-isotopic essential simple closed curves. A $\mathcal{C}$-frame is a family  $\mathscr{C}=\{(\xi_1,C_1), \cdots, (\xi_m,C_m)\}$ of pairs each of which consists of $\xi_i$ and a conjugacy class in $G$. Given a $\mathcal{C}$-frame $\mathscr{C}$, define
\begin{align*}
	\Hit^{\mathscr{B}} _n (\Sigma,\mathscr{C})& = \{[\rho]\in \Hit_n ^{\mathscr{B}} (\Sigma)\,|\, \rho(\xi_i) \in C_i\text{, }i=1,2,\cdots,m\},\quad \text{and}\\
	\overline{\Rep}_n ^{\mathscr{B}}(\pi_1(\Sigma),\mathscr{C})&=\{[\rho]\in \overline{\Rep}_n ^{\mathscr{B}} (\pi_1(\Sigma))\,|\, \rho(\xi_i) \in C_i\text{, }i=1,2,\cdots,m\}. 
\end{align*}
\end{definition}
To be more precise, we should understand $\zeta_i$ (and $\xi_i$) as a loop at the base point $p$ of $\pi_1(\Sigma, p)$ by choosing a path from $p$ to a point in $\zeta_i$ (and $\xi_i$). However  since we are dealing with the conjugacy classes, we may ignore such a technicality.

We observe that $\Hit_n(\Sigma)=\bigcup \Hit_n ^{\mathscr{B}}(\Sigma)$ where the union runs over all possible choice of boundary frames. This foliation plays the key role in the study of Poisson geometry of $\Hit_n(\Sigma)$.

\subsection{Group cohomology}
Cohomology of group is a model for the tangent spaces of $\overline{\Rep}_n(\pi_1(\Sigma))$. In this subsection we review a definition of group cohomology and its properties.

Let $\Gamma$ be a finitely presented group. Given a representation $\rho:\Gamma \to G$, we denote by $\mathfrak{g}_{\rho}$  the $\Gamma$-module $\mathfrak{g}$ under the action $\Ad _\rho$.  If the action is clear from the context, we omit the subscript $\rho$ and simply write $\mathfrak{g}$ instead of $\mathfrak{g}_\rho$. 

By a resolution over $\Gamma$, we mean any projective resolution of $M=\R$ or $\mathbb{Z}$
\[
\cdots \to R_2\to R_1 \to R_0 \to M \to 0
\]
where $M$ is regarded as a trivial $M \Gamma$-module. Then the group cohomology $H^q(\Gamma; V)$ with coefficient in a $M\Gamma$-module $V$ is the cohomology  of the complex $\Hom_{\Gamma} (R_\ast (\Gamma), V)$. Our major concern is the case where $M=\R$ and $V=\mathfrak{g}_\rho$. 

We mostly use the \emph{normalized bar resolution} $(\mathbf{B}_\ast(\Gamma),\der_\Gamma)$ throughout this paper. Recall that $\mathbf{B}_q (\Gamma)$ is a free $\Gamma$-module on symbols $[x_1|x_2| \cdots |x_q]$ where $x_1, \cdots, x_q\in \Gamma\setminus\{1\}$.

Now we give a relative version of group cohomology. We  follow Trotter's paper \cite{trotter1962} (see also  \cite[section 1]{guruprasad1997}). A group system  is, by definition,  a pair  $(\Gamma, \mathcal{S})$ of finitely presented group $\Gamma$ and a collection $\mathcal{S}=\{\Gamma_1, \cdots, \Gamma_m\}$ of its finitely presented subgroups $\Gamma_1, \cdots, \Gamma_m$.

\begin{definition}
	Let $M=\R$ or $\Z$. An \emph{auxiliary resolution} $(R_\ast,A_\ast ^i)$ over the group system $(\Gamma, \mathcal{S})$, or simply $(\Gamma, \mathcal{S})$-resolution, consists of 
	\begin{itemize}
		\item $R_\ast$, a  resolution over  $\Gamma$
		\item $A^i _\ast$,  a  resolution over  $\Gamma_{i}$
		\item $A_\ast:= \bigoplus_{i=1} ^m  M\Gamma \otimes_{M\Gamma_{i}} A^i _\ast$  is a direct summand of $R_\ast$.
	\end{itemize}
\end{definition}
Since $A_\ast$ is a direct summand of $R_\ast$, we can form a short exact sequence of chain complexes
\begin{equation}\label{esofrelative}
0\to A_\ast \to R_\ast \to R_\ast/A_\ast \to 0.
\end{equation}
For a given $\Gamma$-module $\mathfrak{g}_{\rho} =\mathfrak{g}$, we apply the $\Hom_{\Gamma}(-,\mathfrak{g})$ functor on this exact sequence. Then we get the long exact sequence
\[
\cdots \to H^q (\Gamma,\mathcal{S}; \mathfrak{g}) \to H^q(\Gamma; \mathfrak{g}) \to H^q(\mathcal{S};\mathfrak{g}) \to H^{q+1}(\Gamma,\mathcal{S}; \mathfrak{g}) \to \cdots. 
\]

Note that $H^q(\mathcal{S};\mathfrak{g}_\rho)\approx \bigoplus H^q(\Gamma_{i};\mathfrak{g}_{\rho|_{\Gamma_i}})$.

\begin{definition}
	The parabolic cohomology of $\Gamma$ of degree $q$ with coefficient in $\mathfrak{g}_\rho$,   $H^q _{\mathrm{par}} (\Gamma,\mathcal{S}; \mathfrak{g}_\rho)$, is the image of $H^q(\Gamma, \mathcal{S};\mathfrak{g}_\rho)$ in $H^q(\Gamma; \mathfrak{g}_\rho)$. In other words
	\[
	H^q _{\mathrm{par}} (\Gamma, \mathcal{S}; \mathfrak{g}_\rho) \approx H^q(\Gamma,\mathcal{S}; \mathfrak{g}_\rho) / H^{q-1} (\mathcal{S}; \mathfrak{g}_\rho). 
	\]
\end{definition}

We are interested in the case where $q=1$ and $\Gamma=\pi_1(\Sigma)$. In the appendix, we describe how to compute the (1st) parabolic cohomology by finding a nice resolution over a group system $(\Gamma, \mathcal{S})$. In terms of the  normalized bar resolution $\mathbf{B}_\ast(\Gamma)$, elements of $H^1_{\mathrm{par}}(\Gamma, \mathcal{S};\mathfrak{g})$ can be represented by parabolic cocycles
\[
Z^1_{\mathrm{par}} (\Gamma, \mathcal{S}; \mathfrak{g}):=\{\alpha\in Z^1(\Gamma, \mathfrak{g})\,|\, \iota^\# _{\Gamma_{i}} (\alpha) \in B^1(\Gamma_i, \mathfrak{g})\}
\]
where $\iota^\# _{\Gamma_{i}}$ is the restriction defined at the end of this subsection. 

\begin{remark}\label{conj}
	Consider a group system $(\Gamma, \mathcal{S}')$, $\mathcal{S}' = \{\Gamma_1' ,\cdots, \Gamma_m'\}$, which is conjugate to $(\Gamma,\mathcal{S})$ in the sense that for each $i=1,2,\cdots, m$ there is a $g_i\in \Gamma$ such that  $\Gamma_{i} ' = g_i \Gamma_{i} g_i ^{-1}$. Then the parabolic cohomology $H^1_{\mathrm{par}} (\Gamma,\mathcal{S}';\mathfrak{g})$ is the same as $H^1_{\mathrm{par}} (\Gamma,\mathcal{S};\mathfrak{g})$ because $\iota^\# _\gamma (\alpha)\in B^1(\Gamma_i; \mathfrak{g})$ if and only if $\iota^\# _{g\gamma g^{-1}}(\alpha) \in B^1(\Gamma_i; \mathfrak{g})$. In other words, the parabolic cohomology $H^q _{\mathrm{par}} (\Gamma, \mathcal{S}; \mathfrak{g})$ depends only on the conjugacy class of $\Gamma_{i}$. So in regard of parabolic cohomology, we define a group system as a pair of group $\Gamma$ and a family of \emph{conjugacy classes} of subgroups $\Gamma_1, \cdots, \Gamma_m$ of $\Gamma$. 
\end{remark}

We finish this subsection by introducing  the restriction map. Let $\mathbf{B}_\ast(\Gamma)$ be the normalized bar resolution over $\Gamma$. Suppose that we are given a subgroup $\iota_{\Gamma'} : \Gamma'\to \Gamma$. Then we have a natural chain map  $\iota_{\Gamma'} ^\# : \Hom_{\Gamma} (\mathbf{B}_\ast (\Gamma),\mathfrak{g})\to \Hom_{\Gamma} (\R \Gamma\otimes \mathbf{B}_\ast(\Gamma') , \mathfrak{g}) $. Since $\Hom_{\Gamma} (\R \Gamma\otimes \mathbf{B}_\ast(\Gamma') , \mathfrak{g})\approx \Hom_{\Gamma'} (\mathbf{B}_\ast (\Gamma'), \mathfrak{g})$ as chain complexes of $\R$-vector spaces, this inclusion induces a homomorphism  $H^1(\Gamma,\mathfrak{g}) \to H^1(\Gamma', \mathfrak{g})$ which we  denote by $\iota_{\Gamma'}^*[\alpha]=[\iota_{\Gamma'}^\# (\alpha)]$ where $[\alpha]\in H^1(\Gamma, \mathfrak{g})$.  Similarly, if $(\Gamma', \mathcal{S}')$ is a group subsystem of $(\Gamma, \mathcal{S})$, we have the natural restriction map $H^1_{\mathrm{par}}(\Gamma, \mathcal{S};\mathfrak{g}) \to H^1_{\mathrm{par}} (\Gamma', \mathcal{S}';\mathfrak{g})$.

\subsection{Tangent spaces of $\overline{\Rep}_n (\pi_1(\Sigma))$}
It is well-know that the tangent space of $\overline{\Rep}_n (\pi_1(\Sigma))$ at each point $[\rho]\in \overline{\Rep}_n (\pi_1(\Sigma))$ can be identified with $H^1(\pi_1(\Sigma); \mathfrak{g}_{\rho})$. See for example \cite{weil1964}, \cite{goldman1984}, \cite{guruprasad1997} and \cite{labourie2013}.  Since   $\Hit_n(\Sigma)$ is a component of $\overline{\Rep}_n(\pi_1(\Sigma))$ (Lemma \ref{NoInvariantElement}), we can say that the tangent space of $\Hit_n(\Sigma)$ at $[\rho]$ is $H^1(\pi_1(\Sigma); \mathfrak{g}_{\rho})$. 

As in the closed surface case, we have a nice description of local geometry for $\Hit_n ^{\mathscr{B}}(\Sigma)$. Guruprasad-Huebschmann-Jeffrey-Weinstein \cite{guruprasad1997} shows that the tangent space of $\Hit_n ^{\mathscr{B}}(\Sigma)$ at $[\rho]\in \Hit_n ^{\mathscr{B}}(\Sigma)$ can be identified with the parabolic group cohomology
	\begin{align*}
	T_{[\rho]} \overline{\Rep}_n ^{\mathscr{B}}(\pi_1(\Sigma)) &\approx H^1 _{\mathrm{par}} (\pi_1(\Sigma),\{\langle \zeta_1\rangle, \cdots, \langle \zeta_b \rangle\} ;\mathfrak{g}_{ \rho}).
	\end{align*}
More generally, we show the following in the appendix, Proposition \ref{tangentA}.

\begin{proposition}\label{tangent}
	Let $\Sigma$ be a compact oriented hyperbolic surface possibly with boundary components $\{\zeta_1, \cdots, \zeta_b\}$. Let $\{\xi_1, \cdots, \xi_m\}$ be mutually disjoint, non-isotopic essential simple closed curves.  At each $[\rho]\in \overline{\Rep}_n ^{\mathscr{B}}(\pi_1(\Sigma),\mathscr{C})$, 
	\[
	T_{[\rho]}\overline{\Rep}_n ^{\mathscr{B}}(\pi_1(\Sigma),\mathscr{C}) \approx H^1 _{\mathrm{par}}(\pi_1 (\Sigma),\{\langle{\xi_1}\rangle, \cdots, \langle\xi_m\rangle,\langle\zeta_1\rangle, \cdots, \langle \zeta_b\rangle\};\mathfrak{g}_\rho)
	\]
	where $\rho$ is a representative of the class $[\rho]$. 
\end{proposition}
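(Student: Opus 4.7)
The plan is to adapt the approach of Guruprasad--Huebschmann--Jeffrey--Weinstein \cite{guruprasad1997}, who establish the statement in the special case $\mathcal{S}=\{\langle \zeta_1\rangle,\ldots,\langle \zeta_b\rangle\}$ of boundary subgroups alone, to the enlarged group system that also records the interior essential simple closed curves $\{\xi_i\}$. The key observation is that their construction uses only that each subgroup in $\mathcal{S}$ is infinite cyclic and that its generator has regular semisimple (i.e.\ purely loxodromic, by Lemma \ref{NoInvariantElement}) image under $\rho$; it never exploits the fact that the generator represents a boundary loop.

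First I identify the tangent vectors to $\overline{\Rep}_n^{\mathscr{B}}(\pi_1(\Sigma),\mathscr{C})$ at $[\rho]$ as parabolic cocycles modulo coboundaries. Starting from the standard identification $T_{[\rho]}\overline{\Rep}_n(\pi_1(\Sigma))\cong H^1(\pi_1(\Sigma);\mathfrak{g}_\rho)$, a tangent vector is realized as the Fox derivative $\alpha(\gamma)=\tfrac{\der}{\der t}\big|_{0}\rho_t(\gamma)\,\rho(\gamma)^{-1}$ of a smooth path $\rho_t\in\Hom_s(\pi_1(\Sigma),G)$. The class $[\alpha]$ is tangent to $\overline{\Rep}_n^{\mathscr{B}}(\pi_1(\Sigma),\mathscr{C})$ exactly when $\rho_t$ can be chosen so that $\rho_t(\xi_i)\in C_i$ and $\rho_t(\zeta_j)\in B_j$ for all $i,j$. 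Writing $\rho_t(\xi_i)=g_i(t)\,\rho(\xi_i)\,g_i(t)^{-1}$ with $g_i(0)=1$ and differentiating at $t=0$ produces $\alpha(\xi_i)=X_i-\Ad_{\rho(\xi_i)}X_i$, where $X_i=\dot g_i(0)$; that is, $\iota^\#_{\langle \xi_i\rangle}\alpha\in B^1(\langle \xi_i\rangle;\mathfrak{g})$. The same argument applies to each $\zeta_j$, so every tangent vector admits a parabolic representative, and after quotienting by the infinitesimal $G$-action (which contributes exactly $B^1(\pi_1(\Sigma);\mathfrak{g})$) one obtains an injection $T_{[\rho]}\overline{\Rep}_n^{\mathscr{B}}(\pi_1(\Sigma),\mathscr{C})\hookrightarrow H^1_{\mathrm{par}}(\pi_1(\Sigma),\mathcal{S};\mathfrak{g}_\rho)$.

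The converse, that every parabolic class integrates to a genuine deformation preserving the prescribed conjugacy classes, is the substance of the argument. I would apply the implicit function theorem to the smooth evaluation map
\[
\mathrm{ev}\colon \Hom_s(\pi_1(\Sigma),G)\to \prod_{i=1}^{m} G\cdot\rho(\xi_i)\,\times\, \prod_{j=1}^{b} G\cdot\rho(\zeta_j),
\]
whose targets are smooth $G$-orbits in $G$ (each of dimension $n^2-n$, since the relevant elements are regular semisimple). The differential of $\mathrm{ev}$ at $\rho$, read at the level of cocycles, is the restriction $H^1(\pi_1(\Sigma);\mathfrak{g})\to \bigoplus_i H^1(\langle \xi_i\rangle;\mathfrak{g})\oplus \bigoplus_j H^1(\langle \zeta_j\rangle;\mathfrak{g})$ appearing in the long exact sequence of the group system $(\pi_1(\Sigma),\mathcal{S})$. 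If $\mathrm{ev}$ is a submersion at $\rho$, then $\Hom_s^{\mathscr{B},\mathscr{C}}$ is a submanifold whose cocycle-level tangent space is exactly $Z^1_{\mathrm{par}}(\pi_1(\Sigma),\mathcal{S};\mathfrak{g}_\rho)$; passing to the $G$-quotient then produces the desired identification with $H^1_{\mathrm{par}}$.

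The hard part will be the surjectivity of this differential, which is equivalent to the surjectivity of the restriction $H^1(\pi_1(\Sigma);\mathfrak{g})\to H^1(\mathcal{S};\mathfrak{g})$ in the long exact sequence above, or what is the same, the vanishing of the connecting map into $H^2(\pi_1(\Sigma),\mathcal{S};\mathfrak{g})$. I expect to establish this by a Poincar\'e--Lefschetz-type duality, built from an explicit $(\pi_1(\Sigma),\mathcal{S})$-resolution arising from a CW decomposition of $\Sigma$ in which each $\xi_i$ and $\zeta_j$ is a subcomplex, pairing $H^2(\pi_1(\Sigma),\mathcal{S};\mathfrak{g})$ with $H^0(\pi_1(\Sigma);\mathfrak{g})$ through the Killing form on $\mathfrak{g}$; the latter vanishes because $\rho$ is irreducible (Lemma \ref{NoInvariantElement}). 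Once this duality is carried out carefully inside the group-cohomology framework set up in the previous subsection, the remainder of the argument is formal.
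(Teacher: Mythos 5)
Your route is genuinely different from the paper's. The paper proves Proposition~\ref{tangent} (= Proposition~\ref{tangentA}) via a discrete flat-connection model on the $1$-skeleton of $\Sigma$, writing $\overline{\Rep}_n^{\mathscr{B}}(\Gamma,\mathscr{C})$ as the curvature level set $R^{-1}(e)$ modulo a gauge group $\mathcal{G}$ that carries centralizer factors at the marked edges, and reading off the parabolic cochain complex from the resulting commutative diagram. You instead linearize the evaluation map into a product of conjugacy orbits, as in Guruprasad--Huebschmann--Jeffrey--Weinstein, and try to close the transversality step by duality. Your reduction of transversality to the surjectivity of $H^1(\Gamma;\mathfrak{g})\to H^1(\mathcal{S};\mathfrak{g})$, equivalently to the vanishing of $H^2(\Gamma,\mathcal{S};\mathfrak{g})$ (using $H^2(\Gamma;\mathfrak{g})=0$), is correct.

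The gap is in the duality you invoke. When $\mathcal{S}$ contains only the boundary subgroups $\langle\zeta_j\rangle$, Poincar\'e--Lefschetz duality for the pair $(\Sigma,\partial\Sigma)$ does identify $H^2(\Gamma,\mathcal{S};\mathfrak{g})$ with $H_0(\Gamma;\mathfrak{g})\cong H^0(\Gamma;\mathfrak{g})^*$, and irreducibility of $\rho$ alone kills this. But here $\mathcal{S}$ also records the interior curves $\xi_i$, so $(\Gamma,\mathcal{S})$ is not the group system of a surface-with-boundary pair and that duality does not apply. Computing via the CW-structure in which the $\xi_i$ and $\zeta_j$ are subcomplexes (the one used in the appendix), one finds instead
\[
H^2(\Gamma,\mathcal{S};\mathfrak{g})\;\cong\;\bigoplus_{j=1}^{l}H^2\bigl(\Gamma_{\Sigma_j},\partial\overline{\Sigma_j};\mathfrak{g}\bigr)\;\cong\;\bigoplus_{j=1}^{l}H_0\bigl(\Gamma_{\Sigma_j};\mathfrak{g}\bigr),
\]
where $\Sigma_1,\dots,\Sigma_l$ are the components of $\Sigma\setminus\bigcup\xi_i$. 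Its vanishing therefore requires each restriction $\rho_{\Gamma_{\Sigma_j}}$ to have no $\Ad$-invariant vectors; irreducibility of $\rho$ on all of $\Gamma$ is not enough. (A quick sanity check: for trivial coefficients $\R$ and a separating $\xi$ in a closed genus-$2$ surface, the restriction $H^1(\Gamma;\R)\to H^1(\langle\xi\rangle;\R)$ is the zero map since $\xi$ is null-homologous, and indeed $H^2(\Gamma,\langle\xi\rangle;\R)\cong\R^2\cong H_0(\Gamma_{\Sigma_1};\R)\oplus H_0(\Gamma_{\Sigma_2};\R)\neq 0$.) For Hitchin representations the stronger hypothesis is automatic because the subsurface restrictions are again Hitchin (Labourie--McShane), so your argument can be repaired; but as written, pairing $H^2(\Gamma,\mathcal{S};\mathfrak{g})$ with $H^0(\Gamma;\mathfrak{g})$ and quoting irreducibility of $\rho$ does not establish the required vanishing.
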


Recall, by Remark \ref{conj}, that particular choices of a subgroups $\langle\xi_i \rangle$ and $\langle \zeta_i \rangle$ within their conjugacy classes are not important.

\section{Aspects of symplectic geometry}
In this section, we collect elements of symplectic geometry related to our discussion. We review the construction of the Atiyah-Bott-Goldman symplectic form on $\Hit_n(\Sigma)$ as well as the  symplectic form of  H. Kim  on $\Hit_n ^\mathscr{B}(\Sigma)$.  

The key part of this section is subsection \ref{aaprinciple} where we  prove a variant of the action-angle principle    (Theorem \ref{existenceofdarboux}) that allows us to find global Darboux coordinates under certain conditions. 

\subsection{Definitions and properties}
A symplectic manifold is a smooth manifold $M$ with a non-degenerate closed 2-form $\omega$. Given a smooth function $f\in C^\infty(M)$, the Hamiltonian vector field  associated to $f$ is characterized by a unique vector field $\mathbb{X}_f$ such that
\[
\omega(\mathbb{X}_f,Y)= \der f (Y) = Yf
\]
for any vector field $Y$. Since $\der \omega =0$, we have
\[
\mathcal{L}_X \omega = \der \iota_X \omega + \iota_X \der \omega = \der \iota_X \omega
\] 
where $\mathcal{L}$ denotes the Lie derivative. Hence a vector field $X$ preserves $\omega$ if and only if the induced 1-form $\iota_X \omega=\omega(X,-)$ is closed. In particular,  $\mathcal{L}_{\mathbb{X}_f}\omega=0$. It follows that the flow $\Psi^t$ associated to the vector field $\mathbb{X}_f$ is a symplectomorphism for each $t\in\R$ whenever $\Psi^t$ is defined. 

Suppose that we have a symplectic manifold $(M, \omega)$. By defining the Poisson bracket of smooth functions $f,g\in C^\infty(M)$ by $\{f,g\} = \omega(\mathbb{X}_f, \mathbb{X}_g)$, we can turn $M$ into a Poisson manifold.

Let $x\in M^{2n}$ be any point of a symplectic manifold. \emph{Darboux's theorem} states that there is a coordinate neighborhood $(U, (f_1, \cdots, f_n,g_1, \cdots, g_n))$ of $x$ such that $\omega|_U = \sum_{i=1} ^n  \der f_i \wedge \der g_i$. Such coordinates are called (local) \emph{Darboux coordinates}. Global Darboux coordinates are global coordinates 
\[
(f_1, \cdots, f_n,g_1, \cdots, g_n):M\to \R^{2n}
\]
of $M$ where $\omega$ can be expressed as $\omega= \sum_{i=1} ^{n} \der f_i \wedge \der g_i$. 

There is a particularly important symplectic manifold which arises naturally from any manifold. Let $M$ be any smooth $n$-manifold. The cotangent bundle $p:T^*M\to M$ has a canonical 1-form $\lambda_{\text{can}}$ which is characterized by the following property: $\sigma^* \lambda_{\text{can}} = \sigma$ for every 1-form $\sigma$. If $(U, (x_1, \cdots, x_n))$ is a local coordinate chart of $M$, then there are natural coordinates $y_i$ that parametrize each $T^* _q M$, $q\in M$, with respect to an ordered basis $\{\der x_1, \cdots, \der x_n\}$. Then we observe that $(p^{-1}(U),(x_1,\cdots, x_n, y_1, \cdots, y_n))$ is a local coordinate chart of $T^* M$.  With respect to this coordinates, $\lambda_{\text{can}}$ can be written as $\sum_{i=1} ^n y_i \der x_i$. Define the 2-form $\omega_{\text{can}}$ by $\omega_{\text{can}} = - \der \lambda_{\text{can}}$. Then $(T^*M , \omega_{\text{can}})$ is a symplectic manifold of dimension $2n$. Observe that that the Hamiltonian flow on $p^{-1}(U)$ associated to  each coordinate function $x_i$ is linear. 

\subsection{Marsden-Weinstein quotient}
Let $(M, \omega)$ be a symplectic manifold. Suppose that a Lie group $K$ acts on $M$ as symplectomorphisms. This action is called \emph{weakly Hamiltonian} if for each $X\in \mathfrak{k}$, its fundamental vector field $\xi_X$ is a Hamiltonian vector field. That is, for each $X\in \mathfrak{k}$, there is a unique smooth function $H_X$ such that $\iota_{\xi_X}\omega = \der H_X$. A weakly Hamiltonian  action of $K$ on $M$  is \emph{Hamiltonian} if the rule $X\mapsto H_X$ is a Lie algebra homomorphism, i.e., $H_{[X,Y]} = \{H_X , H_Y\}$ for all $X, Y \in \mathfrak{k}$. The obstruction for a weakly Hamiltonian action to be  Hamiltonian  stays in the Lie algebra cohomology $H^2(\mathfrak{k}; \R)$. In particular, if $H^2(\mathfrak{k};\R)=0$, every weakly Hamiltonian action of $K$ becomes Hamiltonian. 

Suppose that we have a Hamiltonian action by a Lie group $K$. For each $x\in M$  there is a unique element $\mu(x)\in \mathfrak{k}^*$ such that  $H_X (x) = \langle \mu(x) , X \rangle$ for all $X\in \mathfrak{k}$ where $\langle\cdot, \cdot\rangle$ is the canonical pairing between $\mathfrak{k}^*$ and $\mathfrak{k}$. The map $\mu: x\mapsto \mu(x)$ so defined is called the \emph{moment map}. 

If the action is Hamiltonian then $\mu$ is $K$-equivariant where $\mathfrak{k}^*$ is equipped with the coadjoint action. Observe that if $z\in \mathfrak{k}^*$ is a regular value of $\mu$ and is a fixed point of a coadjoint action,  then $\mu^{-1}(z)$ is an  invariant coisotropic submanifold. For each $x\in \mu^{-1}(z)$, the symplectic complement $T_x \mu^{-1}(z)^\omega$ is precisely the tangent space of the orbit space $K\cdot x$. Therefore we can hope that a new symplectic manifold may be constructed by collapsing this `bad' directions. That is the ideal of \emph{symplectic reduction} which we state as follow:

\begin{theorem}[Symplectic reduction or Marsden-Weinstein quotient]\label{MWq} Let $(M, \omega)$ be a symplectic manifold with a Hamiltonian action of a Lie group $K$. Let $\mu$ be the  moment map.  Suppose that $z\in \mathfrak{k}^*$ is a  fixed point of the coadjoint action and that it is a regular value of $\mu$.  If, in addition, $K$ acts properly and freely on $\mu^{-1}(z)$, then the quotient 
\[
\mu^{-1}(z) /K
\]
is a smooth manifold and carries the canonical symplectic structure  $\widetilde{\omega}$ which is uniquely determined by the property $\omega|_{\mu^{-1}(z)} = (q^* \widetilde{\omega})|_{\mu^{-1}(z)}$ where  $q:\mu^{-1}(z) \to \mu^{-1}(z) /K$ is the quotient map. 
\end{theorem}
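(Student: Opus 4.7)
The plan is to implement the standard linear-algebraic analysis of the level set $\mu^{-1}(z)$ and show that the directions tangent to $K$-orbits are precisely the null directions of $\omega$ restricted to that level set, so that $\omega$ descends unambiguously to a symplectic form on the quotient. First I would verify that $\mu^{-1}(z)$ is a smooth embedded submanifold: since $z$ is a regular value of $\mu$, the preimage is smooth of codimension $\dim K$, with tangent space $T_x \mu^{-1}(z) = \ker (d\mu)_x$ at each $x$. Next, because $z$ is a fixed point of the coadjoint action and $\mu$ is $K$-equivariant, the level set $\mu^{-1}(z)$ is $K$-invariant, so the hypothesis that $K$ acts properly and freely on $\mu^{-1}(z)$ guarantees, by the standard quotient theorem for Lie group actions, that $\mu^{-1}(z)/K$ carries a unique smooth structure for which $q:\mu^{-1}(z) \to \mu^{-1}(z)/K$ is a smooth submersion.

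The central computation is the identification of the symplectic orthogonal of the orbit with the tangent space of the level set. By the definition of the moment map, for every $X \in \mathfrak{k}$ and every $Y \in T_x M$ we have
\[
\omega_x(\xi_X(x), Y) = \der H_X(Y) = \langle (\der\mu)_x(Y), X\rangle.
\]
Thus $Y \in T_x\mu^{-1}(z) = \ker(\der\mu)_x$ if and only if $\omega_x(\xi_X(x), Y) = 0$ for every $X \in \mathfrak{k}$; that is, $T_x\mu^{-1}(z) = T_x(K\cdot x)^{\omega}$. Since $z$ is coadjoint-fixed, $T_x(K\cdot x)$ already lies in $T_x\mu^{-1}(z)$, so the orbit tangent space is an isotropic subspace sitting inside its own symplectic complement, and it is precisely the kernel of the pullback $\omega|_{\mu^{-1}(z)}$. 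Consequently the two-form $\omega|_{\mu^{-1}(z)}$ is basic with respect to the submersion $q$: for any tangent vector $\xi_X$ to an orbit we have $\iota_{\xi_X}\omega|_{\mu^{-1}(z)} = 0$, and $\mathcal{L}_{\xi_X}\omega|_{\mu^{-1}(z)} = 0$ by $K$-invariance.

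I would then invoke the standard descent lemma for basic forms on principal bundles to obtain a unique smooth two-form $\widetilde\omega$ on $\mu^{-1}(z)/K$ with $q^*\widetilde\omega = \omega|_{\mu^{-1}(z)}$, which also determines it uniquely. Closedness is automatic since $q$ is a surjective submersion and $\omega|_{\mu^{-1}(z)}$ is closed. Non-degeneracy follows from the identification $T_x(K\cdot x) = \ker(\omega|_{\mu^{-1}(z)})_x$: a tangent vector $[Y] \in T_{q(x)}(\mu^{-1}(z)/K)$ satisfying $\widetilde\omega([Y], -) = 0$ lifts to some $Y \in T_x\mu^{-1}(z)$ with $\omega(Y, Z) = 0$ for all $Z \in T_x\mu^{-1}(z) = T_x(K\cdot x)^\omega$, forcing $Y \in T_x(K\cdot x)$ and hence $[Y] = 0$.

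The main obstacle I anticipate is the descent step, namely justifying that a $K$-basic, $K$-invariant closed form on the total space of a principal $K$-bundle genuinely descends to a smooth form on the base; this is standard but requires the properness and freeness hypotheses to furnish local trivializations, and it is precisely where the assumption that $K$ acts properly and freely on $\mu^{-1}(z)$ is used in an essential way. Once this is granted, the remainder is linear algebra at each point, as outlined above.
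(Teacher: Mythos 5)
The paper does not supply a proof of this theorem; it is stated as background and the reader is referred to \cite{mcduff2017, marsden2007, da2001}. Your argument is the standard one and is correct: the moment-map identity $\omega(\xi_X,Y)=\langle \der\mu(Y),X\rangle$ gives $T_x\mu^{-1}(z)=\ker(\der\mu)_x=T_x(K\cdot x)^\omega$, which combined with $T_x(K\cdot x)\subset T_x\mu^{-1}(z)$ (from coadjoint-fixedness and equivariance) shows the orbit directions are exactly $\ker\bigl(\omega|_{\mu^{-1}(z)}\bigr)$, and the resulting basic, $K$-invariant closed form descends uniquely to a symplectic form on the base of the principal $K$-bundle $q$.
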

One can find more details about symplectic reduction for example in \cite{mcduff2017, marsden2007, da2001}.

\subsection{The Fox calculus and the Atiyah-Bott-Goldman symplectic form} Motivated by Atiyah-Bott \cite{atiyah1983}, Goldman \cite{goldman1984} gives an algebraic construction of the symplectic form on  $\overline{\Rep}_n (\pi_1(\Sigma))$. This symplectic form is now called the Atiyah-Bott-Goldman symplectic form which we denote by $\omega_G ^\Sigma$ or simply $\omega_G$ if the surface $\Sigma$ is understood from the context.  The following theorem,  one of the main result of free differential calculus by Fox \cite{fox1953}, is the key ingredient of Goldman's construction.

\begin{theorem}[Fox \cite{fox1953}]\label{fox}
Let $\Gamma$ be a free group on free generators $s_1, \cdots, s_n$.  Let $\Z\Gamma$ be the group ring. There is a collection of operators $\frac{\partial}{\partial s_i}: \Z \Gamma \to \Z \Gamma$, $i=1,2,\cdots, n$ having the following properties
	\begin{itemize}
		\item Given $x,y\in \Z \Gamma$, 
		\[
		\frac{\partial xy}{\partial s_i} = x \frac{\partial y}{\partial s_i} + \frac{\partial x}{\partial s_i } \epsilon(y)
		\]
		where $\epsilon(x)$ denotes the sum of coefficients of $x$. 
		\item $\displaystyle \frac{\partial s_i } {\partial s_j } = \begin{cases}1 & i=j \\ 0 &i\ne j \end{cases}$
		\item For any $x \in \Z\Gamma$, 
		\[
		x = \epsilon(x)1+ \sum_{i=1} ^n \frac{\partial x} {\partial s_i} (s_i-1).
		\] 
	\end{itemize}
\end{theorem}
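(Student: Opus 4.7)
My plan is to construct the Fox derivatives directly on $\Gamma$, extend $\Z$-linearly to $\Z\Gamma$, and verify the three stated properties afterwards. Conceptually, property (3) says that $x - \epsilon(x)\cdot 1$ lies in the augmentation ideal $\ker(\epsilon)$ and admits a unique expansion $\sum_i a_i(s_i - 1)$ with $a_i \in \Z\Gamma$, and the $a_i$ are by definition the Fox derivatives; so the theorem is morally the statement that $\{s_i - 1\}$ is a free $\Z\Gamma$-basis of $\ker(\epsilon)$ when $\Gamma$ is free. I would realize this explicitly by first setting
\[
\frac{\partial s_j}{\partial s_i} := \delta_{ij}, \qquad \frac{\partial s_j^{-1}}{\partial s_i} := -\delta_{ij}\, s_j^{-1},
\]
the second being forced by applying the desired product rule to $s_j s_j^{-1} = 1$ together with $\partial 1/\partial s_i = 0$. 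For a reduced word $w = x_1 \cdots x_k$ with each $x_j$ a generator or its inverse, I would then define
\[
\frac{\partial w}{\partial s_i} := \sum_{j=1}^{k} (x_1 \cdots x_{j-1})\,\frac{\partial x_j}{\partial s_i}.
\]

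The main obstacle is checking that this prescription descends to a well-defined function on $\Gamma$, i.e., is invariant under free reduction. Since the only relations in a free group are the trivial cancellations $s_i^{\pm 1} s_i^{\mp 1} = 1$, well-definedness reduces to verifying that inserting or deleting such a pair in a reduced word does not change $\partial w/\partial s_i$. A brief calculation shows that this is a consequence of the single identity
\[
\frac{\partial s_i}{\partial s_i} + s_i \cdot \frac{\partial s_i^{-1}}{\partial s_i} = 1 + s_i\cdot(-s_i^{-1}) = 0
\]
together with its mirror $\partial s_i^{-1}/\partial s_i + s_i^{-1}\cdot \partial s_i/\partial s_i = 0$. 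Once well-definedness is in hand, extending $\Z$-linearly to $\Z\Gamma$ produces the sought operators.

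Verifying the three properties is then largely formal. Property (2) is the definition. For property (1), the recursive formula above is precisely the iterated product rule on group elements (where $\epsilon(y) = 1$), and the stated version involving $\epsilon(y)$ extends by $\Z$-bilinearity of multiplication together with the fact that $\epsilon$ is a ring homomorphism. For property (3), I would induct on reduced-word length: the base cases $s_j - 1 = 1\cdot(s_j - 1)$ and $s_j^{-1} - 1 = -s_j^{-1}(s_j - 1)$ match the definitions on generators and their inverses, and for the inductive step $w = uv$ the decomposition
\[
w - 1 = u(v - 1) + (u - 1)
\]
combined with the product rule and the inductive hypothesis on $u$ and $v$ yields the expansion for $w$. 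Linear extension to $\Z\Gamma$ then gives property (3) on the whole group ring, completing the proof.
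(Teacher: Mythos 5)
The paper does not contain a proof of this theorem; it is quoted directly from Fox's 1953 paper and used as an external black box to build the fundamental class and Goldman's pairing. So there is no in-paper argument to compare against. Your construction is nonetheless the standard one and is correct: you define the derivatives on generator symbols, extend by the word formula, verify reduction-invariance via $\partial s_j/\partial s_i + s_j\,\partial s_j^{-1}/\partial s_i = 0$ (and its mirror), establish the product rule on group elements and then by $\Z$-bilinearity, and obtain the fundamental formula by induction on word length via the telescoping identity $uv - 1 = u(v-1) + (u-1)$, extending $\Z$-linearly to $\Z\Gamma$ at the end. One small clarification worth making explicit: invariance of the word formula under insertion and deletion of cancelling pairs is not only needed to define $\partial w/\partial s_i$ on $\Gamma$, but is also precisely what licenses applying the word formula to the (possibly non-reduced) concatenation $uv$ when proving the product rule; as written you invoke the product rule as a direct consequence of the recursive formula without spelling out that this uses reduction-invariance, though it is implicit in the order of your argument. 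With that minor point filled in, the proof is complete and matches what one finds in Fox's original treatment.
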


Theorem \ref{fox} allows us to construct a non-trivial homology class in $H_2(\pi_1(\Sigma), \Z)$. To do this we use the normalized bar resolution  $\mathbf{B}_\ast(\Gamma)$  over $\Gamma$. For our convenience, let us make the following convention: $[a\pm b| x] = [a|x]\pm [b|x] \in \mathbf{B}_2(\Gamma)$ for any $a,b,x\in \Gamma\setminus\{1\}$. Now, choose a presentation
\[
	\langle x_1, y_1, x_2, y_2, \cdots, x_g, y_g\,|\, R\rangle
\]
for $\Gamma=\pi_1(\Sigma)$ where $R=\prod_{i=1} ^g [x_i,y_i] $. Then 
\[
	[\Sigma] = \sum_{i=1} ^g \left. \left[\frac{\partial R}{\partial x_i}\right|   x_i\right] + \sum_{i=1} ^g \left. \left[ \frac{\partial R}{\partial y_i}\right| y_i\right] 
\]
represents a generator of $H_2(\Gamma; \Z)\approx \Z$. See Proposition 3.9 of Goldman \cite{goldman1984}.  We call $[\Sigma]$ a \emph{fundamental class} of $\Gamma$. If we use a different relation say $R' =hRh^{-1}$ for some $h\in \Gamma$, then the fundamental class with respect to the new relation $R'$ reads 
\[
 \sum_{i=1} ^g \left. \left[ h\frac{\partial R}{\partial x_i}\right| x_i\right] + \sum_{i=1} ^g \left.\left[ h \frac{\partial R}{\partial y_i}\right|  y_i\right]
\]
which is homologues to the original fundamental class $[\Sigma]$.

\begin{theorem}[Goldman \cite{goldman1984}]\label{goldmansymplectic} Let $\Sigma$ be a closed oriented hyperbolic surface.  Then $\overline{\Rep}_n(\Gamma)$ is a symplectic manifold with the symplectic form defined  at each point $[\rho]\in \overline{\Rep}_n (\Gamma)$ by
	\[
	\omega_G  ^\Sigma([\alpha],[\beta]) = \langle \alpha \smile \beta, [\Sigma] \rangle,
	\]
	where $[\alpha],[\beta]\in H^1(\Gamma, \mathfrak{g}_{\rho})$. 
\end{theorem}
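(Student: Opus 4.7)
The plan is to verify well-definedness, skew-symmetry, non-degeneracy, and closedness of $\omega_G^\Sigma$ in turn.

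First I would use the identification $T_{[\rho]}\overline{\Rep}_n(\Gamma)\cong H^1(\Gamma;\mathfrak{g}_\rho)$ recalled in the previous subsection, so that $[\alpha]$ and $[\beta]$ are genuine tangent vectors at $[\rho]$. The formula for $\omega_G^\Sigma$ is then the composition of three canonical operations: the cup product $H^1(\Gamma;\mathfrak{g}_\rho)\otimes H^1(\Gamma;\mathfrak{g}_\rho)\to H^2(\Gamma;\mathfrak{g}_\rho\otimes\mathfrak{g}_\rho)$; the coefficient map induced by the $\Ad$-invariant trace form $\mathfrak{g}\otimes\mathfrak{g}\to\R$, viewed as a $\Gamma$-module map into the trivial module $\R$; and evaluation against the fundamental class $[\Sigma]\in H_2(\Gamma;\Z)$. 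Well-definedness on cohomology classes is automatic from functoriality of the cup product, and independence of the defining relation $R$ used to build $[\Sigma]$ was already noted right after Theorem \ref{fox}, since different choices of $R$ give homologous fundamental classes.

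Next I would verify skew-symmetry and non-degeneracy. Skew-symmetry is a consequence of graded commutativity $\alpha\smile\beta=(-1)^{\deg\alpha\cdot\deg\beta}\beta\smile\alpha$ applied to degree-one classes, combined with the symmetry of the trace form. For non-degeneracy, Poincaré duality for the aspherical closed oriented surface $\Sigma=K(\Gamma,1)$ produces a perfect pairing $H^1(\Gamma;\mathfrak{g}_\rho)\times H^1(\Gamma;\mathfrak{g}_\rho^\ast)\to\R$ via cup product and evaluation on $[\Sigma]$. Lemma \ref{NoInvariantElement} tells us that $\mathfrak{g}_\rho$ carries no nontrivial $\Ad_\rho$-invariant vectors, so the non-degenerate $\Ad$-invariant trace form furnishes an isomorphism of $\Gamma$-modules $\mathfrak{g}_\rho\cong\mathfrak{g}_\rho^\ast$; under this isomorphism the Poincaré duality pairing becomes precisely $\omega_G^\Sigma$, and hence $\omega_G^\Sigma$ is non-degenerate.

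The main obstacle is closedness, $\der\omega_G^\Sigma=0$. The cleanest route is the Atiyah-Bott picture alluded to just before the theorem: realise $\overline{\Rep}_n(\Gamma)$ as a Marsden-Weinstein type quotient of the affine space of flat $\mathfrak{g}$-connections on a principal $G$-bundle over $\Sigma$ by the gauge action, equipped with the manifestly closed 2-form $(A,B)\mapsto\int_\Sigma \Tr(A\wedge B)$, and transfer closedness through the quotient via Theorem \ref{MWq}. A purely algebraic alternative is to choose smoothly varying cocycle representatives $\alpha_t\in Z^1(\Gamma;\mathfrak{g}_{\rho_t})$ along a path of representations, compute $\der\omega_G^\Sigma(X,Y,Z)$ directly using the Maurer--Cartan-type identity satisfied by first variations of representations, and observe that the resulting expression factors through a triple cup product landing in $H^3(\Gamma;\R)=0$, which forces it to vanish. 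I expect the algebraic route to be the delicate step, since it requires careful bookkeeping of how the cocycles representing a fixed tangent vector vary as $\rho$ moves in $\overline{\Rep}_n(\Gamma)$.
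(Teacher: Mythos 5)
The paper does not prove Theorem \ref{goldmansymplectic}; it is quoted from Goldman's 1984 paper and used as a black box, so there is nothing in the text to compare your argument against directly. Your outline is nevertheless the standard and correct route: identify tangent vectors with $H^1(\Gamma;\mathfrak{g}_\rho)$, realize $\omega_G^\Sigma$ as cup product followed by trace-contraction and evaluation on $[\Sigma]$, get well-definedness and independence of the relation $R$ from homology-level invariance of $[\Sigma]$, get skew-symmetry from graded commutativity plus symmetry of the trace form, and get non-degeneracy from Poincar\'e duality for the aspherical surface together with the trace-form isomorphism $\mathfrak{g}_\rho\cong\mathfrak{g}_\rho^\ast$. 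The Atiyah--Bott gauge-theoretic route you propose for closedness is essentially Goldman's own, and it is the cleanest way to see $\der\omega_G^\Sigma=0$.

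One caution about the ``purely algebraic alternative'' you sketch for closedness. The suggestion that $\der\omega_G^\Sigma(X,Y,Z)$ ``factors through a triple cup product landing in $H^3(\Gamma;\R)=0$'' does not go through as stated. The pairing $\omega_G^\Sigma$ is defined pointwise using the identification $T_{[\rho]}\overline{\Rep}_n(\Gamma)\cong H^1(\Gamma;\mathfrak{g}_\rho)$, and this identification varies with $\rho$ (the coefficient module $\mathfrak{g}_\rho$ itself changes). Consequently $\der\omega_G^\Sigma(X,Y,Z)$ is not a function of three cohomology classes in a single fixed cohomology group; it involves derivatives of cocycle representatives and of the twisted module structure along the direction of variation of $\rho$. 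The resulting expression is not simply a cup product $H^1\otimes H^1\otimes H^1\to H^3$. Goldman's proof, and Karshon's later algebraic proof, handle this point with a careful variational formula (Karshon exhibits $\omega$ as the pull-back of a form whose exterior derivative can be computed explicitly and shown to vanish). So if you pursue the algebraic alternative rather than the gauge-theoretic one, the bookkeeping you flag as delicate is in fact the entire content of the proof, and the $H^3=0$ observation alone will not close it.
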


Explicitly, in terms of normalized bar resolution,
\begin{equation}\label{explicitformula}
\langle \alpha \smile \beta, [\Sigma]\rangle = - \sum_{i=1} ^g \trace \alpha\left(\overline{ \frac{\partial R}{\partial x_i}}\right) \beta(x_i) -\sum_{i=1} ^g \trace \alpha\left(\overline{ \frac{\partial R}{\partial y_i}}\right) \beta(y_i)
\end{equation}
where $\overline{(\cdot)}: \R \Gamma \to \R \Gamma$ is the map induced from the inversion that sends an element $g$ of $\Gamma$ to $g^{-1}$.

Suppose that $\Sigma$ has a boundary component. Then $\overline{\Rep}_n (\pi_1(\Sigma))$ is no longer symplectic. However by controlling the boundary conjugacy classes, we get a foliation each leaf of which is a symplectic manifold. See Theorem 2.2.1 of Audin \cite{audin1997} for more details. To present the result, we need a relative version of a fundamental class.  Choose a path $\eta_i$ from a base point $p$ to a point in $\zeta_i$ in such a way that $z_i := \zeta_i ^{\eta_i}$  fits into a standard presentation 
\begin{equation}\label{presentationforcompact}
	\Gamma=\pi_1(\Sigma,p)= \langle x_1, y_1, x_2, y_2, \cdots, x_g, y_g, z_1, \cdots, z_b\,|\,R\rangle
\end{equation}
where  $R= \prod_{i=1} ^g [x_i,y_i] \prod_{j=1} ^b z_j$. 

\begin{lemma}\label{funclcpt}
 Let 
\[
	[\Sigma] = \sum_{i=1} ^g\left(\,\left.\left[ \frac{\partial R} {\partial x_i}\right| x_i\right] + \left[\left. \frac{\partial R}{\partial y_i} \right| y_i\right]\, \right) + \sum_{j=1} ^ b  \left.\left[ \frac{\partial R}{\partial z_j}\right| z_j\right]
\]
 be a (absolute) 2-chain in $\mathbf{B}_2(\Gamma)\otimes \Z$. There is  an auxiliary resolution  $(R_\ast, A^i _\ast)$ over the group system  $(\Gamma, \{\langle z_i \rangle\})$ with a chain equivalence $\mathbf{B}_\ast (\Gamma)\otimes \Z \to R_\ast \otimes \Z$ such that  the  image of $[\Sigma]$ under the map 
\[
	\mathbf{B}_2(\Gamma)\otimes \Z \to R_2 \otimes \Z \to (R_2/A_2) \otimes \Z
\]
represents a generator of $H_2(\Gamma, \{ \langle z_i\rangle\}; \Z)\approx \Z$.  We call $[\Sigma]$ a \emph{relative fundamental class} of $\Gamma$. 
\end{lemma}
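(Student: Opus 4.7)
The plan is to realize the auxiliary resolution topologically and then match the chain $[\Sigma]$ in the statement with the resulting cellular relative fundamental class.

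First, give $\Sigma$ a CW structure containing $\partial\Sigma$ as a subcomplex, modeled on (\ref{presentationforcompact}): one interior 0-cell $p$, 0-cells $p_j \in \zeta_j$, 1-cells realizing $x_i, y_i$ as loops at $p$, arcs $\eta_j$ from $p$ to $p_j$, and loops $\zeta_j$ at $p_j$, together with a single 2-cell attached along $\prod_i [x_i, y_i] \prod_j \eta_j \zeta_j \eta_j^{-1}$. Lifting to the universal cover $\widetilde{\Sigma}$ yields a free resolution $R_\ast$ of $\Z$ over $\Z\Gamma$, concentrated in degrees $\leq 2$, with $R_2 = \Z\Gamma \cdot e_R$. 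Each component of the preimage of $\zeta_j$ is a line stabilized by a conjugate of $\langle z_j\rangle$, so the cellular chains of $\widetilde{\partial\Sigma}$ form precisely $A_\ast = \bigoplus_j \Z\Gamma \otimes_{\Z\langle z_j\rangle} A^j_\ast$, with $A^j_\ast$ the standard length-one resolution. Being a subcomplex, $A_\ast \subset R_\ast$ is automatically a direct summand of graded $\Z\Gamma$-modules, so $(R_\ast, A^j_\ast)$ is a valid auxiliary resolution, and $H_2\bigl((R_\ast/A_\ast)\otimes_\Gamma \Z\bigr) \cong H_2(\Sigma,\partial\Sigma;\Z) \cong \Z$, generated by the class of $e_R$.

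Second, by the comparison theorem for projective resolutions, choose a chain equivalence $\phi:\mathbf{B}_\ast(\Gamma) \to R_\ast$ with $\phi_0([\,]) = 1$ and $\phi_1([g]) = \sum_s \tfrac{\partial g}{\partial s} e_s$. Applying $\partial_2[a|g] = a[g] - [ag] + [a]$, Fox's fundamental identity, and the normalization $[1]=0$, a direct computation in $\mathbf{B}_1(\Gamma) \otimes_\Gamma \Z$ gives
\[
\partial_2 [\Sigma] = \sum_j [z_j] - [R],
\]
using $\epsilon(\partial R/\partial x_i) = \epsilon(\partial R/\partial y_i) = 0$ and $\epsilon(\partial R/\partial z_j) = 1$. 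Applying $\phi$: one has $\phi_1([R]) = \sum_s \tfrac{\partial R}{\partial s} e_s = \partial_2(e_R)$ in $R_1$, which in $R_1 \otimes_\Gamma \Z$ reduces via the same exponent-sum data to $\sum_j e_{z_j} = \phi_1(\sum_j [z_j])$. Hence $\partial_2\phi_2([\Sigma]) = 0$ in $R_1 \otimes \Z$, so $\phi_2([\Sigma])$ is a 2-cycle there and its image in $(R_2/A_2) \otimes \Z$ is a cycle.

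The remaining task is to verify that this cycle generates. Since $R_2 = \Z\Gamma\cdot e_R$ is free of rank one, $\phi_2([\Sigma]) = c\cdot e_R$ for some $c \in \Z\Gamma$, and its class in $(R_2/A_2)\otimes\Z$ equals $\epsilon(c) \cdot [e_R]$. I expect this augmentation check to be the main obstacle: a direct computation requires pinning down $\phi_2$ explicitly (e.g.\ by the method of acyclic models) and executing nontrivial Fox-calculus bookkeeping to keep track of the interior 0-cell versus the boundary 0-cells $p_j$. A cleaner route is to embed $\Sigma$ into its double $\widehat{\Sigma}$ and invoke Goldman's closed-surface computation (\cite{goldman1984}, Proposition 3.9) for $[\widehat{\Sigma}]$: under the doubling decomposition $[\widehat{\Sigma}]$ splits as two copies of $[\Sigma]$, and naturality of the fundamental class in relative homology forces $\epsilon(c) = \pm 1$, concluding the proof.
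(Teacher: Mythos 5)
Your general strategy — realize the auxiliary resolution as cellular chains of the universal cover for a CW structure on $\Sigma$ with $\partial\Sigma$ as a subcomplex, and compare with the bar resolution through a chain equivalence between $K(\Gamma,1)$'s — is the same as the paper's. The paper's version of the argument (Lemma~\ref{relativefundclassappendix} in the appendix) is in fact considerably terser: it simply asserts that the chain equivalence induced by a homotopy equivalence $X\simeq\Sigma$ sends $[\Sigma]$ to the 2-cell, with no Fox-calculus verification. Your more computational approach via the comparison theorem is legitimate and, properly executed, actually closes the gap the paper leaves open. Unfortunately the execution slips at exactly the step that would have clinched the argument.

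The error is the line ``$\phi_1([R]) = \sum_s \frac{\partial R}{\partial s}\, e_s = \partial_2(e_R)$.'' In $\mathbf{B}_\ast(\Gamma)$, with $\Gamma = \pi_1(\Sigma)$ and $R$ the defining relator, $R=1$ in $\Gamma$, so $[R]=[1]=0$ by the normalization; hence $\phi_1([R]) = 0$, \emph{not} $\partial_2(e_R)$. (The formal expression $\sum_s \frac{\partial R}{\partial s}\,e_s$ lives in the free-group free resolution; it equals $\phi_1$ applied to a bar-chain for $F$, not for $\Gamma$. Also note that in your CW structure the 1-cells are $\{x_i,y_i,\eta_j,\zeta_j\}$, not $\{x_i,y_i,z_j\}$, so the literal formula $\phi_1([g])=\sum_s\frac{\partial g}{\partial s}e_s$ does not even type-check; you should define $\phi_1([z_j])$ as the lift of the loop $\eta_j\zeta_j\eta_j^{-1}$, which after $-\otimes_\Gamma\Z$ becomes $e_{\zeta_j}$.) With the correction $\phi_1([R])=0$, your conclusion ``$\partial_2\phi_2([\Sigma]) = 0$ in $R_1\otimes\Z$'' is false; the correct identity is $\partial_2\phi_2([\Sigma])\otimes\Z = \phi_1(\partial_2[\Sigma])\otimes\Z = \phi_1\bigl(\sum_j[z_j]\bigr)\otimes\Z = \sum_j e_{\zeta_j}$, which is a nonzero chain lying in $A_1\otimes\Z$, so the image in $(R_2/A_2)\otimes\Z$ is still a cycle.

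The irony is that once you fix this, the augmentation check you flag as the ``main obstacle'' evaporates, and you do not need the double or any acyclic-models bookkeeping. Write $\phi_2([\Sigma]) = c\cdot e_R$ with $c\in\Z\Gamma$ as you do; then $\partial_2\phi_2([\Sigma])\otimes\Z = \epsilon(c)\,\partial_2(e_R)\otimes\Z = \epsilon(c)\sum_j e_{\zeta_j}$, since the attaching word $\prod_i[x_i,y_i]\prod_j\eta_j\zeta_j\eta_j^{-1}$ reduces to $\sum_j e_{\zeta_j}$ after tensoring. Comparing with $\phi_1(\partial_2[\Sigma])\otimes\Z = \sum_j e_{\zeta_j}$ and using that the $e_{\zeta_j}$ are part of a $\Z$-basis of $R_1\otimes\Z$ (and $b\geq 1$), you read off $\epsilon(c)=1$. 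Since $(R_2/A_2)\otimes\Z\cong\Z$ is generated by $\bar e_R$ and there are no 3-cells, the class is a generator, finishing the proof without ever needing $\phi_2$ itself. The proposed detour through the Hitchin double is both unnecessary and under-justified: the claim that ``$[\widehat{\Sigma}]$ splits as two copies of $[\Sigma]$'' is at best a chain-level identity with correction terms that you would still have to track, and appealing to the paper's later decomposition machinery at this point risks a logical loop.
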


We  prove Lemma \ref{funclcpt} in the appendix.

Note that if $\Sigma$ is not closed,  $H_2(\Gamma;\Z)=0$ and that  $[\Sigma]$ itself is not even a 2-cycle in the absolute chain complex $\mathbf{B}_\ast (\Gamma) \otimes \Z$. 

\begin{theorem}[Guruprasad-Huebschmann-Jeffrey-Weinstein \cite{guruprasad1997}, H. Kim \cite{kim1999}]\label{Ksymp}	Let $\Sigma$ be a compact oriented hyperbolic surface possibly with boundary. Let $\mathscr{B}$ be a boundary frame. Fix a presentation of $\pi_1(\Sigma)$ as in (\ref{presentationforcompact}) and a representation $\rho$ such that $[\rho]\in \overline{\Rep}_{n} ^{\mathscr{B}}(\Gamma)$. Let $[\alpha],[\beta]\in H^1_{\mathrm{par}} (\Gamma,\{\langle z_i \rangle\} ; \mathfrak{g}_{\rho})$. We choose, for each boundary component $z_i$, an element $X_i \in \Hom_\Gamma(\mathbf{B}_0(\langle z_i\rangle),\mathfrak{g}) \approx \mathfrak{g}$ such that  $\iota_{\langle z_i\rangle} ^{\#} \alpha = \der_{\langle z_i\rangle} X_i$.  Define $\omega_{K} ^\Sigma$ to be
	\[
	\omega_K ^{\Sigma}([\alpha], [\beta]) = \langle \alpha \smile \beta, [\Sigma]\rangle - \sum_{i=1} ^b \Tr X_i \beta(z_i)
	\]
where $ \langle \alpha \smile \beta, [\Sigma]\rangle$ is defined as in (\ref{explicitformula}). Then $\omega_K$ is a closed, non-degenerate 2-form and $(\overline{\Rep}_n ^{\mathscr{B}}(\Gamma),\omega_K^\Sigma)$ becomes a symplectic manifold.
\end{theorem}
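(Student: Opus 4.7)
My plan is to verify in turn that $\omega_K^\Sigma$ is (a) well-defined on parabolic cohomology classes, (b) closed, and (c) non-degenerate. Parts (a) and (c) reduce to cohomological algebra, while (b) carries the substantive analytic content.

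\textbf{Well-definedness.} Three choices must be addressed. First, if $X_i'$ is another primitive satisfying $\der_{\langle z_i\rangle}X_i' = \iota^\#_{\langle z_i\rangle}\alpha$, then $Z_i := X_i-X_i'$ satisfies $\Ad_{\rho(z_i)}Z_i = Z_i$. Since $[\beta]$ is also parabolic, write $\beta(z_i) = \Ad_{\rho(z_i)}Y_i - Y_i$ for some $Y_i\in\mathfrak{g}$; then $\Ad$-invariance of the trace together with $\Ad_{\rho(z_i)^{-1}}Z_i = Z_i$ force $\Tr Z_i\beta(z_i)=0$, so the correction term is independent of the primitive. Second, replacing $\alpha$ by a cohomologous cocycle $\alpha+\der_\Gamma\xi$ alters both $\langle\alpha\smile\beta,[\Sigma]\rangle$ and the allowable choice of $X_i$ (by $X_i\mapsto X_i+\xi$) in amounts that cancel upon evaluation against $\beta$. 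Third, Lemma \ref{funclcpt} guarantees that $[\Sigma]$ gives a well-defined generator of $H_2(\Gamma,\{\langle z_i\rangle\};\Z)$, so the interior pairing depends only on the chain-equivalence class of the relative fundamental class.

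\textbf{Closedness.} I would interpret $\omega_K^\Sigma$ as arising from symplectic reduction. On an ambient smooth stratum of $\overline{\Rep}_n(\Gamma)$ where the $\zeta_i$-holonomies are allowed to vary, the naive cup-product formula produces a two-form in the spirit of Theorem \ref{goldmansymplectic}; restricting to the level set $\{\rho(\zeta_i)\in B_i\}$ corresponds to fixing the moment map of the simultaneous boundary $G^{b}$ conjugation action, and the correction $-\sum_i\Tr X_i\beta(z_i)$ is exactly the pull-back of the Kirillov-Kostant-Souriau form on the product of coadjoint orbits $\prod_i B_i$. Alternatively, one may differentiate $\omega_K^\Sigma$ directly along a one-parameter deformation $\rho_t$ and use the cocycle identity together with the relative-cycle structure of $[\Sigma]$ from Lemma \ref{funclcpt}: the extra boundary contributions produced when $\partial[\Sigma]$ lands in the subcomplex $A_\ast$ are precisely cancelled by the variation of $\sum_i\Tr X_i\beta(z_i)$ when the $X_i$ are chosen to depend smoothly on the parameter. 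Either route yields $\der\omega_K^\Sigma=0$.

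\textbf{Non-degeneracy.} Poincar\'e-Lefschetz duality for the pair $(\Sigma,\partial\Sigma)$ with coefficients in the local system $\mathfrak{g}_\rho$, with pairing given by the trace form on $\mathfrak{g}$, produces a perfect pairing between $H^1(\Gamma;\mathfrak{g}_\rho)$ and $H^1(\Gamma,\{\langle z_i\rangle\};\mathfrak{g}_\rho)$. The boundary correction term converts this mixed duality into a pairing of $H^1_{\mathrm{par}}$ with itself: concretely, the long exact sequence relating the parabolic cohomology to the absolute and relative cohomologies, combined with Lemma \ref{NoInvariantElement} to eliminate $\Ad$-invariant elements of $\mathfrak{g}$ under the purely loxodromic boundary holonomies, identifies $H^1_{\mathrm{par}}$ with its own dual under $\omega_K^\Sigma$, so the kernel is trivial.

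\textbf{Main obstacle.} The hard part is closedness: one must control the dependence of the primitives $X_i$ on the base point $\rho$ and verify the cancellation of boundary contributions when passing between local primitives and the global cocycle identity. This is the step where H. Kim \cite{kim1999} and Guruprasad-Huebschmann-Jeffrey-Weinstein \cite{guruprasad1997} concentrate their effort, and any rigorous treatment must carry out this bookkeeping with care.
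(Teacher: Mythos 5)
Your proposal is correct and follows essentially the same structure as the paper's treatment: well-definedness split into the three choices (primitive $X_i$, cocycle representative, relation $R$), non-degeneracy via duality, and closedness as the genuinely hard step. The paper likewise handles well-definedness by the same trace computations, cites H.~Kim and Guruprasad--Huebschmann--Jeffrey--Weinstein for non-degeneracy and closedness rather than proving them, and your symplectic-reduction heuristic for closedness matches the approach of those references.
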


\begin{remark} Unlike Theorem \ref{goldmansymplectic}, the operation $\langle \alpha \smile \beta, [\Sigma]\rangle$ in Theorem \ref{Ksymp} is defined only on the chain level and cannot descend to cohoomlogy.  In fact, H. Kim \cite{kim1999} computes that for any $X\in C^0(\Gamma;\mathfrak{g})$ and $\beta \in Z^1(\Gamma; \mathfrak{g})$, 
\[
\langle \der_\Gamma X \smile \beta, [\Sigma]\rangle = \sum_{i=1} ^b \Tr X \beta(z_i)\ne 0.
\]
Nevertheless, the whole expression $\omega_K ^\Sigma ([\alpha], [\beta])$ is a nice cohomological operation. 
\end{remark}

\begin{remark}
	The formula given in Lemma 8.4 of Guruprasad-Huebschmann-Jeffrey-Weinstein \cite{guruprasad1997} is incorrect. Since (with notation in \cite{guruprasad1997}) $\langle c, u\smile v\rangle$ is not antisymmetric, we have to replace $\langle c, u\smile v\rangle$ with $\frac{1}{2}(\langle c, u\smile v\rangle-\langle c, v\smile u\rangle)$. Then the formula of Lemma 8.4 of Guruprasad-Huebschmann-Jeffrey-Weinstein \cite{guruprasad1997} and Theorem 5.6 of H. Kim \cite{kim1999} are identical. 
\end{remark}

We state the relevant lemmas to prove Theorem \ref{Ksymp}.

\begin{lemma}[H. Kim \cite{kim1999}] Suppose that $X_i'\in \mathfrak{g}$ is another element such that  $\iota_{\langle z_i\rangle} ^{\#} \alpha = \der_{\langle z_i\rangle} X_i'$, for $i=1,2,\cdots,b$. Then 
\[
 \langle \alpha \smile \beta, [\Sigma]\rangle - \sum_{i=1} ^b \Tr X_i \beta(z_i)= \langle \alpha \smile \beta, [\Sigma]\rangle - \sum_{i=1} ^b \Tr X'_i \beta(z_i).
\]
In particular, $\omega_K ^\Sigma$ is well-defined in the chain level. 
\end{lemma}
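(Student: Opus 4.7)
The plan is to reduce the claimed equality to showing that the difference
\[
\sum_{i=1}^{b} \Tr (X_i' - X_i)\, \beta(z_i) = 0.
\]
Setting $Y_i = X_i' - X_i \in \mathfrak{g}$, the task is then purely algebraic: prove that $\Tr Y_i \beta(z_i) = 0$ for each boundary component $z_i$ under the hypotheses on $X_i, X_i'$ and $\beta$.

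First I would extract the information carried by $Y_i$. By hypothesis both $\der_{\langle z_i\rangle} X_i$ and $\der_{\langle z_i\rangle} X_i'$ equal $\iota_{\langle z_i\rangle}^{\#}\alpha$, so $\der_{\langle z_i\rangle} Y_i = 0$. Viewing $Y_i$ as an element of $\Hom_{\langle z_i\rangle}(\mathbf{B}_0(\langle z_i\rangle), \mathfrak{g}) \cong \mathfrak{g}$ and computing the coboundary in the normalized bar resolution gives $(\der_{\langle z_i\rangle} Y_i)([z_i]) = z_i \cdot Y_i - Y_i = \Ad_{\rho(z_i)} Y_i - Y_i$. Since $\langle z_i\rangle$ is cyclic, $\der_{\langle z_i\rangle} Y_i = 0$ is equivalent to
\[
\Ad_{\rho(z_i)} Y_i = Y_i.
\]

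Next I would use the parabolicity of $\beta$. Because $[\beta] \in H^1_{\mathrm{par}}(\Gamma, \{\langle z_i\rangle\}; \mathfrak{g})$, the restriction $\iota_{\langle z_i\rangle}^{\#}\beta$ is a coboundary, hence there exists $Z_i \in \mathfrak{g}$ with $\beta(z_i) = \Ad_{\rho(z_i)} Z_i - Z_i$. Combining this with the $\Ad_{\rho(z_i)}$-invariance of the trace form $(U,V) \mapsto \Tr(UV)$ on $\mathfrak{g}$, and with the $\Ad_{\rho(z_i)}$-invariance of $Y_i$, I would compute
\[
\Tr Y_i \beta(z_i) = \Tr\bigl(Y_i \Ad_{\rho(z_i)} Z_i\bigr) - \Tr (Y_i Z_i) = \Tr\bigl((\Ad_{\rho(z_i)^{-1}} Y_i) Z_i\bigr) - \Tr(Y_i Z_i) = 0.
\]
Summing over $i$ yields the desired equality, which says precisely that $\omega_K^{\Sigma}([\alpha],[\beta])$ is independent of the chosen $X_i$.

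I do not expect any serious obstacle: once the coefficients $Y_i$ are identified as $\Ad_{\rho(z_i)}$-invariant elements of $\mathfrak{g}$ and $\beta(z_i)$ is written as a coboundary, the conclusion is a one-line consequence of the $\Ad$-invariance of the trace form. The only mild point to be careful about is the identification of $\der_{\langle z_i\rangle}$ in the bar resolution, which one verifies directly from the definition.
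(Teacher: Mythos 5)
Your proof is correct and takes essentially the same route as the paper: you identify $Y_i = X_i' - X_i$ as an $\Ad_{\rho(z_i)}$-invariant element, write $\beta(z_i)$ as a coboundary $\Ad_{\rho(z_i)} Z_i - Z_i$ using parabolicity, and conclude by $\Ad$-invariance of the trace form. The paper's argument is the same modulo the sign of $Y_i$.
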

\begin{proof}
This lemma is also proven in H. Kim \cite{kim1999} but is not stated in an explicit form. So we recall the proof here. 

Suppose that  there are two elements $X_i$ and $X_i '$ of $\mathfrak{g}=C^0(\langle z_i\rangle , \mathfrak{g})$ such that $\iota^\# _{\langle z_i\rangle} \alpha = \der_{\langle z_i\rangle} X_i=\der_{\langle z_i\rangle} X_i'$. Let $Y_i = X_i - X_i'$. Then we have  $\der_{\langle z_i\rangle} Y_i =0$. That is $z_i \cdot Y_i - Y_i =0$.   Since $\beta\in Z^1_{\mathrm{par}}(\Gamma,\{\langle z_i\rangle \};\mathfrak{g})$, we can find $Z_i\in \mathfrak{g}$ such that $\beta(z_i) = z_i \cdot Z_i -Z_i$. Then we compute
\begin{align*}
\sum_{i=1} ^b\Tr  (X_i - X_i') \beta(z_i) & = \sum_{i=1} ^b\Tr  Y_i \beta(z_i)\\
&= \sum_{i=1} ^b \Tr Y_i (z_i \cdot Z_i -Z_i) \\
&= \sum _{i=1} ^b \Tr (z_i ^{-1} \cdot Y_i -Y_i )Z_i = 0.
\end{align*}
Therefore, $\omega_K ^\Sigma$ is independent of choice of $X_i$. 
\end{proof}
\begin{lemma} Suppose that $\alpha=  \der_{\Gamma} X$ for some  $X \in C^0(\Gamma;\mathfrak{g})$.  For any $\beta\in Z^1_{\mathrm{par}} (\Gamma, \{\langle z_i \rangle\};\mathfrak{g})$, we have
\[
 \langle\alpha \smile \beta, [\Sigma]\rangle - \sum_{i=1} ^b \Tr X_i \beta(z_i)= 0.
\]
Likewise, for any $\alpha\in Z^1_{\mathrm{par}} (\Gamma, \{\langle z_i \rangle\};\mathfrak{g})$, and $\beta=  \der_{\Gamma} X$, 
\[
 \langle \alpha  \smile \beta, [\Sigma]\rangle - \sum_{i=1} ^b \Tr X_i \beta (z_i)= 0.
\]
\end{lemma}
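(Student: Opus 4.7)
These two identities together guarantee that $\omega_K^\Sigma$ descends from the cochain level to cohomology, which is the last remaining ingredient needed for Theorem~\ref{Ksymp}. Both follow by reducing to H.~Kim's formula recorded in the preceding Remark, the first directly and the second by a Leibniz rule manoeuvre.

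For the first identity, the plan is to observe that the hypothesis $\alpha=\der_\Gamma X$ itself supplies canonical primitives on every boundary subgroup. Under the identification $C^0(\langle z_i\rangle;\mathfrak{g})\cong\mathfrak{g}$ one checks that $\iota_{\langle z_i\rangle}^\#\,\der_\Gamma X=\der_{\langle z_i\rangle}X$, so we may take $X_i:=X$ for every $i$. By the preceding well-definedness lemma this is as valid a choice as any other. Substituting into the definition of $\omega_K^\Sigma$ and applying the Remark's identity $\langle \der_\Gamma X\smile\beta,[\Sigma]\rangle=\sum_i\Tr X\,\beta(z_i)$, the two sums cancel term by term and the expression collapses to $0$.

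For the second identity the two slots are swapped. The cup product at the normalized bar level is not symmetric, so the Remark does not apply verbatim. My plan is instead to apply the Leibniz rule: for the $1$-cocycle $\alpha$ and the $0$-cochain $X$ one has $\der(\alpha\smile X)=\der\alpha\smile X-\alpha\smile\der_\Gamma X=-\alpha\smile\der_\Gamma X$, so that
\[
\langle\alpha\smile\der_\Gamma X,[\Sigma]\rangle=-\langle\der(\alpha\smile X),[\Sigma]\rangle.
\]
By Kronecker duality, together with Lemma~\ref{funclcpt} saying that $[\Sigma]$ is a relative cycle whose absolute boundary sits in the subcomplex generated by the loops $\langle z_i\rangle$, the right-hand side collapses into a sum over boundary subgroups. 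The parabolic hypothesis on $\alpha$, namely $\alpha(z_i)=z_i\acts X_i-X_i$, then rewrites that sum as $\sum_i\Tr X_i\,(\der_\Gamma X)(z_i)$, producing exactly the cancellation required.

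The main obstacle I anticipate is pinning down the correct relative extension of formula~(\ref{explicitformula}) and keeping the signs straight: (\ref{explicitformula}) is stated only for closed surfaces, whereas the relative fundamental class of Lemma~\ref{funclcpt} contributes additional $[\partial R/\partial z_j\,|\,z_j]$ terms that must interact consistently with the $0$-cochain $X$ through the Leibniz/Kronecker step. If the sign bookkeeping proves delicate, a self-contained fallback is to mimic H.~Kim's original Fox-calculus derivation of the Remark with the roles of $\alpha$ and $\der_\Gamma X$ swapped, exploiting the Fox identity $R-1=\sum\tfrac{\partial R}{\partial x_i}(x_i-1)+\sum\tfrac{\partial R}{\partial y_i}(y_i-1)+\sum_j\tfrac{\partial R}{\partial z_j}(z_j-1)$ and invoking the parabolic condition on $\alpha$ at the very end.
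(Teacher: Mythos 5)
The paper does not prove this lemma itself; it is simply cited to Proposition 5.4 of H.~Kim, so there is no internal argument to compare yours against. Evaluating your attempt on its own merits:

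Your first half is correct and complete. The identification $\iota_{\langle z_i\rangle}^\#\der_\Gamma X=\der_{\langle z_i\rangle}X$ lets you choose $X_i=X$ for every $i$, the preceding well-definedness lemma guarantees this choice is permissible, and substituting the Remark's identity $\langle\der_\Gamma X\smile\beta,[\Sigma]\rangle=\sum_i\Tr X\,\beta(z_i)$ collapses everything to zero. That is the intended, self-contained argument.

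Your second half, however, has a real gap in the final step. After Leibniz and Kronecker duality you assert that the resulting boundary sum ``then rewrites as $\sum_i\Tr X_i\,(\der_\Gamma X)(z_i)$, producing exactly the cancellation required.'' This is not what happens: the two expressions are not equal term-by-term, and the parabolic substitution $\alpha(z_i)=z_i\acts X_i-X_i$ alone does not close the identity. Try the toy case $g=0$, $b=2$, $R=z_1z_2$: the explicit formula (extended to include the $z_j$-terms) gives $\langle\alpha\smile\der_\Gamma X,[\Sigma]\rangle=-\Tr\,\alpha(z_1)(z_1\acts X-X)$, and after substituting $\alpha(z_1)=z_1\acts X_1-X_1$ and comparing with $\Tr X_1(z_1\acts X-X)+\Tr X_2(z_1^{-1}\acts X-X)$ you are left with a residual term $\Tr(X_1-X_2)(z_1^{-1}\acts X-X)$. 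This vanishes only because $X_1-X_2$ lies in $\ker(\Ad_{\rho(z_1)^{-1}}-\mathrm{Id})$, which follows from the cocycle condition on $\alpha$ applied to $z_1z_2=1$. In other words the relation $R=1$ ties the different primitives $X_i$ together, and this constraint is essential to the cancellation; your sketch never invokes it. The fallback you propose (direct Fox-calculus computation using $R-1=\sum\tfrac{\partial R}{\partial s_i}(s_i-1)$ and the cocycle identities) is the more reliable route precisely because the relation gets used at the final step, so I would carry that out in full rather than rely on the Kronecker-duality shortcut.
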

\begin{proof}
See Proposition 5.4 of H. Kim \cite{kim1999}. 
\end{proof}
In other words,  $\omega^\Sigma _K$ is well-defined and descends to a pairing on parabolic cohomology groups. 

\begin{lemma}For any $[\alpha],[\beta]\in H^1_{\mathrm{par}}(\Gamma, \{\langle z_i \rangle \} ;\mathfrak{g})$, 
\[
\omega^\Sigma _K ([\alpha],[\beta]) = - \omega^\Sigma _K ([\beta],[\alpha]).
\]
Moreover, if $\omega^\Sigma _K ([\alpha],[\beta])=0$ for all $[\alpha]\in H^1_{\mathrm{par}}(\Gamma, \{\langle z_i \rangle \} ;\mathfrak{g})$,  we have $[\beta]=0$. 
\end{lemma}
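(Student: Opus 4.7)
The plan is to prove antisymmetry by reducing to graded commutativity of the cup product at the cochain level (which holds only up to a coboundary) and to prove non-degeneracy by Poincar\'e--Lefschetz duality via the trace form on $\mathfrak{g}$.

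For antisymmetry, choose $0$-cochains $X_i$ with $\iota^\#_{\langle z_i\rangle}\alpha = \der_{\langle z_i\rangle} X_i$ and $Y_i$ with $\iota^\#_{\langle z_i\rangle}\beta = \der_{\langle z_i\rangle} Y_i$, and compute
\[
\omega_K^\Sigma([\alpha],[\beta]) + \omega_K^\Sigma([\beta],[\alpha]) = \langle \alpha\smile\beta + \beta\smile\alpha, [\Sigma]\rangle - \sum_{i=1}^b \Tr\bigl(X_i\beta(z_i)+Y_i\alpha(z_i)\bigr).
\]
At the cochain level, graded commutativity of the cup product on $1$-cocycles produces a $1$-cochain $\gamma$ with $\alpha\smile\beta + \beta\smile\alpha = \der_\Gamma \gamma$ (via the standard chain homotopy for the Alexander--Whitney map). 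Since $\partial\Sigma\ne\emptyset$, the chain $[\Sigma]$ of Lemma \ref{funclcpt} is not closed in $\mathbf{B}_\ast(\Gamma)\otimes\Z$; its algebraic boundary lives on the cyclic subgroups $\langle z_i\rangle$. Evaluating $\langle \der_\Gamma\gamma,[\Sigma]\rangle = \langle \gamma,\partial[\Sigma]\rangle$ and using the cyclicity of the trace together with the coboundary relations $\iota^\#_{\langle z_i\rangle}\alpha = \der_{\langle z_i\rangle} X_i$ and $\iota^\#_{\langle z_i\rangle}\beta = \der_{\langle z_i\rangle} Y_i$, the boundary contribution matches $\sum_i\Tr(X_i\beta(z_i)+Y_i\alpha(z_i))$ exactly, so the sum vanishes.

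For non-degeneracy, identify $H^1_{\mathrm{par}}(\Gamma,\{\langle z_i\rangle\};\mathfrak{g}_\rho)$ with the image of $H^1(\Sigma,\partial\Sigma;\mathfrak{g}_\rho)\to H^1(\Sigma;\mathfrak{g}_\rho)$ via Proposition \ref{tangent}. The trace form on $\mathfrak{sl}_n(\R)$ is non-degenerate and $\Ad$-invariant, hence induces a $\pi_1(\Sigma)$-equivariant isomorphism $\mathfrak{g}_\rho\approx\mathfrak{g}_\rho^*$. Algebraic Poincar\'e--Lefschetz duality for the group system $(\Gamma,\{\langle z_i\rangle\})$ (cf.~\cite{trotter1962,guruprasad1997}) then yields a perfect pairing
\[
H^1(\Gamma,\{\langle z_i\rangle\};\mathfrak{g})\otimes H^1(\Gamma;\mathfrak{g})\to H^2(\Gamma,\{\langle z_i\rangle\};\R)\approx\R,
\]
given by cup product composed with the trace and evaluated on the relative fundamental class of Lemma \ref{funclcpt}. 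The formula for $\omega_K^\Sigma$ is exactly this pairing, with the correction terms $X_i$ serving as the cobounding data that promotes the absolute cocycle $\alpha$ to a relative one. Because $\mathfrak{g}_\rho$ has no nonzero $\Ad_\rho$-invariant vectors (Lemma \ref{NoInvariantElement}), passing to the parabolic subspace on both sides preserves non-degeneracy: if $\omega_K^\Sigma([\alpha],[\beta])=0$ for every parabolic $[\alpha]$, the duality forces $[\beta]=0$.

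The main obstacle is the careful bookkeeping in the antisymmetry computation, namely producing the explicit $1$-cochain $\gamma$ and evaluating it on $\partial[\Sigma]$ in terms of the prescribed $X_i,Y_i$. A cleaner conceptual route, which I expect to handle both halves simultaneously, is to interpret $\omega_K^\Sigma$ as the chain-level realization of the cup-product pairing between a relative cocycle lift of $\alpha$ (encoded by the pair $(\alpha,\{X_i\})$) and the absolute cocycle $\beta$ on the relative fundamental class; once this relative-vs-absolute reformulation is in place, antisymmetry follows from graded commutativity at the cohomology level and non-degeneracy from the duality above.
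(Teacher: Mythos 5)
The paper itself does not prove this lemma; it cites Proposition~5.5 of H.~Kim \cite{kim1999}. Your proposal is therefore a genuine proof attempt rather than a restatement of the paper's argument, and the overall strategy --- antisymmetry via graded commutativity of the cup product at the cochain level, non-degeneracy via Poincar\'e--Lefschetz duality for the group system --- is the right one and essentially matches the conceptual reading the paper itself uses later (in the proof of Theorem \ref{twist}, where $\omega_K^\Sigma$ is identified with the cohomological pairing $H^1(\Gamma,\mathcal{S};\mathfrak{g})\otimes H^1(\Gamma;\mathfrak{g})\to H^2(\Gamma,\mathcal{S};\R)\to\R$, citing Lemma~8.4 of \cite{guruprasad1997}).

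There is, however, a genuine gap in your antisymmetry argument. The whole content of that half of the lemma is the cancellation
\[
\langle\alpha\smile\beta+\beta\smile\alpha,[\Sigma]\rangle \;=\; \sum_{i=1}^b \Tr\bigl(X_i\beta(z_i)+Y_i\alpha(z_i)\bigr),
\]
and you assert it rather than establish it. Two specific points need repair. First, the phrase ``its algebraic boundary lives on the cyclic subgroups $\langle z_i\rangle$'' is not literally true in the normalized bar resolution: $\partial[\Sigma]\in \mathbf{B}_1(\Gamma)\otimes\Z$ is a large sum of bar $1$-chains coming from $\partial[a|b]=a[b]-[ab]+[a]$ applied to each Fox-derivative term, and it is only after passing to the auxiliary resolution $(R_\ast,A_\ast)$ of Lemma \ref{funclcpt} that the boundary lands in the $\langle z_i\rangle$-part. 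To stay in the bar resolution you must either evaluate $\langle\gamma,\partial[\Sigma]\rangle$ against this full boundary, or compute $\langle\alpha\smile\beta+\beta\smile\alpha,[\Sigma]\rangle$ directly from the explicit formula (\ref{explicitformula}) using the Fox derivatives of $R=\prod[x_i,y_i]\prod z_j$ and the cocycle identities for $\alpha,\beta$; either way a real computation is required, and the coboundary relations $\iota^\#_{\langle z_i\rangle}\alpha=\der_{\langle z_i\rangle}X_i$ only enter at the end. Second, the ``cleaner conceptual route'' you sketch --- identifying $\omega_K^\Sigma$ with the relative-vs-absolute cup-product pairing --- would indeed dispose of both halves at once, but that identification is itself exactly the chain-level bookkeeping you are trying to avoid (it is the content of GHJW Lemma~8.4); you cannot both cite it as given and present the lemma as an independent proof. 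The non-degeneracy half is in better shape: once the pairing is identified with the perfect duality pairing $H^1(\Gamma,\mathcal{S};\mathfrak{g})\otimes H^1(\Gamma;\mathfrak{g})\to\R$ and one notes that every class in $H^1(\Gamma,\mathcal{S};\mathfrak{g})$ maps into $H^1_{\mathrm{par}}$, vanishing of $\omega_K^\Sigma(\cdot,[\beta])$ on all parabolic classes forces $[\beta]=0$; that step of your argument is correct.
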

\begin{proof}
See Proposition 5.5 of H. Kim \cite{kim1999}.
\end{proof}
Therefore, $\omega^\Sigma _K$ is a nondegenerate 2-form on  $\overline{\Rep} ^{\mathscr{B}} _n (\Gamma)$. 

Then we have to show that $\omega^\Sigma _K$ is closed to conclude that it is indeed a symplectic form. This is not a trivial result and can be proven by various ways. See, for instance, H. Kim \cite{kim1999}, Guruprasad-Huebschmann-Jeffrey-Weinstein \cite{guruprasad1997}, and Karshon \cite{karshon1992}.

The expression of $[\Sigma]$  depends on the choice of the relation $R$.  We may wonder the value of $\omega_K$ changes if we use another relation. The following lemma shows that it is not the case.

\begin{lemma}\label{welldefinedness}
	Let $R' = h Rh^{-1}$ for some $h\in \Gamma$. Let $[\Sigma']$ be the relative fundamental class defined as in  Lemma \ref{funclcpt} with respect to $R'$. Then $\langle \alpha \smile \beta, [\Sigma]\rangle = \langle \alpha \smile \beta, [\Sigma']\rangle$ for all $\alpha, \beta \in H^1_{\mathrm{par}}(\Gamma,\{\langle z_i \rangle \}; \mathfrak{g})$. 
\end{lemma}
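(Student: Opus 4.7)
The plan is to compute the difference $\langle \alpha \smile \beta, [\Sigma']\rangle - \langle \alpha \smile \beta, [\Sigma]\rangle$ directly from (\ref{explicitformula}) (extended in the obvious way to include the boundary generators $z_j$ coming from Lemma \ref{funclcpt}) and show it vanishes. The argument uses the Fox calculus twice: first to rewrite the Fox derivatives of the conjugated relator, and second to apply the cocycle identity to $\beta$.

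First, I would differentiate $R' = h R h^{-1}$ using the Leibniz rule of Theorem \ref{fox}, together with $\frac{\partial h^{-1}}{\partial s} = -h^{-1}\frac{\partial h}{\partial s}$ (which follows by differentiating $hh^{-1} = 1$), to obtain
\[
\frac{\partial R'}{\partial s} \;=\; h\,\frac{\partial R}{\partial s} \;+\; (1 - R')\,\frac{\partial h}{\partial s}
\]
in the free group ring. After projection to $\R\Gamma$ the second summand vanishes since $R' = 1$ in $\Gamma$, hence $\overline{\partial R'/\partial s} = \overline{\partial R/\partial s}\cdot h^{-1}$. The cocycle identity for $\alpha$ extends linearly to $\alpha(Ag) = \alpha(A) + A\cdot \alpha(g)$ for $A \in \R\Gamma$ and $g \in \Gamma$; applying this with $X := \alpha(h^{-1})$ and summing over all generators $s \in \{x_i, y_i, z_j\}$ yields
\[
\langle \alpha \smile \beta, [\Sigma']\rangle - \langle \alpha \smile \beta, [\Sigma]\rangle \;=\; -\sum_{s} \Tr\!\left(\,\overline{\tfrac{\partial R}{\partial s}} \cdot X\,\right) \beta(s).
\]

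To finish, I would exploit the $\Ad_\rho$-invariance of the trace form: for any $A \in \R\Gamma$ and $Y, Z \in \mathfrak{g}$, cyclicity of $\Tr$ gives $\Tr\bigl((A \cdot Y)\, Z\bigr) = \Tr\bigl(Y\, (\bar A \cdot Z)\bigr)$. Applied with $A = \overline{\partial R/\partial s}$, $Y = X$, $Z = \beta(s)$, the sum above rewrites as $-\Tr\bigl(X \cdot \sum_s \tfrac{\partial R}{\partial s} \cdot \beta(s)\bigr)$. The analogous linear extension of the cocycle identity, now applied to $\beta$, gives $\sum_s \frac{\partial R}{\partial s} \cdot \beta(s) = \beta(R)$, and $\beta(R) = 0$ because $R$ represents the identity of $\Gamma$. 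Hence the difference vanishes.

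The only delicate point I anticipate is the algebraic bookkeeping for the involution under the trace pairing and for the linear extensions of the cocycles from $\Gamma$ to $\R\Gamma$; once these conventions are fixed the argument is a direct calculation. Notably, neither the parabolic hypothesis on $[\alpha], [\beta]$ nor the boundary correction terms that later enter $\omega_K^\Sigma$ play any role, so this invariance actually holds for arbitrary $\mathfrak{g}$-valued $1$-cocycles $\alpha, \beta$.
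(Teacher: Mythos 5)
Your proof is correct and follows essentially the same route as the paper: conjugating the relator multiplies the Fox derivatives by $h$ on the left (modulo the relator, which dies in $\R\Gamma$), the cocycle identity for $\alpha$ peels off a term proportional to $\alpha(h^{-1})$, and the discrepancy collapses via the Fox fundamental identity to $\pm\Tr\bigl(\alpha(h^{-1})\,\beta(R-1)\bigr) = 0$. You spell out the Leibniz step and the trace-form invariance more explicitly than the paper does (the paper asserts the displayed $[\Sigma']$ as "straightforward" and goes directly to the extra term), and your closing observation that the parabolic hypothesis plays no role is correct — the argument works for arbitrary $1$-cocycles.
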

\begin{proof}
It is straightforward to obtain
	\[
[\Sigma'] = \sum_{i=1} ^g\left(\,\left.\left[ h \frac{\partial R} {\partial x_i} \right| x_i\right] + \left.\left[ h \frac{\partial R}{\partial y_i} \right| y_i\right]\,\right) + \sum_{j=1} ^ b \left.\left[ h\frac{\partial R}{\partial z_j}\right| z_j\right].
\]
Since $\alpha$ is a cocycle, $\alpha\left(\overline{ h \frac{\partial R} {\partial x_i}}\right) = \alpha\left(\overline{\frac{\partial R}{\partial x_i}} \right) + \overline{\frac{\partial R}{\partial x_i}}\cdot \alpha(h^{-1})$. Here we use the convention $(x\pm y) \acts X = x\acts X \pm y\acts X$ where $x,y\in \Gamma$, $X\in \mathfrak{g}$.  By definition of $\langle \alpha\smile \beta, [\Sigma']\rangle$, 
\begin{multline*}
\langle \alpha\smile \beta, [\Sigma']\rangle  = \langle \alpha \smile \beta, [\Sigma]\rangle \\
 +  \trace \alpha(h^{-1})\beta\left(\sum _{i=1} ^g \left(\frac{\partial R}{\partial x_i} (x_i-1)  + \frac{\partial R}{\partial y_i} (y_i-1)\right) +\sum _{i=1} ^b\frac{\partial R}{\partial z_i} (z_i-1)\right).
\end{multline*}
By Theorem \ref{fox}, we conclude that the second term is $\trace \alpha(h^{-1}) \beta(R-1) = 0$. 
\end{proof}

\subsection{The existence of global Darboux coordinates}\label{aaprinciple}
Now we prove series of results toward the existence of a global Darboux coordinate system. Our main goal of this subsection is Theorem \ref{existenceofdarboux}.

\begin{lemma}[A variation of Theorem 18.12 of da Silva \cite{da2001}]\label{sectionimpliescoordinates}
	Let $f=(f_1, \cdots, f_n):M^{2n}\to B$ be a fiber bundle over a connected open subset $B$ of $\R^n$. Suppose that $M$ is given a symplectic structure $\omega$ such that each fiber is a simply connected Lagrangian submanifold. Suppose moreover that the Hamiltonian vector fields  $\mathbb{X}_{f_1}, \cdots, \mathbb{X}_{f_n}$ are linearly independent at each point in $M$ and complete. Then the following hold:
	\begin{itemize}
		\item $f:M\to B$ becomes an affine bundle over $B$.
		\item If $f:M\to B$ admits a global Lagrangian section, then there is a function $g=(g_1, \cdots, g_n):M \to \R^n$ such that $(f_1, \cdots, f_n, g_1, \cdots, g_n)$ is a global Darboux coordinate system. 
	\end{itemize}
\end{lemma}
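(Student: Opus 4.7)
The proof splits along the two bullets. For the first, each $\Ham{f_i}$ is tangent to the fibers of $f$: both $\Ham{f_j}$ and $\Ham{f_i}$ lie in any fiber's tangent space, so by the Lagrangian property $\der f_j(\Ham{f_i}) = \omega(\Ham{f_j},\Ham{f_i}) = 0$. In particular $\{f_i,f_j\}=0$, the complete flows of $\Ham{f_i}$ pairwise commute, and they assemble into a smooth $\R^n$-action
\[
(t_1,\ldots,t_n)\cdot x := \Psi_1^{t_1}\circ\cdots\circ\Psi_n^{t_n}(x)
\]
preserving every fiber. Linear independence of $\Ham{f_1},\ldots,\Ham{f_n}$ makes the stabilizer of each point a discrete subgroup of $\R^n$, so every orbit is an $n$-dimensional immersed submanifold and hence open in its $n$-dimensional fiber. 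Connectedness of the fiber (it is simply connected) then makes it a single orbit, and simple connectedness forces the stabilizer lattice to be trivial. The action is therefore free and transitive on each fiber, which endows $f$ with the structure of a principal $\R^n$-bundle, equivalently an affine bundle.

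For the second bullet, using the Lagrangian section $s:B\to M$ I would define $g=(g_1,\ldots,g_n):M\to\R^n$ by the rule $(-g_1(x),\ldots,-g_n(x))\cdot s(f(x))=x$, the sign arranged so that $\Ham{f_i}=-\partial/\partial g_i$ in the chart $(f,g)$. By construction $g\circ s\equiv 0$. Expanding
\[
\omega = \sum_{i<j} P_{ij}\,\der f_i\wedge \der f_j + \sum_{i,j} Q_{ij}\,\der f_i\wedge \der g_j + \sum_{i<j} R_{ij}\,\der g_i\wedge \der g_j,
\]
one reads $R_{ij}=0$ from the Lagrangian property of the fibers and $Q_{ij}=\delta_{ij}$ from $\omega(\Ham{f_i},-)=\der f_i$ together with $\Ham{f_i}=-\partial/\partial g_i$. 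Closedness $\der\omega=0$ then isolates the $\der g_k\wedge \der f_i\wedge \der f_j$ terms to force $\partial P_{ij}/\partial g_k=0$ for all $i<j$ and all $k$, so each $P_{ij}$ is a function of $f$ alone. Since $s^*\der g_i=0$ and $s^*\omega=0$, pulling back through $s$ yields $\sum_{i<j} P_{ij}\,\der f_i\wedge \der f_j=0$ on $B$; because the $P_{ij}$ depend only on $f$ this vanishing propagates to all of $M$, leaving $\omega=\sum_i \der f_i\wedge \der g_i$.

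The main obstacle is the first part: passing from the infinitesimal data of complete, commuting Hamiltonian vector fields to an honest principal $\R^n$-bundle structure requires both the completeness hypothesis and a careful use of simple connectedness of fibers to rule out nontrivial stabilizer lattices. Once the affine bundle structure is in hand, the Darboux computation in the second part is a direct parallel of the classical action-angle argument; simple connectedness of the fibers is precisely what lets the would-be angle coordinates be globally defined $\R$-valued functions rather than circle-valued.
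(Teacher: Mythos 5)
Your proof is correct. For the first bullet, you follow essentially the same route as the paper: completeness and commutativity assemble the Hamiltonian vector fields into a smooth $\R^n$-action; linear independence plus the Lagrangian hypothesis make each orbit open in its fiber; connectedness of the fiber gives transitivity; and simple connectedness kills the stabilizer lattice, yielding freeness and the affine bundle structure.

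For the second bullet your computation is a genuine alternative. The paper pairs the coordinate vector fields $\partial/\partial g_i$ and $\partial/\partial f_j$ directly at points of $\sigma(B)$ and then propagates the result to all of $M$ by invoking that the Hamiltonian flows are symplectomorphisms. You instead write $\omega$ out in the global chart $(f,g)$, identify $R_{ij}=0$ (Lagrangian fibers) and $Q_{ij}=\delta_{ij}$ (from $\omega(\Ham{f_i},\cdot)=\der f_i$ and $\Ham{f_i}=-\partial/\partial g_i$), then use $\der\omega=0$ to show the remaining coefficients $P_{ij}$ are fiberwise constant, and finally kill them by pulling back along the Lagrangian section. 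Both arguments are standard and valid; yours is slightly more systematic in that it handles the three coefficient blocks of $\omega$ uniformly rather than computing pairings one at a time. One bonus: your explicit sign in the definition of $g$ (namely $(-g(x))\cdot s(f(x))=x$, so that $\Ham{f_i}=-\partial/\partial g_i$) is what actually makes $\omega=\sum\der f_i\wedge\der g_i$ come out with the sign claimed in the lemma; the paper's choice $g(x)=\mathbf{t}_x$ with $\Ham{f_i}=\partial/\partial g_i$ literally produces $\omega=\sum\der g_i\wedge\der f_i$, so your version is cleaner on this minor point of convention.
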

\begin{proof}
	Since $\mathbb{X}_{f_1}, \cdots, \mathbb{X}_{f_n}$ are complete, they induce a Hamiltonian $\R^n$-action  on $M$. Observe that $\mathbb{X}_{f_1}, \cdots, \mathbb{X}_{f_n}$ are tangent to each fiber. Since fibers are Lagrangian, the rule $x\mapsto (\mathbb{X}_{f_1}|_x,\cdots, \mathbb{X}_{f_n}|_x)$ defines a completely integrable distribution. Since $\mathbb{X}_{f_1}, \cdots, \mathbb{X}_{f_n}$ are linearly independent, the integral manifold is $n$-dimensional, and thus, is an open subset of each fiber. Since each fiber is connected, the integral manifold must be the whole fiber. We know that $\omega([\mathbb{X}_{f_i},\mathbb{X}_{f_j}],Z) = Z \omega(\mathbb{X}_{f_i},\mathbb{X}_{f_j})$ for any vector field $Z$. Since $\omega(\mathbb{X}_{f_i},\mathbb{X}_{f_j})=0$, we have  $[\mathbb{X}_{f_i},\mathbb{X}_{f_j}]=0$ for all $i,j$. This, together with the fact that $\mathbb{X}_{f_i}$'s are complete,  yields that the Hamiltonian flows associated to $f_1,\cdots, f_n$ induce an $\R^n$-action.  Since the integral manifold is the whole fiber, the action must be fiberwise transitive. Since each fiber is simply connected and since the $\R^n$-action is transitive, we conclude that the action is a free action and this gives an affine bundle structure on $M\to B$.  
	
	Let $\sigma:B \to M$ be a Lagrangian section.   Since the action is free and fiberwise transitive, for each $x\in M$, there is a unique $\mathbf{t}_x\in \R^n$  such that $x=\mathbf{t}_x \cdot \sigma(f(x))$. Define the smooth function $g=(g_1, \cdots, g_n):M\to \R^n$ by $g(x) = \mathbf{t}_x$. 
	
	We claim that $x\mapsto (f(x), g(x))$ is a global Darboux coordinate system. We first observe that $(f,g)$ is regular and one-to-one. Hence $(f,g)$ is a global coordinate system. Moreover  $\frac{\partial}{\partial f_i}=\der \sigma\frac{\partial}{\partial x_i}$ spans a Lagrangian subspace at each point $x$ of $\sigma(B)$. Then we compute 
	\begin{align*}
	\omega_x \left(\frac{\partial}{\partial g_i}|_x, \frac{\partial}{\partial f_j}|_x\right)& =\omega_x \left(\mathbb{X}_{f_i}|_x, (\der \sigma \frac{\partial}{\partial x_j})|_x\right) \\
	&=\der f_i\left(  \der \sigma\frac{\partial}{\partial x_j}\right)\\
	&= \der (f_i \circ \sigma) \frac{\partial}{\partial x_j} \\
	&= \frac{\partial x_i}{\partial x_j}= \begin{cases} 1 & i=j \\ 0 & i\ne j\end{cases}
	\end{align*}
	at each $x\in\sigma(B)$. Now consider a general point  $x\in M$. We may assume that $x$ can be reached from $\sigma(f(x))$ by the Hamiltonian flow  $\Psi$ associated to some $f_i$. That is $x= \Psi^t (\sigma(f(x)))$ for some $t\in \R$. Since $\Psi^t$ preserves $\omega$, we have  
	\begin{align*}
	\omega_x\left(\frac{\partial}{\partial g_i}|_x, \frac{\partial}{\partial f_j}|_x\right)&=\omega_x \left(\mathbb{X}_{f_i}|_x , \der \Psi^t \left( \frac{\partial}{\partial f_j}|_{\sigma(f(x))}\right)\right) \\
	&= ((\Psi^{-t} )^\ast \omega)_{x}\left(\mathbb{X}_{f_i}|_x, \der \Psi^t \left( \frac{\partial}{\partial f_j}|_{\sigma(f(x))}\right)\right) \\
	&= \omega_{\sigma(f(x))}\left(\mathbb{X}_{f_i}|_{\sigma(f(x))}, {\frac{\partial}{\partial f_j}}|_{\sigma(f(x))}\right)=\begin{cases} 1 & i=j \\ 0 & i\ne j\end{cases}.
	\end{align*}
	Therefore $(f,g)$ is a global Darboux coordinate system. 
 \end{proof}

\begin{remark} Lemma \ref{sectionimpliescoordinates}, looks similar to the well-known action-angle principle  (see for example Theorem 18.12 of da Silva \cite{da2001}). One difference is that, in our result, the given ``integral of motion'' can be taken as action coordinates without any modification. An additional condition, the existence of a Lagrangian section, must be imposed to obtain this stronger conclusion. 
 \end{remark}

\begin{lemma}\label{local}
      Let $(M^{2n},\omega)$ be a symplectic manifold and $f=(f_1,\cdots, f_n):M\to B\subset \R^n$ be as in Lemma \ref{sectionimpliescoordinates}. Then each $c \in B$ has a neighborhood $U$  such that there is  a symplectomorphism $F:f^{-1}(U)\to T^*B$ which is also a morphism of affine bundles.  
\end{lemma}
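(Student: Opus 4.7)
The plan is to apply Lemma \ref{sectionimpliescoordinates} locally, after producing a Lagrangian section of $f$ over a small neighborhood $U$ of $c$. First, take $U$ to be an open ball around $c$ in $B$. By Lemma \ref{sectionimpliescoordinates} the flows of $\Ham{f_1},\ldots,\Ham{f_n}$ generate a free, fiberwise-transitive Hamiltonian $\R^n$-action $\Phi:\R^n\times M\to M$, so $f:M\to B$ is an affine $\R^n$-bundle and admits a smooth section $\sigma_0:U\to f^{-1}(U)$. The pullback $\sigma_0^*\omega$ is a closed $2$-form on the contractible $U$, and the Poincar\'e lemma gives a $1$-form $\beta=\sum_j \beta_j\,\der x_j$ on $U$ with $\der\beta=-\sigma_0^*\omega$.

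Next, I use the action to modify $\sigma_0$: define the twisted section $\sigma:U\to f^{-1}(U)$ by $\sigma(x)=\beta(x)\acts\sigma_0(x)=\Phi(\beta(x),\sigma_0(x))$. A short calculation gives
\[
\sigma^*\omega=\sigma_0^*\omega+\der\beta=0,
\]
so $\sigma$ is Lagrangian. The three facts driving this computation are: each time-$t$ map $\Phi^t$ is a symplectomorphism (so the vertical-vertical term of $\sigma^*\omega$ becomes $\sigma_0^*\omega$); the $\Ham{f_i}$ span a Lagrangian distribution, so $\omega(\Ham{f_i},\Ham{f_j})=0$ (the horizontal-horizontal term vanishes); and $f_j\circ\Phi^t=f_j$, which combines the two cross-terms into $\der\beta$.

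Since $f_j\circ\Phi^t=f_j$, orbits of $\Ham{f_j}$ through any point of $f^{-1}(U)$ stay in the same fiber and hence inside $f^{-1}(U)$, so the restricted flows are complete. Thus Lemma \ref{sectionimpliescoordinates} applies to $f|_{f^{-1}(U)}$ with the Lagrangian section $\sigma$, producing smooth functions $g=(g_1,\ldots,g_n):f^{-1}(U)\to\R^n$ such that $(f_1,\ldots,f_n,g_1,\ldots,g_n)$ is a global Darboux chart on $f^{-1}(U)$. The resulting map
\[
F:f^{-1}(U)\to T^*U,\qquad F(x)=\sum_{j=1}^n g_j(x)\,\der x_j\big|_{f(x)},
\]
is a symplectomorphism by the Darboux property, and it intertwines the Hamiltonian $\R^n$-action on $f^{-1}(U)$ with fiberwise translation on $T^*U$, making it an isomorphism of affine bundles.

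The principal obstacle is the displayed identity $\sigma^*\omega=\sigma_0^*\omega+\der\beta$. After decomposing $\der\Phi_{(t,y)}(a,w)=\sum_i a_i\Ham{f_i}|_{\Phi^t(y)}+\der\Phi^t(w)$ into its horizontal and vertical contributions, the three invariance properties above collapse the four resulting terms exactly to $\sigma_0^*\omega+\der\beta$; tracking the signs in the cross-terms is the only genuine subtlety.
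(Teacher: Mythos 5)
Your proof is correct, and it reaches the conclusion by a genuinely different mechanism than the paper. The paper's route is to pick a point $x\in f^{-1}(c)$ and invoke the Carath\'eodory--Jacobi--Lie theorem (the first $n$ commuting functions $f_1,\dots,f_n$ can be completed to a Darboux chart near $x$), which immediately produces a local Lagrangian section of $f$; the paper then builds $F$ directly from the section and the two $\R^n$-actions, without re-invoking Lemma~\ref{sectionimpliescoordinates}. You instead start from an \emph{arbitrary} smooth section $\sigma_0$ over a contractible $U$ (available since the affine bundle trivializes there), apply the Poincar\'e lemma to find a primitive $\beta$ of $-\sigma_0^*\omega$, and twist $\sigma_0$ by the $\R^n$-action along $\beta$. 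The identity $\sigma^*\omega=\sigma_0^*\omega+\der\beta$ that you flag as the crux is indeed correct: the $\Ham{f_i}$-$\Ham{f_j}$ term dies since fibers are Lagrangian, the vertical-vertical term reproduces $\sigma_0^*\omega$ because $\Phi^t$ is a symplectomorphism, and the two cross-terms combine to $\der\beta$ once you use $\omega(\Ham{f_i},\,\cdot\,)=\der f_i$ together with $f_i\circ\Phi^t=f_i$ and $f_i\circ\sigma_0=x_i$. Your approach is arguably closer in spirit to Duistermaat's original argument (which the paper cites as its inspiration), trades a somewhat heavyweight named theorem for an explicit twisting computation, and packages the final step by reusing Lemma~\ref{sectionimpliescoordinates} locally rather than redoing the flow-intertwining argument by hand; the paper's version avoids the curvature computation at the cost of invoking Carath\'eodory--Jacobi--Lie. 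Both are valid and comparable in length.
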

\begin{proof}
Note that, by the first assertion of Lemma \ref{sectionimpliescoordinates}, $M$ is an affine bundle. 

We first show that there is a neighborhood $U$ of $c$ where a local Lagrangian section $\sigma|_U$ on $U$ exists. For this we first choose a neighborhood  $V_0$ of $c\in B$ where both $M$ and $T^* B$ are trivialized. Let $T: f^{-1}(V_0) \to V_0 \times \R^n$ be a trivialization. 
	
Let $x\in f^{-1}(c)$. Carath\'eodory-Jacobi-Lie theorem states that there is a neighborhood $U_0$ of $x$ and  function $g=(g_1, \cdots, g_n)$ such that $(U_0,(f,g))$ is a local Darboux chart. We may assume that $U_0\subset f^{-1}(V_0)$ and that $T(U_0)  = U \times I$ for some open box $I$ of $\R^n$ and an open neighborhood $U$ of $c$.  So locally, $\omega|_{U_0} = \sum_{i=1} ^n  \der f_i \wedge \der g_i$. Therefore, $f:M\to B$ admits a local Lagrangian section over $U:=f(U_0)\subset V_0$. Let $\sigma$ be this section.

Let $z:U \to z(U)$ be the zero section of $T^* B \to B$. Define a map $F_0 : \sigma(U) \to z(U)$ by $z\circ f$.  Observe that $(T^* B, \omega_{\text{can}})$ is a vector bundle with fiber preserving the Hamiltonian $\R^n$-action acting fiberwise freely, linearly and  transitively.  We also have the fiberwise free, transitive and linear Hamiltonian $\R^n$-action on $M$. So, for each $x\in f^{-1}(c)$, there is a unique $\mathbf{t}_x\in \R^n$ such that $\mathbf{t}_x \cdot \sigma(c) = x$.  Extend $F_0$ to the map $F: (f^{-1}(U) ,\sigma(U))\to (T^* B|_U,z(U))$ by  $F(x)=\mathbf{t}_x \cdot z(f(x))$. Then $F$ is clearly an affine bundle map. Lastly we have to prove that $F$ is symplectomorphic. To this end, observe that $F$ is symplectomorphic at each point of $\sigma(U)$. Let $\Phi^t$ and $\Psi^t$ be Hamiltonian flows on each bundle corresponding to the same 1-dimensional subgroup of $\R^n$. Then  $F\circ \Phi^t = \Psi^t$. Since Hamiltonian flows preserve the symplectic form, we conclude that $F$ is symplectomorphic.
\end{proof}


Assuming further that $B$ has the trivial 2nd cohomology, we can prove the existence of a global Lagrangian section.   The proof is based on sheaf cohomology theory and the idea of Duistermaat \cite{duistermaat1980}.  

\begin{proposition}\label{existsection}
Assume that $B$ is connected and $H^2(B;\R)=0$. Under the assumptions of Lemma \ref{local}, $f:M\to B$ admits a global Lagrangian section.
\end{proposition}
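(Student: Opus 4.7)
The plan is to construct a global Lagrangian section by gluing the local ones from Lemma \ref{local} via a sheaf-cohomology argument, along the lines of Duistermaat \cite{duistermaat1980}.

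First I would apply Lemma \ref{local} to choose a locally finite open cover $\{U_\alpha\}$ of $B$, refined so that each intersection is contractible, together with affine-bundle symplectomorphisms $F_\alpha : f^{-1}(U_\alpha) \to T^*U_\alpha$. On each overlap $U_{\alpha\beta}:=U_\alpha\cap U_\beta$, the transition map $F_\alpha\circ F_\beta^{-1}$ is an affine-bundle automorphism of $T^*U_{\alpha\beta}$ that is also a symplectomorphism. Any such map must be of the form $(x,\xi)\mapsto (x,\xi+\lambda_{\alpha\beta}(x))$ for some 1-form $\lambda_{\alpha\beta}$ on $U_{\alpha\beta}$, and the symplectomorphism condition forces $\lambda_{\alpha\beta}$ to be closed (the standard fact that fiberwise translation in $T^*B$ by a 1-form $\lambda$ preserves $\omega_{\mathrm{can}}$ iff $\lambda$ is closed). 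The cocycle identity $\lambda_{\alpha\beta}+\lambda_{\beta\gamma}+\lambda_{\gamma\alpha}=0$ on triple overlaps is then immediate, so $\{\lambda_{\alpha\beta}\}$ defines a \v{C}ech 1-cocycle with values in the sheaf $\mathcal{Z}^1$ of closed 1-forms on $B$.

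The heart of the argument is to show this cocycle is a coboundary. From the Poincar\'e-lemma short exact sequence of sheaves
\[
0 \to \underline{\R} \to \mathcal{C}^\infty \xrightarrow{\der} \mathcal{Z}^1 \to 0
\]
on $B$, together with the fact that $\mathcal{C}^\infty$ is a fine sheaf (hence acyclic in positive degree), the associated long exact sequence yields $H^1(B,\mathcal{Z}^1)\cong H^2(B,\underline{\R})\cong H^2(B;\R)$. The hypothesis $H^2(B;\R)=0$ therefore forces $\lambda_{\alpha\beta}=\mu_\beta-\mu_\alpha$ for some closed 1-forms $\mu_\alpha$ on $U_\alpha$. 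Modifying each $F_\alpha$ by the cotangent-fiber translation $(x,\xi)\mapsto(x,\xi-\mu_\alpha(x))$ produces new affine-bundle symplectomorphisms whose transition maps are now the identity on $T^*U_{\alpha\beta}$; pulling back the zero section from any $T^*U_\alpha$ then yields a well-defined global Lagrangian section of $f$.

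The main obstacle is verifying that the transition maps between the local cotangent-bundle trivializations are genuinely of the form \emph{translation by a closed 1-form}, which is what translates the gluing problem into the computation of $H^1(B,\mathcal{Z}^1)$. This rests on the combined affine-bundle-plus-symplectomorphism content of Lemma \ref{local}; the remainder is standard sheaf-cohomology bookkeeping.
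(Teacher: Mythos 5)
Your argument is correct and follows essentially the same Duistermaat-style strategy as the paper: the obstruction to gluing local Lagrangian sections is encoded as a \v{C}ech $1$-cocycle with values in the sheaf of closed $1$-forms, and it is killed using the fine/soft resolution $0\to\underline{\R}\to\mathcal{C}^\infty\to\mathcal{Z}^1\to 0$ together with $H^2(B;\R)=0$. The only (cosmetic) difference is that you build the cocycle from the transition maps of the local symplectic trivializations $F_\alpha$ from Lemma \ref{local}, whereas the paper builds it from differences of local Lagrangian sections via the auxiliary bundle $N$ and the isomorphism $\eta:N\to T^*B$ — and there is a harmless sign slip in your coboundary/modification step ($\lambda_{\alpha\beta}=\mu_\alpha-\mu_\beta$ is what the correction $(x,\xi)\mapsto(x,\xi-\mu_\alpha)$ requires), but this is just a relabeling of $\mu$.
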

\begin{proof}
For each $y\in B$,  vector fields $\mathbb{X}_{f_1},\cdots,\mathbb{X}_{f_n}$ are tangent to the fiber $M_y:=f^{-1}(y)$. We write $\mathbb{X}_{f_i}(M_y)$ for the vector field on $M_y$ induced by $\mathbb{X}_{f_i}$. Let $N$ be a vector bundle over $B$ whose fiber over $y$ is the $\R$-vector space spanned by $\mathbb{X}_{f_1}(M_y),\cdots, \mathbb{X}_{f_n}(M_y)$. Since $\der f_i$ annihilates $T _x M_y\subset T_x M$, $x\in M_y$, there is a unique \emph{closed} 1-form $\eta_i$ on $B$ such that $f^* \eta_i= \der f_i$. The assignment $\eta: \mathbb{X}_{f_i} \mapsto \eta_i$  induces the isomorphism of vector bundles $\eta : N \to T^* B$ in the obvious way.

Note that under the assumptions of Lemma \ref{local}, $M$ has the structure of affine bundle over $B$ and the vector bundle $N$ acts on $M$ by fiberwise translation. 

\begin{claim}
	Let $\sigma_1$ be a local Lagrangian section of the affine bundle $M\to B$ over an  open set $U\subset B$. 
	\begin{itemize}
		\item If $\sigma_2$ is another local Lagrangian section over $U$, then $\sigma_1-\sigma_2$ naturally defines a local section of $N\to B$. Moreover, $\eta(\sigma_1 - \sigma_2 ) $ is a closed 1-form on $U$.
		\item Conversely, let $\gamma$ be a local section of $N\to B$ over $U$ such that $\eta(\gamma)$ is a closed 1-form. Then $\sigma_1 + \gamma$ is another Lagrangian local section of $M\to B$ on $U$. 
	\end{itemize} 
\end{claim}
\begin{proof}[Proof of the Claim]
It is clear that $\sigma_1-\sigma_2$ is naturally a section of $N$ since for each $y\in B$,  there is a unique translation vector from $\sigma_1(y)$ to $\sigma_2(y)$. 

Let $y\in B$. Lemma \ref{local} guarantees that we can find a neighborhood $V$ of $y$ such that there is a symplectomorphism $F: f^{-1}(V)\to T^* B$ sending $\sigma_1(V)$ to the zero section.  By construction of $F$, we have $F(\sigma_1-\sigma_2) = \eta (\sigma_1 - \sigma_2)$. Since $F$ is a symplectomorphism $F(\sigma_1)$ and $F(\sigma_2)$ are both closed 1-forms. Since $F$ is an affine bundle morphism, we observe that $\eta(\sigma_1 -\sigma_2) = F(\sigma_1) - F(\sigma_2)$ is also a closed 1-form. Therefore, each point $y\in U$ has a neighborhood where $\eta(\sigma_1-\sigma_2)$ is closed,  which proves the first part of the claim.

If $\gamma$ is a local section of $N$ such that $\eta(\gamma)$ is closed, then we have that $F(\sigma_1 + \gamma)=\eta(\sigma_1+\gamma)$ is a closed 1-form so it is a local Lagrangian section of $T^*B$. Since $F$ is symplectomorphism, $\sigma_1+\gamma$ must be a local Lagrangian section. 
\end{proof}

We can cover $B$ by open sets $\{W_i\}$ such that the affine bundle $M\to B$ is trivial over $W_i$ for each $i$ and that there is a local Lagrangian section $\sigma_i$ on each $W_i$. We can further assume that each finite intersection of $\{W_i\}$ is contractible. Observe, by the above claim, that the difference $\sigma_{ij} := \sigma_i - \sigma_j$ of the sections on $W_i\cap W_j$ gives a (\v Cech) 1-cocycle $\{\eta (\sigma_{ij})\}$ of the sheaf $\operatorname{Ker} \der^1$. By the above claim again, the cohomology class in $H^1(B, \operatorname{Ker} \der^1)$ represented by  $\{\eta (\sigma_{ij})\}$ is the obstruction of finding a global Lagrangian section. We show that this obstruction class vanishes. 

Consider an exact sequence of sheaves
\[
0\to \underline{\R} \to \Omega_{B} ^0 \to \operatorname{Im} \der^0 \to 0.
\] 
Here, $\underline{\R}$ denotes the constant sheaf and $\Omega_B ^0$ is the sheaf of smooth functions on $B$. The above exact sequence induces the long exact sequence
\[
\cdots \to H^1(B,\Omega^0 _{B}) \to H^1(B , \operatorname{Im} \der^0) \to H^2 (B, \underline{\R}) \to H^2(B,\Omega^0 _{B}) \to \cdots.
\]
Observe that $\operatorname{Ker} \der^1=\operatorname{Im} \der^0$ as sheaves.  Moreover because $\Omega^0 _{B}$ is a soft sheaf, it follows that
\[
H^1(B,\Omega^0 _{B})=H^2(B,\Omega^0 _{B})=0.
\]
 Therefore 
\[
H^1(B , \operatorname{Ker} \der^1) \approx H^1(B , \operatorname{Im} \der^0) \approx H^2 (B, \underline{\R})\approx H^2(B;\R) =0.
\]
Consequently, $\{\eta (\sigma_{ij})\}$ represents the trivial class so Proposition \ref{existsection} follows. 
\end{proof}

Finally,  by putting all the above results together, one can deduce the following theorem.

\begin{theorem}[A variation of Duistermaat \cite{duistermaat1980}]\label{existenceofdarboux}
Let $(M^{2n},\omega)$ be a symplectic manifold and $f=(f_1, \cdots, f_n):M^{2n}\to B$ be a fiber bundle over a connected open subset $B$ of $\R^n$. Suppose:
\begin{itemize}
\item $ H^2(B;\R)=0$,
\item  each fiber is a simply connected Lagragian submanifold, and
\item  the Hamiltonian vector fields  $\mathbb{X}_{f_1}, \cdots, \mathbb{X}_{f_n}$ are linearly independent at each point in $M$ and complete. 
\end{itemize}
Then there is a function $g=(g_1, \cdots, g_n):M \to \R^n$ such that 
\[
(f_1, \cdots, f_n, g_1, \cdots, g_n)
\] 
is a global Darboux coordinate system. \end{theorem}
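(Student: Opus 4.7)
The plan is to derive Theorem \ref{existenceofdarboux} directly by chaining together Proposition \ref{existsection} and Lemma \ref{sectionimpliescoordinates}, which have been set up precisely so that the theorem follows as a clean corollary. First I would verify that the hypotheses of Theorem \ref{existenceofdarboux} are exactly the hypotheses required by Proposition \ref{existsection}: we need $f$ to be a fiber bundle over a connected open $B\subset\R^n$ with simply connected Lagrangian fibers, the Hamiltonian vector fields $\Ham{f_1},\dots,\Ham{f_n}$ to be linearly independent at every point and complete, and $H^2(B;\R)=0$. All three conditions are explicitly assumed, so no extra work is needed to pass into the machinery of the previous two results.

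Next I would invoke Proposition \ref{existsection} to produce a global Lagrangian section $\sigma: B \to M$ of $f$. Implicitly this uses the first bullet of Lemma \ref{sectionimpliescoordinates}, which upgrades $f:M\to B$ to an affine bundle with structure group $\R^n$ acting fiberwise by the commuting Hamiltonian flows of $f_1,\dots,f_n$; this affine structure is required for the claim in the proof of Proposition \ref{existsection} that local Lagrangian sections differ by closed $1$-forms, and for the sheaf-theoretic gluing that produces $\sigma$.

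With $\sigma$ in hand, I would apply the second bullet of Lemma \ref{sectionimpliescoordinates} verbatim. That lemma takes a global Lagrangian section and, via the free transitive fiberwise $\R^n$-action, defines a unique smooth map $g=(g_1,\dots,g_n):M\to \R^n$ by the rule $g(x)=\mathbf{t}_x$, where $x=\mathbf{t}_x\cdot \sigma(f(x))$. The lemma then shows that $(f_1,\dots,f_n,g_1,\dots,g_n)$ is a global coordinate system and that $\omega=\sum_i \der f_i \wedge \der g_i$ by propagating the Darboux identity along Hamiltonian flows from $\sigma(B)$. This yields the desired global Darboux chart, completing the proof.

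Since Proposition \ref{existsection} and Lemma \ref{sectionimpliescoordinates} already carry the substantive content, there is no real obstacle at this concluding step; the theorem is essentially a corollary. The genuinely hard work lies in the earlier sheaf-cohomology argument of Proposition \ref{existsection}, where the hypothesis $H^2(B;\R)=0$ is converted into the vanishing of the \v Cech obstruction in $H^1(B,\operatorname{Ker}\der^1)$ that encodes whether local Lagrangian sections can be glued globally. Everything else in the current theorem is bookkeeping to confirm that the hypotheses match.
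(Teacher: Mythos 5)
Your proposal is correct and matches the paper exactly: the paper presents Theorem \ref{existenceofdarboux} as an immediate consequence of Proposition \ref{existsection} (existence of a global Lagrangian section) and Lemma \ref{sectionimpliescoordinates} (Lagrangian section implies Darboux coordinates), with the only wording being ``by putting all the above results together.'' Your verification that the hypotheses line up and your account of how the two results chain together is a faithful expansion of the paper's one-line deduction.
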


\section{Decomposition formulas}

This section is devoted to the proof of our first main result, Theorem \ref{decompthmintro}. As mentioned in the introduction, we do induction on the number of curves. We deal with the base cases by using  the Fox calculus and cocycle computations. Induction process is somewhat technical particularly when we try to cut the surface by more than three curves. Suppose for instance that three curves $\xi_1$, $\xi_2$ and $\xi_3$ are positioned as in Figure \ref{f1}. Then $\xi_1$, $\xi_2$ and $\xi_3$ are all non-separating in $\Sigma$. However $\xi_3$ becomes separating seen as a curve in $\Sigma\setminus( \xi_1\cup \xi_2)$. On the other hand, $\xi_2$ becomes separating if we subtract $\xi_1$ and $\xi_3$. Therefore we get at least three different decompositions of $\pi_1(\Sigma)$ depending on the order of cutting. To treat this technicality systematically, we borrow the notation of graph of groups.

We then present a (parabolic) group cohomology version of the Mayer-Vietoris sequence to prove the decomposition formula for a single cutting.  This sequence decomposes the tangent space into one or two components and our formulas show that the pairing $\omega^\Sigma _K$ is additive with respect to this decomposition.  

\begin{figure}[ht]
\begin{tikzpicture}
\draw(0,0) node {\includegraphics[scale=0.3]{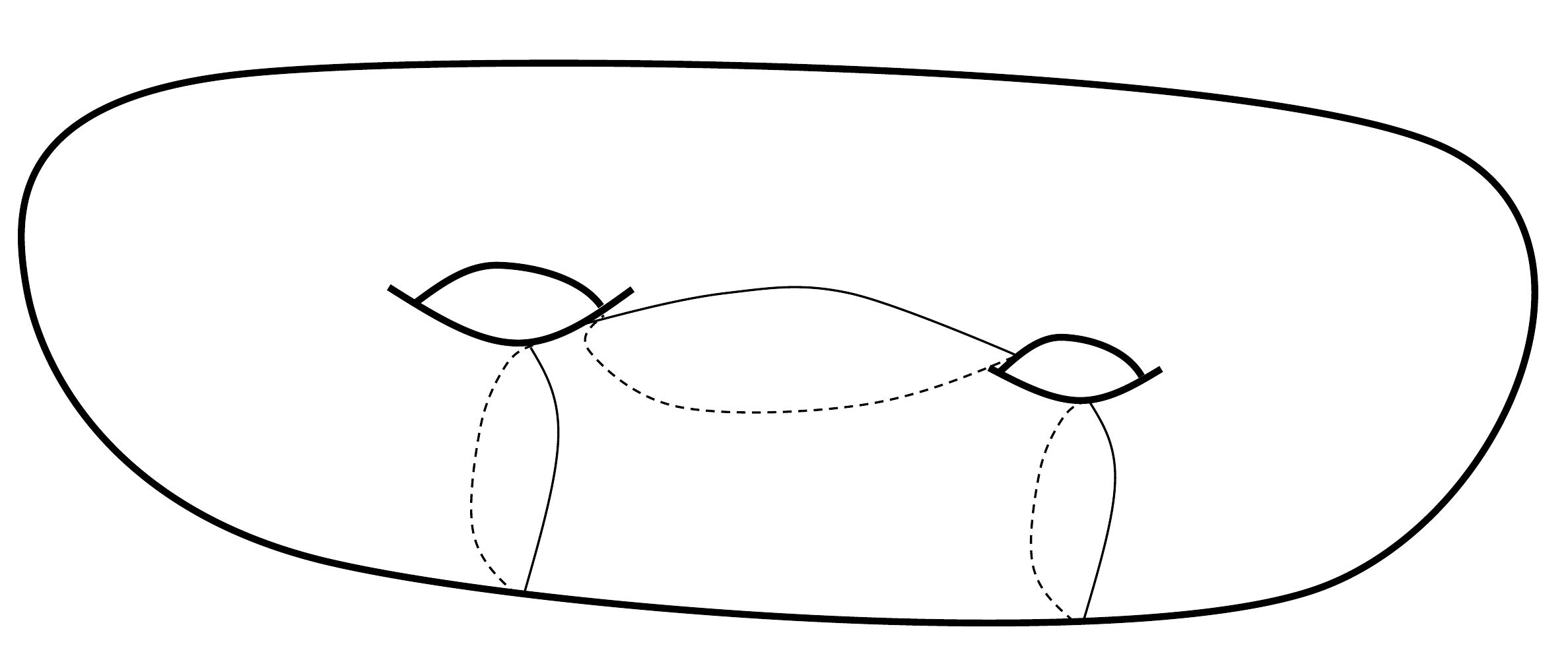}};
\draw (0,.4cm) node {$\xi_2$};
\draw (-1.2cm, -.7cm) node {$\xi_1$};
\draw (1.7cm, -.7cm) node {$\xi_3$};
\end{tikzpicture}
\caption{}\label{f1}
\end{figure}

\subsection{Decomposition of fundamental groups}\label{tree}
 Let $\Sigma$ be a compact oriented hyperbolic surface of genus $g$ with boundary components $\zeta_1, \cdots, \zeta_b$. We denote by $\Gamma$ its fundamental group $\pi_1(\Sigma)$. Let $\{\xi_1, \cdots, \xi_m\}$ be a collection of pairwise disjoint, non-isotopic essential simple closed curves in $\Sigma$ that divide the surface into subsurfaces $\Sigma_1, \cdots, \Sigma_l$. We assume that each $\Sigma_i$ is hyperbolic.  
 

Following Johnson-Millson \cite{Johnson1987}, we can construct a tree $\mathcal{T}$  as follow. Let $p:\widetilde{\Sigma}\to \Sigma$ be the universal cover. The set of vertices $V(\mathcal{T})$ consists of  connected components of $\widetilde{\Sigma} \setminus \bigcup_{i=1} ^ m p^{-1}(\xi_i)$. Two vertices are joined by an edge in $E(\mathcal{T})$ if they are adjacent along some component of $p^{-1}(\xi_i)$. Observe that each vertex corresponds to the universal cover of some $\Sigma_i$.  Johnson-Millson show in \cite[Lemma 5.3]{Johnson1987}  that $\mathcal{T}$ is indeed a tree and admits a $\Gamma$-action without inversion. Hence, we have the following theorem. 

\begin{theorem}[Johnson-Millson \cite{Johnson1987}, see also Serre \cite{serre2003}]\label{decomposerep}
	Let $\Sigma$ be a compact oriented hyperbolic surface, $\{\xi_1, \cdots, \xi_m\}$   a collection of pairwise disjoint, non-isotopic essential simple closed curves in $\Sigma$ that divides the surface into hyperbolic subsurfaces $\Sigma_1, \cdots, \Sigma_l$. Then  $\Gamma:=\pi_1(\Sigma)$ is isomorphic to the fundamental group $\pi_1(\Gamma, \mathcal{G}, \mathcal{D})$ of a graph of groups $(\Gamma,\mathcal{G})$, $\mathcal{G} = \mathcal{T}/\Gamma$ where $\mathcal{D}$ is a choice of a maximal tree of $\mathcal{G}$.  We can label vertices of  $\mathcal{G}$ by $\Sigma_1, \cdots, \Sigma_l$  and edges by $\xi_1, \cdots, \xi_m$. Choose  a lift $j$ of $\mathcal{D}$ in $\mathcal{T}$. The vertex group at $\Sigma_i$ is $\Gamma_{\Sigma_i}=\operatorname{Stab}_{\Gamma} (j(\Sigma_i))$ and the edge group at $\xi_i$ is $\Gamma_{\xi_i} = \operatorname{Stab}_{\Gamma}(j(\xi_i))$. 
\end{theorem}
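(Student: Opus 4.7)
The plan is to apply the Bass--Serre correspondence between group actions on trees and graphs of groups. The two inputs required are that $\mathcal{T}$ is a tree and that $\Gamma$ acts on $\mathcal{T}$ without inversions. Both of these are already stated in the paragraph preceding the theorem and attributed to Johnson--Millson \cite[Lemma 5.3]{Johnson1987}, so I would simply invoke that lemma. The structure theorem for groups acting on trees (Theorem 13 of Ch.~I, \S5.4 of Serre \cite{serre2003}) then yields the isomorphism $\Gamma\cong \pi_1(\Gamma,\mathcal{G},\mathcal{D})$ for the quotient graph of groups $\mathcal{G}=\mathcal{T}/\Gamma$ together with any choice of maximal subtree $\mathcal{D}$ and lift $j$.

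The remaining content is to identify the quotient graph $\mathcal{G}$ and its vertex and edge groups concretely. For the labeling of the graph: each vertex of $\mathcal{T}$ is a connected component of $\widetilde{\Sigma}\setminus \bigcup_j p^{-1}(\xi_j)$, and each such component projects onto exactly one of the subsurfaces $\Sigma_i$. Since $\Gamma$ acts transitively on the components of $p^{-1}(\Sigma_i)$, the set of $\Gamma$-orbits on $V(\mathcal{T})$ is in bijection with $\{\Sigma_1,\ldots,\Sigma_l\}$. An identical argument applied to edges, which lie in preimages $p^{-1}(\xi_j)$, gives the edge labeling by $\{\xi_1,\ldots,\xi_m\}$.

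For the vertex and edge groups, fix the lift $j$ of $\mathcal{D}$ in $\mathcal{T}$ as in the statement. Each $\Sigma_i$ is an incompressible subsurface of $\Sigma$, so each component of the preimage of $\Sigma_i$ inside the universal cover $\widetilde{\Sigma}$ is itself a universal cover of $\Sigma_i$; in particular the region $j(\Sigma_i)\subset \widetilde{\Sigma}$ is open, connected, and simply connected. Hence $\operatorname{Stab}_\Gamma(j(\Sigma_i))$ acts freely and properly discontinuously on $j(\Sigma_i)$ with quotient $\Sigma_i$, so is canonically isomorphic to $\pi_1(\Sigma_i)$, embedded in $\Gamma=\pi_1(\Sigma)$ in the standard way determined by the lift. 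The same argument applied to the lift $j(\xi_i)\subset p^{-1}(\xi_i)$ identifies $\operatorname{Stab}_\Gamma(j(\xi_i))$ with $\pi_1(\xi_i)\cong\mathbb{Z}$, and the boundary-stabilizer inclusions demanded by the graph of groups structure are realized by the inclusions of stabilizers of incident simplices.

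The main obstacle is not conceptual but rather bookkeeping: one must coordinate the choice of the lift $j$ of the maximal tree $\mathcal{D}$ together with base-point choices in each vertex lift so that the two edge injections $\Gamma_{\xi_i}\hookrightarrow \Gamma_{\Sigma_i}$ required by the graph of groups formalism coincide on the nose with the geometric inclusions of stabilizers. Once this matching is arranged (adjusting, if necessary, by the edge elements associated with edges of $\mathcal{G}$ outside $\mathcal{D}$), every remaining claim is either a standard covering-space argument or a direct citation of Bass--Serre theory.
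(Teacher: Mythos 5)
Your proposal is correct and follows exactly the route the paper takes: the paper itself gives no proof, but rather constructs the tree $\mathcal{T}$, cites Johnson--Millson's Lemma 5.3 for the tree property and the $\Gamma$-action without inversion, and then simply invokes Bass--Serre theory (Serre's structure theorem). Your elaboration of the quotient-graph labeling and the identification of vertex/edge stabilizers with $\pi_1(\Sigma_i)$ and $\pi_1(\xi_i)$ via incompressibility is a correct filling-in of what the paper leaves implicit.
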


Observe that $\Gamma_{\Sigma_i}$ is conjugate in $\Gamma$ to $\pi_1(\Sigma_i)$ and that $\Gamma_{\xi_i}$ is conjugate to $\pi_1(\xi_i)$. 

Let us choose a base vertex of $\mathcal{D}$ and define a relation $\le$ on $V(\mathcal{D})$ by declaring that $\Sigma_i \le {\Sigma_j}$ if and only if ${\Sigma_i}$ is nearer to the base vertex  than ${\Sigma_j}$. It is clear that $\le$ is a partial order and the set $V(\mathcal{D})$ becomes a poset. For each vertex $\Sigma_i$ we define the following subset   
\[
S({\Sigma_i}):=\{{\Sigma_j} \in V(\mathcal{D})\,|\,{\Sigma_i}\le {\Sigma_j}\}.
\] 

For an edge ${\xi_i}$ and $\gamma \in \Gamma_{\xi_i}$, denote by $\gamma ^-$ the image of $\gamma$ in the vertex group of  the origin  $\operatorname{o}({\xi_i})$. Similarly $\gamma^+$ is the image of $\gamma$ in the vertex group of the terminal $\operatorname{t}({\xi_i})$. Therefore, for each $\xi_i \in  E(\mathcal{G})$, 
\begin{align*}
\Gamma_{\xi_i} ^{+}&:= \{\gamma^{+} \,|\, \gamma\in \Gamma_{\xi_i}\}, \quad \text{ and}\\
\Gamma_{\xi_i} ^{-}&:= \{\gamma^{-} \,|\, \gamma\in \Gamma_{\xi_i}\}
\end{align*}
are subgroups of $\Gamma=\pi_1(\Gamma, \mathcal{G}, \mathcal{D})$. 

For each $\xi_i \in E(\mathcal{G})$ and each $\gamma \in \Gamma_{\xi_i}$,  we have $\gamma^+ = \gamma^-$  in the whole group $\Gamma$. If $\xi_i$ is an edge of $\mathcal{G}$ but  $\xi_i\notin E(\mathcal{D})$, then we have an additional  generator $\xi_i^{\perp}$ with relation $\xi_i^{\perp} \gamma^+ (\xi_i^{\perp})^{-1} = \gamma^-$ for each $\gamma \in \Gamma_{\xi_i}$ in $\Gamma$. Note that $\xi_i^{\perp}$  corresponds to a loop  transverse to $\xi_i$.

Let $\rho:\Gamma \to G$  be a representation. Since each vertex group $\Gamma_{\Sigma_i}$ injects into $\Gamma$, $\rho$ induces a representation $\rho_{\Gamma_{\Sigma_i}} : \Gamma_{\Sigma_i} \to G$ for each vertex group.  $\rho$  also induces a representation $\rho_{\xi_i^{\perp}}:\langle \xi_i^{\perp} \rangle \to G$ for each edge $\xi_i$ which is not in $E(\mathcal{D})$.  In this way, we obtain a collection of representations $\rho_{\Gamma_{\Sigma_i}}: \Gamma_{\Sigma_i } \to G$ for each $i=1,2,\cdots,l$ and  $\rho_{\xi_i^{\perp}}:\langle \xi_i^{\perp}\rangle \to G$ for each $\xi_i\in E(\mathcal{G})\setminus E(\mathcal{D})$.

Conversely, suppose that we are given a  collection of representations $\rho_{\Gamma_{\Sigma_i}}: \Gamma_{\Sigma_i } \to G$ for each $i=1,2,\cdots,l$ and  $\rho_{\xi_j^{\perp}}:\langle \xi_j^{\perp}\rangle \to G$ for each $\xi_j\in E(\mathcal{G})\setminus E(\mathcal{D})$, subject to relations 
\begin{itemize}
\item  If ${\xi_k} \in E(\mathcal{D})$ and if ${\Sigma_i}=\operatorname{o}(\Gamma_{\xi_k})$ and ${\Sigma_j}=\operatorname{t}(\Gamma_{\xi_k})$  then for each $\gamma\in \Gamma_{\xi_k}$, 
\begin{equation}\label{rel1}
\rho_{\Gamma_{\Sigma_i}}(\gamma ^-) = \rho_{\Gamma_{\Sigma_j}}( \gamma ^+). 
\end{equation}
 \item If ${\xi_k}\notin E(\mathcal{D})$ and if ${\Sigma_i}=\operatorname{o}({\xi_k})$ and ${\Sigma_j}=\operatorname{t}({\xi_k})$ then, for each $\gamma \in \Gamma_{\xi_k}$, 
 \begin{equation}\label{rel2}
\rho_{\xi_k^{\perp}} (\xi_k^{\perp}) \rho_{\Gamma_{\Sigma_j}}(\gamma^+) \rho_{\xi_k^{\perp}}(\xi_k^{\perp})^{-1} = \rho_{\Gamma_{\Sigma_i}}(\gamma^-).
\end{equation}
\end{itemize}
Then there is a unique representation $\rho: \Gamma \to G$  whose restrictions are precisely prescribed representations. 
\begin{figure}[ht]\label{f2}
\begin{tikzpicture}
\draw(0,0) node {\includegraphics[scale=0.3]{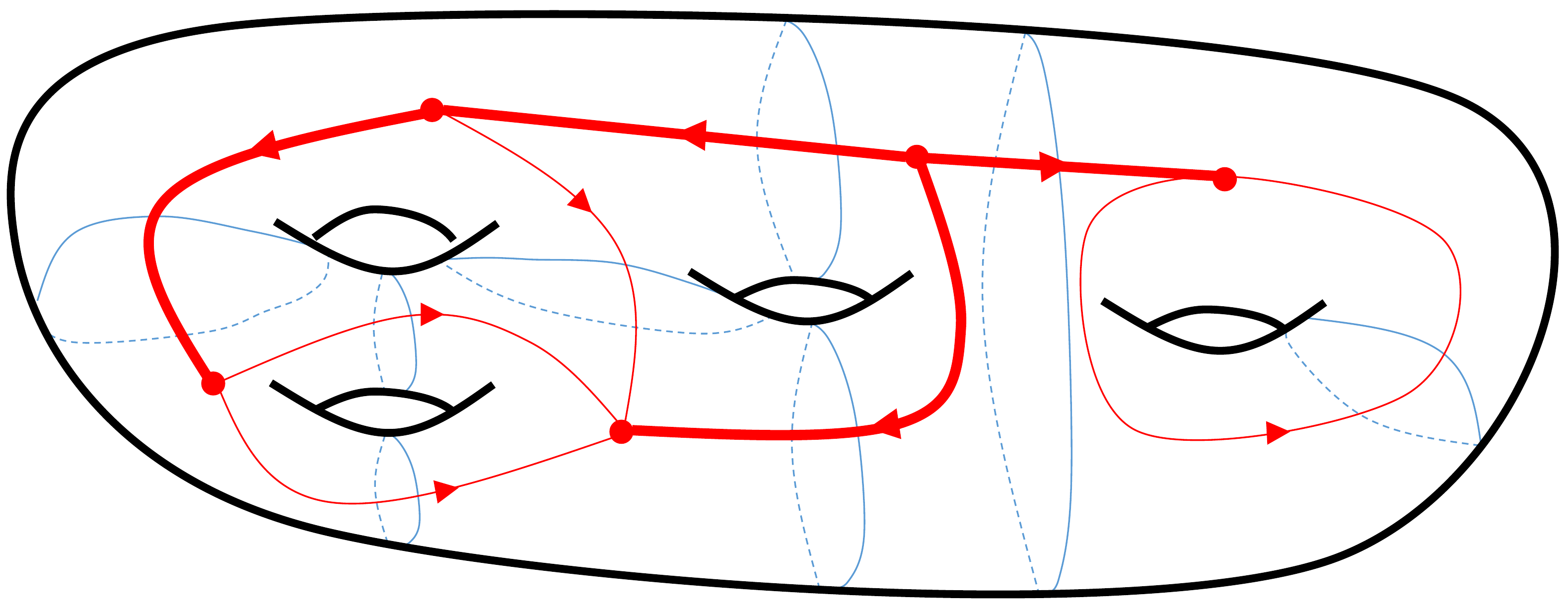}};
\end{tikzpicture}
\caption{An example of decomposition.  Curves in $\mathcal{C}$ are depicted by blue lines. The graph $\mathcal{G}$ is drawn in red with a maximal tree $\mathcal{D}$ bolded.}
\end{figure}

\subsection{The Mayer-Vietoris sequence}\label{MVS}
Before go further, we summarize the general settings that we consider in the subsequent section. 
 \begin{itemize}
 \item $\Sigma$ is a compact oriented hyperbolic surface with boundary components $\{\zeta_1, \cdots, \zeta_b\}$.  $\{\xi_1, \cdots, \xi_m\}$  is a collection of pairwise disjoint, non-isotopic essential simple closed curves in $\Sigma$ that divide the surface into hyperbolic subsurfaces $\Sigma_1, \cdots, \Sigma_l$. We  stick to the notation $\Gamma=\pi_1(\Sigma)$ (which is also isomorphic to $\pi_1(\Gamma, \mathcal{G}, \mathcal{D})$), $\Gamma_{\Sigma_1}, \cdots, \Gamma_{\Sigma_l}$ and $\Gamma_{\xi_1}, \cdots, \Gamma_{\xi_m}$ of the previous subsection.
 
 \item Denote by $\iota_{\Sigma_i}$ the map $\overline{\Sigma_i} \to \Sigma$, the extension of the inclusion $\Sigma_i \to \Sigma$ to the completion $\overline{\Sigma_i}$ of $\Sigma_i$. We sometimes use the same notation $\iota_{\Sigma_i}$ to denote the induced homomorphism  $\iota_{\Gamma_{\Sigma_i}}:\Gamma_{\Sigma_i} \to\Gamma$.
 
 \item Unless otherwise stated, $[\rho]$ denotes an element in $\overline{\Rep}_n ^{\mathscr{B}}(\Gamma, \mathscr{C})$ such that $[\rho_{\Gamma_{\Sigma_i}}]\in \overline{\Rep}_n ^{\mathscr{B}_i} (\Gamma_{\Sigma_i})$ for each $i=1,2,\cdots, l$ where 
 \[
 \mathscr{B}_i=\{(\xi,B)\,|\,\xi\text{ is a component of } \partial \overline{\Sigma_i}\text{ and } (\iota_{\Sigma_i}(\xi),B)\in \mathscr{B}\cup \mathscr{C}\}.
 \]
 \end{itemize}

The Mayer-Vietoris sequence for cohomology of group system is proven in \cite{bieri1978}. Our version of the  Mayer-Vietoris sequence is the following.
\begin{proposition}\label{mvs} Fix a representation $\rho$ in the class $[\rho]$. Let $(\Gamma, \mathcal{S})$ be a group system where $\Gamma=\pi_1(\Sigma)$ and $\mathcal{S} = \{\Gamma_{\xi_1}^+, \cdots, \Gamma_{\xi_m}^+, \langle \zeta_1 \rangle, \cdots, \langle \zeta_b \rangle\}$. Define for each $i=1,2,\cdots, l$, $\mathcal{S}_i =\{ \langle \zeta \rangle \subset \Gamma_{\Sigma_i}\,|\, \zeta \text{ is a component of } \partial \overline{\Sigma_i}  \}$ so that $(\Gamma_{\Sigma_i}, \mathcal{S}_i)$ is a group subsystem of $(\Gamma, \mathcal{S})$. 
	\begin{itemize}
	\item The sequence
	\[
	0\to \bigoplus_{i=1} ^{m} H^0(\Gamma_{\xi_i}; \mathfrak{g}_{\rho_{\Gamma_{\xi_i}^+}})\overset{\delta}{\to} H^1_{\mathrm{par}} (\Gamma,\mathcal{S}; \mathfrak{g}_{\rho})\overset{\iota^*}{\to}  \bigoplus_{i=1} ^l H^1 _{\mathrm{par}} (\Gamma_{\Sigma_i},\mathcal{S}_i ;\mathfrak{g}_{\rho_{\Gamma_{\Sigma_i}}})\to 0
	\]
	is exact. 
	\item The connecting homomorphism $\delta$ sends $X\in H^0(\Gamma_{\xi_i}; \mathfrak{g})$ to the tangent cocycle of an algebraic bending by $X$  along $\xi_i$ and $\iota^*$ is induced from the inclusions $\iota_{{\Sigma_i}}:(\Gamma_{\Sigma_i},\mathcal{S}_i)\to (\Gamma,\mathcal{S})$ that is, 
	\[
	\iota^* ([\alpha]) = \iota_{\Sigma_1} ^* [\alpha] \oplus \cdots \oplus \iota^* _{\Sigma_l}[\alpha].
	\]
	\end{itemize}
\end{proposition}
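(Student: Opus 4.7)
The plan is to derive this as the parabolic analogue of the standard Bass--Serre Mayer--Vietoris long exact sequence attached to the graph of groups $(\Gamma, \mathcal{G}, \mathcal{D})$. The $\Gamma$-equivariant cellular chain complex of the tree $\mathcal{T}$ supplies a short exact sequence of $\Z\Gamma$-modules
\[
0 \to \bigoplus_{i=1}^m \Z[\Gamma/\Gamma_{\xi_i}] \to \bigoplus_{i=1}^l \Z[\Gamma/\Gamma_{\Sigma_i}] \to \Z \to 0,
\]
so applying $\Hom_\Gamma(-, \mathfrak{g})$ to a projective resolution and invoking Shapiro's lemma gives the classical Mayer--Vietoris long exact sequence relating $H^*(\Gamma;\mathfrak{g})$, $H^*(\Gamma_{\Sigma_i};\mathfrak{g})$ and $H^*(\Gamma_{\xi_i};\mathfrak{g})$. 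Because each $\rho_{\Gamma_{\Sigma_i}}$ is irreducible (Lemma \ref{NoInvariantElement}), $H^0(\Gamma_{\Sigma_i};\mathfrak{g}) = 0$, so the connecting homomorphism $\delta : \bigoplus H^0(\Gamma_{\xi_i};\mathfrak{g}) \to H^1(\Gamma;\mathfrak{g})$ is automatically injective.

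The next step is to cut this sequence down to the parabolic subspaces. For the restriction map, each $\zeta \in \mathcal{S}_i$ has $\iota_{\Sigma_i}(\zeta)$ conjugate in $\Gamma$ to an element of $\mathcal{S}$, so Remark \ref{conj} ensures that $\iota^*$ sends $H^1_{\mathrm{par}}(\Gamma,\mathcal{S};\mathfrak{g})$ into $\bigoplus H^1_{\mathrm{par}}(\Gamma_{\Sigma_i},\mathcal{S}_i;\mathfrak{g})$. The explicit bending cocycles in the image of $\delta$ (described below) are coboundaries on every subgroup in $\mathcal{S}$, so $\operatorname{im}\delta \subset H^1_{\mathrm{par}}(\Gamma,\mathcal{S};\mathfrak{g})$ and the equality $\ker\iota^* = \operatorname{im}\delta$ descends. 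For surjectivity, any parabolic $\beta_i$ on $(\Gamma_{\Sigma_i},\mathcal{S}_i)$ is already a coboundary on every adjacent $\Gamma_{\xi_k}$, hence $(\beta_i)$ lies in the kernel of the next map $\bigoplus H^1(\Gamma_{\Sigma_i};\mathfrak{g}) \to \bigoplus H^1(\Gamma_{\xi_k};\mathfrak{g})$ and lifts to some $[\alpha] \in H^1(\Gamma;\mathfrak{g})$; parabolicity of $\alpha$ on every element of $\mathcal{S}$ is then inherited from the parabolicity of the $\beta_i$'s at their boundaries.

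To identify $\delta$ with algebraic bending, I would trace through the snake-lemma recipe. Given $X \in H^0(\Gamma_{\xi_i};\mathfrak{g})$, lift $X$ to a $0$-cochain on $\bigoplus \Z[\Gamma/\Gamma_{\Sigma_j}]$ that equals $X$ on the vertex group of one side of $\xi_i$ and zero on the other, apply the coboundary, then pull back to $C^1(\Gamma;\mathfrak{g})$. A direct computation recovers Goldman's tangent cocycle of the bending deformation $\rho_t(\gamma) = \exp(tX)\rho(\gamma)\exp(-tX)$ applied only on the chosen side (equivalently, $\rho_t(\xi_i^\perp) = \exp(tX)\rho(\xi_i^\perp)$ for non-tree edges), producing a coboundary of $X$ on one vertex group and zero on the other, together with $\alpha(\xi_i^\perp) = X$ for non-tree edges.

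The main obstacle is aligning the auxiliary-resolution formalism of subsection 2.2 with the Bass--Serre resolution of $\mathcal{T}$, so that parabolicity is tracked at every stage of the derivation. To achieve this, I would construct a single $(\Gamma,\mathcal{S})$-auxiliary resolution simultaneously compatible with the graph-of-groups structure: take the auxiliary resolutions for each $(\Gamma_{\Sigma_i},\mathcal{S}_i)$ supplied by the appendix, induce them up via $\Z\Gamma \otimes_{\Z\Gamma_{\Sigma_i}}$, and splice them along the edge-group resolutions through the tree sequence. A diagram chase through the resulting double complex converts the standard Bass--Serre exactness into the parabolic exactness claimed in the statement.
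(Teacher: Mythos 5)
Your Bass--Serre Mayer--Vietoris strategy is essentially the same approach the paper takes in its appendix: the short exact coefficient sequence $0\to\bigoplus \Z[\Gamma/\Gamma_{\xi_i}]\to\bigoplus\Z[\Gamma/\Gamma_{\Sigma_i}]\to\Z\to 0$ you write down is precisely the augmented cellular chain complex of the Johnson--Millson tree $\mathcal{T}$, and the long exact sequence you obtain via Shapiro's lemma is what the paper produces from the exact sequence of cellular chain complexes $0\to\bigoplus C_*(p^{-1}(\xi_i))\to\bigoplus C_*(p^{-1}(\Sigma_i))\to C_*(\widetilde\Sigma)\to 0$ (with the identification of $\delta$ with algebraic bending likewise deferred to Johnson--Millson). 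The one point you flag as the ``main obstacle'' --- aligning the auxiliary-resolution formalism with the Bass--Serre data so parabolicity is tracked termwise --- is exactly what the paper's Lemma \ref{paraboliccocycle} resolves, by choosing a CW-structure on $\Sigma$ adapted to the $\xi_i$ and $\zeta_j$ so that the resulting cellular resolution is already a $(\Gamma,\mathcal S)$-auxiliary resolution and parabolic cocycles are literally those vanishing on the $1$-cells lying over the curves in $\mathcal S$; this makes the parabolic restriction of the sequence immediate rather than requiring the double-complex splicing you sketch.
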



We do not prove the first statement at this moment because their proof has no dependency on remaining parts of our paper. For the sake of completeness however we give a proof in the appendix.  

The second assertion regarding the map $\delta$ is shown in \cite[Lemma 5.8]{Johnson1987}. Because we do need some details about the connecting homomorphism, we give more descriptions here.

Choose a representation $\rho$ in the class $[\rho]$.  Let $\xi_0$ be an edge of $\mathcal{G}$. Let   $X\in H^0(\Gamma_{\xi_{i_0}}; \mathfrak{g}_{\rho_{\Gamma_{\xi_0}^+}})$ where $H^0 (\Gamma_{\xi_{i_0}};  \mathfrak{g}_{\rho_{\Gamma_{\xi_0}^+}}) = \ker(\Ad_{\rho_{\Gamma_{\xi_0}^+}} - \operatorname{Id})\subset \mathfrak{g}$.  We introduce a flow $\Phi^t _{X, \xi_{i_0}}$ in $\Hit_n(\Sigma)$ as follow.  If $\xi_{i_0}$ is an edge in $E(\mathcal{D})$ joining $\Sigma_p$ and $\Sigma_q$ with $\Sigma_p<\Sigma_q$, define
\[
\Phi^t _{X, \xi_{i_0}} (\rho) (x) = \begin{cases}
\rho(x) & x\in \Gamma_{\Sigma_j} \text{, } \Sigma_j \le \Sigma_p\text{ or incomparable}\\
(\exp tX) \rho(x) (\exp - tX)& x\in \Gamma_{\Sigma_j} \text{, }\Sigma_j \ge \Sigma_q\\
(\exp tX) \rho(x) (\exp - tX) & x=\xi_k^{\perp}\text{, } \operatorname{o}({\xi_k}), \operatorname{t}({\xi_k})\in S(\Sigma_q)\\
(\exp  tX) \rho(x) & x=\xi_k^{\perp} \text{, } \operatorname{o}(\xi_k )\in S(\Sigma_q)\\
 \rho(x)(\exp  - tX) & x=\xi_k^{\perp} \text{, } \operatorname{t}(\xi_k )\in S(\Sigma_q)\\
\end{cases}.
\]
For each $t$, $\Phi^t _{X, \xi_{i_0}}(\rho)$ satisfies all relations in (\ref{rel1}) and (\ref{rel2}).

If $\xi_{i_0}$ is not in $E(\mathcal{D})$, we define
\[
\Phi^t _{X, \xi_{i_0}} (\rho) (x) = \begin{cases}
 \rho(x) (\exp tX) & x=\xi_{i_0}^{\perp} \\ 
\rho(x) & \text{otherwise}
\end{cases}.
\]
Again $\Phi^t _{X,\xi_{i_0}} (\rho)$ fulfills the relations in (\ref{rel1}) and (\ref{rel2}) for each $t$.  Therefore, in both cases, we get the flow of representations $\Phi^t _{X, \xi_{i_0}}$. Call this flow the \emph{algebraic bending} by $X$ along $\xi_{i_0}$. The last assertion of Proposition \ref{mvs} states that $\frac{\partial}{\partial t} \Phi^t_{X,\xi_{i_0}}  = \delta(0,0,\cdots,0,X,0,\cdots,0)$ where $X$ is in the $H^0 (\Gamma_{\xi_{i_0}};\mathfrak{g})$ component of $\bigoplus_{i=1} ^m H^0(\Gamma_{\xi_i}; \mathfrak{g})$.

\subsection{Local decomposition formula: separating case}
Assume $\xi$ is separating so that  $\Sigma \setminus \xi$ has two components $\Sigma_1, \Sigma_2$ each of which is hyperbolic. In view of Theorem \ref{decomposerep}, $\Gamma$ is the fundamental group of \raisebox{-2ex}{\begin{tikzpicture}
\draw (0,0) node {$\bullet$} ;
\draw (0,0) -- (1,0) node {$\bullet$};
\draw[->, thick] (.5,0)--(.51,0);
\draw (0,0) node[above] {$\Sigma_1$};
\draw (1,0) node[above] {$\Sigma_2$};
\draw (.5,0) node[above]{$\xi$};
\end{tikzpicture}}.  

From Proposition \ref{mvs}, we have the short exact sequence
\begin{multline*}
0\to H^0(\Gamma_\xi; \mathfrak{g}_{\rho_{\Gamma_\xi ^+}}) \overset{\delta}{\to}  H^1 _{\mathrm{par}} (\Gamma,\mathcal{S} ; \mathfrak{g}_\rho)  \\  \overset{(\iota_{\Sigma_1} ^*,\iota_{\Sigma_2} ^* )}{\to} H^1_{\mathrm{par}} (\Gamma_{\Sigma_1},\mathcal{S}_1; \mathfrak{g}_{\rho_{\Gamma_{\Sigma_1}}}) \oplus H^1 _{\mathrm{par}}(\Gamma_{\Sigma_2},\mathcal{S}_2; \mathfrak{g}_{\rho_{\Gamma_{\Sigma_2}}})\to 0
\end{multline*}
where, as before, $\mathcal{S}= \{\Gamma_\xi ^+, \langle \zeta_1\rangle, \cdots, \langle \zeta_b\rangle \}$ is a collection of subgroups of $\Gamma$ that forms a group system $(\Gamma, \mathcal{S})$ and $\mathcal{S}_i = \{\langle \zeta \rangle \subset \Gamma_{\Sigma_i} \,|\, \zeta \text{ is a component of } \partial \overline{\Sigma_i}\}$. We also abbreviate the inclusion $\iota_{\Gamma_{\Sigma_i}}:\Gamma_{\Sigma_i} \to \Gamma$ to $\iota_{\Sigma_i}$. 



Now we can state  the decomposition formula.

\begin{theorem}\label{decomppairingsep} Let $\Sigma$ be a compact oriented hyperbolic surface possibly with boundary components $\{\zeta_1, \cdots, \zeta_b\}$ and $\xi$ an essnetial simple closed curve that separates $\Sigma$ into two hyperbolic subsurfaces $\Sigma_1$ and $\Sigma_2$. Let $(\Gamma, \mathcal{S})$ be the group system where $\Gamma=\pi_1(\Sigma)$ and $\mathcal{S}= \{\Gamma_\xi^+, \langle \zeta_1\rangle, \cdots, \langle \zeta_b\rangle \}$. Choose a boundary frame $\mathscr{B}$ and $\{\xi\}$-frame $\mathscr{C}$.  Let $[\rho] \in \overline{\Rep}_n ^{\mathscr{B}}(\Gamma, \mathscr{C})$ be such that $[\rho_{\Gamma_{\Sigma_i}}] \in \overline{\Rep}_n ^{\mathscr{B}_i}(\Gamma_{\Sigma_i})$ where 
\[
\mathscr{B}_i=\{ (\zeta, B)\,|\, \zeta\text{ is a component of }\partial \overline{\Sigma_i}\text{ and }(\iota_{\Sigma_i}(\zeta), B) \in \mathscr{B}\cup \mathscr{C}\}
\]
for each $i=1,2$. Fix a representation $\rho$ in the class $[\rho]$. For $[\alpha],[\beta]\in H^1_{\mathrm{par}}(\Gamma, \mathcal{S};\mathfrak{g}_\rho)$, we have
	\[
	\omega_K^{\Sigma}([\alpha], [\beta]) = \omega^{\Sigma_1} _K (\iota^* _{\Sigma_1}[\alpha],\iota^* _{\Sigma_1}[\beta])+\omega^{\Sigma_2} _K(\iota^* _{\Sigma_2}[\alpha],\iota^* _{\Sigma_2}[\beta]).
	\] 
\end{theorem}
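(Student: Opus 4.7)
The proof proceeds by a direct Fox-calculus computation in the spirit of Zocca \cite{zocca1998}. First I realize $\Gamma=\pi_1(\Sigma)$ as the amalgamated product $\Gamma_{\Sigma_1}*_{\Gamma_\xi}\Gamma_{\Sigma_2}$ via Theorem \ref{decomposerep}, and I choose single-relation presentations of $\Gamma$, $\Gamma_{\Sigma_1}$ and $\Gamma_{\Sigma_2}$ that align with this decomposition. Using Lemma \ref{welldefinedness} to conjugate the relation if necessary, I order the standard generators of $\Gamma$ so that those coming from $\Sigma_1$ form an initial block and the surface relation factors as $R=R_1^{*}\cdot S$, where $R_1^{*}$ involves only $\Sigma_1$-generators and $S$ involves only $\Sigma_2$-generators. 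Then the element $\xi:=R_1^{*}\in\Gamma$ represents the separating curve, and the presentations of $\Gamma_{\Sigma_i}$ take the form $R_1=R_1^{*}\xi_1$ and $R_2=\xi_2 S$, with the inclusions sending $\xi_1\mapsto\xi^{-1}$ and $\xi_2\mapsto\xi$.

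Applying the Fox derivative formulas of Theorem \ref{fox} and using $R_1^{*}=\xi$ in $\Gamma$, I obtain
\[
\partial R/\partial g=\partial R_1^{*}/\partial g+\xi\cdot\partial S/\partial g
\]
for every generator $g$. Assembling the relative fundamental classes from Lemma \ref{funclcpt}, a direct comparison yields the chain-level identity (in the normalized bar resolution)
\[
[\Sigma]=[\Sigma_1]+[\Sigma_2]-E,\qquad E=[\xi\,|\,\xi^{-1}].
\]
Evaluating the pairing via formula (\ref{explicitformula}) gives $\langle\alpha\smile\beta,E\rangle=-\Tr\alpha(\xi^{-1})\beta(\xi^{-1})$.

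I then unwind the definition of $\omega_K^{\Sigma}$ from Theorem \ref{Ksymp}. For the boundary components of $\Sigma$ inherited by $\Sigma_i$, the correction terms $-\Tr X_k\beta(\zeta_k)$ cancel pairwise between $\omega_K^{\Sigma}$ and $\omega_K^{\Sigma_i}$ after taking the same $X_k$ on both sides; this is permissible because the defining condition $\alpha(\zeta_k)=\zeta_k\cdot X_k-X_k$ is intrinsic to the boundary subgroup $\langle\zeta_k\rangle$. The well-definedness lemma preceding Theorem \ref{Ksymp} also lets me pick a single $X_\xi\in\mathfrak{g}$ as the correction datum for $\xi$ in both $\Gamma_{\Sigma_1}$ and $\Gamma_{\Sigma_2}$, producing surplus contributions $-\Tr X_\xi\beta(\xi^{-1})$ in $\omega_K^{\Sigma_1}$ and $-\Tr X_\xi\beta(\xi)$ in $\omega_K^{\Sigma_2}$. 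Substituting $\alpha(\xi^{-1})=\rho(\xi^{-1})\cdot X_\xi-X_\xi$ and $\beta(\xi^{-1})=-\rho(\xi^{-1})\cdot\beta(\xi)$ into the discrepancy
\[
\omega_K^{\Sigma}-\omega_K^{\Sigma_1}-\omega_K^{\Sigma_2}=\Tr\alpha(\xi^{-1})\beta(\xi^{-1})+\Tr X_\xi\beta(\xi^{-1})+\Tr X_\xi\beta(\xi)
\]
and invoking the cyclicity of the trace to show $\Tr\alpha(\xi^{-1})\beta(\xi^{-1})=-\Tr X_\xi\beta(\xi)-\Tr X_\xi\beta(\xi^{-1})$ makes the discrepancy vanish.

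The main obstacle is the combinatorial set-up in the first paragraph: arranging the generators of $\Gamma$ so that the separating curve $\xi$ is realized as a prefix $R_1^{*}$ of the surface relation $R$, and verifying that the resulting one-relation presentations of the $\Gamma_{\Sigma_i}$ are compatible with the standard boundary generators $\xi_1,\xi_2$ under the inclusions. Lemma \ref{welldefinedness} supplies the needed flexibility to conjugate $R$, after which the remaining Fox-calculus manipulation and trace identities are mechanical.
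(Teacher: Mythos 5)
Your proof is correct, and it takes a genuinely different route from the paper's. Both arguments begin by setting up compatible single-relation presentations so that the relative fundamental class satisfies $[\Sigma]=[\Sigma_1]+[\Sigma_2]-E$ with $E=[\xi\,|\,\xi^{-1}]$ (the paper writes $E=[z_{1,0}^{-1}|z_{1,0}]$, which is the same chain in the other notation). From that point on the strategies diverge. The paper invokes the Mayer--Vietoris identity $H^1_{\mathrm{par}}(\Gamma,\mathcal{S};\mathfrak{g})=\ker\iota^*_{\Sigma_1}+\ker\iota^*_{\Sigma_2}$ to split an arbitrary class $[\alpha]=[\alpha_1]+[\alpha_2]$, then applies a normalization lemma (Lemma~\ref{normalize}) to choose a representative vanishing identically on one side, so that the correction term $\Tr X_\xi\beta(\cdot)$ and the pairing against $E$ both vanish for free; the general case follows by bilinearity. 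You instead run the computation directly for an arbitrary pair $[\alpha],[\beta]$, isolating the discrepancy
\[
\omega_K^{\Sigma}-\omega_K^{\Sigma_1}-\omega_K^{\Sigma_2}=\Tr\alpha(\xi^{-1})\beta(\xi^{-1})+\Tr X_\xi\beta(\xi^{-1})+\Tr X_\xi\beta(\xi),
\]
and then killing it with the $\operatorname{Ad}$-invariance of the trace form together with the coboundary identity $\alpha(\xi^{-1})=\rho(\xi^{-1})\acts X_\xi-X_\xi$ and the cocycle identity $\beta(\xi^{-1})=-\rho(\xi^{-1})\acts\beta(\xi)$; I verified these substitutions and they do cancel exactly. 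Your observation that a single $X_\xi$ may serve as the correction datum for both $\Gamma_{\Sigma_1}$ and $\Gamma_{\Sigma_2}$ is correct, again by the cocycle identity on $\langle\xi\rangle$. What your approach buys is independence from the Mayer--Vietoris splitting and from the normalization lemma, making the argument more self-contained; what it costs is a more delicate bookkeeping of the presentations and a trace manipulation that the paper avoids entirely by reducing to cocycles supported on one side. One point you should tighten: the cancellation of the $\zeta_k$-terms implicitly requires that each original boundary component of $\Sigma$ lies on exactly one $\Sigma_i$ and that the restriction of $\alpha$ to $\langle\zeta_k\rangle$ is taken through $\Gamma_{\Sigma_i}$; this is automatic for the group system as set up, but it is worth stating.
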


We prove the following lemma first.

\begin{lemma}\label{normalize}
	Let $\mathcal{S}_i = \{\langle \zeta \rangle \,|\, \zeta \text{ is a component of } \partial \overline{\Sigma_i}\}$, $i=1,2$. If $\iota^*_{\Sigma_2} [\alpha]=0$  then  there is a unique 1-cocycle $\widetilde{\alpha_1}\in Z^1_{\mathrm{par}}(\Gamma,\mathcal{S}; \mathfrak{g})$ such that $[\widetilde{\alpha_1}]=[\alpha]$ in $H^1_{\mathrm{par}}(\Gamma,\mathcal{S} ; \mathfrak{g})$ and that $\iota_{\Sigma_2} ^\# (\widetilde{\alpha_1} )=0$ in $Z^1_{\mathrm{par}} (\Gamma_{\Sigma_2}, \mathcal{S}_2;\mathfrak{g})$. Similarly, if $\iota^* _{\Sigma_1} [\alpha] =0$ then  there is a unique 1-cocycle $\widetilde{\alpha_2}$ such that $[\widetilde{\alpha_2}]=[\alpha]$ and that $\iota_{\Sigma_1}^\# (\widetilde{\alpha_2}) =0$ in $Z^1_{\mathrm{par}} (\Gamma_{\Sigma_1}, \mathcal{S}_1;\mathfrak{g})$. 
\end{lemma}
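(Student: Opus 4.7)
The plan is to construct $\widetilde{\alpha_1}$ by correcting $\alpha$ by a global coboundary that kills its restriction to $\Gamma_{\Sigma_2}$. Since $H^1_{\mathrm{par}}(\Gamma_{\Sigma_2},\mathcal{S}_2;\mathfrak{g})$ is, by definition, a subspace of $H^1(\Gamma_{\Sigma_2};\mathfrak{g})$, the hypothesis $\iota^*_{\Sigma_2}[\alpha]=0$ yields an element $X_2\in \mathfrak{g} = C^0(\Gamma_{\Sigma_2};\mathfrak{g})$ with $\iota^\#_{\Sigma_2}\alpha = \der_{\Gamma_{\Sigma_2}} X_2$. I would then set $\widetilde{\alpha_1} := \alpha - \der_\Gamma X_2$, viewing $X_2$ equally well as an element of $C^0(\Gamma;\mathfrak{g})$.

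Next I verify the three required properties. First, $[\widetilde{\alpha_1}]=[\alpha]$ in $H^1(\Gamma;\mathfrak{g})$ holds trivially because a coboundary was subtracted, and hence also in $H^1_{\mathrm{par}}$. Second, $\widetilde{\alpha_1}\in Z^1_{\mathrm{par}}(\Gamma,\mathcal{S};\mathfrak{g})$: for any $S\in \mathcal{S}$, the restriction $\iota^\#_S(\widetilde{\alpha_1}) = \iota^\#_S(\alpha) - \der_S X_2$ still lies in $B^1(S;\mathfrak{g})$, since $\iota^\#_S(\alpha)\in B^1(S;\mathfrak{g})$ and $\der_S X_2$ is itself a coboundary. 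Third, by construction, for any $\gamma\in\Gamma_{\Sigma_2}$,
\[
\widetilde{\alpha_1}(\gamma) = \alpha(\gamma) - (\gamma\acts X_2 - X_2) = 0,
\]
so $\iota^\#_{\Sigma_2}(\widetilde{\alpha_1})=0$ in $Z^1_{\mathrm{par}}(\Gamma_{\Sigma_2},\mathcal{S}_2;\mathfrak{g})$.

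For uniqueness, suppose $\widetilde{\alpha_1}'$ is another parabolic cocycle cohomologous to $\alpha$ and vanishing on $\Gamma_{\Sigma_2}$. Then $\widetilde{\alpha_1} - \widetilde{\alpha_1}' = \der_\Gamma Y$ for some $Y\in \mathfrak{g}$. Restricting to $\Gamma_{\Sigma_2}$ gives $\der_{\Gamma_{\Sigma_2}} Y = 0$, that is, $\Ad_{\rho(\gamma)} Y = Y$ for every $\gamma\in \Gamma_{\Sigma_2}$. Since $[\rho_{\Gamma_{\Sigma_2}}]\in \overline{\Rep}_n^{\mathscr{B}_2}(\Gamma_{\Sigma_2})$, the module $\mathfrak{g}_{\rho_{\Gamma_{\Sigma_2}}}$ admits no nonzero $\Ad$-invariant element (as noted in the discussion of $\overline{\Rep}_n$ and reaffirmed in Lemma \ref{NoInvariantElement}), forcing $Y=0$ and hence $\widetilde{\alpha_1}=\widetilde{\alpha_1}'$. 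The symmetric assertion for $\iota^*_{\Sigma_1}[\alpha]=0$ is obtained by interchanging the roles of $\Sigma_1$ and $\Sigma_2$.

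The argument is essentially routine homological algebra; the only substantive input is the vanishing of invariants in $\mathfrak{g}_{\rho_{\Gamma_{\Sigma_i}}}$, which is precisely what the standing assumption $[\rho_{\Gamma_{\Sigma_i}}]\in \overline{\Rep}_n^{\mathscr{B}_i}(\Gamma_{\Sigma_i})$ supplies, and without which uniqueness of the normalization would fail.
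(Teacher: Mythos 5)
Your argument is correct and follows the same route as the paper: subtract the coboundary $\der_\Gamma X_2$ coming from the trivialization of $\iota^\#_{\Sigma_2}\alpha$, then use the absence of nonzero $\Ad_{\rho_{\Gamma_{\Sigma_2}}}$-invariants (Lemma \ref{NoInvariantElement}) to force uniqueness. Your added verification that $\widetilde{\alpha_1}$ stays in $Z^1_{\mathrm{par}}(\Gamma,\mathcal{S};\mathfrak{g})$ is a small elaboration the paper leaves implicit but does not change the substance.
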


\begin{proof}
	We prove the first case. Pick any representative of $[\alpha]$ say $\alpha_1'\in Z^1_{\mathrm{par}}(\Gamma,\mathcal{S};\mathfrak{g})$. Since $\iota^*_{\Sigma_2} [\alpha]=\iota^*_{\Sigma_2} [\alpha_1 ']=0$,  there is $X\in \mathfrak{g}$ such that $\iota_{\Sigma_2} ^\# (\alpha_1') = \der_{\Gamma_{\Sigma_2}} X$. Let $\widetilde{\alpha_1} = \alpha_1 ' -\der_{\Gamma} X$. Then clearly $[\widetilde{\alpha_1}]$ satisfies the required properties. For the uniqueness, suppose that there is another class $\alpha_1 ''\in Z^1_{\mathrm{par}}(\Gamma,\mathcal{S}; \mathfrak{g})$ satisfying the same properties. Then since $[\alpha_1'']=[\widetilde{\alpha_1}]$ in $H^1_{\mathrm{par}} (\Gamma,\mathcal{S}; \mathfrak{g})$, we have  $\alpha_1'' = \widetilde{\alpha_1} +\der_\Gamma Y$ for some $Y\in \mathfrak{g}$. Applying $\iota_{\Sigma_2}^\#$, we have  $0=\iota^\# _{\Sigma_2}\der_\Gamma Y=\der_{\Gamma_{\Sigma_2}} Y$ in $Z^1_{\mathrm{par}}(\Gamma_{\Sigma_2},\mathcal{S}_2;\mathfrak{g})$. By Lemma \ref{NoInvariantElement}, $Y=0$ and the uniqueness follows. 
	
	The second case can be achieved along the same lines. 
\end{proof}

\begin{proof}[Proof of  Theorem \ref{decomppairingsep}] We borrow the idea of Zocca \cite{zocca1998}. We use the following presentations
	\begin{align*}
	{\Gamma}_{\Sigma_1}&=\langle {x}_{1,1}, {y} _{1,1},\cdots, x_{1,g_1}, y_{1,g_1} , {z}_{1,0}, \cdots, z_{1,b_1} \,|\,{\mathbf{r}}_1\rangle,\\
	{\Gamma}_{\Sigma_2}&= \langle {x}_{2,1}, {y} _{2,1} ,\cdots, x_{2,g_2}, y_{2,g_2}, {z}_{2,0}, \cdots, z_{2,b_2} \,|\,{\mathbf{r}}_2\rangle,\\
	\Gamma_{\xi} &= \langle \xi \rangle\text{ with }\xi^- =z_{1,0},\, \xi^+ = z_{2,0}^{-1},\qquad \text{and}\\ 
	\Gamma&=\pi_1(\Sigma)=\langle{x}_{1,j}, {y} _{1,j} , {z}_{1,j} ,{x}_{2,j}, {y} _{2,j} , {z}_{2,j} \,|\,{\mathbf{r}} \rangle
	\end{align*}
	 where  relations are given by
	\begin{align*}
	{\mathbf{r}}_1 &= (\prod_{j=1} ^{g_1}[{x} _{1,j}, {y}_{1,j}])(\prod_{j=1} ^{b_1} {z}_{1,j}){z}_{1,0},\\
	{\mathbf{r}}_2 &= {z}_{2,0} (\prod_{j=1} ^{b_2} {z}_{1,j})(\prod_{j=1} ^{g_2}[{x} _{2,j}, {y}_{2,j}]),\quad\text{and}\\
	{\mathbf{r}}&=(\prod_{j=1} ^{g_1}[{x} _{1,j}, {y}_{1,j}])(\prod_{j=1} ^{b_1} {z}_{1,j}) (\prod_{j=1} ^{b_2} {z}_{2,j})(\prod_{j=1} ^{g_2}[{x} _{2,j}, {y}_{2,j}]).
	\end{align*}

We can decompose the relative fundamental class (Lemma \ref{funclcpt}) as
	\[
	[\Sigma] = [\Sigma_1]+ [\Sigma_2]- [{z}_{1,0} ^{-1}| {z}_{1,0}].
	\]
	
	We first assume $\iota^* _{\Sigma_2} [\alpha_1] =0$ and  compute $\omega_K ^\Sigma([\alpha_1], [\beta])$.  By Lemma \ref{normalize}, we can find a representative $\widetilde{\alpha_1}$ of $[\alpha_1]$ such that $\iota_{\Sigma_2} ^\# \widetilde{\alpha_1}=0$. We use $\widetilde{\alpha_1}$ to compute $\omega_K ^\Sigma$ as follow
	\begin{align*}
	\omega_K ^\Sigma([\alpha_1],[\beta])&=\omega_K ^{\Sigma}([\widetilde{\alpha_1}],[\beta]) \\
	&=\langle \widetilde{\alpha_1} \smile \beta, [\Sigma_1] \rangle+\langle \widetilde{\alpha_1} \smile \beta, [\Sigma_2] \rangle+\langle \widetilde{\alpha_1} \smile \beta, [z_{1,0}^{-1}|z_{1,0}] \rangle \\
	&\qquad  -\sum_{i=1} ^{b_1}  \Tr X_{1,i} \beta(z_{1,i})-\sum_{i=1} ^{b_2}  \Tr X_{2,i} \beta(z_{2,i})
\end{align*}
	where $X_{i,j} \in \mathfrak{g}$ is such that $\der_{\langle z_{i,j} \rangle} X_{i,j} = \iota_{\langle z_{i,j} \rangle} ^\# \widetilde{\alpha_1}$. By the construction of $\widetilde{\alpha_1}$, we have $\langle \widetilde{\alpha_1} \smile \beta, [\Sigma_2]\rangle =0$ and  $\widetilde{\alpha_1}({z}_{i,0})=\widetilde{\alpha_1}({z}_{i,0}^{-1})= 0$, for $i=1,2$. Again, by the construction of $\widetilde{\alpha_1}$,  we can choose $X_{2,i}$ to be $0$ for all $i=1,2,\cdots, b_2$. Hence
\[
\omega_K ^\Sigma([\alpha_1],[\beta])=\omega_K ^{\Sigma_1}(\iota^*_{\Sigma_1}[\alpha_1],\iota_{\Sigma_1} ^*[\beta])+\Tr X \beta({z}_{1,0})
\]
where $X\in \mathfrak{g}$ is given by the property $\der_{\langle z_{1,0}\rangle} X =  \iota_{\langle z_{1,0} \rangle} ^\# \widetilde{\alpha_1}$. Observe that  $\iota_{\langle z_{1,0}\rangle} ^\# \widetilde{\alpha_1}=\iota_{\langle z_{2,0}\rangle} ^\# \widetilde{\alpha_1} =0$. Therefore we can  take $X$ to be zero. This leads us to 
	\[
	\omega_K ^\Sigma([\alpha_1],[\beta])=\omega_K ^{\Sigma_1}(\iota^*_{\Sigma_1}[\alpha_1],\iota_{\Sigma_1} ^*[\beta]).
	\]
	
	Similarly, if $[\alpha_2] \in H^1_{\mathrm{par}} (\Gamma_{\Sigma_2},\mathcal{S}_2;\mathfrak{g})$ is such that $\iota^*_{\Sigma_1} [\alpha_2]=0$, we choose $\widetilde{\alpha_2}$ as before to get
	\[
	\omega_K ^\Sigma([\alpha_2],[\beta]) =\omega_K ^{\Sigma_2}(\iota^*_{\Sigma_2}[\alpha_2],\iota_{\Sigma_2} ^* [\beta]).
	\]

Now suppose that we are given  a general $[\alpha]\in H^1_{\mathrm{par}} (\Gamma, \mathcal{S};\mathfrak{g})$. Since $H^1_{\mathrm{par}} (\Gamma, \mathcal{S};\mathfrak{g}) = \ker \iota^* _{\Sigma_1} + \ker \iota ^* _{\Sigma_2}$ (not direct), we can decompose $[\alpha]$ as a sum  $[\alpha]=[\alpha_1]+[\alpha_2]$ where $[\alpha_1]\in \ker \iota^* _{\Sigma_2}$ and $[\alpha_2]\in \ker \iota^* _{\Sigma_1}$. Then by linearity we have 
	\begin{align*}
	\omega_K ^\Sigma ([\alpha],[\beta]) & =\omega_K ^{\Sigma}([\alpha_1],[\beta])+\omega_K ^{\Sigma}([\alpha_2], [\beta])\\
	&=\omega_K ^{\Sigma_1}(\iota_{\Sigma_1} ^*[\alpha_1],\iota_{\Sigma_1} ^*[\beta])+\omega_K ^{\Sigma_2}(\iota_{\Sigma_2} ^*[\alpha_2],\iota_{\Sigma_2} ^*[\beta])\\
	&=\omega_K ^{\Sigma_1}(\iota_{\Sigma_1} ^*[\alpha],\iota_{\Sigma_1} ^*[\beta])+\omega_K ^{\Sigma_2}(\iota_{\Sigma_2} ^*[\alpha],\iota_{\Sigma_2} ^*[\beta]).
	\end{align*}
	It completes the proof of Theorem \ref{decomppairingsep}.
\end{proof} 

\subsection{Local decomposition formula: non-separating case}\label{secnonsep}
Suppose that $\xi$ is non-separating such that $ \Sigma_0:=\Sigma\setminus \xi$ is hyperbolic. In this case $\Gamma$ is the fundamental group of \raisebox{-1.5ex}{\begin{tikzpicture}
\draw (0,0) node[left] {$\Sigma_0$};
\draw (0,0) node {$\bullet$};
\draw (.3,0) circle (.3cm);
\draw[->, thick] (.6,.01)--(.6,.02);
\draw (.6,0) node[right] {${\xi}$};
\end{tikzpicture}} and we have the exact sequence:
\[
0\to H^0(\Gamma_\xi, \mathfrak{g}_{\rho_{\Gamma_\xi^ +}}) \to  H^1_{\mathrm{par}} (\Gamma,\mathcal{S} ; \mathfrak{g}_\rho)\overset{\iota^* _{\Sigma_0}}{\to}H^1_{\mathrm{par}} (\Gamma_{\Sigma_0},\mathcal{S}_0; \mathfrak{g}_{\rho_{\Gamma_{\Sigma_0}}})\to 0
\]
where $\mathcal{S} = \{\Gamma_\xi^+, \langle \zeta_1\rangle, \cdots, \langle \zeta_b \rangle \}$, $\mathcal{S}_0=\{\langle \zeta \rangle \,|\, \zeta \text{ is a component of } \partial\overline{ \Sigma_0}\}$ so that $(\Gamma_{\Sigma_0}, \mathcal{S}_0)$ is a group subsystem of $(\Gamma, \mathcal{S})$. Note that the homomorphism $\iota_{\Gamma_{\Sigma_0}}:\Gamma_{\Sigma_0} \to \Gamma$ and $\iota_{\Gamma_\xi ^\pm } : \Gamma_\xi ^{\pm} \to \Gamma_{\Sigma_0}$ are abbreviated to $\iota_{\Sigma_0}$ and $\iota_{\xi^\pm}$ respectively.

The corresponding decomposition formula is the following:
\begin{theorem}\label{decomppairingnonsep}  Let $\Sigma$ be a compact oriented hyperbolic surface and $\xi$ a non-separating essential simple closed curve such that  $\Sigma_0 = \Sigma\setminus \xi $ is hyperbolic subsurface. Let $(\Gamma, \mathcal{S})$ be a group system where $\Gamma=\pi_1(\Sigma)$ and $\mathcal{S}= \{\Gamma_\xi^+, \langle \zeta_1\rangle, \cdots, \langle \zeta_b\rangle \}$. Choose a boundary frame $\mathscr{B}$ and $\{\xi\}$-frame $\mathscr{C}$. Let $[\rho] \in \overline{\Rep}_n ^{\mathscr{B}}(\Gamma, \mathscr{C})$ be such that $[\rho_{\Gamma_{\Sigma_0}}] \in \overline{\Rep}_n ^{\mathscr{B}_0}(\Gamma_{\Sigma_0})$ where 
\[
\mathscr{B}_0= \{ (\zeta, B)\,|\,\zeta\text{ is a component of }\partial\overline{ \Sigma_0}\text{ and }(\iota_{\Sigma_0} (\zeta),B)\in \mathscr{B}\cup \mathscr{C} \}.
\]
Fix a representation $\rho$ in $[\rho]$. For $[\alpha],[\beta]\in  H^1 _{\mathrm{par}} (\Gamma,\mathcal{S}; \mathfrak{g}_{\rho})$, we have
	\[
	\omega_K ^\Sigma ([\alpha], [\beta]) = \omega_K ^{\Sigma_0} (\iota^* _{\Sigma_0} [\alpha], \iota^* _{\Sigma_0} [\beta]).
	\]
\end{theorem}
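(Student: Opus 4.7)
My plan is to adapt the Zocca-style proof of Theorem \ref{decomppairingsep} to the HNN structure of $\Gamma$ over $\Gamma_{\Sigma_0}$. The key insight is that after normalizing both representatives to vanish on $\xi$, the chain-level discrepancy $[\Sigma] - \iota_{\Sigma_0*}[\Sigma_0]$ pairs to a residue supported on the HNN stable letter $\xi^\perp$, which is then absorbed by the $z_0^-$ boundary correction appearing in $\omega_K^{\Sigma_0}$ via Ad-invariance of the trace.

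\medskip

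\textbf{Compatible presentations and chain decomposition.} Writing $K = \prod_{j=1}^{g-1}[x_j,y_j]\prod_{i=1}^b z_i$, I will use
\[
\Gamma_{\Sigma_0} = \langle x_j, y_j, z_i, z_0^+, z_0^- \mid \mathbf{r}_0 = K z_0^+ z_0^- \rangle, \qquad \Gamma = \langle x_j, y_j, z_i, \xi, \xi^\perp \mid \mathbf{r} = K[\xi,\xi^\perp]\rangle,
\]
with inclusion $\iota_{\Sigma_0}: z_0^+ \mapsto \xi$ and $z_0^- \mapsto \xi^\perp\xi^{-1}(\xi^\perp)^{-1}$. A direct Fox-calculus computation using Lemma \ref{funclcpt}, together with the substitutions $K\xi\xi^\perp\xi^{-1} = \xi^\perp$ and $K\xi\xi^\perp\xi^{-1}(\xi^\perp)^{-1} = 1$ implied by $\mathbf{r} = 1$, gives the explicit 2-chain identity
\[
[\Sigma] - \iota_{\Sigma_0 *}[\Sigma_0] = -[\xi^\perp \mid \xi] + [K\xi \mid \xi^\perp] - [K\xi \mid z_0^-]
\]
in the normalized bar complex of $\Gamma$ (the degenerate $[1\mid \xi^\perp]$ term vanishes). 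This is the non-separating analog of the extra chain $-[z_{1,0}^{-1}\mid z_{1,0}]$ appearing in Theorem \ref{decomppairingsep}.

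\medskip

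\textbf{Normalization and the cancellation.} Since $[\alpha], [\beta] \in H^1_{\mathrm{par}}(\Gamma, \mathcal{S}; \mathfrak{g})$ are parabolic on $\langle \xi \rangle$, I would modify each representative by a global coboundary $\der_\Gamma X$ to arrange $\widetilde{\alpha}(\xi) = \widetilde{\beta}(\xi) = 0$; this is the $(\Gamma, \{\langle \xi \rangle\})$-version of Lemma \ref{normalize}, and suffices because $\omega_K^\Sigma$ and $\omega_K^{\Sigma_0}$ descend to cohomology. A short cocycle computation using $z_0^- = \xi^\perp \xi^{-1}(\xi^\perp)^{-1}$ and $\widetilde{\alpha}(\xi)=0$ then yields $\widetilde{\alpha}(z_0^-) = (1-z_0^-)\acts \widetilde{\alpha}(\xi^\perp)$, and similarly for $\widetilde{\beta}$. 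Consequently the parabolic conditions for $\iota^*_{\Sigma_0}\widetilde{\alpha}$ at the new boundaries $z_0^\pm$ of $\Sigma_0$ are realized by $X_0^+ = 0$ and $X_0^- = -\widetilde{\alpha}(\xi^\perp)$. Writing $A = \widetilde{\alpha}(\xi^\perp)$, $B = \widetilde{\beta}(\xi^\perp)$ and using $(K\xi)^{-1} = z_0^-$, the pairing of $\widetilde{\alpha} \smile \widetilde{\beta}$ with the extra chain from Step 1 reduces, via $\widetilde{\beta}(\xi) = 0$ and Ad-invariance of $\Tr$, to $\Tr AB - \Tr A\,\Ad_{\rho(z_0^-)} B$; meanwhile the boundary correction $\Tr X_0^+ \widetilde{\beta}(z_0^+) + \Tr X_0^- \widetilde{\beta}(z_0^-)$ computes to $-\Tr AB + \Tr A\,\Ad_{\rho(z_0^-)}B$. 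The two contributions cancel exactly, yielding $\omega^\Sigma_K(\widetilde{\alpha},\widetilde{\beta}) = \omega^{\Sigma_0}_K(\iota^*_{\Sigma_0}\widetilde{\alpha}, \iota^*_{\Sigma_0}\widetilde{\beta})$.

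\medskip

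\textbf{Expected main obstacle.} Unlike the separating case, the Mayer--Vietoris sequence of Proposition \ref{mvs} has only a one-dimensional kernel, and there is no direct-sum decomposition of $[\alpha]$ across two sides of $\xi$. The information tied to the HNN stable letter $\xi^\perp$ cannot be killed by any normalization on $\xi$; its contribution survives at the chain level and must be compensated against the parabolic correction at $z_0^-$ appearing in $\omega_K^{\Sigma_0}$ rather than against another subsurface's cup product. Keeping track of signs through the normalized bar convention, Fox calculus, and the Fox inversion $\overline{(\cdot)}$ is the dominant technical burden; the clean cancellation at the end is an Ad-invariance identity that delicately matches the chain-level residue against the parabolic boundary term.
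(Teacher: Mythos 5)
Your approach is correct and establishes the claim, but it is a genuinely different bookkeeping of the Zocca-style argument from what the paper does.  Both proofs decompose the relative fundamental class via Fox calculus across the HNN extension; the difference lies in the choice of normalization and, consequently, where the cancellation occurs.  The paper writes the relator as $\mathbf{r}=[z_{b+1},\xi^\perp]K$ (with $K$ the boundary-times-commutator word) rather than your $K[\xi,\xi^\perp]$, and its decomposition of $[\Sigma]$ produces four extra 2-chains rather than your three.  More importantly, the paper applies Lemma \ref{normalizenonsep} asymmetrically, arranging $\widetilde{\alpha}(z_{b+2})=0$ (vanishing on $\xi^-$) while $\widetilde{\beta}(z_{b+1})=0$ (vanishing on $\xi^+$); with this choice both boundary correction terms $\Tr X_{b+1}\widetilde\beta(z_{b+1})$ and $\Tr X_{b+2}\widetilde\beta(z_{b+2})$ vanish outright, and the entire cancellation is carried out among the four extra cup-product terms at the chain level.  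You instead normalize both $\widetilde\alpha$ and $\widetilde\beta$ symmetrically so that each vanishes on $\xi=z_0^+$; this kills the $X_0^+$ correction but leaves a nonzero $X_0^-=-\widetilde\alpha(\xi^\perp)$, and the residual chain pairing $\Tr AB-\Tr A\,\Ad_{\rho(z_0^-)}B$ is then absorbed against the nonvanishing boundary term rather than cancelling internally.  I have verified that your chain identity $[\Sigma]-\iota_{\Sigma_0*}[\Sigma_0]=-[\xi^\perp\,|\,\xi]+[K\xi\,|\,\xi^\perp]-[K\xi\,|\,z_0^-]$ and the formulas $(K\xi)^{-1}=z_0^-$, $\widetilde\alpha(z_0^-)=(1-z_0^-)\acts A$, $X_0^-=-A$ are consistent with the conventions in Theorem \ref{Ksymp} and (\ref{explicitformula}), and the final trace identities do cancel; so the route closes.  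What the paper's asymmetric choice buys is that the proof never needs to compute any boundary correction at all, which makes the sign bookkeeping slightly lighter; what your symmetric choice buys is conceptual parallelism with Lemma \ref{normalize} in the separating case and a cleaner interpretation of the extra terms as an exchange between chain residue and the new parabolic correction at the cut curve.
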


As in the separating case, we start with proving the following:

\begin{lemma}\label{normalizenonsep}
	Let $\mathcal{S}_0 = \{\langle \zeta \rangle\subset \Gamma_{\Sigma_0} \,|\, \zeta \text{ is a component of } \partial \overline{\Sigma_0}\}$. There are 1-cocycles $\widetilde{\alpha}, \widetilde{\alpha}'$ such that $[\widetilde{\alpha}]=[\widetilde{\alpha}']=[\alpha]$ in $H^1_{\mathrm{par}} (\Gamma, \mathcal{S}; \mathfrak{g})$ and that $\iota_{\Gamma_\xi ^-} ^\# \widetilde{\alpha} =0$, $\iota_{\Gamma_\xi ^+} ^\# \widetilde{\alpha}' =0$ in $Z^1(\Gamma_\xi ^+ ;\mathfrak{g})$ and $Z^1(\Gamma_\xi ^- ; \mathfrak{g})$ respectively.
\end{lemma}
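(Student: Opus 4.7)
The plan is to build $\widetilde{\alpha}'$ first by directly exploiting the parabolic condition, and then produce $\widetilde{\alpha}$ by a short cocycle computation that takes advantage of the conjugacy $\Gamma_\xi^- = \xi^\perp \Gamma_\xi^+ (\xi^\perp)^{-1}$ in $\Gamma$.

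Since $\Gamma_\xi^+ \in \mathcal{S}$ and $[\alpha] \in H^1_{\mathrm{par}}(\Gamma, \mathcal{S}; \mathfrak{g})$, by definition of the parabolic cohomology there exists $X \in \mathfrak{g}$ with $\iota_{\Gamma_\xi^+}^\# \alpha = \der_{\Gamma_\xi^+} X$. I would set $\widetilde{\alpha}' := \alpha - \der_\Gamma X$. This is still a parabolic cocycle (coboundaries restrict to coboundaries at every $\langle \zeta_i \rangle$ and at $\Gamma_\xi^+$) and is cohomologous to $\alpha$, and by construction $\iota_{\Gamma_\xi^+}^\# \widetilde{\alpha}' = 0$.

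For $\widetilde{\alpha}$, I would use that in the presentation coming from Subsection \ref{tree} we have the generator $\xi^\perp$ satisfying $\xi^\perp \gamma^+ (\xi^\perp)^{-1} = \gamma^-$ for every $\gamma \in \Gamma_\xi$. Applying the cocycle identity to $\alpha$ on this relation, together with $\alpha(\gamma^+) = \gamma^+ \cdot X - X$, a routine expansion gives
\[
\alpha(\gamma^-) = \gamma^- \cdot Y - Y, \qquad Y := \xi^\perp \cdot X - \alpha(\xi^\perp).
\]
Thus $\iota_{\Gamma_\xi^-}^\# \alpha = \der_{\Gamma_\xi^-} Y$, and I would set $\widetilde{\alpha} := \alpha - \der_\Gamma Y$. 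Again $\widetilde{\alpha}$ lies in $Z^1_{\mathrm{par}}(\Gamma, \mathcal{S}; \mathfrak{g})$, is cohomologous to $\alpha$, and satisfies $\iota_{\Gamma_\xi^-}^\# \widetilde{\alpha} = 0$. (Alternatively, one can appeal abstractly to Remark \ref{conj}, which says the parabolic property only depends on the conjugacy class of the subgroup; the explicit formula for $Y$ is however what one actually needs in the subsequent decomposition argument.)

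There is essentially no obstacle here beyond the mechanical verification of the formula for $Y$; the only point requiring a little care is making sure that subtracting $\der_\Gamma X$ (respectively $\der_\Gamma Y$) does not destroy the parabolic condition at the boundary components $\langle \zeta_i \rangle$, which is automatic because every global coboundary restricts to a coboundary on every subgroup. Note I make no claim of uniqueness — unlike Lemma \ref{normalize}, the centralizer $H^0(\Gamma_\xi; \mathfrak{g}_{\rho_{\Gamma_\xi^+}})$ is generically nonzero for Hitchin representations (since $\rho(\xi)$ is purely loxodromic its centralizer is a Cartan subalgebra), so $X$ and $Y$ are only defined up to this ambiguity, and the lemma accordingly asserts only existence.
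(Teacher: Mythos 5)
Your proof is correct and takes the same route as the paper: subtract from $\alpha$ the global coboundary $\der_\Gamma X$ (resp.\ $\der_\Gamma Y$) of a $0$-cochain trivializing the restriction to the relevant cyclic subgroup. The paper works out only the $\Gamma_\xi^+$ case (where the parabolic condition applies directly since $\Gamma_\xi^+\in\mathcal{S}$) and dismisses the other with ``the construction of $\widetilde{\alpha}'$ is almost the same''; since $\Gamma_\xi^-\notin\mathcal{S}$, one does in fact need the conjugacy $\Gamma_\xi^- = \xi^\perp\Gamma_\xi^+(\xi^\perp)^{-1}$ (equivalently Remark~\ref{conj}) to see that $\iota_{\Gamma_\xi^-}^\#\alpha$ is a coboundary, and your explicit formula $Y=\xi^\perp\cdot X-\alpha(\xi^\perp)$, which checks out by direct cocycle expansion, makes this precise and is the more complete account. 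Your side remark that uniqueness fails here, unlike in Lemma~\ref{normalize}, because $H^0(\Gamma_\xi;\mathfrak{g})$ is the $(n-1)$-dimensional centralizer of the loxodromic $\rho(\xi)$, is also correct.
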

\begin{proof}Choose any representative $\alpha'$ of $[\alpha]$. By the assumption, $\iota^\# _{\Gamma_\xi ^+}(\alpha')= \der_{\Gamma_\xi ^+} X$ for some $X\in \mathfrak{g}$.  Let $\widetilde{\alpha} = \alpha' -\der_\Gamma X$ so that $\widetilde{\alpha}(\Gamma_\xi ^+) =0$.  The construction of $\widetilde{\alpha}'$ is almost the same. 
\end{proof}
\begin{proof}[Proof of  Theorem \ref{decomppairingnonsep}]
	We use the following presentations
	\[
	\Gamma= \langle {x}_1,{y}_1, \cdots, {x}_g, {y}_g, {z}_1, \cdots, {z}_{b+1},\xi^\perp \,|\,{\mathbf{r}}\rangle,
	\] 
	\[
	\Gamma_{\xi} = \langle \xi \rangle \text{ with }\xi^+ = z_{b+1} ,\xi^- = z_{b+2}^{-1}
	\]
 and
	\[
	{\Gamma}_{\Sigma_0} = \langle {x}_1,{y}_1, \cdots, {x}_g, {y}_g, {z}_1, \cdots, {z}_{b+2}\,|\,{\mathbf{r}}_0 \rangle
	\] 
	where 
	\[
	{\mathbf{r}}=[{z}_{b+1},\xi^\perp](\prod_{j=1} ^g [{x}_j,{y}_j])\prod_{j=1} ^{b} {z}_j
	\]
 and
	 \[
	 {\mathbf{r}}_0={z}_{b+1}{z}_{b+2}(\prod_{j=1} ^g [{x}_j,{y}_j])\prod_{j=1} ^{b} {z}_j.
	 \]
Then we have
	\[
	[\Sigma] = [\Sigma_0]-[{z}_{b+1}| {z}_{b+2}]-[{z}_{b+1} \xi^\perp {z}_{b+1} ^{-1}| {z}_{b+1}] +[{z}_{b+1}|\xi^\perp]-[{z}_{b+1}{z}_{b+2}|\xi^\perp].
	\]
	 By Lemma \ref{normalizenonsep}, we can find  $\widetilde{\alpha}$ and  $\widetilde{\beta}$ such that $[\alpha] = [\widetilde{\alpha}]$ ($[\beta] = [\widetilde{\beta}]$, respectively) and $\widetilde{\alpha}({z}_{b+2} ) =0$ ($\widetilde{\beta}({z}_{b+1} ) =0$, respectively).  Now we have 
	\begin{multline*}
	\omega_K ^\Sigma([\alpha], [\beta]) =  \omega_K ^{\Sigma_0}(\iota^* _{\Sigma_0}[\widetilde{\alpha}], \iota^* _{\Sigma_0}[\widetilde{\beta}]) -\Tr (\widetilde{\alpha}({z}_{b+1}){z}_{b+1}\acts \widetilde{\beta}({z}_{b+2}))\\ +\Tr( \widetilde{\alpha}({z}_{b+1}) {z}_{b+1}\acts \widetilde{\beta}(\xi^\perp)) - \Tr( \widetilde{\alpha}({z}_{b+1}{z}_{b+2})({z}_{b+1}{z}_{b+2})\acts \widetilde{\beta}(\xi^\perp))\\ +\Tr  X_{b+1} \widetilde{\beta} (z_{b+1}) + \Tr X_{b+2} \widetilde{\beta}(z_{b+2})
	\end{multline*}
	where $X_{b+1}$ and $X_{b+2}$ are elements of $\mathfrak{g}$ such that $\iota^\#_{{\xi^+}} \widetilde{\alpha}=\der_{\Gamma_{\xi}^+} X_{b+1}$ and $\iota^\#_{\xi^-} \widetilde{\alpha}=\der_{\Gamma_{\xi}^-}X_{b+2}$. Since $\widetilde{\beta}(z_{b+1})=\widetilde{\alpha}(z_{b+2})=0$ the last two terms vanish. 
	
	We expand  using $z_{b+2} = \xi^{\perp} z_{b+1} ^{-1} (\xi^{\perp} )^{-1}$,
	\begin{align*}
	\widetilde{\alpha}({z}_{b+1}){z}_{b+1}\acts \widetilde{\beta}({z}_{b+2})& = \widetilde{\alpha}({z}_{b+1}) {z}_{b+1}\acts (\widetilde{\beta}(\xi^\perp) + (\xi^\perp {z}_{b+1}^{-1})\acts \widetilde{\beta}((\xi^\perp)^{-1}))\\
	&= \widetilde{\alpha}({z}_{b+1}) {z}_{b+1}\acts (\widetilde{\beta}(\xi^\perp) - (\xi^\perp {z}_{b+1}^{-1}(\xi^\perp)^{-1})\acts \widetilde{\beta}(\xi^\perp))\\
	&= \widetilde{\alpha}({z}_{b+1}) {z}_{b+1}\acts (\widetilde{\beta}(\xi^\perp) - {z}_{b+2}\acts \widetilde{\beta}(\xi^\perp)). 
	\end{align*}
	On the other hand, since $\widetilde{\alpha}({z}_{b+2})=0$,
	\[
	\widetilde{\alpha}({z}_{b+1}{z}_{b+2})({z}_{b+1}{z}_{b+2})\acts \widetilde{\beta}(\xi^\perp)= \widetilde{\alpha}({z}_{b+1})({z}_{b+1}{z}_{b+2})\acts \widetilde{\beta}(\xi^\perp). 
	\]
	Therefore, all terms except $ \omega_K ^{\Sigma_0}(\iota^* _{\Sigma_0}[\widetilde{\alpha}], \iota^* _{\Sigma_0}[\widetilde{\beta}]) $ cancel each other. So we get
	\[
	\omega_K ^\Sigma([\alpha], [\beta]) =  \omega_K ^{\Sigma_0}(\iota^* _{\Sigma_0}[\widetilde{\alpha}], \iota^* _{\Sigma_0}[\widetilde{\beta}]) 
	\]
	as desired. 
\end{proof}

Combining Theorem \ref{decomppairingsep} and Theorem \ref{decomppairingnonsep}, one gets the following general local decomposition theorem. 

\begin{corollary}\label{localdecomp}
 Let $\Sigma$ be a compact oriented hyperbolic surface and let $\{\xi_1, \cdots, \xi_m\}$ be  a collection of pairwise disjoint, non-isotopic essential simple closed curves in $\Sigma$ that divide the surface into hyperbolic subsurfaces $\Sigma_1, \cdots, \Sigma_l$. Let $(\Gamma, \mathcal{S})$ be a group system where $\Gamma=\pi_1(\Sigma)$ and $\mathcal{S}= \{\Gamma_{\xi_1}^+, \cdots, \Gamma_{\xi_m}^+, \langle \zeta_1\rangle, \cdots, \langle \zeta_b\rangle \}$. Choose a boundary frame $\mathscr{B}$ and $\mathcal{C}$-frame $\mathscr{C}$.  Let $[\rho]$ be an element in $\overline{\Rep}_n ^{\mathscr{B}}(\Gamma, \mathscr{C})$ such that $[\rho_{\Gamma_{\Sigma_i}}]\in \overline{\Rep}_n ^{\mathscr{B}_i} (\Gamma_{\Sigma_i})$ for each $i=1,2,\cdots, l$ where 
 \[
 \mathscr{B}_i = \{ (\zeta, B)\,|\, \zeta\text{ is a component of }\partial \overline{\Sigma_i}\text{ and } (\iota_{\Sigma_i} (\zeta), B)\in \mathscr{B}\cup \mathscr{C}\}.
 \]
Fix a representative $\rho$ of $[\rho]$. Then for any $[\alpha],[\beta]\in H^1 _{\mathrm{par}} (\Gamma, \mathcal{S};\mathfrak{g}_\rho)$, we have
 \[
 \omega^\Sigma _K ([\alpha], [\beta] ) = \sum_{i=1} ^l \omega^{\Sigma_i} _K (\iota^* _{\Sigma_i} [\alpha], \iota^* _{\Sigma_i} [\beta]). 
 \]
\end{corollary}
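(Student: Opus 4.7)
The plan is to prove Corollary \ref{localdecomp} by induction on $m$, the number of cutting curves in $\mathcal{C}$. The base case $m=1$ is exactly Theorem \ref{decomppairingsep} (if $\xi_1$ separates $\Sigma$) or Theorem \ref{decomppairingnonsep} (if $\xi_1$ is non-separating), since in either base-case result the single curve already partitions $\Sigma$ into the pieces $\Sigma_i$ appearing in the statement.

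For the inductive step, I would pick any curve $\xi=\xi_k\in\mathcal{C}$ and treat it as the ``first cut,'' setting $\mathcal{C}'=\mathcal{C}\setminus\{\xi\}$. Let $\Sigma^{(1)},\dots,\Sigma^{(r)}$ be the components of $\Sigma\setminus\xi$, so $r\in\{1,2\}$. The remaining curves of $\mathcal{C}'$ sit in the interiors of the $\Sigma^{(j)}$'s and cut each $\Sigma^{(j)}$ into the subcollection of $\Sigma_1,\dots,\Sigma_l$ that it contains. For each $j$, one forms an induced boundary frame $\mathscr{B}^{(j)}$ (whose entries come from the components of $\partial\overline{\Sigma^{(j)}}$ which are either boundary components of $\Sigma$ or copies of $\xi$) and an induced cutting frame $\mathcal{C}^{(j)}\subset\mathcal{C}'$. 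Using the hypothesis on the restrictions $[\rho_{\Gamma_{\Sigma_i}}]$ together with the constraint imposed by $\mathscr{C}$ on $\rho(\xi)$, one records that $[\rho_{\Gamma_{\Sigma^{(j)}}}]\in\overline{\Rep}_n^{\mathscr{B}^{(j)}}(\Gamma_{\Sigma^{(j)}},\mathscr{C}^{(j)})$, so the hypotheses for both the base-case theorem and the inductive hypothesis are in place.

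With this bookkeeping, I would first apply Theorem \ref{decomppairingsep} or Theorem \ref{decomppairingnonsep} to $\Sigma$ cut along the single curve $\xi$, giving
\begin{equation*}
\omega_K^\Sigma([\alpha],[\beta]) = \sum_{j=1}^{r} \omega_K^{\Sigma^{(j)}}\bigl(\iota^*_{\Sigma^{(j)}}[\alpha],\,\iota^*_{\Sigma^{(j)}}[\beta]\bigr).
\end{equation*}
Next, for each $j$ the surface $\Sigma^{(j)}$ is cut by the at most $m-1$ curves in $\mathcal{C}^{(j)}$ into exactly those $\Sigma_i$ it contains, so the inductive hypothesis applied to $\Sigma^{(j)}$ with the frames $\mathscr{B}^{(j)},\mathscr{C}^{(j)}$ yields
\begin{equation*}
\omega_K^{\Sigma^{(j)}}\bigl(\iota^*_{\Sigma^{(j)}}[\alpha],\,\iota^*_{\Sigma^{(j)}}[\beta]\bigr) \;=\; \sum_{\Sigma_i\subset\Sigma^{(j)}} \omega_K^{\Sigma_i}\bigl(\iota^*_{\Sigma_i}\iota^*_{\Sigma^{(j)}}[\alpha],\,\iota^*_{\Sigma_i}\iota^*_{\Sigma^{(j)}}[\beta]\bigr).
\end{equation*}
Because the composition of inclusion-induced restriction maps $\iota^*_{\Sigma_i}\circ\iota^*_{\Sigma^{(j)}}$ agrees with the direct restriction $\iota^*_{\Sigma_i}$ (both are the pullback to the cohomology of $\Gamma_{\Sigma_i}$ along $\Gamma_{\Sigma_i}\hookrightarrow\Gamma$), summing over $j$ gives the claimed identity.

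The only real obstacle is bookkeeping rather than substance: one must verify at the intermediate stage that $[\rho_{\Gamma_{\Sigma^{(j)}}}]$ lies in the correct smooth stratum $\overline{\Rep}_n^{\mathscr{B}^{(j)}}$, that the frames $\mathscr{B}^{(j)}$ and $\mathcal{C}^{(j)}$ on $\Sigma^{(j)}$ are precisely the frames demanded by the inductive hypothesis applied to $\Sigma^{(j)}$, and that the two successive restrictions compose correctly. The potentially subtle phenomenon flagged in the introduction to Section~4 --- that whether $\xi_k$ separates can depend on the order in which the cuts are performed (Figure \ref{f1}) --- is absorbed harmlessly here, because at each stage of the induction we invoke only the single-curve base cases, where the separating/non-separating distinction is already settled relative to the ambient surface at that stage.
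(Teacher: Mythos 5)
Your proof is correct and follows essentially the same strategy as the paper's: induction on the number of cutting curves, invoking Theorem \ref{decomppairingsep} or Theorem \ref{decomppairingnonsep} for the first cut, applying the inductive hypothesis to the resulting pieces, and using the compatibility $\iota^*_{\Sigma_i}\circ\iota^*_{\Sigma^{(j)}}=\iota^*_{\Sigma_i}$ to assemble the sum. The only cosmetic difference is that you treat the separating and non-separating cases uniformly via $r\in\{1,2\}$ and allow the first cut to be an arbitrary $\xi_k$, whereas the paper splits into two explicit sub-cases with $\xi_m$ as the designated first cut.
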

\begin{proof}
We use induction on the number of curves in $\mathcal{C}$. If $\mathcal{C}$ consists of a single curve $\xi$, we are done by Theorem \ref{decomppairingsep} or \ref{decomppairingnonsep} depending on whether $\xi$ is separating or not. 

Suppose that a collection $\mathcal{C} = \{\xi_1, \cdots, \xi_m\}$, $m>1$, is given where $\xi_m$  is separating. Without loss of generality, we may assume that $\Sigma_-:=\Sigma_1\cup \cdots\cup \Sigma_p$ and $\Sigma_+ :=\Sigma_{p+1} \cup \cdots \cup \Sigma _l$ are two components of $\Sigma \setminus \xi_m$. By virtue of Theorem \ref{decomposerep}, we can identify $\Gamma$ with the fundamental group of graph of groups  \raisebox{-2ex}{\begin{tikzpicture}
\draw (0,0) node {$\bullet$} ;
\draw (0,0) -- (1,0) node {$\bullet$};
\draw[->, thick] (.5,0)--(.51,0);
\draw (0,0) node[above] {$\Sigma_-$};
\draw (1,0) node[above] {$\Sigma_+$};
\draw (.5,0) node[above]{$\xi_m$};
\end{tikzpicture}}.  Let $\mathcal{S}_{\pm}=\{\langle \zeta\rangle\subset \Gamma_{\Sigma_\pm} \,|\, \zeta\text{ is a component of }\partial \overline{\Sigma_{\pm}}\}$ so that $(\Gamma_{\Sigma_{+}}, \mathcal{S}_{+})$ and $(\Gamma_{\Sigma_{-}}, \mathcal{S}_{-})$ become  group subsystems of $(\Gamma, \{\Gamma_{\xi_m}^+,\langle\zeta_1\rangle, \cdots, \langle\zeta_b\rangle\})$.  Then by Theorem \ref{decomppairingsep}, 
\[
\omega^{\Sigma}_K ([\alpha], [\beta]) = \omega^{\Sigma_-} _K (\iota^* _{\Sigma_-} [\alpha] ,\iota^* _{\Sigma_- } [\beta]) + \omega^{\Sigma_+} _K (\iota^* _{\Sigma_+} [\alpha] ,\iota^*_{\Sigma_+} [\beta]),
\] 
where $\iota_{\Sigma_{+}}$ and $\iota_{\Sigma_{-}}$ are the natural maps $(\Gamma_{\Sigma_{+}}, \mathcal{S}_{+}) \to (\Gamma, \mathcal{S})$ and  $(\Gamma_{\Sigma_{-}}, \mathcal{S}_{-}) \to (\Gamma, \mathcal{S})$ respectively. 
Observe that a collections of curves $ \{\xi\in \mathcal{C} \,|\,\xi\cap \Sigma_+ \ne \emptyset \}$ and $\{\xi\in \mathcal{C} \,|\,\xi\cap \Sigma_- \ne \emptyset \}$ cut $\overline{\Sigma_+}$ and $\overline{\Sigma_-}$ into $\Sigma_1,\cdots,\Sigma_p$ and $\Sigma_{p+1}, \cdots, \Sigma_l$ respectively.  By the induction hypothesis, we have  
\[
 \omega^{\Sigma_-} _K (\iota^* _{\Sigma_-} [\alpha] ,\iota^* _{\Sigma_- } [\beta])= \sum_{i=1} ^p \omega^{\Sigma_i} _K (\widebar{\iota _{\Sigma_i}}^* \iota^* _{\Sigma_-} [\alpha], \widebar{\iota _{\Sigma_i}}^* \iota^*_{\Sigma_-} [\beta]),
 \]
 and  
 \[
 \omega^{\Sigma_+} _K (\iota^* _{\Sigma_+} [\alpha] ,\iota^* _{\Sigma_+ } [\beta])= \sum_{i=p+1} ^l \omega^{\Sigma_i} _K (\widebar{\iota _{\Sigma_i}}^* \iota^* _{\Sigma_+} [\alpha], \widebar{\iota _{\Sigma_i}}^* \iota^*_{\Sigma_+} [\beta])
 \]
where $\widebar{\iota_{\Sigma_i}}: \Gamma_{\Sigma_i} \to \Gamma_{\Sigma_\pm}$, $i=1,2,\cdots, l$ are the natural inclusions. We observe that $\widebar{\iota _{\Sigma_i}}^*\iota^* _{\Sigma_\pm} = \iota^* _{\Sigma_i}$. Therefore, we obtain
\[
 \omega^\Sigma _K ([\alpha], [\beta] ) = \sum_{i=1} ^l \omega^{\Sigma_i} _K (\iota^* _{\Sigma_i} [\alpha], \iota^* _{\Sigma_i} [\beta]). 
\] 

Now suppose that $\xi_m$ is non-separating. Let $\Sigma_0:=\Sigma\setminus \xi_m$. Then by Theorem \ref{decomposerep}, $\Gamma$ is the fundamental group of a graph of groups \raisebox{-1.5ex}{\begin{tikzpicture}
\draw (0,0) node[left] {$\Sigma_0$};
\draw (0,0) node {$\bullet$};
\draw (.3,0) circle (.3cm);
\draw[->, thick] (.6,.01)--(.6,.02);
\draw (.6,0) node[right] {${\xi_m}$};
\end{tikzpicture}}. By Theorem \ref{decomppairingnonsep}, we have 
\[
\omega^\Sigma _K ([\alpha],[\beta]) = \omega^{\Sigma_0} _K (\iota^* _{\Sigma_0} [\alpha], \iota^* _{\Sigma_0} [\beta]). 
\]
Here $\iota_{\Sigma_0}$ is the injection from $\Gamma_{\Sigma_0}$ into $\Gamma$. Since $\mathcal{C}\setminus \{\xi_m\}$ divides $\Sigma_0$ into $\Sigma_1, \cdots, \Sigma_l$,  by the induction hypothesis, we obtain
\[
\omega^{\Sigma_0} _K (\iota^* _{\Sigma_0} [\alpha], \iota^* _{\Sigma_0} [\beta])= \sum_{i=1} ^{l} \omega^{\Sigma_i} _K (\widebar{\iota_{\Sigma_i}} ^* \iota^ *_{\Sigma_0} [\alpha], \widebar{\iota_{\Sigma_i}} ^* \iota^ *_{\Sigma_0}[\beta]).
\]
Since $\widebar{\iota_{\Sigma_i}}^* \iota^* _{\Sigma_0}=\iota^* _{\Sigma_i}$, we have 
\[
\omega^{\Sigma_0} _K (\iota^* _{\Sigma_0} [\alpha], \iota^* _{\Sigma_0} [\beta])=\sum_{i=1} ^{l} \omega^{\Sigma_i} _K (\iota_{\Sigma_i} ^* [\alpha], \iota_{\Sigma_i} ^* [\beta]).
\]
This completes the induction and Corollary \ref{localdecomp} follows.  
\end{proof}

\subsection{Global decomposition} As mentioned in the introduction, we can decompose $\pi_(\Sigma)$ into $\pi_1(\Sigma_i)$'s and this decomposition allows us to construct the map 
\begin{equation}\label{restmap}
\Rep_n (\Gamma) \to \Rep_n (\Gamma_{\Sigma_1}) \times \cdots \times \Rep_n (\Gamma_{\Sigma_l})
\end{equation}
induced from $[\rho]\mapsto ([\rho_{\Gamma_{\Sigma_1}}],[\rho _{\Gamma_{\Sigma_2}}],\cdots, [\rho _{\Gamma_{\Sigma_l}}] )$.

Recall that Theorem 9.1 of Labourie-McShane \cite{labourie2009} shows that  if $[\rho]$ is Hitchin, then so is each factor $\rho_{\Gamma_{\Sigma_i}}$.  Therefore, if we restrict (\ref{restmap}) to $ \Hit_n ^{\mathscr{B}}(\Sigma, \mathscr{C})$ we get the map
\begin{equation}\label{rest}
\overline{\Phi}: \Hit_n ^{\mathscr{B}}(\Sigma, \mathscr{C}) \to \Hit_n ^{\mathscr{B}_1} (\Sigma_1) \times  \cdots \times \Hit_n ^{\mathscr{B}_l}(\Sigma_l)
\end{equation}
where 
\[
\mathscr{B}_i=\{(\xi,B)\,|\,\xi\text{ is a component of } \partial \overline{\Sigma_i}\text{ and } (\iota_{\Sigma_i}(\xi),B)\in \mathscr{B}\cup \mathscr{C}\}.
\]

\begin{proposition}\label{JM2}    Let $\Sigma$ be a compact oriented hyperbolic surface possibly with boundary components $\{\zeta_1, \cdots, \zeta_b\}$ and let $\{\xi_1, \cdots, \xi_m\}$ be  a collection of pairwise disjoint, non-isotopic oriented essential simple closed curves in $\Sigma$ that divide the surface into hyperbolic subsurfaces $\Sigma_1, \cdots, \Sigma_l$.  We have the following:
	\begin{itemize}
		\item Let $(\Gamma, \mathcal{S})$ be a group system where $\Gamma=\pi_1(\Sigma)$, 
		\[
		\mathcal{S}=\{\langle \zeta_1\rangle, \cdots, \langle \zeta_b\rangle, \Gamma_{\xi_1}^+, \cdots, \Gamma_{\xi_m}^+\}
		\]
		and let
		\[
		\mathcal{S}_i = \{\langle\zeta\rangle\subset\Gamma_{\Sigma_i}\,|\, \zeta \text{ is a compoment of }\partial \overline{\Sigma_i}\}.
		\]
		Then we have identifications
		\[
		T_{[\rho]} \Hit ^{\mathscr{B}} _n (\Sigma, \mathscr{C}) = H^1_{\mathrm{par}} (\Gamma, \mathcal{S}; \mathfrak{g}_\rho)
		\]
		and
		\[
		T_{\overline{\Phi}([\rho])} \Hit_n ^{\mathscr{B}_1}(\Sigma_1)\times\cdots \times \Hit_n ^{\mathscr{B}_l}(\Sigma_l) =\bigoplus_{i=1} ^l H^1 _{\mathrm{par}} (\Gamma_{\Sigma_i } , \mathcal{S}_i; \mathfrak{g}_{\rho_{\Gamma_{\Sigma_i}}}).
		\]
		\item Under the above identifications,  the differential $\der \overline{\Phi} $ fits into the Mayer-Vietoris  sequence 
		\[
		0\to \bigoplus_{i=1} ^ m H^0(\Gamma_{\xi_i}; \mathfrak{g}) \overset{\delta}{\to} H^1 _{\mathrm{par}} (\Gamma,\mathcal{S}; \mathfrak{g}) \overset{\der \overline{\Phi} }{\to} \bigoplus_{i=1} ^l  H^1 _{\mathrm{par}} (\Gamma_{\Sigma_i},\mathcal{S}_i; \mathfrak{g})  \to 0,
		\]
	that is,
	\[
	\der \overline{\Phi} ([\alpha]) = \iota_{\Sigma_1} ^* [\alpha] \oplus \cdots \oplus \iota^* _{\Sigma_l}[\alpha].
	\]
	\end{itemize}
\end{proposition}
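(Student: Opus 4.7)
The plan is to assemble the proposition from two ingredients already in place: Proposition \ref{tangent} gives the tangent space identifications and Proposition \ref{mvs} gives the exact sequence; the only genuine content is to check that the middle map of the Mayer--Vietoris sequence agrees with $\der\overline{\Phi}$ under these identifications.

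First I would apply Proposition \ref{tangent} directly to $\Sigma$ with respect to the collection $\{\xi_1,\dots,\xi_m,\zeta_1,\dots,\zeta_b\}$, and separately to each subsurface $\Sigma_i$ with respect to its own boundary components. This yields
\[
T_{[\rho]}\Hit_n^{\mathscr{B}}(\Sigma,\mathscr{C}) \approx H^1_{\mathrm{par}}(\Gamma,\mathcal{S};\mathfrak{g}_\rho)
\]
and
\[
T_{\overline{\Phi}([\rho])}\prod_{i=1}^l \Hit_n^{\mathscr{B}_i}(\Sigma_i) \approx \bigoplus_{i=1}^l H^1_{\mathrm{par}}(\Gamma_{\Sigma_i},\mathcal{S}_i;\mathfrak{g}_{\rho_{\Gamma_{\Sigma_i}}}),
\]
using Remark \ref{conj} to identify each $\Gamma_{\xi_i}^+\subset\Gamma_{\Sigma_i}$ with the appropriate boundary subgroup up to conjugacy. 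Note that Lemma \ref{NoInvariantElement} guarantees that we are working in the smooth loci, so the tangent spaces really are these parabolic cohomology groups.

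Next I would compute $\der\overline{\Phi}$ concretely. A tangent vector at $[\rho]$ is represented by a parabolic 1-cocycle $\alpha\in Z^1_{\mathrm{par}}(\Gamma,\mathcal{S};\mathfrak{g}_\rho)$ together with a smooth path $\rho_t$ in $\Hom(\Gamma,G)$ with $\rho_0=\rho$ and
\[
\frac{\der}{\der t}\bigg|_{t=0}\rho_t(\gamma)\cdot\rho(\gamma)^{-1}=\alpha(\gamma),\qquad \gamma\in\Gamma.
\]
By construction $\overline{\Phi}([\rho_t])=([\rho_t\circ\iota_{\Sigma_1}],\dots,[\rho_t\circ\iota_{\Sigma_l}])$, so its derivative at $t=0$ has as its $i$-th component the cohomology class of $\gamma\mapsto\alpha(\iota_{\Sigma_i}(\gamma))$, which is exactly $\iota_{\Sigma_i}^\#\alpha$. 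Thus
\[
\der\overline{\Phi}([\alpha]) = \iota_{\Sigma_1}^*[\alpha]\oplus\cdots\oplus\iota_{\Sigma_l}^*[\alpha],
\]
that is, $\der\overline{\Phi}$ coincides with the map $\iota^*$ of Proposition \ref{mvs}.

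Finally, Proposition \ref{mvs} provides the exact sequence
\[
0\to \bigoplus_{i=1}^m H^0(\Gamma_{\xi_i};\mathfrak{g}) \xrightarrow{\delta} H^1_{\mathrm{par}}(\Gamma,\mathcal{S};\mathfrak{g}) \xrightarrow{\iota^*} \bigoplus_{i=1}^l H^1_{\mathrm{par}}(\Gamma_{\Sigma_i},\mathcal{S}_i;\mathfrak{g}) \to 0,
\]
and substituting $\der\overline{\Phi}$ for $\iota^*$ gives the claim. The only delicate point is the bookkeeping needed to make sense of the restriction: the group system $(\Gamma_{\Sigma_i},\mathcal{S}_i)$ uses the intrinsic boundary subgroups of $\Gamma_{\Sigma_i}$, whereas in $(\Gamma,\mathcal{S})$ each $\xi_j$ appears only once via its representative $\Gamma_{\xi_j}^+$; this mismatch is resolved by Remark \ref{conj} because parabolic cohomology depends only on the conjugacy class of the subgroups, and restriction via $\iota_{\Sigma_i}$ carries $\Gamma_{\xi_j}^+$ and $\Gamma_{\xi_j}^-$ (when both appear in $\partial\overline{\Sigma_i}$) to conjugates of the corresponding subgroups in $\mathcal{S}_i$. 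This is the step I expect to require the most care; once it is verified the proof is complete.
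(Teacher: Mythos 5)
Your proof is correct and takes essentially the same route as the paper: the first bullet is Proposition \ref{tangent} applied to $\Sigma$ and to each $\Sigma_i$, and the second follows by identifying $\der\overline{\Phi}$ with the restriction map $\iota^*$ of Proposition \ref{mvs}, which you verify by the standard cocycle computation. The paper states this even more tersely, but your unpacking of $\der\overline{\Phi}$ and the conjugacy-class bookkeeping via Remark \ref{conj} are exactly the implicit content.
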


\begin{proof}

The first statement is already done in Proposition \ref{tangent}.

The second assertion follows from  the definition of $\overline{\Phi}$ and Proposition \ref{mvs}. 
\end{proof}

\begin{lemma}\label{connectedfiber}
Each fiber of $\overline{\Phi}$ is connected.  
\end{lemma}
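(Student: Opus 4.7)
My strategy is to identify each fiber of $\overline{\Phi}$ with a single orbit of the $\R^{m(n-1)}$-action by algebraic bending, which is then connected as a continuous image of $\R^{m(n-1)}$. By Proposition \ref{mvs}, the kernel of $\der \overline{\Phi}$ equals the image of $\delta$, i.e., the span of the infinitesimal algebraic bendings along $\xi_1, \ldots, \xi_m$. Since $\rho(\xi_i)$ is purely loxodromic, $H^0(\Gamma_{\xi_i}; \mathfrak{g})$ has dimension $n-1$, so the algebraic bending defines a free $\R^{m(n-1)}$-action preserving $\overline{\Phi}$ whose orbits match the dimension of each fiber. Consequently orbits are open (hence also closed) in each fiber, and every fiber decomposes as a disjoint union of connected $\R^{m(n-1)}$-orbits. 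The lemma thus reduces to showing that each fiber consists of a single orbit.

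I would prove this by induction on $m = |\mathcal{C}|$. The base case $m = 0$ is immediate. For the inductive step, pick one curve $\xi \in \mathcal{C}$ and factor
\[
\Hit_n^{\mathscr{B}}(\Sigma, \mathscr{C}) \xrightarrow{\Psi} \mathcal{M} \xrightarrow{\overline{\Phi}'} \prod_{i=1}^l \Hit_n^{\mathscr{B}_i}(\Sigma_i),
\]
where $\mathcal{M}$ is the Hitchin space of $\Sigma$ cut along $\xi$ alone (a product of two Hitchin components if $\xi$ separates, a single one otherwise), and $\overline{\Phi}'$ is the restriction map corresponding to the remaining $m-1$ curves applied componentwise. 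By the induction hypothesis, $\overline{\Phi}'$ has connected fibers. Since $\Psi$ is a submersion onto an open subset, the preimage of a connected set remains connected; thus it suffices to treat the single-cut case.

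For the single-cut case with $\xi$ separating and $\Gamma \cong \Gamma_{\Sigma_-} *_{\langle \xi \rangle} \Gamma_{\Sigma_+}$, take $[\rho], [\rho']$ in the same fiber of $\Psi$. The triviality of the centralizer of a Hitchin representation (Lemma \ref{NoInvariantElement}) lets me uniquely pick representatives with $\rho|_{\Gamma_{\Sigma_-}} = \rho'|_{\Gamma_{\Sigma_-}}$, whereupon $\rho'|_{\Gamma_{\Sigma_+}} = g\, \rho|_{\Gamma_{\Sigma_+}}\, g^{-1}$ for a unique $g \in Z_G(\rho(\xi))$. The $\R^{n-1}$-bending orbit through $[\rho]$ is parametrized by $g$ ranging over the identity component $Z_G(\rho(\xi))^0$, a connected maximal torus diffeomorphic to $\R^{n-1}$; any path in this torus from $1$ to $g$ then lifts to a path in the fiber from $[\rho]$ to $[\rho']$. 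The non-separating case is analogous via the HNN extension structure, with the relevant $g$ arising as the ratio of the two possible values of $\rho(\xi^\perp)$.

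The hard part will be to verify that $g$ actually lies in $Z_G(\rho(\xi))^0$ rather than in another component: for $n \geq 3$ the centralizer $Z_G(\rho(\xi))$ of a positive loxodromic element in $\PSL_n(\R)$ has several connected components in general, so this is a genuine restriction. The intended resolution is \emph{Hitchin positivity}: bending by an element of a non-identity component of $Z_G(\rho(\xi))$ produces a representation whose restrictions to the subsurfaces remain Hitchin but which itself exits the Hitchin component of $\Sigma$. This can be extracted from the gluing and positivity results of Labourie--McShane \cite{labourie2009} and Fock--Goncharov \cite{fock2006}. Once this positivity constraint is in hand, the induction closes as described.
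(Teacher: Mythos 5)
Your proposed route is genuinely different from the paper's. The paper's proof is coordinate-based: it completes $\mathcal{C}$ to a maximal geodesic lamination, invokes the Bonahon--Dreyer parametrization (Theorem \ref{BonahonDreyerCoordinates}), and observes that $\overline{\Phi}$ is the projection forgetting exactly the shear invariants $\sigma^{\rho}_j(\xi_i)$ along the closed leaves $\mathcal{C}$; since $\Hit_n^{\mathscr{B}}(\Sigma,\mathscr{C})$ is identified with a slice of the interior of a convex polytope and the fiber of $\overline{\Phi}$ is the sub-slice obtained by fixing all remaining coordinates, connectedness (indeed convexity) of the fiber is immediate. Your approach instead tries to identify each fiber with a single $\R^{m(n-1)}$-orbit of algebraic bending directly at the level of representations. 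That is a legitimate strategy in principle, and it is essentially what the paper later proves \emph{using} Lemma \ref{connectedfiber} together with Lemma \ref{freeaction} (see the proof of Theorem \ref{gendecomp}); but your argument has two genuine gaps as written.

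First, the inductive step. You assert that ``since $\Psi$ is a submersion onto an open subset, the preimage of a connected set remains connected.'' This is false as stated: a submersion onto an open set need not have connected point-preimages (consider $(x,y)\mapsto y$ on $\R^2$ minus the $y$-axis). What you actually need is that $\Psi$ has connected fibers (the $m=1$ case), that $(\overline{\Phi}')^{-1}(y)$ is connected (induction), and — crucially — that $(\overline{\Phi}')^{-1}(y)\cap\operatorname{Image}\Psi$ is connected. The last point does not follow formally: the image of a restriction map is a priori only an open submanifold of the product of Hitchin components (the paper is careful to say ``open submanifold'' in Theorem \ref{gendecomp}), and an open subset of a connected fiber need not be connected. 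You would need either a surjectivity statement for $\Psi$ restricted to the relevant fibers, or a separate convexity/connectedness argument for the image — which in effect pushes you back toward a coordinate argument.

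Second, and more seriously, the ``hard part'' you isolate at the end is the whole content of the lemma, and you do not prove it. You correctly observe that the gluing element $g$ lies in $Z_G(\rho(\xi))$, which for $\rho(\xi)$ purely loxodromic in $\PSL_n(\R)$, $n\geq 3$, has several components, and you correctly identify that the needed constraint — that two Hitchin representations of $\Sigma$ restricting to conjugate pieces must be related by a gluing element in $Z_G(\rho(\xi))^0$ — is a positivity statement. But you only say this ``can be extracted from the gluing and positivity results of Labourie--McShane \cite{labourie2009} and Fock--Goncharov \cite{fock2006}'' without extracting it. As it stands the proof reduces the lemma to an unverified claim. The Bonahon--Dreyer coordinates the paper uses already package exactly this positivity information (the shear invariant along $\xi$ is a single real number in $\R^{n-1}$, ranging over all of the identity component of the torus), which is why the paper's argument closes cleanly while yours does not. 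To complete your version you would need to show, say, that changing $g$ by a representative of a nontrivial component of $Z_G(\rho(\xi))/Z_G(\rho(\xi))^0$ flips the sign of some double ratio or triple ratio along a leaf spiraling into $\xi$, violating the positivity that characterizes Hitchin representations.
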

\begin{proof}
We complete $\mathcal{C}$ to get a maximal geodesic lamination of $\Sigma$ and  construct the Bonahon-Dreyer coordinates on $\Hit_n (\Sigma)$ and on $\Hit_n (\Sigma_i)$ (see \cite{bonahon2014} or Appendix \ref{BDreview}) with respect to this maximal lamination. In this coordinates, $\Hit_n ^{\mathscr{B}}(\Sigma, \mathscr{C})$ is the set
\begin{multline*}
\{[\rho]\in \Hit_n (\Sigma)\,|\, l ^{\zeta_j}(\rho) = l ^{\zeta_j} (\rho_0),\, l  ^{\xi_k}(\rho) = l ^{\xi_k}  (\rho_0),\\ j=1,2,\cdots, b,\text{ and }k=1,2,\cdots, m\}
\end{multline*}
for some fixed reference point $[\rho_0]\in \Hit_n ^{\mathscr{B}}(\Sigma, \mathscr{C})$. Here  $l ^{\xi}$ is defined by
\[
l ^{\xi} (\rho) = \left( \log \frac{|\lambda_1(\rho(\xi))|}{|\lambda_{2}(\rho(\xi))|}, \cdots,  \log \frac{|\lambda_{n-1}(\rho(\xi))|}{|\lambda_{n}(\rho(\xi))|}\right)\in \R^{n-1}
\]
where $\xi$ is  a closed leaf or a boundary component and $\lambda_i(g)$ is the $i$th largest eigenvalue of $g\in G$. Recall that each component of $l^{\zeta_i}$ and $l^{\xi_i}$ can be expressed as a linear combination of triangle invariants and shear invariants. Moreover one can express $\overline{\Phi}$ as
\[
\overline{\Phi}=\operatorname{pr}_{\Sigma_1}\times \cdots \times \operatorname{pr}_{\Sigma_l}
\]
where $\operatorname{pr}_{\Sigma_i}$ denotes the projection onto the triangle invariants and shear invariants associated to ideal triangles and (infinite or closed) leaves contained in the interior of $\Sigma_i$. It follows that the fiber of $\overline{\Phi}$ is spanned by the shear invariants associated to closed leaves $\mathcal{C}$. Therefore the fiber of $ \overline{\Phi}$ is connected. 
\end{proof}

We now introduce a Hamiltonian $\R^{m(n-1)}$-action that makes $\overline{\Phi}$ an affine bundle over the base space $\Hit_n ^{\mathscr{B}}(\Sigma, \mathscr{C})/ \R^{m(n-1)}$. Then we prove that the base space $\Hit_n ^{\mathscr{B}}(\Sigma, \mathscr{C})/ \R^{m(n-1)}$ is the symplectic reduction.

Let $\mathbf{Hyp}^+$ be the set of purely loxodromic (or positive hyperbolic) elements in $G=\PSL_n(\R)$. By an invariant function we mean a smooth function $f:\mathbf{Hyp}^+ \to \R$ such that $f(ghg^{-1}) = f(h)$ for all $h\in \mathbf{Hyp}^+$ and $g\in G$. Given an invariant function $f$, there associated another function $F:\mathbf{Hyp}^+ \to \mathfrak{g}$ characterized by the property that $\frac{d}{dt}|_{t=0} f (g \exp tX)  = \Tr (F(g) X)$ for all $X\in \mathfrak{g}$. Observe that $\Ad_g (F(h)) =F(ghg^{-1})$. 

Let $f_1, \cdots, f_{n-1}$ be invariant functions such that 
\[
g\mapsto f(g):=(f_1(g),\cdots, f_{n-1}(g))
\]
is injective and that $\{F_1(g), F_2(g), \cdots, F_{n-1}(g)\}$ forms a basis of $\ker(\Ad_g - \operatorname{Id})$ where $g\in \mathbf{Hyp}^+$. To each oriented essential simple closed curve $\xi$, associate a map $f_\xi:\Hit_n ^{\mathscr{B}}(\Sigma)\to \R^{n-1}$ which is defined by $f_\xi ([\rho]) = f(\rho(\xi))$. 

Given $\mathcal{C}=\{\xi_1, \cdots, \xi_m\}$ a family of mutually disjoint, non-isotopic oriented essential simple closed curves, let $\mathbf{T}^t_{\xi_i, j}([\rho])= [\Phi^t_{F_j(\rho(\xi_i )), \xi_i}(\rho)]$, the algebraic bending by $F_j(\rho(\xi_i))$ along $\xi_i$.  Then for $(\textbf{t}_1, \cdots, \textbf{t}_m)\in \R^{m(n-1)}$, where $\mathbf{t}_i= (t^1_i, \cdots, t^{n-1}_i)\in \R^{n-1}$, we define the complete flow
\[
\mathbf{T}^{(\textbf{t}_1, \cdots, \textbf{t}_m)}([\rho]) = \mathbf{T}^{t^{n-1} _m} _{\xi_m,n-1}\circ\mathbf{T}^{t^{n-2} _m} _{\xi_m,n-2}\circ\cdots \circ \mathbf{T}^{t^2_1}_{\xi_1, 2}\circ  \mathbf{T}^{t^1_1}_{\xi_1, 1} ([\rho]). 
\]
The above formula is well-defined in the sense that it does not depend on the order of compositions. Hence we obtain the $\R^{m(n-1)}$-action on $\Hit^{\mathscr{B}} _n(\Sigma)$ given  by
\[
(\textbf{t}_1, \cdots, \textbf{t}_m)\cdot [\rho] = \mathbf{T}^{(\textbf{t}_1, \cdots, \textbf{t}_m)}([\rho]).
\]
Recall that $\delta(F_j(\rho(\xi_i)))$ is the fundamental vector field of the unit vector (seen as a Lie algebra element) in the direction of $t^j _i$ at $[\rho]$.

\begin{lemma}\label{freeaction}
The $\R^{m(n-1)}$-action on $\Hit_n ^{\mathscr{B}}(\Sigma, \mathscr{C})$ is free.  
\end{lemma}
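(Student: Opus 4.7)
My plan is to show that the stabilizer of every $[\rho]\in \Hit_n^{\mathscr{B}}(\Sigma,\mathscr{C})$ is trivial. Fix a representative $\rho$ of $[\rho]$ and suppose $(\mathbf{t}_1,\dots,\mathbf{t}_m)$ lies in the stabilizer. Because the elements $F_j(\rho(\xi_i))$ all lie in the abelian centralizer Lie algebra of $\rho(\xi_i)$, the composition of bendings along a single $\xi_i$ coincides with a single algebraic bending by $X_i:=\sum_j t_i^j F_j(\rho(\xi_i))\in\ker(\Ad_{\rho(\xi_i)}-\operatorname{Id})$. Writing $\rho'$ for the total bent representation, we have $\rho'=g\rho g^{-1}$ for some $g\in G$, and I aim to show $g=1$ and every $X_i = 0$.

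First I would choose a vertex $\Sigma_\ast\in V(\mathcal{D})$ which is minimal for the partial order $\leq$ of Section~\ref{tree}. Inspecting the formula defining $\Phi^t_{X,\xi}$, every tree-edge bending acts as the identity on $\Gamma_{\Sigma_\ast}$ (because $\Sigma_\ast$ never falls into the ``$\Sigma_j\geq \Sigma_q$'' region), and every non-tree bending moves only the auxiliary generator $\xi_i^{\perp}$. Therefore $\rho'|_{\Gamma_{\Sigma_\ast}} = \rho|_{\Gamma_{\Sigma_\ast}}$ and $g\in Z_G(\rho|_{\Gamma_{\Sigma_\ast}})$. Since $\rho|_{\Gamma_{\Sigma_\ast}}$ is Hitchin by Theorem~9.1 of \cite{labourie2009}, Lemma~\ref{NoInvariantElement} forces $g=1$, and hence $\rho'=\rho$ pointwise.

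Next I would induct outward from $\Sigma_\ast$ along $\mathcal{D}$ to conclude that each $X_i=0$. For a vertex $\Sigma_q$ reached through a tree path $\xi_{i_1},\dots,\xi_{i_k}$, executing the bendings in that order yields, for $\gamma\in\Gamma_{\Sigma_q}$, an identity of the shape $\rho'(\gamma) = h_k\cdots h_1\,\rho(\gamma)\,h_1^{-1}\cdots h_k^{-1}$, where $h_l$ is the exponential of the bending element used at step $l$. The inductive hypothesis (edges closer to $\Sigma_\ast$ carry trivial bending) makes $h_1=\cdots = h_{l-1}=1$, so the intermediate representation agrees with $\rho$ on the subgroups containing $\xi_{i_l}$ and $h_l=\exp(X_{i_l})$. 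The identity $\rho'=\rho$ then gives $\exp(X_{i_k})\in Z_G(\rho|_{\Gamma_{\Sigma_q}})=\{1\}$. Since $\rho(\xi_{i_k})$ is purely loxodromic, $\ker(\Ad_{\rho(\xi_{i_k})}-\operatorname{Id})$ is the Lie algebra of the identity component of a split Cartan of $\PSL_n(\R)$ -- a copy of $\R^{n-1}$ on which $\exp$ is a bijection -- so $X_{i_k}=0$, and the $t_{i_k}^j$ vanish by the linear independence of $\{F_j(\rho(\xi_{i_k}))\}_j$. Non-tree edges are handled analogously: once the tree bendings are known trivial one reads off $\rho'(\xi_i^{\perp}) = \rho(\xi_i^{\perp})\exp(X_i)$ and concludes $\exp(X_i)=1$.

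The main technical obstacle is the non-commutativity of successive bendings: the element used at step $l$ is $F_j(\rho_{l-1}(\xi_{i_l}))$, which can differ from $F_j(\rho(\xi_{i_l}))$ because earlier bendings may conjugate $\rho(\xi_{i_l})$. The inductive ordering along the tree is precisely what resolves this, since once the closer bendings are shown to be the identity the intermediate representation agrees with $\rho$ on the relevant vertex subgroup, and the verification at the far edge reduces cleanly to injectivity of the exponential on a single split Cartan.
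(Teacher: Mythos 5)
Your proof is correct and follows essentially the same strategy as the paper's: the paper's argument also starts by observing that the action fixes $\rho|_{\Gamma_{\Sigma_0}}$ on the base vertex group, invokes Lemma~\ref{NoInvariantElement} to pass from $\mathbf{t}\cdot[\rho]=[\rho]$ to the pointwise identity $\mathbf{t}\cdot\rho=\rho$, and then appeals (very briefly, with the phrase ``by induction and the definition of the action'') to an outward induction along the tree $\mathcal{D}$ to conclude $\mathbf{t}=\mathbf{0}$. You have merely spelled out that tree induction explicitly, including the observation that $\ker(\Ad_{\rho(\xi_i)}-\operatorname{Id})$ is an abelian subalgebra on which $\exp$ is injective, which the paper leaves implicit.
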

\begin{proof}
Choose a representative $\rho$ of  $[\rho]$. We observe that, by construction of the $\R^{m(n-1)}$-action, $(\mathbf{t}\cdot \rho)|_{\Gamma_{\Sigma_0}}=\rho_{\Gamma_{\Sigma_0}}$  on  the vertex group $\Gamma_{\Sigma_0}$ of the base vertex. Suppose that  $\mathbf{t}\cdot [\rho]=[\rho]$ for some $\mathbf{t}\in \R^{m(n-1)}$. Then, by Lemma \ref{NoInvariantElement},  $\mathbf{t} \cdot \rho =\rho$ as representations. Now by induction and the definition of the action we have $\mathbf{t}=\mathbf{0}$.  Therefore, the $\R^{m(n-1)}$-action  is free. 
\end{proof}

\begin{lemma}\label{properness}
Let 
\[
\mathcal{H}^{\mathscr{B}} (\Gamma,\mathscr{C}):=\{\rho\in \Hom(\Gamma, G)\,|\,[\rho]\in\Hit_n ^{\mathscr{B}}(\Sigma, \mathscr{C})\}.
\]
The $\R^{m(n-1)}$-action on $\mathcal{H}^{\mathscr{B}} (\Gamma,\mathscr{C})$ is proper. 
\end{lemma}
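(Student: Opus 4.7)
The plan is to reduce properness of the $\R^{m(n-1)}$-action to the well-known properness of the ordinary $G$-action on each factor space $\mathcal{H}^{\mathscr{B}_i}(\Gamma_{\Sigma_i})$, which is due to Johnson-Millson \cite{Johnson1987}. Unpacking the explicit formulas for $\Phi^t_{X,\xi_{i_0}}$ and using commutativity of the various $\mathbf{T}^t_{\xi_i,j}$, a direct induction on the tree distance from the base vertex yields, for every $\Sigma_i \in V(\mathcal{D})$,
\[
(\mathbf{t} \acts \rho)|_{\Gamma_{\Sigma_i}} = C_i(\rho, \mathbf{t})\, \rho|_{\Gamma_{\Sigma_i}}\, C_i(\rho, \mathbf{t})^{-1},
\]
where $C_i(\rho, \mathbf{t})$ is the product, taken in path order, of the exponentials $\exp(\mathbf{t}_{k_s} \cdot F(\rho(\xi_{k_s})))$ over the tree edges $\xi_{k_1}, \ldots, \xi_{k_r}$ lying on the unique path in $\mathcal{D}$ from the base vertex to $\Sigma_i$. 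Similarly, for each edge $\xi_i \notin E(\mathcal{D})$,
\[
(\mathbf{t} \acts \rho)(\xi_i^\perp) = A_i(\rho, \mathbf{t})\, \rho(\xi_i^\perp)\, \exp\bigl(\mathbf{t}_i \cdot F(\rho(\xi_i))\bigr)\, B_i(\rho, \mathbf{t}),
\]
where $A_i$ and $B_i$ are explicit products of exponentials depending only on the \emph{tree} coefficients of $\mathbf{t}$.

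Now suppose $\rho_k \to \rho$ and $\mathbf{t}_k \acts \rho_k \to \rho'$ in $\mathcal{H}^{\mathscr{B}}(\Gamma, \mathscr{C})$. Restricting to each $\Gamma_{\Sigma_i}$ and using the first identity,
\[
C_i(\rho_k, \mathbf{t}_k)\, \rho_k|_{\Gamma_{\Sigma_i}}\, C_i(\rho_k, \mathbf{t}_k)^{-1} \to \rho'|_{\Gamma_{\Sigma_i}}.
\]
Both $\rho_k|_{\Gamma_{\Sigma_i}}$ and $\rho'|_{\Gamma_{\Sigma_i}}$ lie in the Hitchin component, so by Lemma \ref{NoInvariantElement} they have trivial $G$-stabilizer. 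Invoking the Johnson-Millson properness of the $G$-action on $\mathcal{H}^{\mathscr{B}_i}(\Gamma_{\Sigma_i})$, we can extract a subsequence along which $C_i(\rho_k, \mathbf{t}_k)$ converges in $G$ for every $i$ simultaneously (finitely many such extractions).

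To recover the $\mathbf{t}_k$ themselves, I would traverse $\mathcal{D}$ by breadth-first search from the base vertex. For a tree edge $\xi_i \in E(\mathcal{D})$ joining $\Sigma_p < \Sigma_q$, the factorization $C_q = C_p \cdot \exp(\mathbf{t}_{k,i} \cdot F(\rho_k(\xi_i)))$ gives
\[
\exp\bigl(\mathbf{t}_{k,i} \cdot F(\rho_k(\xi_i))\bigr) = C_p(\rho_k, \mathbf{t}_k)^{-1}\, C_q(\rho_k, \mathbf{t}_k),
\]
which converges to an element of the identity component of $Z_G(\rho(\xi_i))$; since the centralizer of a purely loxodromic element is abelian, $\exp$ restricts to a diffeomorphism from $\ker(\Ad_{\rho(\xi_i)} - \operatorname{Id})$ onto that component, and $\{F_j(\rho(\xi_i))\}_{j=1}^{n-1}$ is by construction a basis of the source, so inversion yields convergence of $\mathbf{t}_{k,i}$. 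For a non-tree edge $\xi_i \notin E(\mathcal{D})$, once all tree coefficients are controlled the factors $A_i(\rho_k, \mathbf{t}_k)$ and $B_i(\rho_k, \mathbf{t}_k)$ converge, so the second displayed identity isolates $\exp(\mathbf{t}_{k,i} \cdot F(\rho_k(\xi_i)))$ as a convergent sequence in the centralizer, and the same exp-log argument applies. The main obstacle I anticipate is the careful bookkeeping for non-tree edges: one must verify that the flanking factors $A_i$ and $B_i$ indeed depend only on the already-controlled tree components and that the extracted exponential lies in the identity component of the centralizer, so that the logarithm is unambiguous.
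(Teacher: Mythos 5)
Your strategy is sound and, once the bookkeeping you flag is filled in, gives a correct proof, but it is mechanically different from the paper's. Both proofs reduce the vertex-group part to Johnson--Millson properness of the $G$-action on $\Hom_s(\Gamma_{\Sigma_i},G)$: you use the sequential criterion for properness and peel off the tree coefficients edge by edge along $\mathcal{D}$, whereas the paper works directly with the compact-set definition, showing that $\{\mathbf{t}:\mathbf{t}\cdot C\cap C\ne\emptyset\}$ is contained in a finite intersection of sets $U_i$ and $V_j$ that are shown to be compact. The sharpest divergence is in the treatment of non-tree edges: you isolate $\exp(\mathbf{t}_{k,i}\cdot F(\rho_k(\xi_i)))$ and invert it using the fact that $\exp$ is a diffeomorphism from $\ker(\Ad_{\rho(\xi_i)}-\operatorname{Id})$ onto the identity component of the centralizer of a purely loxodromic element, while the paper instead passes through the Cartan projection $\mathfrak{a}:G\to\overline{A}^+$, using its properness and a $KAK$-type normalization $K\cdot\iota_{\xi_j^\perp}(C)\cdot K=\iota_{\xi_j^\perp}(C)$. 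Your exp-log route is arguably more direct and avoids introducing Cartan projections. Regarding the obstacle you anticipate: the structure of $\Phi^t_{X,\xi_{i_0}}$ really does force $A_i$ and $B_i$ to depend only on $\rho$ and the tree coefficients, because bending along a non-tree edge $\xi_{i_0}\notin E(\mathcal{D})$ fixes all vertex groups, so commuting it to the front leaves the original $F(\rho(\xi_i))$ in the exponent; one also needs to note that $F_j(\rho_k(\xi_i))$ is a basis for every $k$ (not just the limit), since all $\rho_k(\xi_i)$ remain purely loxodromic, and that the limiting exponential stays in the identity component of $Z_G(\rho(\xi_i))$ because one can conjugate by elements tending to the identity so as to move into a fixed torus. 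These are genuine verifications but pose no obstruction.
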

\begin{proof}
Define 
\[
\mathcal{H} (\Gamma_{\Sigma_i}):=\{\rho\in \Hom(\Gamma_{\Sigma_i}, G)\,|\, [\rho]\in \Hit_n(\Sigma_i)\}
\]
for each $i=1,2,\cdots, l$. We know, by Lemma \ref{NoInvariantElement}, that $\mathcal{H}^{\mathscr{B}} (\Gamma,\mathscr{C})$ and $\mathcal{H}(\Gamma_{\Sigma_i})$ are subspaces of $\Hom_s(\Gamma, G)$ and $\Hom_s(\Gamma_{\Sigma_i},G)$ respectively.   Let $C$ be a compact subset of $\mathcal{H}^{\mathscr{B}} (\Gamma,\mathscr{C})$. We know that the restriction map 
\[
\iota_{\Sigma_i}: \mathcal{H}^{\mathscr{B}} (\Gamma,\mathscr{C}) \to \mathcal{H}(\Gamma_{\Sigma_i})
\]
and 
\[
\iota_{\xi_j^\perp}:\mathcal{H}^{\mathscr{B}} (\Gamma,\mathscr{C}) \to \Hom(\langle \xi_j ^\perp \rangle, G),\quad \xi_j \in E(\mathcal{G})\setminus E(\mathcal{D}),
\]
are continuous and equivariant with respect to the $\R^{m(n-1)}$-action. Let
\begin{align*}
U_i &:=\{\mathbf{t}\in \R^{m(n-1)}\,|\, \mathbf{t}\cdot \iota_{\Sigma_i}(C) \cap \iota_{\Sigma_i}(C)  \ne \emptyset \},\\
V_j&:= \{\mathbf{t}\in \R^{m(n-1)}\,|\, \mathbf{t}\cdot \iota_{\xi_j^\perp}(C) \cap \iota_{\xi_j ^\perp}(C)  \ne \emptyset \}.
\end{align*}
Since $\iota_{\Sigma_i}$ and $\iota_{\xi_j^ \perp}$ are equivariant, 
\[
\{\mathbf{t}\in \R^{m(n-1)}\,|\, \mathbf{t} \cdot C \cap C \ne \emptyset\} \subset \bigcap_{i=1} ^l  U_i \cap \bigcap_{j=1} ^N V_j
\]
where $N= |E(\mathcal{G})\setminus E(\mathcal{D})|$. We claim that  $ \bigcap_{i=1} ^l  U_i \cap \bigcap_{j=1} ^N V_j$  is compact. Since 
\begin{equation}\label{properset}
\{\mathbf{t}\in \R^{m(n-1)}\,|\, \mathbf{t} \cdot C \cap C \ne \emptyset\}
\end{equation}
is closed, this shows that (\ref{properset}) is compact. 

It is known that the $G$-action on $\Hom_s(\Gamma_{\Sigma_i}, G)$ is proper. See   Proposition 1.1 of Johnson-Millson \cite{Johnson1987}.   Hence, on each $\mathcal{H}(\Gamma_{\Sigma_i})\subset \Hom_s(\Gamma_{\Sigma_i},G)$, the set 
\[
D:=\{g\in G\,|\, g \iota_{\Sigma_i}(C) g^{-1} \cap \iota_{\Sigma_i}(C) \ne \emptyset \}
\]
is compact. Suppose that $\xi$ is in $E(\mathcal{D})$ and precedes $\Sigma_i$.  Let
\begin{multline*}
E:=\{(t_1, \cdots, t_{n-1}) \in \R^{n-1}\,|\, \exp (t_1 F_1(\rho(\xi))+ \cdots+ t_{n-1}F_{n-1}(\rho(\xi)))\in D\\ \text{ for some } \rho \in C\}.
\end{multline*}
Recall that the $\R^{n-1}$ action  on $\Hom_s(\Gamma_{\Sigma_i}, G)$ corresponding to the flow along $\xi$ is conjugation by $\exp (t_1 F_1 (\rho(\xi))+\cdots+t_{n-1}F_{n-1}(\rho(\xi)))$, $\rho \in C$. Hence we have 
\[
\{\mathbf{t} \in \R^{n-1}\,|\, \mathbf{t}\cdot \iota_{\Sigma_i}(C) \cap \iota_{\Sigma_i}(C)\ne \emptyset\} \subset E.
\]
We claim that $E$ is compact which also proves that 
\[
\{\mathbf{t} \in \R^{n-1}\,|\, \mathbf{t}\cdot \iota_{\Sigma_i}(C) \cap \iota_{\Sigma_i}(C)\ne \emptyset\}
\]
is compact.  Consider the map $k: \R^{n-1} \times C \to G$ given by 
\[
((t_1, \cdots, t_{n-1}), \rho)\mapsto \exp (t_1 F_1 (\rho(\xi))+\cdots+t_{n-1}F_{n-1}(\rho(\xi))).
\]
This map is continuous.  Moreover if $W$ is an unbounded subset of $\R^{n-1}$ then so is $k(W\times C)$ where $G$ is given the operator norm. Since $C$ is  compact, the projection  $p_1:\R^{n-1}\times C \to \R^{n-1}$ onto the first factor is a closed map. Therefore,  $E=p_1(k^{-1}(D))$ is closed and bounded  subset of $\R^{n-1}$ so $E$ is compact.  

By simple induction, we have 
\[
\bigcap_{i=1} ^l U_i = A_1 \oplus \cdots \oplus A_l \oplus \R^{N(n-1)}
\]
where  each $A_i$ is a compact subspace of a subgroup $\R^{n-1}$ of $\R^{m(n-1)}$ corresponding to the flow along an edge in $\mathcal{D}$.  

Now we claim that the set 
\[
B_j:= \{\mathbf{t}\in \R^{n-1} \,|\, \mathbf{t}\cdot \iota_{\xi_j ^\perp}(C) \cap \iota_{\xi_j ^\perp}(C) \ne \emptyset\}
\]
is compact. Recall that the $\R^{n-1}$ action on $\iota_{\xi_j ^\perp}(C)$ is the right multiplication by $\exp (t_1F_1(\rho(\xi_j ))+\cdots+t_{n-1}F_{n-1}(\rho(\xi_j)))$. Let $\overline{A}^+$ be the set of diagonal matrices with diagonal entries being sorted from largest to smallest. Consider the Cartan projection $\mathfrak{a}: G \to \overline{A}^+$ which is known to be continuous and proper. We may assume that $K\cdot \iota_{\xi_j^\perp}(C)=\iota_{\xi_j^\perp}(C)\cdot K = \iota_{\xi_j^\perp}(C)$  where $K$ is a maximal compact subgroup of $G$. Then we observe that 
\[
F:=\{ g\in \overline{A}^+\,|\, \iota_{\xi_j^\perp}(C) g \cap \iota_{\xi_j^\perp}(C) \ne \emptyset\}
\]
is compact.   Indeed if $F$ is not compact, there is an unbounded sequence $\{g_i\}$ in $\overline{A}^+$ such that $ \iota_{\xi_j^\perp}(C) g_i \cap \iota_{\xi_j ^\perp}(C) \ne \emptyset$ for all $i$. Then $\iota_{\xi_j^\perp}(C)$ must be unbounded, which contradicts  the assumption that $\iota_{\xi_j^\perp}(C)$ is compact. Since $\mathfrak{a}$ is proper, $\mathfrak{a}^{-1}(F)=\{ g\in G\,|\, \iota_{\xi_j^\perp}(C)g \cap \iota_{\xi_j^\perp}(C) \ne \emptyset\}$ is compact in $G$. Thus $B_j=p_1(k^{-1}(\mathfrak{a}^{-1}(F)))$ is also closed and bounded. It follows that $B_j$  must be compact. 

Hence topologically, $\bigcap_{i=1} ^l  U_i \cap \bigcap_{j=1} ^N V_j$ is a closed subspace of $A_1\times \cdots \times A_l \times B_1 \times \cdots \times B_N$. Since each $A_i$ and $B_j$ are compact, $\bigcap_{i=1} ^l  U_i \cap \bigcap_{j=1} ^N V_j$ is also compact. 
\end{proof}

Let $\mu:\Hit_n ^\mathscr{B}(\Sigma) \to \R^{m(n-1)}$ be the function defined by 
\begin{equation}\label{momentmap}
\mu([\rho])= (f_{\xi_1}(\rho), \cdots, f_{\xi_m}(\rho))
\end{equation}
and let $\mathbf{L}=\operatorname{image} \mu$. $\mu$ is the complete invariant of conjugacy classes of $\mathbf{Hyp}^+$. Therefore, the value  $\mu(g)$ determines the conjugacy class in which $g\in \mathbf{Hyp}^+$ is contained.

\begin{theorem}[Generalization of Goldman \cite{goldman1986}]\label{twist} Keep the assumption of Proposition \ref{JM2}. For any boundary frame $\mathscr{B}$, the $\R^{m(n-1)}$-action on $\Hit_n ^{\mathscr{B}}(\Sigma)$ is Hamiltonian whose moment map is given by (\ref{momentmap}).  Each $y\in \mathbf{L}$ is a regular value of $\mu$ and the action is proper on $\mu^{-1}(y)$.
\end{theorem}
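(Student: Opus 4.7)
The plan is to establish, in sequence, the four assertions packed into the statement: (i) the action is weakly Hamiltonian with the claimed moment map, (ii) the action is in fact Hamiltonian, (iii) every $y \in \mathbf{L}$ is a regular value, and (iv) the action is proper on $\mu^{-1}(y)$.

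For (i), the key computation is that, for a fixed oriented essential simple closed curve $\xi$ and an invariant function $f$ on $\mathbf{Hyp}^+$ with associated $F$, the Hamiltonian vector field on $(\Hit_n^{\mathscr{B}}(\Sigma), \omega_K^{\Sigma})$ of the function $f_\xi: [\rho]\mapsto f(\rho(\xi))$ coincides with the tangent cocycle of the algebraic bending $\Phi^t_{F(\rho(\xi)),\xi}$ at $t=0$, namely $\delta(F(\rho(\xi)))$. This is the parabolic analogue of Goldman's calculation in \cite{goldman1986}: I would differentiate $f_\xi$ in the direction of a test class $[\beta] \in H^1_{\mathrm{par}}$ to obtain $\operatorname{Tr}(F(\rho(\xi))\beta(\xi))$, then choose a presentation of $\Gamma$ in which $\xi$ appears as a cyclic generator and evaluate the chain-level formula of Theorem \ref{Ksymp} on $(\delta(F(\rho(\xi))),\beta)$; the two expressions match. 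Applied coordinate-wise through the basis $\{F_1,\ldots,F_{n-1}\}$ of $\ker(\operatorname{Ad}_g-\operatorname{Id})$ and over all $\xi_i \in \mathcal{C}$, this identifies the fundamental vector fields of the $\R^{m(n-1)}$-action with the Hamiltonian vector fields of the components of $\mu$ defined in (\ref{momentmap}).

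For (ii), because $\R^{m(n-1)}$ is abelian, it suffices to show that the Poisson brackets $\{f_{\xi_i, j}, f_{\xi_k,l}\}$ all vanish. By Goldman's product formula, each such bracket is a signed sum indexed by $\xi_i \cap \xi_k$; since the curves in $\mathcal{C}$ are pairwise disjoint this sum is empty when $i \ne k$, and when $i = k$ the bendings along a single curve commute (alternatively, $F_j(\rho(\xi_i))$ and $F_l(\rho(\xi_i))$ lie in the same abelian Cartan subalgebra). Hence $X\mapsto H_X$ is a Lie algebra homomorphism and the action is Hamiltonian. For (iii), Lemma \ref{freeaction} gives that the action is free, so its fundamental vector fields are pointwise linearly independent; combining this with $\iota_{\xi_X}\omega_K^{\Sigma} = d H_X$ and the nondegeneracy of $\omega_K^{\Sigma}$, we conclude that $d\mu_{[\rho]}$ is surjective at every $[\rho]$, so every $y \in \mathbf{L}$ is a regular value.

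For (iv), given $y\in \mathbf{L}$, let $\mathscr{C}$ be the $\mathcal{C}$-frame whose conjugacy classes have invariants equal to the components of $y$; injectivity of $f$ ensures that $\mathscr{C}$ is well defined and that $\mu^{-1}(y) = \Hit_n^{\mathscr{B}}(\Sigma,\mathscr{C})$. Lemma \ref{properness} establishes properness of the $\R^{m(n-1)}$-action at the representation level on $\mathcal{H}^{\mathscr{B}}(\Gamma,\mathscr{C})$. Since the $G$-action on $\mathcal{H}^{\mathscr{B}}(\Gamma,\mathscr{C})$ is proper and free by Johnson--Millson \cite{Johnson1987} and commutes with the $\R^{m(n-1)}$-action, a standard diagram chase shows properness descends to the quotient $\Hit_n^{\mathscr{B}}(\Sigma,\mathscr{C})$. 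I expect the main obstacle to lie in (i), not in a logical sense but in the bookkeeping: matching the chain-level cup-product formula for $\omega_K^{\Sigma}$ with the several cases of the explicit algebraic bending description from Section \ref{MVS} (tree edge with $\Sigma_p \le \Sigma_q$ versus non-tree edge) requires a careful choice of presentation and of the auxiliary elements $X_i$ in Theorem \ref{Ksymp}. Once (i) is settled, steps (ii)--(iv) follow formally from the disjointness of $\mathcal{C}$ and from the already-established Lemmas \ref{freeaction} and \ref{properness}.
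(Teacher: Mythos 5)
Your outline of steps (i)--(iii) is reasonable. For (i), the paper identifies the fundamental vector fields with Hamiltonian vector fields via a Poincar\'e duality argument (a relative version of Goldman's Proposition~3.7 and Theorem~4.3 of \cite{goldman1986}, using the pairing $H^1(\Gamma,\mathcal{S};\mathfrak{g})\otimes H^1(\Gamma;\mathfrak{g})\to \R$ and showing that both $\mathbb{X}_{f_{\xi_i}}$ and $\delta(F_j(\rho(\xi_i)))$ are Poincar\'e dual to $\xi_i\otimes F_j(\rho(\xi_i))$), whereas you propose a direct differentiation-and-cocycle-evaluation computation; both should work, and yours is arguably more hands-on. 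Step (ii) via disjointness matches the paper. Step (iii) is not made explicit in the paper's proof; your argument (free action $\Rightarrow$ pointwise independent fundamental vector fields $\Rightarrow$ $d\mu$ surjective by nondegeneracy) is a fine way to supply it.

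Step (iv), however, contains a genuine gap. Properness of the $\R^{m(n-1)}$-action on $\mathcal{H}^{\mathscr{B}}(\Gamma,\mathscr{C})$ does \emph{not} descend to the quotient $\Hit_n^{\mathscr{B}}(\Sigma,\mathscr{C})=\mathcal{H}^{\mathscr{B}}(\Gamma,\mathscr{C})/G$ by a ``standard diagram chase,'' even though the $G$-action is proper, free, and commutes with the $\R^{m(n-1)}$-action. The obstruction is that a compact set $C$ in the quotient lifts only to a $G$-saturated (hence noncompact) set upstairs; if you instead pick an arbitrary compact lift $\tilde C$ with $p(\tilde C)=C$, then $\{\mathbf{t}\,:\,\mathbf{t}C\cap C\ne\emptyset\}$ equals $\{\mathbf{t}\,:\,\mathbf{t}\tilde C\cap G\tilde C\ne\emptyset\}$, not $\{\mathbf{t}\,:\,\mathbf{t}\tilde C\cap\tilde C\ne\emptyset\}$, and the former can fail to be compact. (Concretely: $G=\Z$ acting on $\R^2$ by $(x,y)\mapsto(x+1,y)$ and $\R$ acting by $(x,y)\mapsto(x+t\alpha,y)$ are both proper, but the induced $\R$-action on $S^1\times\R$ is not proper.) This is exactly why the paper's proof constructs a continuous, $\R^{m(n-1)}$-\emph{equivariant} section $s:\mu^{-1}(y)\to\mathcal{H}^{\mathscr{B}}(\Gamma,\mathscr{C})$ using the Frenet flag curve and the normalization $\mathcal{F}_{\tilde\rho}(p)=P$, $\mathcal{F}_{\tilde\rho}(q)=Q$, $\mathcal{F}_{\tilde\rho}(r)^{(1)}=R$: equivariance holds because the action fixes $\rho|_{\Gamma_{\Sigma_0}}$ and hence fixes the normalizing triple of flags. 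With $C'=s(C)$ one gets the clean equality $\{\mathbf{t}\,:\,\mathbf{t}C\cap C\ne\emptyset\}=\{\mathbf{t}\,:\,\mathbf{t}C'\cap C'\ne\emptyset\}$, and Lemma~\ref{properness} finishes the job. Without the section, properness on the quotient is not established.
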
 
\begin{proof}
Theorem 4.3 of Goldman \cite{goldman1986} states that when $\Sigma$ is closed, the $\R^{m(n-1)}$-action on  $(\Hit_n(\Sigma),\omega_G)$ is weakly Hamiltonian. Since curves in $\mathcal{C}$ are pairwise disjoint, non-isotopic, Theorem 3.5 of \cite{goldman1986} implies that the Hamiltonian functions $[\rho] \mapsto f_i (\rho(\xi_j))$ commute each other. Therefore this action is Hamiltonian. 

Now we assume that $\Sigma$ has boundary. Let $(\Gamma, \mathcal{S})$ be a group system where $\Gamma=\pi_1(\Sigma)$ and $\mathcal{S}=\{\langle \zeta_1\rangle, \cdots, \langle \zeta_b\rangle, \Gamma_{\xi_1}^+, \cdots, \Gamma_{\xi_m}^+\}$.   We first consider the cohomological operation
\[
H^1 (\Gamma ,\mathcal{S} ; \mathfrak{g}) \otimes H^1(\Gamma; \mathfrak{g} ) \overset{\smile}{\to} H^2(\Gamma,\mathcal{S} ; \R ) \overset{\cap [\Sigma]}{\to} \R
\] 
where the first arrow is the usual cup product and the second is the cap product with a relative fundamental class $[\Sigma] \in H_2(\Gamma, \mathcal{S};\R)$. It descends to the operation
\[
H^1 _{\mathrm{par}} (\Gamma,\mathcal{S};\mathfrak{g}) \otimes H^1_{\mathrm{par}}(\Gamma, \mathcal{S}; \mathfrak{g}) \to \R
\]
which is the same as the explicitly defined form $\omega^\Sigma _K$. See  Lemma 8.4 of \cite{guruprasad1997}. Then as in the proof of Proposition 3.7 of  \cite{goldman1986}, we can show that the Poincar\'{e} dual of the cohomology element $\mathbb{X}_{f_{\xi_i}}|_{[\rho]}\in H^1_{\mathrm{par}}(\Gamma, \mathcal{S};\mathfrak{g}_\rho)\subset H^1(\Gamma;\mathfrak{g}_\rho)$ is $\xi_i \otimes F_j (\rho(\xi_i))\in H_1(\Gamma,\mathcal{S};\mathfrak{g})$. This follows from the commutativity of following diagram, absolute version of which appears in the proof of Proposition 3.7 of  \cite{goldman1986}:
\[
\xymatrix{
H^1(\Gamma;\mathfrak{g})\ar[d] ^{\tilde{\omega_K }}\ar[r]^{\cap [\Sigma]}\ar[rd]^{\theta} & H_1(\Gamma,\mathcal{S}; \mathfrak{g})\ar[d]^{\eta}\\
H^1(\Gamma,\mathcal{S};\mathfrak{g})^* &  \ar[l]^{\Tr^*} H^1(\Gamma,\mathcal{S};\mathfrak{g}^*)^*
}.
\]
For the precise definition of each map, we refer to Goldman \cite{goldman1986}. One can also prove that, by exactly the same argument of Theorem 4.3 of \cite{goldman1986},  the  Poincar\'{e} dual of $[\frac{\partial}{\partial t^j _i } \mathbf{T}]=\delta(F_j(\rho(\xi_i)))\in H^1(\Gamma; \mathfrak{g})$ is  given by $\xi_i \otimes F_j (\rho(\xi_i))$ as well. This proves that the action is weakly Hamiltonian. To prove that the action is Hamiltonian, we again use the fact that $\xi_i$ are all disjoint which implies that $\{f_{\xi_i}, f_{\xi_j}\}=0$ for all $i, j$. 

It remains to prove the properness of the action. We show the following claim first.
\begin{claim}
Let $\mathscr{C}$ be the $\mathcal{C}$-frame such that $\mu^{-1}(y) = \Hit_n ^{\mathscr{B}}(\Sigma, \mathscr{C})$. For any given compact subset $C\subset \mu^{-1}(y)=\Hit_n ^{\mathscr{B}}(\Sigma, \mathscr{C})$, there is a compact set $C' \subset \mathcal{H}^{\mathscr{B}} (\Gamma,\mathscr{C})$ satisfying the following properties:
\begin{itemize}
\item $C= p(C')$ where $p: \mathcal{H}^{\mathscr{B}} (\Gamma,\mathscr{C}) \to \Hit_n ^{\mathscr{B}}(\Sigma, \mathscr{C})$ is the projection $\rho\mapsto [\rho]$
\item If $\rho \in C'$ and $[\mathbf{t} \cdot \rho ] \in C$ for some $\mathbf{t} \in \R^{m(n-1)}$ then $\mathbf{t} \cdot \rho \in C'$. 
\end{itemize}
\end{claim}
\begin{proof}[Proof of the Claim]
We extend $\mathcal{C}$ to a maximal geodesic lamination on $\Sigma$ and fix an ideal triangle $T$ contained in $\Sigma_0$, the origin of the tree $\mathcal{D}$. By Labourie-McShane \cite{labourie2009}, there is a equivariant flag curve $\mathcal{F}_\rho:\partial_\infty \widetilde{\Sigma} \to \Flag(\R^n)$ for each $\rho \in \mathcal{H}^{\mathscr{B}} (\Gamma,\mathscr{C})$ where $\widetilde{\Sigma}$ is the universal cover of $\Sigma$. Fix also flags $P$, $Q$ and a projective line $R$ such that $(P,Q,R)$ is generic. Denote by $p,q,r$ the three vertices of a lift $\widetilde{T}$ of $T$. Then for each $[\rho]\in \Hit_n ^{\mathscr{B}}(\Sigma, \mathscr{C})$ there is a unique $\widetilde{\rho}\in \mathcal{H}^{\mathscr{B}} (\Gamma,\mathscr{C})$ such that $[\widetilde{\rho}] = [\rho]$, $\mathcal{F}_{\widetilde{\rho}} (p)=P,$ $\mathcal{F}_{\widetilde{\rho}} (q) = Q$ and $\mathcal{F}_{\widetilde{\rho}}(r)^{(1)}= R$. Let $s: \mu^{-1}(y) \to \mathcal{H}^{\mathscr{B}} (\Gamma,\mathscr{C})$ be the map defined by $s([\rho]) = \widetilde{\rho}$.  Define $C' = s(C)$. Since the map $s$ is continuous, $C'$ is also compact. Suppose that $p(\mathbf{t}\cdot \rho)=[\mathbf{t}\cdot \rho] \in C$ for some $\mathbf{t}\in \R^{m(n-1)}$ and some $\rho \in C'$. Since $(\mathbf{t}\cdot \rho)|_{\Gamma_{\Sigma_0}} = \rho|_{\Gamma_{\Sigma_0}}$, we have that $\mathcal{F}_{\mathbf{t}\cdot \rho} (p)= P$,  $\mathcal{F}_{\mathbf{t}\cdot \rho}(q)= Q$, and  $\mathcal{F}_{\mathbf{t}\cdot \rho} (r) ^{(1)}= R$. Hence $\mathbf{t} \cdot \rho = s([\mathbf{t}\cdot \rho])$. It follows that $C'$ is the desired compact set. 
\end{proof}
To prove the properness, we lift the compact set $C$ to $C'$ as above.  We observe that
\[
\{\mathbf{t}\in \R^{m(n-1)} \,|\, \mathbf{t} \cdot C\cap C \ne \emptyset\} = \{\mathbf{t}\in \R^{m(n-1)} \,|\, \mathbf{t}\cdot C' \cap C' \ne \emptyset\}.
\]
The right hand side is compact by Lemma \ref{properness}. Therefore, the $\R^{m(n-1)}$-action is proper on $\mu^{-1}(y)$. 
\end{proof}

In particular, by virtue of Theorem \ref{MWq}, we can construct the Marsden-Weinstein quotient
\[
q:  \mu^{-1}(y) \to \mu^{-1}(y)/ \R^{m(n-1)}
\]
We denote by $\widetilde{\omega}_K ^\Sigma$ the induced symplectic form on $ \mu^{-1}(y)/ \R^{m(n-1)}$.

Let $\mathscr{C}$ be the $\mathcal{C}$-frame such that $\mu^{-1}(y) = \Hit_n ^{\mathscr{B}}(\Sigma, \mathscr{C})$. As we mentioned above  the quotient space $\Hit_n ^{\mathscr{B}}(\Sigma, \mathscr{C}) /\R^{m(n-1)}$ carries the symplectic form $\widetilde{\omega}_K ^\Sigma$.  On the other hand, the target of $\overline{\Phi}$, $\Hit_n ^{\mathscr{B}_1}(\Sigma_1)\times\cdots \times \Hit_n ^{\mathscr{B}_l}(\Sigma_l)$, also admits a symplectic form $\omega_K ^{\Sigma_1}\oplus \cdots \oplus \omega_K ^{\Sigma_l}$. 

\begin{theorem}\label{gendecomp}   Let $\Sigma$ be a compact oriented hyperbolic surface possibly with boundary components $\{\zeta_1, \cdots, \zeta_b\}$ and let $\{\xi_1, \cdots, \xi_m\}$ be  a collection of pairwise disjoint, non-isotopic oriented essential simple closed curves in $\Sigma$ that divide the surface into hyperbolic subsurfaces $\Sigma_1, \cdots, \Sigma_l$. Let $\mathscr{B}$ and $\mathscr{C}$ be a boundary frame and $\mathcal{C}$-frame respectively. Then $\overline{\Phi}$ in (\ref{rest}) induces the natural map 
	\[
	\Phi:\Hit_n ^{\mathscr{B}}(\Sigma, \mathscr{C})/\R^{m(n-1)} \to   \Hit_n ^{\mathscr{B}_1}(\Sigma_1)\times\cdots \times \Hit_n ^{\mathscr{B}_l}(\Sigma_l)
	\]
	where
	\[
			\mathscr{B}_i=\{(\xi,B)\,|\,\xi\text{ is a component of } \partial \overline{\Sigma_i}\text{ and } (\iota_{\Sigma_i}(\xi),B)\in \mathscr{B}\cup \mathscr{C}\}.
\]
Moreover $\Phi$ is a symplectic diffeomorphism onto an open submanifold of 
\[
\Hit_n ^{\mathscr{B}_1}(\Sigma_1)\times\cdots \times \Hit_n ^{\mathscr{B}_l}(\Sigma_l).
\]
\end{theorem}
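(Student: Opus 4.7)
The plan is to assemble the theorem from the ingredients already in place: the local decomposition formula (Corollary \ref{localdecomp}), the Mayer--Vietoris identification of tangent spaces (Proposition \ref{JM2}), the Hamiltonian action and symplectic reduction (Theorem \ref{twist}, Theorem \ref{MWq}), and the connectedness of fibers (Lemma \ref{connectedfiber}).

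First I would verify that $\overline{\Phi}$ is constant on $\R^{m(n-1)}$-orbits, hence descends to a well-defined map $\Phi$ on the quotient. Inspecting the construction of the algebraic bending $\Phi^t_{X,\xi_i}$ in Section \ref{MVS}, on each vertex group $\Gamma_{\Sigma_j}$ the bent representation either coincides with $\rho_{\Gamma_{\Sigma_j}}$ or is the composition of $\rho_{\Gamma_{\Sigma_j}}$ with conjugation by $\exp tX$; thus the class $[\rho_{\Gamma_{\Sigma_j}}]\in \Hit_n^{\mathscr{B}_j}(\Sigma_j)$ is preserved. Since the $\R^{m(n-1)}$-action is generated by such bendings, $\overline{\Phi}$ is invariant, and $\Phi=\overline{\Phi}\circ q^{-1}$ is well defined.

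Next I would show $\Phi$ is a local diffeomorphism. By Proposition \ref{JM2}, $\der\overline{\Phi}$ fits into the Mayer--Vietoris sequence and its kernel is the image of $\delta$, which by the second part of Proposition \ref{mvs} is exactly the span of the infinitesimal algebraic bendings, i.e., the tangent space to the $\R^{m(n-1)}$-orbit. Hence $\der\Phi$ is injective on the quotient tangent space. Moreover, a dimension count through the Mayer--Vietoris sequence shows
\[
\dim\Hit_n^{\mathscr{B}}(\Sigma,\mathscr{C})
=\sum_{i=1}^{l}\dim\Hit_n^{\mathscr{B}_i}(\Sigma_i) + m(n-1),
\]
using that $\dim H^0(\Gamma_{\xi_i};\mathfrak{g})=n-1$ because $\rho(\xi_i)$ is purely loxodromic. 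So $\der\Phi$ is a linear isomorphism and $\Phi$ is a local diffeomorphism; in particular its image is open. For injectivity, suppose $\Phi([\rho_1])=\Phi([\rho_2])$, so $\overline{\Phi}([\rho_1])=\overline{\Phi}([\rho_2])$. Since each fiber of $\overline{\Phi}$ is connected (Lemma \ref{connectedfiber}) and the $\R^{m(n-1)}$-orbits inside a fiber are open (their tangent spaces fill the kernel of $\der\overline{\Phi}$), each fiber is a single orbit; hence $[\rho_1]$ and $[\rho_2]$ lie in the same orbit and define the same class in the quotient.

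Finally, for the symplectic statement I would combine Corollary \ref{localdecomp} with the definition of $\widetilde{\omega}_K^{\Sigma}$ coming from symplectic reduction (Theorem \ref{MWq}, Theorem \ref{twist}). Corollary \ref{localdecomp} gives, on tangent vectors to $\Hit_n^{\mathscr{B}}(\Sigma,\mathscr{C})$,
\[
\omega_K^{\Sigma}\big|_{\mu^{-1}(y)} \;=\; \overline{\Phi}^{\,*}\bigl(\omega_K^{\Sigma_1}\oplus\cdots\oplus\omega_K^{\Sigma_l}\bigr).
\]
Since $\overline{\Phi}=\Phi\circ q$ we have $\overline{\Phi}^{\,*}=q^{*}\Phi^{*}$, while by construction $\omega_K^{\Sigma}|_{\mu^{-1}(y)}=q^{*}\widetilde{\omega}_K^{\Sigma}$. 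As $q$ is a surjective submersion, $q^{*}$ is injective on differential forms, so cancelling gives $\Phi^{*}\bigl(\omega_K^{\Sigma_1}\oplus\cdots\oplus\omega_K^{\Sigma_l}\bigr)=\widetilde{\omega}_K^{\Sigma}$, which is the desired symplectic identity.

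The main obstacle I anticipate is the injectivity step: one must rule out that two points in different $\R^{m(n-1)}$-orbits could still have restrictions conjugate componentwise. The clean way is the fiber-connectedness of Lemma \ref{connectedfiber}, which reduces the problem to the orbit-versus-fiber dimension coincidence already coming from Mayer--Vietoris. Aside from this, every other step is a direct bookkeeping of the structural results established earlier in the paper.
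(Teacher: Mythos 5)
Your proposal is correct and follows essentially the same route as the paper: well-definedness from the fact that the bending action is conjugation on each vertex group, the symplectic identity from Corollary \ref{localdecomp} together with Proposition \ref{JM2} and the reduction identity $q^*\widetilde{\omega}_K^\Sigma=\omega_K^\Sigma$, and injectivity and openness via fiber-connectedness (Lemma \ref{connectedfiber}) plus the Mayer--Vietoris rank count. Your spelled-out version of the injectivity step (orbits are open in each fiber because their tangent spaces exhaust $\ker\der\overline{\Phi}$, hence a connected fiber is a single orbit) is slightly more explicit than the paper's terse appeal to Lemmas \ref{connectedfiber} and \ref{freeaction}, but the underlying reasoning is the same.
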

\begin{proof}
Since $\R^{m(n-1)}$ acts as conjugation on each $\Gamma_{\Sigma_i}$, $\Phi$ is well-defined.

We now prove that $\Phi$ is symplectic. Since
\[
T_{q[\rho]} \Hit_n ^{\mathscr{B}}(\Sigma,\mathscr{C})/\R^{m(n-1)}\approx \der q ( H^1_{\mathrm{par}} (\Gamma, \mathcal{S};\mathfrak{g}_\rho)),
\]
each element in $T_{q[\rho]} \Hit_n ^{\mathscr{B}}(\Sigma,\mathscr{C})/\R^{m(n-1)}$ can be written as $ \der q ([\alpha])$ for some $[\alpha] \in H^1_{\mathrm{par}}(\Gamma, \mathcal{S};\mathfrak{g}_\rho)$. Moreover, by Proposition \ref{JM2},  we have  
\[
\der (\Phi \circ  q) ([\alpha]) =\der \overline{\Phi}([\alpha])= \iota^* _{\Sigma_1} [\alpha]\oplus \cdots \oplus \iota^* _{\Sigma_l} [\alpha].
\]
Therefore, it follows that 
\[
\Phi^* (\omega^{\Sigma_1} _K \oplus \cdots \oplus \omega^{\Sigma_l} _K ) ( \der q [\alpha], \der q [\beta]) = \sum_{i=1} ^l \omega^{\Sigma_i} _K (\iota^* _{\Sigma_i} [\alpha], \iota^* _{\Sigma_i}  [\beta]). 
\]
By Corollary \ref{localdecomp}, we have
\[
 \sum_{i=1} ^l \omega^{\Sigma_i} _K (\iota^* _{\Sigma_i}  [\alpha], \iota^* _{\Sigma_i}  [\beta])= \omega^\Sigma _K (  [\alpha], [\beta]).
\]
Since $q^* (\widetilde{\omega}_K ^{\Sigma}) = \omega^\Sigma _K$ on $T_{[\rho]} \Hit_n ^{\mathscr{B}}(\Sigma,\mathscr{C})=H^1_{\mathrm{par}}(\Gamma, \mathcal{S};\mathfrak{g}_\rho)$, and since $[\alpha]$ and $[\beta]$ were chosen in $H^1_{\mathrm{par}}(\Gamma, \mathcal{S};\mathfrak{g}_\rho)$, it follows that 
\[
 \omega^\Sigma _K ([\alpha], [\beta])=\widetilde{\omega}^\Sigma _K (\der q[\alpha], \der q[\beta]).
\]
Therefore, $\widetilde{\omega}^\Sigma _K=\Phi^*(\omega_K ^{\Sigma_1}\oplus \cdots \oplus \omega_K ^{\Sigma_l})$ as we wanted.

$\Phi$ is one-to-one. Indeed by  Lemmas \ref{connectedfiber} and \ref{freeaction},  $\R^{m(n-1)}$ acts on each fiber of $\overline{\Phi}$ transitively and freely. Hence $\Phi([\rho_1])= \Phi([\rho_2])$ if and only if $[\rho_1]$ and $[\rho_2]$ are in the same fiber of $\overline{\Phi}$ if and only if there is a unique $\mathbf{t}\in \R^{m(n-1)}$ such that $\mathbf{t}\cdot [\rho_1]=[\rho_2]$. Therefore, $[\rho_1]$ and $[\rho_2]$ represent the same element in $\Hit_n ^{\mathscr{B}}(\Sigma, \mathscr{C})/\R^{m(n-1)}$. 

	We now show that $\Phi$ is an open embedding. Observe that $\delta( \bigoplus_{i=1} ^ m H^0(\Gamma_{\xi_i}; \mathfrak{g}))$ is tangent to the orbits of the $\R^{m(n-1)}$-action. Therefore we have,  by Proposition \ref{JM2},   
\begin{align*}
T_{q[\rho]} \Hit_n ^{\mathscr{B}}(\Sigma,\mathscr{C})/\R^{m(n-1)}&\approx \der q ( H^1_{\mathrm{par}} (\Gamma, \mathcal{S};\mathfrak{g}_\rho))\\ 
&=  H^1_{\mathrm{par}} (\Gamma, \mathcal{S};\mathfrak{g}_\rho)/\delta (\bigoplus_{i=1} ^m H^0(\Gamma_{\xi_i}; \mathfrak{g})) \\
&\approx \bigoplus_{i=1} ^l H^1(\Gamma_{\Sigma_i}, \mathcal{S}_i; \mathfrak{g}).
\end{align*}
Therefore, Proposition \ref{JM2} shows that $\Phi$ has the full rank and that  
\[
\dim \Hit_n ^{\mathscr{B}}(\Sigma, \mathscr{C})/\R^{m(n-1)} =\dim   \Hit_n ^{\mathscr{B}_1}(\Sigma_1)\times\cdots \times \Hit_n ^{\mathscr{B}_l}(\Sigma_l). 
\]
It follows that $\Phi$ is an open embedding. 
\end{proof}

\section{Global Darboux coordinates on $\Hit_3(\Sigma)$}

In this section we prove Theorem \ref{globaldarbouxintro}.  

We first review Goldman's construction of global parametrization on $\Hit_3(\Sigma)$ where $\Sigma$ is a closed surface and then compute $\omega_G$ between some coordinate vector fields. We then construct an $\R^{8g-8}$-valued function and prove that this function satisfies all the conditions of Theorem \ref{existenceofdarboux}.  Corollary \ref{comm} is essentially used in the proof. 

Throughout this section, $\Sigma$  denotes a closed oriented hyperbolic surface unless otherwise stated. 

\subsection{Review on the Goldman coordinates}
Choi-Goldman \cite{choi1993} show that $\Hit_3(\Sigma)$ can be seen as the deformation space of convex projective structures on the surface $\Sigma$. It allows Goldman \cite{goldman1990} to construct  global coordinates of $\Hit_3(\Sigma)$ based on  projective geometry. Let us briefly summarize the construction of Goldman coordinates. 

	Take a maximal collection of disjoint, non-isotopic essential simple closed curves $\mathcal{C}=\{\xi_1, \cdots, \xi_{3g-3}\}$ in  $\Sigma$. This collection $\mathcal{C}$ cuts the surface into $2g-2$ pants $P_1, \cdots, P_{2g-2}$. As mentioned in Lemma \ref{NoInvariantElement}, if $[\rho]\in \Hit_n(\Sigma)$, then each $\rho(\xi_i)$ is in $\mathbf{Hyp}^+$. Therefore, by giving an orientation to each $\xi_i$,  we can associate the following invariants $m_i$ and $\ell_i$ to each oriented simple closed curve $\xi_i$:
	\[
	\ell_i (\rho )= \log  \frac{|\lambda_1(\rho(\xi_i))|}{|\lambda_3(\rho(\xi_i))|}, \quad m_i(\rho)= 3 \log| \lambda_2(\rho(\xi_i))|.
	\]
	Here $\lambda_i$ denotes the $i$th largest eigenvalue. 
	
	 Recall that there is a Hamiltonian $\R^{6g-6}$-action on $\Hit_3(\Sigma)$ with moment map 
	 \[
	 \mu : [\rho] \mapsto (\ell_1(\rho), m_1(\rho), \cdots, \ell_{3g-3}(\rho), m_{3g-3}(\rho)).
	 \]
	 The quotient $q: \Hit_3(\Sigma) \to \Hit_3(\Sigma)/\R^{6g-6}$ is an affine bundle. Recall also that $\Hit_3(\Sigma)$ is foliated by $\bigcup_{y\in \mathbf{L}} \mu^{-1}(y)$. As each $\mu^{-1}(y)$ is invariant under the $\R^{6g-6}$-action, the quotient space $\Hit_3(\Sigma)/\R^{6g-6}$  is also foliated by symplectic manifolds of the form $\mu^{-1}(y)/\R^{6g-6}$. We have seen that each leaf $\mu^{-1}(y)/\R^{6g-6}$ can be identified with the symplectic manifold of the form $\Hit_3 ^{\mathscr{B}^y _1}(P_1)\times \cdots\times \Hit_3 ^{\mathscr{B}^y _{2g-2}}(P_{2g-2})$ via the map $\Phi_y$ in Theorem \ref{gendecomp} where $\mathscr{B}^y _i$ are boundary frames corresponding to $y$. 
	 
	 Goldman \cite{goldman1990} shows that each factor $\Hit_3 ^{\mathscr{B}_i}(P_i)$ is parametrized by two coordinates $(s, t)$.  We can therefore parametrize the quotient $\Hit_3(\Sigma)/\R^{6g-6}$ by $\ell_i$, $m_i$ coordinates together with interior coordinates $s_i$ and $t_i$ defined by
	 \[
	 s_i:=\log s\circ \operatorname{pr}_i\circ \Phi_y,\quad \text{and}\quad t_i:=\log t\circ\operatorname{pr}_i \circ\Phi_y
	 \]
where $\operatorname{pr}_i$ is the projection onto $i$th factor of $\Hit_3 ^{\mathscr{B}^y _1}(P_1)\times \cdots\times \Hit_3 ^{\mathscr{B}^y _{2g-2}}(P_{2g-2})$. 
	 	
	 To  complete our discussion, we have to parametrize the fiber of the affine bundle $q: \Hit_3(\Sigma) \to \Hit_3(\Sigma)/\R^{6g-6}$.  To this end, we need to specify the origin of the affine bundle $\Hit_3(\Sigma) \to \Hit_3(\Sigma)/\R^{6g-6}$. We make the following observation first:
	\begin{lemma}\label{origin}
 Let $([\rho_1], \cdots, [\rho_{2g-2}]) \in \Hit_3 ^{\mathscr{B}_1 ^y}(P_1)\times \cdots \times\Hit_3^{\mathscr{B}^y _{2g-2}}(P_{2g-2})$ be in the image of $\Phi_y$. Then there is a unique $[\rho]\in \mu^{-1}(y)\subset \Hit_3(\Sigma)$ such that $\Phi_y (q([\rho]))=([\rho_1],\cdots,[\rho_{2g-2}])$ and that $\sigma^\rho  _j(\xi_i)=0$ for each $i=1,2,\cdots, 3g-3$ and $j=1,2$. Here $\sigma^\rho  _j(\xi_i) $ are shear invariants of $\xi_i$ given in  (\ref{ClosedLeafInvariant}) of Appendix \ref{BDreview}.
	\end{lemma}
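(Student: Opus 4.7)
The plan is to first fix any lift in the fiber over $([\rho_1],\ldots,[\rho_{2g-2}])$ and then translate it by the $\R^{6g-6}$-action to normalize the shear invariants. By Theorem~\ref{gendecomp}, $\Phi_y$ is a diffeomorphism from $\mu^{-1}(y)/\R^{6g-6}$ onto an open submanifold of $\Hit_3^{\mathscr{B}^y_1}(P_1)\times\cdots\times \Hit_3^{\mathscr{B}^y_{2g-2}}(P_{2g-2})$, so, as $([\rho_1],\ldots,[\rho_{2g-2}])$ lies in the image, one may pick some $[\rho_0]\in\mu^{-1}(y)$ with $\Phi_y(q([\rho_0])) = ([\rho_1],\ldots,[\rho_{2g-2}])$. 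The fiber $q^{-1}(q([\rho_0]))$ coincides with the $\R^{6g-6}$-orbit of $[\rho_0]$, and this action is free by Lemma~\ref{freeaction}; hence it remains to produce a unique $\mathbf{t}\in\R^{6g-6}$ such that $\sigma_j^{\mathbf{t}\cdot\rho_0}(\xi_i) = 0$ for all $i=1,\ldots,3g-3$ and $j=1,2$.

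To analyze how the action moves the shear invariants, extend $\mathcal{C}$ to a maximal geodesic lamination and invoke the Bonahon-Dreyer coordinate system on $\Hit_3(\Sigma)$ (as in the proof of Lemma~\ref{connectedfiber}). The collection of BD invariants splits naturally into three groups: the $6g-6$ closed-leaf shears $\{\sigma_j^\rho(\xi_i)\}_{i,j}$, the length data $\{\ell_i,m_i\}_i$, and the triangle and infinite-leaf-shear invariants supported in the interiors of the pants $P_k$. The map $\Phi_y\circ q$ records precisely the last two groups (together with the values of $\ell_i,m_i$, which are fixed on $\mu^{-1}(y)$), while it forgets the first. Therefore along any $\R^{6g-6}$-orbit inside $\mu^{-1}(y)$ every BD invariant is constant except the shears $\{\sigma_j^\rho(\xi_i)\}_{i,j}$, and these accordingly restrict to a diffeomorphism of the fiber onto $\R^{6g-6}$.

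Combining the two observations, the composition
\[
\R^{6g-6}\;\xrightarrow{\;\mathbf{t}\mapsto\mathbf{t}\cdot[\rho_0]\;}\;q^{-1}(q([\rho_0]))\;\xrightarrow{\;(\sigma_j(\xi_i))\;}\;\R^{6g-6}
\]
is a diffeomorphism, so there is a unique $\mathbf{t}_0$ sent to the origin, and $[\rho]:=\mathbf{t}_0\cdot[\rho_0]$ is the desired representative; any other $[\rho']$ in the fiber satisfying the normalization would give another preimage of $0$ under this diffeomorphism and hence coincide with $[\rho]$.

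The principal subtlety will be justifying cleanly that the BD invariants of the second and third groups are literally constant along an $\R^{6g-6}$-orbit, rather than just modulo some affine correction. This boils down to checking that each bending flow $\mathbf{T}^t_{\xi_i,j}$ alters the gluing across $\xi_i$ while preserving the projective structures on both sides of $\xi_i$, which leaves triangle invariants and interior shears untouched and acts on $\sigma_j^\rho(\xi_i)$ by an affine isomorphism. This compatibility between Goldman's flows and Bonahon-Dreyer shears is the content of Bonahon-Kim \cite{bonahon2018}, and a direct verification can also be given from the description of $\Phi^t_{X,\xi_i}$ in Subsection~\ref{MVS} together with the definition of the shear invariants in Appendix~\ref{BDreview}.
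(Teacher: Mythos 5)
Your proof is correct and follows essentially the same route as the paper: pick a lift, let the free $\R^{6g-6}$-action move along the fiber, and use the fact (Bonahon--Kim, Prop. 5.2) that the twist-bulge flow acts on the Bonahon--Dreyer closed-leaf shears $(\sigma_1(\xi_i),\sigma_2(\xi_i))$ by an affine map with invertible linear part, so exactly one point of the orbit has vanishing shears. The paper's proof is terser -- it spells out only the uniqueness, exhibiting the explicit invertible block-diagonal matrix $A$ with blocks $\begin{pmatrix}1 & -3\\ 1 & 3\end{pmatrix}$ -- whereas you also address existence explicitly; the crucial ingredient (the affine isomorphism between Goldman's parameters and BD shears, which you correctly flag as the ``principal subtlety'' and settle via \cite{bonahon2018}) is the same in both.
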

	\begin{proof}
	We have to show the uniqueness of $[\rho]$. Suppose that there is another $[\rho']\in \Hit_3(\Sigma)$ such that $\Phi([\rho'])=([\rho_1], \cdots, [\rho_{2g-2}])$ and that $\sigma^{\rho'}_j(\xi_i) =0$ for all $i=1,2,\cdots, 3g-3$ and $j=1,2$. Then we can find a non-zero vector $\mathbf{t}\in\R^{6g-6}$ such that $\mathbf{t}\cdot[\rho']=[\rho]$. Due to Proposition 5.2 of Bonahon and I. Kim \cite{bonahon2018}, there is a block-diagonal matrix
	\[
	A:=\begin{pmatrix}
	D_1 & 0 &\cdots &0 \\
	0 & D_2 & \cdots &0\\
	0 & 0 &\ddots&0\\
	0& 0& 0 & D_{3g-3}
	\end{pmatrix}, \qquad D_i = \begin{pmatrix} 1 & -3 \\ 1 & 3 \end{pmatrix}
	\]
	 such that  $A \, \mathbf{t}=(\sigma^{\rho}_1(\xi_1),\sigma^\rho _2 (\xi_1), \cdots, \sigma^\rho _1(\xi_{3g-3}),\sigma^\rho _2 (\xi_{3g-3}))^T =\mathbf{0}$. Since $A$ is non-singular, $\mathbf{t}$ must be the null vector which is a contradiction. 
	\end{proof}
	 Therefore, we obtain  a section $\mathfrak{s}: \operatorname{Image} \Phi_y \to  \mu^{-1}(y) \subset \Hit_3(\Sigma)$ of $\Phi_y \circ q|_{\mu^{-1}(y)}$ by assigning  to each $([\rho_1], \cdots, [\rho_{2g-2}])$ in the image of $\Phi_y$, the unique $[\rho]\in \Hit_3(\Sigma)$ constructed in Lemma \ref{origin}. Use the image of $\mathfrak{s}$ as the origins of the action to get the well-defined  twist-bulge parameters $u_i$, $v_i$ to each $\xi_i$. In summary, the global coordinates of $\Hit_3(\Sigma)$ are given by
\[
\{\mathbf{s}_1,\mathbf{t}_1, \cdots, \mathbf{s}_{2g-2},\mathbf{t}_{2g-2}, \ell_1, m_1, \cdots, \ell_{3g-3},m_{3g-3},u_1, v_1, \cdots, u_{3g-3},v_{3g-3}\}
\]
where
\[
\mathbf{s}_i := s_i \circ q,\quad\text{and}\quad\mathbf{t}_i := t_i \circ q.
\]
We have to remark that these coordinates may not be compatible with the symplectic form $\omega_G$. 

\subsection{Proof of Theorem \ref{globaldarbouxintro}}
In this subsection we give a proof of Theorem \ref{globaldarbouxintro}.  We start with some lemmas aiming to apply Theorem \ref{existenceofdarboux} at the end.   

Recall that for a function $f$, we denote by $\mathbb{X}_{f}$ its Hamiltonian vector field.  Recall also that
\[
\frac{\partial}{\partial \mathbf{s}_i} = \der \mathfrak{s} \frac{\partial}{\partial s_i},\quad\text{and}\quad \frac{\partial}{\partial \mathbf{t}_i} = \der \mathfrak{s} \frac{\partial}{\partial t_i}
\]
for each $i=1,2,\cdots, 2g-2$. 

\begin{corollary}[See also \cite{kim1999}] \label{comm}
	Let $\Sigma$ be a closed oriented hyperbolic surface. For any  section $\mathfrak{s}: \operatorname{Image} \Phi \to \Hit_3(\Sigma)$ of $\Phi\circ q:\Hit_3(\Sigma) \to \operatorname{Image} \Phi$ and at any point $[\rho]\in \Hit_n(\Sigma)$, we have: 
	\[
\omega_G\left(\frac{\partial}{\partial \mathbf{s}_i},\frac{\partial}{\partial \mathbf{s}_j}\right)=\omega_G\left(\frac{\partial}{\partial \mathbf{s}_i},\frac{\partial}{\partial \mathbf{t}_j}\right)=\omega_G\left(\frac{\partial}{\partial \mathbf{t}_i},\frac{\partial}{\partial \mathbf{t}_j}\right)=0
\]
	whenever $i\ne j$ and
	\[
	\omega_{G} \left( \frac{\partial}{\partial \mathbf{s}_i} , \frac{\partial}{\partial \mathbf{t}_i}\right) =-1.
	\]
\end{corollary}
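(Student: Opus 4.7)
The plan is to reduce the computation to a pants-by-pants check by pushing $\partial/\partial \mathbf{s}_i$ and $\partial/\partial \mathbf{t}_i$ through the quotient $q$ and the decomposition map $\Phi_y$ of Theorem \ref{gendecomp}, then invoking H. Kim's Darboux theorem on a single pants. Since $\mathbf{s}_i = s_i\circ q$ and $\mathbf{t}_i = t_i\circ q$ descend from coordinates on $\Hit_3(\Sigma)/\R^{6g-6}$, the directions $\partial/\partial \mathbf{s}_i$ and $\partial/\partial \mathbf{t}_i$ at $[\rho]$ preserve every $\ell_j$ and $m_j$ and are therefore tangent to the level set $\mu^{-1}(y)$, where $y = \mu([\rho])$. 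Consequently the symplectic reduction identity $q^* \widetilde{\omega}_K^\Sigma = \omega_G|_{\mu^{-1}(y)}$ converts the problem into a computation of $\widetilde{\omega}_K^\Sigma$ on the push-forward vectors.

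By Theorem \ref{gendecomp}, the map $\Phi_y$ identifies $(\mu^{-1}(y)/\R^{6g-6}, \widetilde{\omega}_K^\Sigma)$ symplectically with an open subset of the product $\prod_{k=1}^{2g-2}\Hit_3^{\mathscr{B}^y_k}(P_k)$ equipped with the direct sum $\bigoplus_k \omega_K^{P_k}$. Because $\mathfrak{s}$ is a section of $\Phi_y\circ q$, we have $\Phi_y\circ q\circ \mathfrak{s} = \mathrm{id}$ on $\operatorname{Image}\Phi_y$, so
\[
\der(\Phi_y\circ q)\,\frac{\partial}{\partial \mathbf{s}_i} \;=\; \der(\Phi_y\circ q)\circ \der\mathfrak{s}\,\frac{\partial}{\partial s_i} \;=\; \frac{\partial}{\partial s_i},
\]
and similarly for $\partial/\partial \mathbf{t}_i$. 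Since $s_i = \log s\circ \operatorname{pr}_i\circ \Phi_y$ depends only on the $i$-th factor of the product, the vector $\partial/\partial s_i$ pushes forward to a tangent vector with support entirely in $\Hit_3^{\mathscr{B}^y_i}(P_i)$, and likewise for $\partial/\partial t_i$. When $i\neq j$, the two resulting vectors live in different factors of the product, so the direct-sum symplectic form pairs them trivially; this disposes of all the off-diagonal pairings simultaneously, including the mixed $\omega_G(\partial/\partial \mathbf{s}_i, \partial/\partial \mathbf{t}_j)$ with $i\ne j$. When $i = j$, the pairing collapses to
\[
\omega_K^{P_i}\!\left(\frac{\partial}{\partial\log s},\frac{\partial}{\partial \log t}\right)
\]
evaluated at the projected representation in $\Hit_3^{\mathscr{B}^y_i}(P_i)$, which by H. Kim's theorem that $(s,t)$ are Darboux coordinates for $\omega_K^{P_i}$ (read through the logarithmic change of variables, with the sign convention used in \cite{kim1999}) equals $-1$.

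The main obstacle is purely bookkeeping: keeping straight which coordinates live on which space ($\Hit_3(\Sigma)$ itself, its reduction $\mu^{-1}(y)/\R^{6g-6}$, or the product of pants moduli), verifying that $\partial/\partial \mathbf{s}_i$ and $\partial/\partial \mathbf{t}_i$ are honestly tangent to $\mu^{-1}(y)$ (not to arbitrary level sets), and checking that Kim's normalization together with the logarithmic reparametrization $s_i = \log s$, $t_i = \log t$ produces the constant $-1$ rather than some other nonzero scalar. Once these identifications are set, the vanishing of all off-diagonal pairings is automatic from the direct-sum structure of $\bigoplus_k \omega_K^{P_k}$ and needs no further argument.
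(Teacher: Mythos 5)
Your proposal is correct and follows essentially the same route as the paper's proof: observe that $\partial/\partial\mathbf{s}_i$ and $\partial/\partial\mathbf{t}_i$ are tangent to the level set $\mu^{-1}(y)$, pass through the Marsden--Weinstein quotient where $\omega_G|_{\mu^{-1}(y)} = q^*\widetilde{\omega}_K^\Sigma$, apply the decomposition of Theorem \ref{gendecomp} to express $\widetilde{\omega}_K^\Sigma$ as $\bigoplus_k\omega_K^{P_k}$ on the product of pants components, note that the vectors project into disjoint factors when $i\ne j$ (the restriction $\iota^*_{\Gamma_{P_k}}$ kills both vectors for $k\ne i,j$), and invoke H.\ Kim's result on a single pants for the diagonal. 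Your explicit remark about checking the logarithmic reparametrization against Kim's normalization is a reasonable caution, but the paper simply cites Theorem 5.8 of \cite{kim1999} for the value $-1$, implicitly in the same coordinates.
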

\begin{remark}
	The first part of Corollary \ref{comm} is partially proven by H. Kim, see Proposition 6.4 of \cite{kim1999}. He uses the mathematica to prove them. We can obtain the same result and more without mathematica. 
\end{remark}
\begin{proof}
	Enough to consider the case when $P_i$ and $P_j$ are adjacent. 

	Suppose that $[\rho]\in \Hit_n(\Sigma, \mathscr{C})$ for some $\mathscr{C}$. Then $\frac{\partial}{\partial\mathbf{s}_i}=\der\mathfrak{s} \frac{\partial}{\partial s_i}$ and $\frac{\partial}{\partial \mathbf{t}_i}=\der\mathfrak{s} \frac{\partial}{\partial t_i}$ are tangent to $\Hit_n(\Sigma, \mathscr{C})$. Observe that $\omega^\Sigma _K = \omega_G ^\Sigma$ when $\Sigma$ is closed and that $(q^* \widetilde{\omega}_G)|_{\Hit_n(\Sigma, \mathscr{C})}=\omega_G|_{\Hit_n(\Sigma, \mathscr{C})}$.  Theorem \ref{gendecomp} yields 
	\begin{align*}
	\omega_G\left(\der\mathfrak{s}\frac{\partial }{\partial s_i}, \der\mathfrak{s}\frac{\partial }{\partial s_j}\right)&  =\widetilde{\omega}_K \left(\der(q\circ \mathfrak{s})\frac{\partial }{\partial s_i}, \der(q\circ \mathfrak{s})\frac{\partial }{\partial s_j}\right)\\
	& =  \omega_K ^{P_i} \left(\iota_{\Gamma_{P_i}}^*\frac{\partial }{\partial s_i}, \iota_{\Gamma_{P_i}}^* \frac{\partial }{\partial s_j}\right)+\omega_K ^{P_j} \left(\iota_{\Gamma_{P_j}}^*\frac{\partial }{\partial s_i}, \iota_{\Gamma_{P_j}}^*\frac{\partial }{\partial s_j}\right). 
	\end{align*}
	If $i\ne j$, we have $\iota_{\Gamma_{P_j}}^* \frac{\partial}{\partial s_i}=\iota_{\Gamma_{P_i}}^* \frac{\partial}{\partial s_j}=0$ and the result follows. 
	
	Similarly, 
	\begin{align*}
	\omega_G\left(\der\mathfrak{s}\frac{\partial }{\partial s_i}, \der\mathfrak{s}\frac{\partial }{\partial t_j}\right)&  =\widetilde{\omega}_K \left(\der(q\circ \mathfrak{s})\frac{\partial }{\partial s_i}, \der(q\circ \mathfrak{s})\frac{\partial }{\partial t_j}\right)\\
	& =  \omega_K ^{P_i} \left(\iota_{\Gamma_{P_i}}^*\frac{\partial }{\partial s_i}, \iota_{\Gamma_{P_i}}^* \frac{\partial }{\partial t_j}\right)+\omega_K ^{P_j} \left(\iota_{\Gamma_{P_j}}^*\frac{\partial }{\partial s_i}, \iota_{\Gamma_{P_j}}^*\frac{\partial }{\partial t_j}\right)\\
	&=0,
	\end{align*}
	since $\iota_{\Gamma_{P_j}}^* \frac{\partial}{\partial s_i}=\iota_{\Gamma_{P_i}}^* \frac{\partial}{\partial t_j}=0$.

	When $i=j$, we argue in the same fashion:  
	\[
	\omega_G\left(\der\mathfrak{s}\frac{\partial }{\partial s_i}, \der\mathfrak{s}\frac{\partial }{\partial t_i}\right)=\widetilde{\omega}_K \left(\der(q\circ \mathfrak{s})\frac{\partial }{\partial s_i}, \der(q\circ \mathfrak{s})\frac{\partial }{\partial t_i}\right)= \omega_K ^{P_i} \left(\frac{\partial }{\partial s_i}, \frac{\partial }{\partial t_i}\right)=-1.
	\]
	Here $\omega_K ^{P_i} (\frac{\partial }{\partial s_i}, \frac{\partial }{\partial t_i})=-1$ is due to Theorem 5.8 of  H. Kim \cite{kim1999}.
\end{proof}

\begin{lemma}\label{form} For each $i=1,2,\cdots, 2g-2$, the Hamiltonian vector field $\mathbb{X}_{\mathbf{s}_i}$ is of the form
	\[
	\mathbb{X}_{\mathbf{s}_i} = \frac{\partial}{\partial \mathbf{t}_i } + \sum_{j=1} ^{3g-3} \left(a_j \frac{\partial}{\partial u_j} + b_j \frac{\partial}{\partial v_j}\right)
	\]
	for some smooth functions $a_j$ and $b_j$. 
\end{lemma}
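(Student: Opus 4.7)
The plan is to expand $\mathbb{X}_{\mathbf{s}_i}$ in the global Goldman coordinate basis and identify each coefficient. Writing
\[
\mathbb{X}_{\mathbf{s}_i}=\sum_{k} A_{k}\tfrac{\partial}{\partial\mathbf{s}_{k}}+\sum_{k} B_{k}\tfrac{\partial}{\partial\mathbf{t}_{k}}+\sum_{j} C_{j}\tfrac{\partial}{\partial\ell_{j}}+\sum_{j} D_{j}\tfrac{\partial}{\partial m_{j}}+\sum_{j}\Bigl(a_{j}\tfrac{\partial}{\partial u_{j}}+b_{j}\tfrac{\partial}{\partial v_{j}}\Bigr),
\]
the goal is to show $A_{k}=0$, $B_{k}=\delta_{ik}$, and $C_{j}=D_{j}=0$, leaving only the vertical coefficients $a_{j},b_{j}$ undetermined by the formula.

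First I would check that $\mathbb{X}_{\mathbf{s}_i}$ is tangent to each level set $\mu^{-1}(y)$ of the moment map of Theorem \ref{twist}. Since $\mathbf{s}_i=s_i\circ q$ factors through the quotient $q:\Hit_3(\Sigma)\to \Hit_3(\Sigma)/\R^{6g-6}$, it is constant along the $\R^{6g-6}$-orbits, and these orbits are generated by the Hamiltonian vector fields $\mathbb{X}_{\ell_j},\mathbb{X}_{m_j}$. Hence $\mathbb{X}_{\ell_j}\mathbf{s}_i=\mathbb{X}_{m_j}\mathbf{s}_i=0$, and antisymmetry of the Poisson bracket gives $\mathbb{X}_{\mathbf{s}_i}\ell_j=\mathbb{X}_{\mathbf{s}_i}m_j=0$, which reads $C_{j}=D_{j}=0$ for every $j$.

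Next I would push $\mathbb{X}_{\mathbf{s}_i}|_{\mu^{-1}(y)}$ down to the reduced space. Using the identifications $q_*\tfrac{\partial}{\partial\mathbf{s}_k}=\tfrac{\partial}{\partial s_k}$ and $q_*\tfrac{\partial}{\partial\mathbf{t}_k}=\tfrac{\partial}{\partial t_k}$ together with Corollary \ref{comm}, one obtains $\widetilde\omega_K^\Sigma=-\sum_k ds_k\wedge dt_k$ on $\mu^{-1}(y)/\R^{6g-6}$, from which a one-line computation yields $\widetilde{\mathbb{X}}_{s_i}=\tfrac{\partial}{\partial t_i}$. Because $\mathbf{s}_i$ is $\R^{6g-6}$-invariant and Poisson-commutes with the components of the moment map, the standard Marsden--Weinstein correspondence (Theorem \ref{MWq}) gives
\[
q_*\bigl(\mathbb{X}_{\mathbf{s}_i}|_{\mu^{-1}(y)}\bigr)=\widetilde{\mathbb{X}}_{s_i}=\tfrac{\partial}{\partial t_i}.
\]

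Finally, the kernel of $q_*$ along $\mu^{-1}(y)$ is spanned by $\tfrac{\partial}{\partial u_j}$ and $\tfrac{\partial}{\partial v_j}$, while $q_*$ sends $\tfrac{\partial}{\partial\mathbf{s}_k}\mapsto\tfrac{\partial}{\partial s_k}$ and $\tfrac{\partial}{\partial\mathbf{t}_k}\mapsto\tfrac{\partial}{\partial t_k}$; comparing coefficients in the identity $q_*\mathbb{X}_{\mathbf{s}_i}=\tfrac{\partial}{\partial t_i}$ forces $A_k=0$ and $B_k=\delta_{ik}$. Since the level sets $\mu^{-1}(y)$ foliate $\Hit_3(\Sigma)$, this holds globally, and the conclusion follows. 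The only point that requires some care is the identification of $\tfrac{\partial}{\partial u_j},\tfrac{\partial}{\partial v_j}$ as the infinitesimal generators of the $\R^{6g-6}$-action; however the argument only uses that these vectors span the vertical distribution of $q$, so the precise normalization built into the $(u_j,v_j)$ parametrization does not enter.
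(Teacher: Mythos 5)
Your argument is correct, and it reaches the conclusion by a route that differs in its second half from the paper's. The first step — concluding that $\mathbb{X}_{\mathbf{s}_i}$ has no $\partial/\partial\ell_j$ or $\partial/\partial m_j$ components — is essentially the computation the paper opens with (the paper writes it as $\omega_G(\mathbb{X}_{\mathbf{s}_i},\mathbb{X}_{\ell_k})=\partial\mathbf{s}_i/\partial u_k=0$, which is the same antisymmetry statement). Where you diverge is in pinning down the $\partial/\partial\mathbf{s}_k$ and $\partial/\partial\mathbf{t}_k$ coefficients: the paper does this by a sequence of direct pairing computations $\omega_G(\mathbb{X}_{\mathbf{s}_i},\partial/\partial\mathbf{s}_k)$ and $\omega_G(\mathbb{X}_{\mathbf{s}_i},\partial/\partial\mathbf{t}_k)$, feeding in the table of pairings from Corollary~\ref{comm} together with $\mathbb{X}_{\ell_j}=\partial/\partial u_j$, $\mathbb{X}_{m_j}=\partial/\partial v_j$; whereas you instead pass to the Marsden--Weinstein quotient, compute $\widetilde{\mathbb{X}}_{s_i}=\partial/\partial t_i$ there, and pull back. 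Your route packages the same linear-algebraic content more conceptually, at the cost of invoking the standard fact (not stated verbatim in Theorem~\ref{MWq}, but an immediate consequence of the defining identity $q^*\widetilde{\omega}|_{\mu^{-1}(y)}=\omega_G|_{\mu^{-1}(y)}$) that an invariant Hamiltonian on $\mu^{-1}(y)$ has $q_*\mathbb{X}_H=\widetilde{\mathbb{X}}_{\widetilde H}$. Both arguments ultimately rest on Corollary~\ref{comm} to identify the reduced form as $-\sum_k ds_k\wedge dt_k$, so the dependency structure is the same; the paper's version is more self-contained while yours is shorter once the reduction correspondence is granted. One small point worth making explicit in your writeup: tangency of $\mathbb{X}_{\mathbf{s}_i}$ to $\mu^{-1}(y)$ is not automatic but is exactly what the vanishing $C_j=D_j=0$ from your first step provides, so the logical order of the two steps matters.
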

\begin{proof}
	The most generic form of the Hamiltonian vector field $\mathbb{X}_{\mathbf{s}_i}$ of $\mathbf{s}_i$ is 
	\begin{multline*}
	\mathbb{X}_{\mathbf{s}_i}= \sum_{j=1} ^{2g-2} a_{\mathbf{s}_j} \frac{\partial}{\partial \mathbf{s}_j}+ \sum_{j=1} ^{2g-2} a_{\mathbf{t}_j} \frac{\partial}{\partial \mathbf{t}_j}+ \sum_{j=1} ^{3g-3} a_{u_j} \frac{\partial}{\partial u_j}+\sum_{j=1} ^{3g-3} a_{v_j} \frac{\partial}{\partial v_j}\\ +\sum_{j=1} ^{3g-3} a_{\ell_j} \frac{\partial}{\partial \ell_j}+\sum_{j=1} ^{3g-3} a_{m_j} \frac{\partial}{\partial m_j}.
	\end{multline*}
	If we compute $\omega_G(\mathbb{X}_{\mathbf{s}_i}, \mathbb{X}_{\ell_k})$, we get
	\[
	\omega_G(\mathbb{X}_{\mathbf{s}_i}, \mathbb{X}_{\ell_k})= \der \mathbf{s}_i \left(\frac{\partial }{\partial u_k} \right)= \frac{\partial \mathbf{s}_i}{\partial u_k} =0.
	\]
	On the other hand, 
	\begin{align*}
	-\omega_G(\mathbb{X}_{\mathbf{s}_i}, \mathbb{X}_{\ell_k}) & =\sum_{j=1} ^{2g-2} a_{\mathbf{s}_j}  \frac{\partial  \ell_k}{\partial \mathbf{s}_j}+ \sum_{j=1} ^{2g-2} a_{\mathbf{t}_j}\frac{\partial  \ell_k}{\partial \mathbf{t}_j}+ \sum_{j=1} ^{3g-3} a_{u_j} \frac{\partial  \ell_k}{\partial u_j}+\sum_{j=1} ^{3g-3} a_{v_j} \frac{\partial  \ell_k}{\partial v_j}\\
	& \qquad+\sum_{j=1} ^{3g-3} a_{\ell_j} \frac{\partial  \ell_k}{\partial \ell_j}+\sum_{j=1} ^{3g-3} a_{m_j} \frac{\partial  \ell_k}{\partial m_j}\\
			&=a_{\ell_k}.
	\end{align*}
	It follows that $\mathbb{X}_{\mathbf{s}_i}$ does not have $\frac{\partial}{\partial \ell_k}$ components,  $k=1,2,\cdots, 3g-3$. Similarly, since $-\omega_G(\mathbb{X}_{\mathbf{s}_i} , \mathbb{X}_{m_k}) = 0 = a_{m_k}$, we can conclude that $\mathbb{X}_{\mathbf{s}_i}$ does not contain $\frac{\partial}{\partial m_k}$, $k=1,2,\cdots, 3g-3$ factors either. Thus,
	\[
	\mathbb{X}_{\mathbf{s}_i} = \sum_{j=1} ^{2g-2} a_{\mathbf{s}_j} \frac{\partial}{\partial \mathbf{s}_j}+ \sum_{j=1} ^{2g-2} a_{\mathbf{t}_j} \frac{\partial}{\partial \mathbf{t}_j}+ \sum_{j=1} ^{3g-3} a_{u_j} \frac{\partial}{\partial u_j}+\sum_{j=1} ^{3g-3} a_{v_j} \frac{\partial}{\partial v_j}.
	\]
	
	We showed in Corollary \ref{comm}  that
	\[
	\omega_G \left(\frac{\partial}{\partial \mathbf{s}_j},\frac{\partial}{\partial \mathbf{s}_k}\right)=0,\quad\text{ and }\quad \omega_G \left(\frac{\partial}{\partial \mathbf{s}_j},\frac{\partial}{\partial \mathbf{t}_k}\right)=\begin{cases} -1& \text{if }j=k \\ 0 & \text{if }j\ne k\end{cases}.
	\]
	Recall also that $\mathbb{X}_{\ell_i}=\frac{\partial}{\partial u_i}$, and $\mathbb{X}_{m_i}=\frac{\partial}{\partial v_i}$. Hence for any $j$ and $k$, 
	\[
	\omega_G\left(\frac{\partial }{\partial u_j}, \frac{\partial }{\partial \mathbf{s}_k}\right) =\omega_G\left(\frac{\partial }{\partial v_j}, \frac{\partial }{\partial \mathbf{s}_k}\right) =\omega_G\left(\frac{\partial }{\partial u_j}, \frac{\partial }{\partial \mathbf{t}_k}\right) =\omega_G\left(\frac{\partial }{\partial v_j}, \frac{\partial }{\partial \mathbf{t}_k}\right) =0.
	\]
	Combining these two results, we get 
	\[
	1=\omega_G \left(\mathbb{X}_{\mathbf{s}_i}, \frac{\partial}{\partial \mathbf{s}_i}\right)= a_{\mathbf{t}_i} \omega_G\left(\frac{\partial}{\partial \mathbf{t}_i} , \frac{\partial}{\partial \mathbf{s}_i}\right) = a_{\mathbf{t}_i}
	\]
	and, whenever $k$ is not equal to $i$,
	\[
	0=\omega_G\left(\mathbb{X}_{\mathbf{s}_i},\frac{\partial}{\partial \mathbf{s}_k}\right)=a_{\mathbf{t}_k} \omega_G\left(\frac{\partial}{\partial \mathbf{t}_k}, \frac{\partial}{\partial \mathbf{s}_k}\right)=a_{\mathbf{t}_k}.
	\]
	Thus it follows that 
	\[
	a_{\mathbf{t}_k}=\begin{cases} 1 & \text{if } k=i\\ 0 & \text{if }k\ne i\end{cases}.
	\]
	In particular $\mathbb{X}_{\mathbf{s}_i}$ does not have $\frac{\partial}{\partial \mathbf{t}_k}$ components for all $k$  different from $i$. 
	
	Finally by computing 
	\[
	0=\frac{\partial \mathbf{s}_i}{\partial \mathbf{t}_k}=\omega_G\left(\mathbb{X}_{\mathbf{s}_i}, \frac{\partial}{\partial \mathbf{t}_k}\right) = a_{\mathbf{s}_k}
	\]
	we can show that $\mathbb{X}_{\mathbf{s}_i}$ does not have  $\frac{\partial}{\partial \mathbf{s}_k}$ components for all $k$. 
\end{proof}
\begin{lemma}\label{complete}
	Each vector field $\mathbb{X}_{\mathbf{s}_i}$, $i=1,2,\cdots, 2g-2$, is complete.
\end{lemma}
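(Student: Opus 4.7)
The plan is to exploit the $\R^{6g-6}$-invariance of $\Ham{\mathbf{s}_i}$ to reduce the flow equations to an explicitly solvable decoupled system. First, $\mathbf{s}_i$ is itself $\R^{6g-6}$-invariant, since by construction it factors through the quotient $q:\Hit_3(\Sigma)\to \Hit_3(\Sigma)/\R^{6g-6}$; equivalently, $\{\mathbf{s}_i,\ell_k\}=\partial \mathbf{s}_i/\partial u_k = 0$ and $\{\mathbf{s}_i,m_k\}=\partial \mathbf{s}_i/\partial v_k=0$ for every $k$, as already noted inside the proof of Lemma \ref{form}. Since Hamiltonian vector fields satisfy $[\Ham{f},\Ham{g}]=\Ham{\{f,g\}}$, it follows that $\Ham{\mathbf{s}_i}$ commutes with $\Ham{\ell_k}=\partial/\partial u_k$ and $\Ham{m_k}=\partial/\partial v_k$, so $\Ham{\mathbf{s}_i}$ is invariant under the $\R^{6g-6}$-action. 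Because this action is translation in the $(u_k,v_k)$-coordinates, the coefficients $a_j,b_j$ from Lemma \ref{form} depend only on $(\mathbf{s}_\bullet,\mathbf{t}_\bullet,\ell_\bullet,m_\bullet)$, not on $(u_\bullet,v_\bullet)$.

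Lemma \ref{form} also says $\Ham{\mathbf{s}_i}$ has no $\partial/\partial \mathbf{s}_j$, $\partial/\partial \ell_j$, $\partial/\partial m_j$, or $\partial/\partial \mathbf{t}_k$ ($k\ne i$) component; hence along any integral curve of $\Ham{\mathbf{s}_i}$, each of the coordinates $\mathbf{s}_j$, $\ell_j$, $m_j$, and $\mathbf{t}_k$ ($k\ne i$) stays constant, and among these only $\mathbf{t}_i$ varies. Combined with the independence of $a_j,b_j$ from the twist-bulge coordinates, along the curve the coefficients $a_j$ and $b_j$ become smooth functions of the single variable $\mathbf{t}_i$.

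The flow therefore reduces, in the Goldman coordinates, to
\[
\dot{\mathbf{t}}_i = 1,\qquad \dot{u}_j=a_j(\mathbf{t}_i),\qquad \dot{v}_j=b_j(\mathbf{t}_i),
\]
with all other coordinates frozen. The first equation gives $\mathbf{t}_i(t)=\mathbf{t}_i(0)+t$, and the others integrate explicitly to
\[
u_j(t)=u_j(0)+\int_0^t a_j(\mathbf{t}_i(0)+s)\,\der s,\qquad v_j(t)=v_j(0)+\int_0^t b_j(\mathbf{t}_i(0)+s)\,\der s,
\]
both finite for every $t\in\R$ by smoothness of $a_j,b_j$. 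Since the Goldman coordinates identify $\Hit_3(\Sigma)$ globally with $\R^{16g-16}$, the resulting tuple lies in $\Hit_3(\Sigma)$ for all $t$, so $\Ham{\mathbf{s}_i}$ is complete. The potential obstacle is the a priori uncontrolled twist-bulge components $a_j,b_j$, which could conceivably blow up in finite time; the $\R^{6g-6}$-invariance tames them by removing any $(u,v)$-feedback from the right-hand side of the ODE.
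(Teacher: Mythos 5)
Your proof is correct and follows essentially the same strategy as the paper: take the form of $\Ham{\mathbf{s}_i}$ from Lemma \ref{form}, show the coefficients $a_j, b_j$ are independent of the twist-bulge variables $(u_\bullet, v_\bullet)$, and then observe that the resulting ODE decouples so that the flow integrates for all time. The only cosmetic difference is in how the $u,v$-independence is obtained: you deduce $[\Ham{\mathbf{s}_i},\partial/\partial u_k]=\Ham{\{\mathbf{s}_i,\ell_k\}}=0$ directly, whereas the paper first expresses $a_j=-\omega_G(\partial/\partial\mathbf{t}_i,\partial/\partial\ell_j)$ and differentiates that formula using $\mathcal{L}_{\Ham{\ell_k}}\omega_G=0$; both are the same invariance statement.
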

\begin{proof} From Lemma \ref{form}, $\mathbb{X}_{\mathbf{s}_i}$ is of the form
	\[
\mathbb{X}_{\mathbf{s}_i} = \frac{\partial}{\partial \mathbf{t}_i }+ \sum_j \left(a_j \frac{\partial}{\partial u_j} + b_j \frac{\partial}{\partial v_j}\right)
\]
We investigate the coefficient functions $a_i$, $b_i$. 
	Observe  that
	\[
	\omega_G\left(\mathbb{X}_{\mathbf{s}_i}, \frac{\partial}{\partial \ell_j}\right)=\frac{\partial \mathbf{s}_i}{\partial \ell_j}=0.
	\]
	On the other hand
	\[
	\omega_G\left(\mathbb{X}_{\mathbf{s}_i}, \frac{\partial}{\partial \ell_j}\right)=\omega_G\left(\frac{\partial }{\partial \mathbf{t}_i}, \frac{\partial}{\partial \ell_j}\right)+a_{j}.
	\]
	Therefore
	\[
	a_{j} =  -\omega_G\left(\frac{\partial}{\partial \mathbf{t}_i}, \frac{\partial}{\partial \ell_j}\right).
	\]
	Similarly,
	\[
	\omega_G\left(\mathbb{X}_{\mathbf{s}_i}, \frac{\partial}{\partial m_j}\right)=\frac{\partial \mathbf{s}_i}{\partial m_j}=\omega_G \left(\frac{\partial}{\partial \mathbf{t}_i}, \frac{\partial}{\partial m_j}\right)+b_j=0
	\]
	shows that
	\[
	b_j =- \omega_G\left(\frac{\partial}{\partial \mathbf{t}_i}, \frac{\partial}{\partial m_j}\right).
	\]
	Since $\mathbb{X}_{\ell_k}= \frac{\partial}{\partial u_k}$ is a Hamiltonian vector field as well as a coordinate vector field, we have 
	\[
	0=(\mathcal{L}_{\mathbb{X}_{\ell_k}} \omega_G) \left(\frac{\partial}{\partial \mathbf{t}_i}, \frac{\partial}{\partial \ell_j}\right) = \mathbb{X}_{\ell_k}\omega_G \left(\frac{\partial}{\partial \mathbf{t}_i},\frac{\partial}{\partial \ell_j}\right)=-\frac{\partial a_j}{\partial u_k}
	\]
	which yields that functions $a_{1},\cdots,a_{3g-3}$ do not depend on  $u_1, \cdots, u_{3g-3}$. Same argument using $\mathbb{X}_{m_k}$ instead of $\mathbb{X}_{\ell_k}$ shows that $a_{1},\cdots,a_{3g-3}$ do not depend on the $v_1,\cdots, v_{3g-3}$ variables either. Similarly, $b_{1},\cdots,b_{3g-3}$ are functions depending only on $\mathbf{s}_i,\mathbf{t}_i, \ell_i$ and $m_i$. 
	
The equation $\dot{\mathbf{x}}(t) = \mathbb{X}_{\mathbf{s}_i} (\mathbf{x}(t))$ for an integral curve reads
	\begin{gather}
	\frac{\der \mathbf{s}_{j}(t)}{\der t} = 0, \quad j=1,2,\cdots, 2g-2 \label{flow1}\\
	\frac{\der \mathbf{t}_j (t)}{\der t}= 0, \quad j=1,2,\cdots, 2g-2, \, j\ne i \\
	\frac{\der \mathbf{t}_i (t)}{\der t}=1\\
	\frac{\der \ell_j (t)}{\der t} = 0, \quad j=1,2,\cdots, 3g-3\\
	\frac{\der m_j (t)}{\der t}=0, \quad j=1,2,\cdots, 3g-3 \label{flow2}\\
	\frac{\der u_j (t)}{\der t}=a_j, \quad j=1,2,\cdots, 3g-3 \label{flow3}\\
	\frac{\der v_j (t)}{\der t}=b_j,  \quad j=1,2,\cdots, 3g-3.\label{flow4}
	\end{gather}
	A solution for equations (\ref{flow1})-(\ref{flow2}) is 
	\begin{equation}\label{sol}
	\begin{cases}
	\mathbf{t}_i =t  + \text{const.}& \\
	\mathbf{t}_j = \text{const.} & j=1,2,\cdots, 2g-2,j \ne i\\
	\mathbf{s}_j= \text{const.} & j=1,2,\cdots, 2g-2\\
	\ell_j,m_j = \text{const.} & j =1,2,\cdots, 3g-3
	\end{cases}.
	\end{equation}
	Having the fact that $a_j$ and $b_j$ are functions of $\mathbf{s}_i, \mathbf{t}_i, \ell_i, m_i$ in mind, plug  the solution (\ref{sol})  into $a_j$ and $b_j$. Then  $a_j$ and $b_j$ become purely smooth functions of the time $t$. It means that the equations (\ref{flow3}), (\ref{flow4}) have a solution for all $t$. Therefore the vector field $\mathbb{X}_{\mathbf{s}_i}$ for each  $i=1,2,\cdots, 2g-2$ is complete.
\end{proof}

We define a function $F:\Hit_3(\Sigma) \to \R^{8g-8}$ to be
\[
F([\rho])= (\mathbf{s}_1(\rho), \cdots, \mathbf{s}_{2g-2}(\rho), \ell_1(\rho),\cdots,\ell_{3g-3} (\rho), m_1(\rho),\cdots, m_{3g-3}(\rho)).
\]
\begin{lemma}\label{lagrangian}
	 For each $x\in \operatorname{Image} F\subset \R^{8g-8}$, the fiber $F^{-1}(x)$ is a simply connected Lagrangian submanifold. 
\end{lemma}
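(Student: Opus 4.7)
The plan is to verify separately the three requirements on $F^{-1}(x)$: that it is a smooth submanifold of the expected dimension, that its tangent space is Lagrangian, and that it is simply connected. The global Goldman coordinate system and the structural results already in place (Lemma \ref{form}, Corollary \ref{comm}, and the formulas $\mathbb{X}_{\ell_j}=\partial/\partial u_j$, $\mathbb{X}_{m_j}=\partial/\partial v_j$ from Goldman \cite{goldman1986}) will reduce everything to algebraic manipulations.

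First, since $\mathbf{s}_1,\dots,\mathbf{s}_{2g-2},\ell_1,\dots,\ell_{3g-3},m_1,\dots,m_{3g-3}$ form part of a global coordinate system on $\Hit_3(\Sigma)\cong\R^{16g-16}$, their differentials are pointwise linearly independent, so $F$ is a submersion and each level set $F^{-1}(x)$ is a smooth submanifold of dimension $8g-8$. Under $\omega_G$, the kernel of $dF_{[\rho]}$ coincides with the symplectic orthogonal complement $V^{\omega_G}$ of $V := \mathrm{span}\{\mathbb{X}_{\mathbf{s}_i},\mathbb{X}_{\ell_j},\mathbb{X}_{m_j}\}$. Thus $T_{[\rho]}F^{-1}(x) = V^{\omega_G}$.

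Second, I will show that $V$ is isotropic of dimension $8g-8$, which forces $V=V^{\omega_G}$ and hence Lagrangianness of the fiber. Using the identity $\omega_G(\mathbb{X}_f,\mathbb{X}_g)=\mathbb{X}_g(f)$, each Poisson bracket among the functions $\mathbf{s}_i,\ell_j,m_j$ vanishes: the brackets among $\ell_j$ and $m_j$ vanish because $\mathbb{X}_{\ell_j}=\partial/\partial u_j$ and $\mathbb{X}_{m_j}=\partial/\partial v_j$ are coordinate vector fields annihilating the Goldman coordinates $\ell_k,m_k$; the brackets $\{\mathbf{s}_i,\ell_j\}$ and $\{\mathbf{s}_i,m_j\}$ are simply $\partial\mathbf{s}_i/\partial u_j$ and $\partial\mathbf{s}_i/\partial v_j$, both zero for the same coordinate reason; finally $\{\mathbf{s}_i,\mathbf{s}_j\}=\mathbb{X}_{\mathbf{s}_j}(\mathbf{s}_i)=\partial\mathbf{s}_i/\partial\mathbf{t}_j=0$ by Lemma \ref{form}, because $\mathbb{X}_{\mathbf{s}_j}$ has only $\partial/\partial\mathbf{t}_j$, $\partial/\partial u_k$ and $\partial/\partial v_k$ components and $\mathbf{s}_i$ is independent of all of these. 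The same Lemma \ref{form} together with $\mathbb{X}_{\ell_j}=\partial/\partial u_j$, $\mathbb{X}_{m_j}=\partial/\partial v_j$ shows that in the coordinate basis $\{\partial/\partial\mathbf{t}_i,\partial/\partial u_j,\partial/\partial v_j\}$ the vectors $\mathbb{X}_{\mathbf{s}_i},\mathbb{X}_{\ell_j},\mathbb{X}_{m_j}$ have leading-term matrix equal to the identity, so they are linearly independent. Hence $\dim V = 8g-8 = \tfrac{1}{2}\dim\Hit_3(\Sigma)$ and $V$ is Lagrangian, giving $T_{[\rho]}F^{-1}(x)=V$ Lagrangian.

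Third, for simple connectedness I will restrict the Goldman coordinates to $F^{-1}(x)$. On this level set the values of $\mathbf{s}_i,\ell_j,m_j$ are fixed, and the remaining coordinates $\mathbf{t}_1,\dots,\mathbf{t}_{2g-2},u_1,\dots,u_{3g-3},v_1,\dots,v_{3g-3}$ parametrize it: the $u_j,v_j$ each range over all of $\R$ because they come from the free $\R^{6g-6}$-twist-bulge action of Theorem \ref{twist}, and each $\mathbf{t}_i=\log t_i$ ranges over all of $\R$ by Goldman's description of $\Hit_3^{\mathscr{B}}(P_i)$ as $\R_{>0}^2$ in the $(s,t)$-parameters for a fixed boundary frame. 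Therefore $F^{-1}(x)\cong\R^{8g-8}$, which is simply connected.

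The main obstacle is the Lagrangian computation, and specifically the verification that the $\mathbf{s}_i$ mutually Poisson-commute: this is exactly where Lemma \ref{form} does the essential work by pinning down $\mathbb{X}_{\mathbf{s}_i}$ to have no $\partial/\partial\mathbf{s}_k$ component, so that $\mathbb{X}_{\mathbf{s}_j}(\mathbf{s}_i)$ reduces to a trivial partial derivative. Without that structural result the brackets $\{\mathbf{s}_i,\mathbf{s}_j\}$ would have to be computed by a direct symplectic calculation as in Corollary \ref{comm}; everything else in the argument is then essentially bookkeeping.
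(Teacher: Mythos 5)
Your proof is correct, and the Lagrangian part takes essentially the same route as the paper: both argue via Lemma \ref{form} and the coordinate expressions $\mathbb{X}_{\ell_j}=\partial/\partial u_j$, $\mathbb{X}_{m_j}=\partial/\partial v_j$ that the span $V$ of the Hamiltonian vector fields is isotropic of dimension $8g-8$, hence Lagrangian. You are slightly more careful than the paper in that you first observe $T_{[\rho]}F^{-1}(x)=V^{\omega_G}$ and only then conclude it equals $V$; the paper asserts outright that the tangent space is spanned by the $\mathbb{X}$'s, which really is a consequence of the dimension count you spell out.

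For simple connectedness the two arguments genuinely diverge. The paper invokes Lemma \ref{complete}: the commuting Hamiltonian vector fields are \emph{complete}, so they induce an $\R^{8g-8}$-action which, by Lemma \ref{form}, acts freely and transitively on each fiber; hence $F^{-1}(x)\cong\R^{8g-8}$. You instead read the diffeomorphism type of the fiber directly off the structure of the Goldman coordinate domain (the $\mathbf{t}_i$ range over all of $\R$ by Goldman's parametrization of $\Hit_3^{\mathscr{B}}(P_i)$ for each fixed boundary frame, and $u_j,v_j$ range over all of $\R$ by the affine structure). Your route avoids Lemma \ref{complete} altogether; what it buys is an elementary derivation that does not rely on completeness of the flows, at the price of explicitly leaning on Goldman's description of the coordinate domain as a product $\R^{2g-2}_\mathbf{s}\times\R^{2g-2}_\mathbf{t}\times\mathfrak{R}\times\R^{6g-6}_{u,v}$. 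The paper's approach is more intrinsic and reuses Lemma \ref{complete}, which is in any case required later to verify the hypotheses of Theorem \ref{existenceofdarboux}, so there is no net savings globally; but as an isolated proof of this lemma your version is a bit more self-contained.
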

\begin{proof}
Tangent space at each point of $F^{-1}(x)$ is spanned by vectors 
\[
\mathbb{X}_{\mathbf{s}_1},\cdots, \mathbb{X}_{\mathbf{s}_{2g-2}}, \mathbb{X}_{\ell_1}, \cdots, \mathbb{X}_{\ell_{3g-3}}, \text{ and } \mathbb{X}_{m_1}, \cdots, \mathbb{X}_{m_{3g-3}}.
\]
By Corollary \ref{comm} and Lemma \ref{form}, $F^{-1}(x)$ is a Lagrangian submanifold.

By Lemma \ref{complete},  $\mathbb{X}_{\mathbf{s}_1},\cdots, \mathbb{X}_{\mathbf{s}_{2g-2}}$, $\mathbb{X}_{\ell_1},\mathbb{X}_{m_1},\cdots,\mathbb{X}_{\ell_{3g-3}} , \mathbb{X}_{m_{3g-3}}$  are commuting complete vector fields tangent to each fiber $F^{-1}(x)$. Thus, the Hamiltonian flows of $\mathbf{s}_1, \cdots, \mathbf{s}_{2g-2}, \ell_1,m_1, \cdots, \ell_{3g-3},m_{3g-3}$  induce an $\R^{8g-8}$-action on  $F^{-1}(x)$. By Lemma \ref{form}, this action is free and transitive on each fiber $F^{-1}(x)$. Therefore, each fiber is diffeomorphic to $\R^{8g-8}$ which is simply connected.
%
\end{proof}

Now we can prove Theorem \ref{globaldarbouxintro}. Let $B$ be the image of the function $F: \Hit_3(\Sigma) \to \R^{8g-8}$. According to section 1.8 of Goldman \cite{goldman1990}, $B$ is diffeomorphic to $\R ^{2g-2}\times \mathfrak{R}$ and 
 \[
 \mathfrak{R}=\{(\ell_1, \cdots, \ell_{3g-3}, m_1, \cdots, m_{3g-3})\in \R_+^{3g-3}\times \R^{3g-3}\,|\,|m_i|< \ell_i \}
 \]
 where $\R_+=\{x\in \R\,|\, x>0\}$. In particular, $B$ is contractible. In addition to this, due to  Lemma \ref{complete} and  Lemma \ref{lagrangian},  we observe that $F: \Hit_3(\Sigma) \to \R^{8g-8}$ satisfies the conditions of Theorem \ref{existenceofdarboux}. Therefore the result follows from Theorem \ref{existenceofdarboux}.

\appendix
\section{More details on group cohomology}
This appendix is devoted to prove Propositions \ref{mvs} and \ref{tangent}. Let us recall our settings. $\Sigma$ is a compact hyperbolic surface with boundary $\zeta_1, \cdots, \zeta_b$. We choose a collection of pairwise disjoint, non-isotopic essential simple closed curves $\xi_1, \cdots, \xi_m$ that separate $\Sigma$ into subsurfaces $\Sigma_1, \cdots, \Sigma_l$ which are hyperbolic. We use the notation in section \ref{tree} and section \ref{MVS}. We denote by $\Gamma_{\Sigma_i}$ the conjugacy class of the subgroup $\pi_1(\Sigma_i)$  in $\pi_1(\Sigma)$ and by $\Gamma_{\xi_i}$ the conjugacy class of $\pi_1(\xi_i)$ in $\pi_1(\Sigma)$. Let $\mathcal{S} = \{\Gamma_{\xi_1}^+, \cdots, \Gamma_{\xi_m}^+, \langle \zeta_1 \rangle, \cdots, \langle \zeta_b \rangle\}$ and $\mathcal{S}_i =\{ \langle \zeta \rangle \,|\, \zeta \text{ is a component of } \partial \overline{\Sigma_i}  \}$,  $i=1,2,\cdots, l$. Then $(\Gamma_{\Sigma_i}, \mathcal{S}_i)$ are group subsystem of $(\Gamma, \mathcal{S})$.

 We give a CW-structure on $\Sigma$ as follows:
\begin{itemize}
	\item Choose points $p_1, \cdots, p_l$ on the interior  of each $\Sigma_1, \cdots, \Sigma_l$, $q_1, \cdots, q_m$  on $\xi_1, \cdots, \xi_m$ and $q_{m+1}, \cdots, q_{m+b}$ on  $\zeta_1, \cdots, \zeta_b$. They are 0-cells
	\item Each $\xi_i$ and $\zeta_i$ is adjacent to at most two $\Sigma_j$. If $\xi_i$ is adjacent to say $\Sigma_1$ and $\Sigma_2$, we connect $q_i$ to each $p_1$ and $p_2$ by path $\eta_{1,i}$ and $\eta_{2,i}$ in $\Sigma_1$ and $\Sigma_2$ respectively. Do the same thing for each $\zeta_i$.   On each $\Sigma_i$, we choose simple closed curves $x_{i,1},y_{i,1}, \cdots, x_{i,g_i}, y_{i,g_i}$ based at $p_i$ in such a way that they are not intersecting and $\Sigma_i\setminus (\partial\Sigma_i \cup \bigcup_j \eta_{i,j} \cup \bigcup_j (x_{i,j}\cup y_{i,j}))$ is a disk. $x_{i,j}, y_{i,j}, \eta_{i,j}, \xi_i, \zeta_i$ are 1-cells
	\item 2-cells are disks corresponding to each $\Sigma_i$. 
\end{itemize}
We can check that this CW-structure has the following properties:
\begin{itemize}
	\item The natural inclusion $\Sigma_i\to \Sigma$, $\xi_i \to \Sigma$ and $\zeta_i \to \Sigma$ are cellular. 
	\item Let $p:\widetilde{\Sigma} \to \Sigma$ be the universal cover. CW-structure of $\Sigma$ lifts to a CW-structure of $\widetilde{\Sigma}$. We observe that the natural inclusion  $p^{-1} (\Sigma_i) \to \widetilde{\Sigma}$ is also cellular. 
	\item $\bigcup_{i=1} ^m p^{-1} (\xi_i)$ decomposes $\widetilde{\Sigma}$ into contractible regions.  Each component corresponds to the universal cover of $\Sigma_i$ for some $i$. 
\end{itemize}

Denote by $C_\ast (\widetilde{\Sigma})$ the cellular chain complex over $\R$ for $\widetilde{\Sigma}$ associated to this CW-structure. We know that $\Gamma$ acts on each $C_i(\widetilde{\Sigma})$ as covering transformations so that  $C_i(\widetilde{\Sigma})$ becomes a $\R\Gamma$-module. In fact, $C_i(\widetilde{\Sigma})$ is a free $\R\Gamma$-module.  Since $\widetilde{\Sigma}$ is contractible, the cellular chain complex $C_\ast (\widetilde{\Sigma})$ is exact. The augmented complex $C_\ast(\widetilde{\Sigma})\to \R$  is therefore a free resolution over $\Gamma$.

As we did previously,  $\Gamma_{\Sigma_i}$ action on $\widetilde{\Sigma_i}$ turns  $C_\ast(\widetilde{\Sigma_i})$ into a complex of $\R\Gamma_{\Sigma_i}$-modules. Since $\widetilde{\Sigma_i}$ is also contractible, $C_\ast(\widetilde{\Sigma_i})\to \R$ is a free resolution over $\Gamma_{\Sigma_i}$.  Moreover, we see that $\R\Gamma \otimes C_\ast (\widetilde{\Sigma_i}) \approx C_\ast (p^{-1}(\Sigma_i))$. Since $\widetilde{\Sigma_i}$ is again a subcomplex of $\widetilde{\Sigma}$, the natural inclusion induces a chain map (as $\R\Gamma$-modules) $C_\ast (p^{-1}(\Sigma_i)) \to C_\ast (\widetilde{\Sigma})$. Therefore, we get the natural surjective chain map
\[
\bigoplus_{i=1} ^l C_\ast (p^{-1}(\Sigma_i)) \to C_\ast (\widetilde{\Sigma})\to 0.
\]

We do the same thing for $\Gamma_{\xi_i}$. If $\xi_i$  connects $\mathrm{o}(\xi_i)=\Sigma_a$ and $\mathrm{t}(\xi_i)=\Sigma_b$,  we have a natural chain map from $C_\ast(p^{-1}(\xi_i))$ to $ C_\ast(p^{-1}(\Sigma_a))$ and to $C_\ast (p^{-1}(\Sigma_b))$ both are induced by the inclusion. Then 
\[
(\iota^\# , -\iota^\#): C_\ast(p^{-1}(\xi_i)) \to C_\ast(p^{-1}(\Sigma_a))\oplus C_\ast (p^{-1}(\Sigma_b))
\]
is also injective chain map. So we have the  exact sequence of chain maps
\[
0\to \bigoplus _{i=1} ^m C_\ast (p^{-1}(\xi_i)) \to \bigoplus_{i=1} ^l C_\ast (p^{-1}(\Sigma_i)).
\]
Two maps fit into the exact sequence of complexes
\[
0\to \bigoplus_{i=1}^m C_\ast(p^{-1}(\xi_i)) \to \bigoplus_{i=1} ^l C_\ast (p^{-1}(\Sigma_i)) \to C_\ast (\widetilde{\Sigma})\to 0
\]
of $\R \Gamma$-modules. We apply $\Hom_\Gamma(-,\mathfrak{g}_\rho)$ functor to compute the group cohomology where $\mathfrak{g}$ is a $\R\Gamma$-module via the $\Ad \rho$ action. Let $C^\ast(\Gamma;\mathfrak{g}) = \Hom_\Gamma(C_\ast(\widetilde{\Sigma}), \mathfrak{g})$. 

The natural map $\Hom_\Gamma (C_\ast(p^{-1}(\Sigma_i)), \mathfrak{g}) \to \Hom_{\Gamma_{\Sigma_i}} (C_\ast(\widetilde{\Sigma_i}), \mathfrak{g})$  that sends $f$ to a homomorphism $a\mapsto f(1\otimes a)$ is an isomorphism of chain complexes of $\R$-vector spaces. Therefore the middle chain computes $\bigoplus_{i=1} ^l H^q(\Gamma_{\Sigma_i},\mathfrak{g})$. Similarly, $\Hom_{\Gamma} ( C_\ast(p^{-1}(\xi_i)),\mathfrak{g})\approx  \Hom_{\Gamma_{\xi_i}} (C_\ast (\widetilde{\xi_i}) , \mathfrak{g})$ computes $H^q(\Gamma_{\xi_i}, \mathfrak{g})$. This proves that the following sequence 
\begin{equation}\label{ordinarymvs}
0 \to \bigoplus_{i=1} ^{m}  H^0(\Gamma_{\xi_i}; \mathfrak{g}) \overset{\delta}{\to} H^1(\Gamma; \mathfrak{g}) \to \bigoplus_{i=1} ^l H^1(\Gamma_{\Sigma_i}; \mathfrak{g}) \to \bigoplus_{i=1} ^{m} H^1(\Gamma_{\xi_i}; \mathfrak{g}) \to 0 
\end{equation}
is exact. 

To prove  Theorem \ref{mvs}, we have to characterize the parabolic cohomology in terms of cocycles.  We state this as the following lemma.

\begin{lemma}\label{paraboliccocycle}
	Let 
	\begin{multline*}
	Z^1 _{\mathrm{par}}(\Gamma,\mathcal{S};\mathfrak{g})\\=\{f\in Z^1(\Gamma;\mathfrak{g})\,|\,f(\xi)=0\text{ for all }\xi\in \bigoplus_{i=1} ^mC_1(p^{-1}(\xi_i))\oplus \bigoplus_{i=1} ^b C_1(p^{-1}(\zeta_i))\}.
	\end{multline*}
	Then 
	\[
	H^1_{\mathrm{par}}(\Gamma, \mathcal{S};\mathfrak{g})=\frac{Z^1 _{\mathrm{par}} (\Gamma, \mathcal{S}; \mathfrak{g})}{B^1(\Gamma; \mathfrak{g}) \cap Z^1_{\mathrm{par}} (\Gamma, \mathcal{S}; \mathfrak{g})}. 
	\]
\end{lemma}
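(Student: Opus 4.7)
The plan is to work entirely within the cellular resolution constructed at the start of this appendix. I would set
\[
R_\ast := C_\ast(\widetilde{\Sigma}), \qquad A_\ast := \bigoplus_{i=1}^m C_\ast(p^{-1}(\xi_i)) \oplus \bigoplus_{i=1}^b C_\ast(p^{-1}(\zeta_i)),
\]
and verify that $(R_\ast, A_\ast)$ is an auxiliary resolution over the group system $(\Gamma, \mathcal{S})$. The key point, which uses the CW-structure built above, is that each $\xi_i$ and each $\zeta_i$ is a subcomplex of $\Sigma$; this forces $A_\ast$ to be a free $\R\Gamma$-submodule of $R_\ast$ spanned by a subset of the cell basis, hence a direct summand in every degree. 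The contractibility of $\widetilde{\Sigma}$, $\widetilde{\xi_i}$, and $\widetilde{\zeta_i}$ provides the required resolution property for the base and for each summand.

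Next I would apply $\Hom_\Gamma(-, \mathfrak{g})$ to the short exact sequence $0 \to A_\ast \to R_\ast \to R_\ast / A_\ast \to 0$. Since all the $\R\Gamma$-modules involved are free and the sequence splits degreewise, this yields a short exact sequence of cochain complexes whose long exact sequence realizes the map $H^1(\Gamma, \mathcal{S}; \mathfrak{g}) \to H^1(\Gamma; \mathfrak{g})$ whose image \emph{defines} $H^1_{\mathrm{par}}(\Gamma, \mathcal{S}; \mathfrak{g})$.

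The heart of the argument is to unravel the identifications. A degree-$1$ element of $\Hom_\Gamma(R_\ast / A_\ast, \mathfrak{g})$ is exactly a $\Gamma$-homomorphism $f \colon R_1 \to \mathfrak{g}$ vanishing on $A_1$, and the relative cocycle condition $df = 0$ coincides with the ordinary cocycle condition inside $\Hom_\Gamma(R_\ast, \mathfrak{g})$ (the extra vanishing on $\partial(A_2) \subset A_1$ is automatic). Hence
\[
Z^1(\Gamma, \mathcal{S}; \mathfrak{g}) \;=\; Z^1_{\mathrm{par}}(\Gamma, \mathcal{S}; \mathfrak{g})
\]
in the notation of the statement, and the map into $H^1(\Gamma; \mathfrak{g})$ is induced simply by viewing a parabolic cocycle as an ordinary cocycle. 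Applying the first isomorphism theorem, its image is $Z^1_{\mathrm{par}} / (Z^1_{\mathrm{par}} \cap B^1(\Gamma; \mathfrak{g}))$, which is the formula in the lemma.

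The one subtlety I expect to require care is the direct summand property of $A_\ast$ in $R_\ast$: without this the surjectivity of $\Hom_\Gamma(R_\ast, \mathfrak{g}) \to \Hom_\Gamma(A_\ast, \mathfrak{g})$ (and hence the identification of relative cochains with $\Gamma$-maps vanishing on $A_\ast$) could fail. But this is exactly what the cellular setup guarantees, because the lifts of the cells of $\xi_i$ and $\zeta_i$ sit inside the set of $\Gamma$-generators of the cells of $\widetilde{\Sigma}$, so an $\R\Gamma$-linear splitting is obtained by sending each remaining generator to zero. Once this is in place the rest is essentially bookkeeping.
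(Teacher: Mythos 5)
Your proposal is correct and follows the same route as the paper: take the cellular resolution $R_\ast = C_\ast(\widetilde{\Sigma})$ with $A_\ast$ generated by cells lying over the $\xi_i$ and $\zeta_i$, observe that this is an auxiliary resolution for $(\Gamma,\mathcal{S})$, and then unwind the definition of $H^1_{\mathrm{par}}$ as the image of $H^1(\Gamma,\mathcal{S};\mathfrak{g}) \to H^1(\Gamma;\mathfrak{g})$. The paper's own proof is a single sentence, so your version supplies exactly the bookkeeping (direct-summand verification, identification of relative $1$-cocycles with $Z^1_{\mathrm{par}}$, first isomorphism theorem) that the paper delegates to ``the definition of parabolic cohomology.''
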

\begin{proof}
We see that $(C_\ast (\widetilde{\Sigma}),C_\ast (\widetilde{\xi_i})\oplus C_\ast(\widetilde{\zeta_i}))$ is an auxiliary resolution over the group system $(\Gamma, \mathcal{S})$. Now the lemma follows from the definition of parabolic cohomology. 
\end{proof}

It follows that the map 
\[
\bigoplus_{i=1} ^l  H^1_{\mathrm{par}}( \Gamma_{\Sigma_i}, \mathcal{S}_i;\mathfrak{g})\to \bigoplus_{i=1} ^m  H^1(\Gamma_{\xi_i};\mathfrak{g})
\]
 in (\ref{ordinarymvs}) is trivial. Suppose that an element $([\alpha_1], \cdots, [\alpha_l])$  in $\bigoplus Z^1 _{\mathrm{par}} (\Gamma_{\Sigma_i}, \mathcal{S}_i;\mathfrak{g})$ is given. For each $[\alpha_i]$, we define a parabolic cocycle $\overline{\alpha_i}\in Z^1_{\mathrm{par}} (\Gamma,\mathcal{S};\mathfrak{g})$ by 
 \[
 \overline{\alpha_i} (x) =\begin{cases}
 \alpha_i (x) & x\in C_1(p^{-1}(\Sigma_i) )\\
 0 & \text{otherwise}
 \end{cases}. 
 \]
 Then $[\overline{\alpha_1}]+\cdots+ [\overline{\alpha_l}]$ is an element of $H^1_{\mathrm{par}} (\Gamma,\mathcal{S};\mathfrak{g})$ which maps to $([\alpha_1], \cdots, [\alpha_l])$. Thus
 \[
H^1_{\mathrm{par}}(\Gamma,\mathcal{S};\mathfrak{g})\to \bigoplus_{i=1} ^l  H^1_{\mathrm{par}}( \Gamma_{\Sigma_i}, \mathcal{S}_i;\mathfrak{g})
 \]
 is surjective.  Finally the exactness at $H^1_{\mathrm{par}} (\Gamma,\mathcal{S};\mathfrak{g})$ and  $\bigoplus H^0(\Gamma_{\xi_i};\mathfrak{g})$ follow from the exactness of (\ref{ordinarymvs}) and the fact that a parabolic cohomology group is a subgroup of the ordinary cohomology.

\begin{proposition}\label{tangentA}
	Let $[\rho]\in\overline{\Rep}_n ^{\mathscr{B}}(\Gamma,\mathscr{C})$. We have
	\[
	T_{[\rho]}\overline{\Rep}_n ^{\mathscr{B}}(\Gamma,\mathscr{C}) \approx H^1_{\mathrm{par}} (\Gamma,\mathcal{S};\mathfrak{g}_{\rho}). 
	\]
\end{proposition}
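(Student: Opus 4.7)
The plan is to mimic the classical identification $T_{[\rho]}\overline{\Rep}_n(\Gamma)\approx H^1(\Gamma;\mathfrak{g}_\rho)$ of Weil and Goldman, and then cut out the constraint submanifold $\overline{\Rep}_n^{\mathscr{B}}(\Gamma,\mathscr{C})$ at the cocycle level, recognizing the resulting subspace as $H^1_{\mathrm{par}}(\Gamma,\mathcal{S};\mathfrak{g}_\rho)$ via Lemma \ref{paraboliccocycle}. Concretely, I would first recall that a tangent vector at $[\rho]$ lifts to a smooth path $\rho_t\in \Hom_s(\Gamma,G)$ with $\rho_0=\rho$, and that the rule $u(\gamma)=\frac{\der}{\der t}\big|_{t=0}\rho_t(\gamma)\rho(\gamma)^{-1}$ (translated back to $\mathfrak{g}$) defines a 1-cocycle whose cohomology class in $H^1(\Gamma;\mathfrak{g}_\rho)$ depends only on the tangent vector, giving an isomorphism $T_{[\rho]}\overline{\Rep}_n(\Gamma)\approx H^1(\Gamma;\mathfrak{g}_\rho)$ (see \cite{weil1964,goldman1984}).

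Next I would identify, at the infinitesimal level, the submanifold condition defining $\overline{\Rep}_n^{\mathscr{B}}(\Gamma,\mathscr{C})$. A smooth path $\rho_t$ in $\Hom_s(\Gamma,G)$ lies in the preimage of $\overline{\Rep}_n^{\mathscr{B}}(\Gamma,\mathscr{C})$ iff, for each $\xi_i\in\mathcal{C}$, the curve $\rho_t(\xi_i)$ stays in the fixed conjugacy class $C_i$, i.e.\ $\rho_t(\xi_i)=g_{i,t}\rho(\xi_i)g_{i,t}^{-1}$ for some smooth $g_{i,t}$ with $g_{i,0}=1$, and analogously for each boundary component $\zeta_j$. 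Differentiating at $t=0$ and writing $X_i=\dot g_{i,0}\in\mathfrak{g}$ yields
\[
\iota_{\langle \xi_i\rangle}^{\#}(u)=\der_{\langle\xi_i\rangle}X_i,\qquad \iota_{\langle\zeta_j\rangle}^{\#}(u)=\der_{\langle\zeta_j\rangle}Y_j,
\]
so $u\in Z^1_{\mathrm{par}}(\Gamma,\mathcal{S};\mathfrak{g})$ by Lemma \ref{paraboliccocycle}. Since coboundaries $\der_\Gamma Y$ always lie in $Z^1_{\mathrm{par}}$ and correspond to infinitesimal conjugation (which is zero in $T_{[\rho]}$), passing to the quotient gives a well-defined, injective linear map $T_{[\rho]}\overline{\Rep}_n^{\mathscr{B}}(\Gamma,\mathscr{C})\to H^1_{\mathrm{par}}(\Gamma,\mathcal{S};\mathfrak{g}_\rho)$.

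Surjectivity is the main obstacle and is where I would spend most of the work. Given $[u]\in H^1_{\mathrm{par}}(\Gamma,\mathcal{S};\mathfrak{g}_\rho)$, we need to produce a genuine smooth family $\rho_t$ preserving the prescribed conjugacy classes and realizing $u$. The natural route is to view $\overline{\Rep}_n^{\mathscr{B}}(\Gamma,\mathscr{C})$ as a fiber of the evaluation/quotient map
\[
\mathrm{ev}\colon \overline{\Rep}_n(\Gamma)\longrightarrow \prod_i G/\!\!/G\times \prod_j G/\!\!/G,\qquad [\rho]\longmapsto (\,[\rho(\xi_i)]\,,\,[\rho(\zeta_j)]\,),
\]
and to show that the specified conjugacy classes are regular values at $[\rho]$. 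Because each $\rho(\xi_i)$ and $\rho(\zeta_j)$ is purely loxodromic, the centralizer $Z_G(\rho(\xi_i))$ has Lie algebra of dimension $n-1$ and the conjugacy class through $\rho(\xi_i)$ is a smooth submanifold of $G$ whose tangent space at $\rho(\xi_i)$ is $\operatorname{image}(\operatorname{Id}-\Ad_{\rho(\xi_i)})$. A direct cocycle check then identifies $T_{[\rho]}\,\mathrm{ev}^{-1}(\mathrm{ev}([\rho]))$ with the kernel of the composition $H^1(\Gamma;\mathfrak{g})\to\bigoplus H^1(\langle\xi_i\rangle;\mathfrak{g})\oplus\bigoplus H^1(\langle\zeta_j\rangle;\mathfrak{g})$, which is exactly $H^1_{\mathrm{par}}(\Gamma,\mathcal{S};\mathfrak{g}_\rho)$ by Lemma \ref{paraboliccocycle}. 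Combining this with the smoothness of $\overline{\Rep}_n(\Gamma)$ (guaranteed by irreducibility and $Z_G(\rho)=\{1\}$, as reviewed in Section 2) and the regularity of $\mathrm{ev}$ at $[\rho]$, the implicit function theorem yields both the smooth manifold structure on $\overline{\Rep}_n^{\mathscr{B}}(\Gamma,\mathscr{C})$ and the desired identification of its tangent space. The hard part is this regularity statement for $\mathrm{ev}$; once it is in hand, the rest is bookkeeping with the definitions.
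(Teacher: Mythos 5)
Your argument takes a genuinely different route from the paper. The paper works in the discrete flat connection model: it fixes a CW structure on $\Sigma$ adapted to the curves $\xi_i,\zeta_j$, freezes the holonomies along those edges, and shrinks the vertex gauge group at the midpoints $q_i$ to the centralizers $Z_G(\rho(\xi_i))$ and $Z_G(\rho(\zeta_j))$. The conjugacy-class constraints are thereby built into the parametrization rather than imposed afterward as a level set, so the tangent space computation reduces to matching the linearized gauge complex $T_e\mathcal{G}\to T_{\rho} G^{|E\setminus S|}\to T_e G^l$ against the parabolic cochain complex and invoking Lemma \ref{paraboliccocycle}. You instead stay with the Weil--Goldman cocycle description of $T_{[\rho]}\overline{\Rep}_n(\Gamma)\cong H^1(\Gamma;\mathfrak{g}_\rho)$ and try to exhibit $\overline{\Rep}_n^{\mathscr{B}}(\Gamma,\mathscr{C})$ as a regular fiber of an evaluation map $\mathrm{ev}$. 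That is a natural and conceptually transparent strategy, and your identification of the infinitesimal constraint (the restricted cocycle along each $\xi_i$ and $\zeta_j$ must be a coboundary) with membership in $Z^1_{\mathrm{par}}(\Gamma,\mathcal{S};\mathfrak{g}_\rho)$ is correct.

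The gap, which you flag but do not close, is the regularity of $\mathrm{ev}$. Your argument needs surjectivity of the restriction map $H^1(\Gamma;\mathfrak{g}_\rho)\to\bigoplus_i H^1(\langle\xi_i\rangle;\mathfrak{g}_\rho)\oplus\bigoplus_j H^1(\langle\zeta_j\rangle;\mathfrak{g}_\rho)$; without it, the implicit function theorem does not apply and the fiber of $\mathrm{ev}$ is not a priori a submanifold whose tangent space is $\ker\der\mathrm{ev}$. For the boundary subgroups this is standard Poincar\'e--Lefschetz duality: $H^2(\Gamma,\{\langle\zeta_j\rangle\};\mathfrak{g})\cong H_0(\Gamma;\mathfrak{g})=0$ since $\rho$ has no invariant vectors. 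For the \emph{interior} curves $\xi_i$ it is an additional statement, roughly that the eigenvalue invariants of $\rho(\xi_i)$ are independent submersions. It can be proved---for example, the restriction from each piece $\Gamma_{\Sigma_k}$ to an adjacent edge group $\Gamma_{\xi_i}$ is surjective because $\Gamma_{\Sigma_k}$ is free with the boundary curve as part of a free basis, and a compatible tuple of classes on the pieces then lifts to $H^1(\Gamma;\mathfrak{g})$ via the exact sequence (\ref{ordinarymvs})---but you neither supply this argument nor point to a result that delivers it, and it is precisely the nontrivial input the paper's choice of parametrization is designed to sidestep. As written, the proposal correctly identifies the target statement and the surrounding bookkeeping, but the pivotal regularity remains an unproved assertion.
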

\begin{proof}
We use the discrete connection model to describe the space of representations. Namely, we see the 1-skeleton $\Sigma^{(1)}$ of $\Sigma$ as a graph and consider a discrete flat connection over the graph, which is the rule that assigns to each edge an element of $G$ in such a way that the holonomy around each 2-cell is trivial. In this setting $\overline{\Rep}_n ^{\mathscr{B}}(\Gamma,\mathscr{C})$ is an open subset of $R^{-1}(e)$ modulo the gauge 
\[
\mathcal{G}=G^{|\Sigma^{(0)}\setminus\{q_1, \cdots, q_{m+b}\}|}\times Z_{G}(\rho(\xi_1))\times \cdots \times Z_G(\rho(\xi_m))\times Z_G(\rho(\zeta_1))\times \cdots \times Z_G(\rho(\zeta_b))
\] 
where $R: G^{|E(\Sigma^{(1)})\setminus S|}\to G^{l}$  is the curvature and 
\[
S=\{\xi_1, \cdots, \xi_m,\zeta_1, \cdots, \zeta_b\}\subset E(\Sigma^{(1)}).
\]
See  Labourie \cite{labourie2013} for more details.

Then we have the following commutative diagram 
\[
\xymatrix{
C^0 _{\mathrm{par}} (\Gamma, \mathcal{S};\mathfrak{g}) \ar[d]^{=} \ar[r]^{\der} & C^1 _{\mathrm{par}} (\Gamma, \mathcal{S};\mathfrak{g}) \ar[r]^{\der} \ar[d]& C^2 _{\mathrm{par}} (\Gamma, \mathcal{S}; \mathfrak{g})\ar[d]^{=} \\
T_e \mathcal{G} \ar[r] & T_{[\rho]}G^{|E(\Sigma^{(1)})\setminus S|} \ar[r]_{\der R} & T_e G^{l}
}
\]
where 
\begin{align*}
C^0 _{\mathrm{par}} (\Gamma, \mathcal{S};\mathfrak{g})&= \{ f\in C^0(\Gamma;\mathfrak{g})\,|\, \der f \in Z^1 _{\mathrm{par}} (\Gamma, \mathcal{S};\mathfrak{g})\},\\
C^1_{\mathrm{par}}(\Gamma, \mathcal{S}; \mathfrak{g})& = \{f\in C^1(\Gamma; \mathfrak{g})\,|\, f(\xi) = 0 \\
&\qquad \quad \text{ for all } \xi\in \bigoplus_{i=1} ^mC_1(p^{-1}(\xi_i))\oplus \bigoplus_{i=1} ^b C_1(p^{-1}(\zeta_i))\},\text{ and } \\
C^2_{\mathrm{par}} (\Gamma, S; \mathfrak{g}) &= C^2(\Gamma; \mathfrak{g}).
\end{align*}
It is then clear from Lemma \ref{paraboliccocycle}, that the tangent space is $H^1_{\mathrm{par}}(\Gamma, \mathcal{S};\mathfrak{g})$. 
\end{proof}

\begin{lemma}\label{relativefundclassappendix}
 Let 
\[
	[\Sigma] = \sum_{i=1} ^g\left(\, \left[ \left. \frac{\partial R} {\partial x_i} \right| x_i\right] + \left[\left. \frac{\partial R}{\partial y_i} \right| y_i\right]\,\right) + \sum_{j=1} ^ b  \left[\left. \frac{\partial R}{\partial z_j}\right| z_j\right]
\]
 be a (absolute) 2-chain in $\mathbf{B}_2(\Gamma)\otimes \Z$. Let $(R_\ast, A^i _\ast)$ be an auxiliary resolution over the group system  $(\Gamma, \mathcal{S})$ constructed in the proof of Lemma \ref{paraboliccocycle}. Choose a chain equivalence $\mathbf{B}_\ast (\Gamma)\otimes \Z \to R_\ast \otimes \Z$. Then  the  image of $[\Sigma]$ under the map 
\[
	\mathbf{B}_2(\Gamma)\otimes \Z \to R_2 \otimes \Z \to (R_2/A_2) \otimes \Z
\]
represents a generator of $H_2(\Gamma,\mathcal{S}; \Z)\approx \Z$. 
\end{lemma}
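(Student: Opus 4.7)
My plan is to realize the auxiliary resolution geometrically as the cellular chain complex of $\widetilde{\Sigma}$ with a CW-structure adapted to the given presentation, reducing the claim to a statement about the orientation class in $H_2(\Sigma,\partial\Sigma;\Z)$. I would equip $\Sigma$ with a 0-cell $p$ in the interior and a 0-cell $q_j$ on each $\zeta_j$, 1-cells $x_i,y_i$ based at $p$, arcs $\eta_j$ from $p$ to $q_j$, boundary loops $\zeta_j$ at $q_j$, and a single 2-cell $e^2$ whose attaching word represents $R$ under the identification $z_j=\eta_j\zeta_j\eta_j^{-1}$. Setting $R_\ast:=C_\ast(\widetilde{\Sigma})$ and $A^j_\ast:=C_\ast(\widetilde{\zeta_j})$, the fact that $p,q_1,\dots,q_b$ are distinct 0-cells makes $A_\ast$ a direct summand of $R_\ast$, and
\[
(R_\ast/A_\ast)\otimes_{\Z\Gamma}\Z = C_\ast(\Sigma,\partial\Sigma;\Z),
\]
whose $H_2\approx\Z$ is generated by the single fundamental 2-cell $e^2\otimes 1$.

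By the comparison theorem for projective resolutions, any chain equivalence $\mathbf{B}_\ast(\Gamma)\otimes\Z\to R_\ast\otimes\Z$ lifts, uniquely up to chain homotopy, to a chain map $\phi:\mathbf{B}_\ast(\Gamma)\to R_\ast$ of $\Z\Gamma$-complexes; since chain-homotopic maps agree on homology, I may choose any convenient $\phi$. I will normalize $\phi_0[\,]=e^0_p$ and take $\phi_1([s])$, for $s\in\{x_i,y_i,z_j\}$, to be the cellular 1-chain in $\widetilde{\Sigma}$ realizing the lift of the path $s$ (so that $\phi_1([x_i])=e^1_{x_i}$, $\phi_1([y_i])=e^1_{y_i}$, and $\phi_1([z_j])$ is the obvious 1-chain assembled from $e^1_{\eta_j}$ and $e^1_{\zeta_j}$). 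The key property, following from the chain rule for Fox derivatives, is the identity
\[
\sum_s \frac{\partial R}{\partial s}\,\phi_1([s])=\partial e^2 \quad \text{in } R_1.
\]

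Next, I expand the bar differential:
\[
\partial[\Sigma]=\sum_s\left(\frac{\partial R}{\partial s}[s]-\left[\frac{\partial R}{\partial s}\cdot s\right]+\left[\frac{\partial R}{\partial s}\right]\right),
\]
and since $R=1$ in $\Gamma$, Theorem \ref{fox} yields $\sum_s \frac{\partial R}{\partial s}(s-1)=R-1=0$ in $\Z\Gamma$, so the second and third sums cancel and $\partial[\Sigma]=\sum_s\frac{\partial R}{\partial s}[s]$. Applying $\phi_1$ gives $\partial\phi_2([\Sigma])=\phi_1(\partial[\Sigma])=\partial e^2$. Because $\widetilde{\Sigma}$ is contractible and $R_3=0$, the differential $\partial_2:R_2\to R_1$ is injective, forcing $\phi_2([\Sigma])=e^2$. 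Its image in $(R_2/A_2)\otimes\Z$ is therefore $e^2\otimes 1$ (using $A_2=0$), a generator of $H_2(\Gamma,\mathcal{S};\Z)\approx\Z$. I expect the main technical nuisance to be the verification of the displayed Fox-calculus identity: the abstract derivatives $\frac{\partial R}{\partial s}$ are expressed in the group generators $\{x_i,y_i,z_j\}$, whereas $\partial e^2$ sits naturally in terms of the edge generators $\{x_i,y_i,\eta_j,\zeta_j\}$, and bridging the two requires careful use of the chain rule after substituting $z_j=\eta_j\zeta_j\eta_j^{-1}$.
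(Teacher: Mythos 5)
Your proof is correct and takes a genuinely different, more explicit route than the paper's. The paper realizes the bar resolution as the cellular chain complex of a $K(\Gamma,1)$ built from the ``tautological'' presentation of $\Gamma$, obtains the chain equivalence from a homotopy equivalence of $K(\Gamma,1)$ spaces, and then simply asserts that this equivalence sends $[\Sigma]$ to the sum of 2-cells. You instead invoke the comparison theorem for projective $\Z\Gamma$-resolutions to produce a chain map $\phi:\mathbf{B}_\ast(\Gamma)\to R_\ast$ with $\phi_0,\phi_1$ normalized on generators, compute $\partial[\Sigma]=\sum_s\frac{\partial R}{\partial s}[s]$ in $\mathbf{B}_1(\Gamma)$ using the Fox-calculus identity $\sum_s\frac{\partial R}{\partial s}(s-1)=R-1=0$ in $\Z\Gamma$, match this against $\partial e^2$ via the Fox chain rule with the substitution $z_j=\eta_j\zeta_j\eta_j^{-1}$, and then use the injectivity of $\partial_2:R_2\to R_1$ (from $R_3=0$ and the contractibility of $\widetilde{\Sigma}$) to pin down $\phi_2([\Sigma])=e^2$ exactly, rather than merely up to boundary. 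This buys a rigorous verification of precisely the step the paper labels ``certainly,'' which is the real content of the lemma; your observation that homotopic $\Z\Gamma$-chain maps agree on homology covers independence of the chosen equivalence. Two minor points to tidy up: the auxiliary resolution referenced in the lemma statement (from Lemma \ref{paraboliccocycle}) is built from a CW structure adapted to a decomposition along interior curves and may have several 2-cells and interior vertices, so either specialize to the $\mathcal{C}=\emptyset$ case explicitly or account for the extra cells; and since $[\Sigma]$ lives in $\mathbf{B}_2(\Gamma)\otimes\Z$, you should note that you work with its canonical $\Z\Gamma$-lift in $\mathbf{B}_2(\Gamma)$ before applying $\phi_2$ and tensoring down.
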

\begin{proof}
Let $\langle \gamma\in \Gamma\setminus \{1\}  \,|\, \gamma_1\cdot  \gamma_2 = \gamma_1\gamma_2\rangle$ be a  `tautological' presentation for $\Gamma$ and let $X_0$ be a presentation complex for this presentation. Let  $X$ be a  $K(\Gamma, 1)$ space obtained by attaching appropriate cells to $X_0$. Observe that the cellular chain complex of $\widetilde{X}$ corresponds to the normalized bar resolution $\mathbf{B}_\ast(\Gamma)$ over $\Gamma$. The chain equivalence $\mathbf{B}_\ast (\Gamma)\otimes \Z \to R_\ast \otimes \Z$ is given by the homotopy equivalence between $K(\Gamma, 1)$ spaces $X$ and $\Sigma$.  This chain equivalence sends $[ \Sigma ]$ to the sum of all 2-cells of $\Sigma$. Certainly its image in $(R_2/A_2)\otimes \Z$ represents the generator of $H_2(\Gamma,\mathcal{S};\Z)$.
\end{proof}

\section{Review on  the Bonahon-Dreyer coordinates}\label{BDreview}
In this appendix, we review Bonahon-Dreyer's parametrization of $\Hit_n(\Sigma)$. Complete discussion on this coordinates can be found in their original paper \cite{bonahon2014}. See also \cite{bonahon2018} for the relationship between coordinates of Bonahon-Dreyer and that of Goldman.

For the remaining of this appendix, we assume that $\Sigma$ is a closed oriented surface of genus $g>1$. We identify the universal cover $\widetilde{\Sigma}$ of $\Sigma$  with the Poincar\'e disk $\mathbb{H}^2$ with the ideal boundary $\partial_\infty \widetilde{\Sigma}$ homeomorphic to the circle. 

To parametrize $\Hit_n(\Sigma)$, we have to fix some topological data
\begin{itemize}
\item We equip $\Sigma$ with an auxiliary hyperbolic metric and take a maximal geodesic lamination $\Lambda$ on $\Sigma$.
\item We give an arbitrary orientation on each leaf.
\item To each closed leaf, we choose a ``short'' transverse arc intersecting the leaf exactly once.
\end{itemize}
Observe that since $\Lambda$ is maximal,  $\Sigma\setminus \Lambda$ consists of $4g-4$ triangles $T_1, T_2 \cdots, T_{4g-4}$. 

The starting point of this parametrization is the following characterization theorem due to Labourie. 
\begin{theorem}[Labourie \cite{labourie2006}]
A representation $\rho:\pi_1(\Sigma) \to \PSL_n(\R)$ is Hitchin if and only if there is a $\rho$-equivariant flag curve $\mathcal{F}_\rho:\partial_\infty \widetilde{\Sigma}\to \Flag(\R^n)$ such that $\mathcal{F}_\rho ^{(1)}: \partial_\infty \widetilde{\Sigma} \to P \R^n$ is hyperconvex and Frenet. 
\end{theorem}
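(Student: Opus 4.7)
The plan is to prove the equivalence in two directions, using the theory of Anosov representations as the central technical device, together with a deformation argument along the Hitchin component.

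For the forward direction (Hitchin implies the existence of a $\rho$-equivariant hyperconvex Frenet flag curve), I would first establish that every Hitchin representation is Anosov with respect to a minimal parabolic (equivalently, Borel) subgroup $B$ of $\PSL_n(\R)$. The input is that every nontrivial $\gamma\in\pi_1(\Sigma)$ has $\rho(\gamma)$ purely loxodromic with distinct positive real eigenvalues (Lemma \ref{NoInvariantElement}), which gives the pointwise hyperbolicity that one needs to promote, via uniform estimates on a cocycle over the geodesic flow on $T^1\Sigma$, to the domination/contraction condition defining an Anosov representation. The Anosov property then produces, by extracting stable and unstable limits under the geodesic flow, a continuous $\rho$-equivariant limit map $\mathcal{F}_\rho:\partial_\infty\widetilde{\Sigma}\to \Flag(\R^n)$, whose image lies in pairwise transverse flags.

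Next I would verify the Frenet and hyperconvex conditions on $\mathcal{F}_\rho$. The strategy is continuity along the Hitchin component, which is connected by Hitchin's cell structure. For Fuchsian $\rho=\iota_n\circ \rho_0$, the map $\mathcal{F}_\rho$ is explicitly the Veronese (osculating) embedding $\R P^1 \hookrightarrow \Flag(\R^n)$ composed with the Fuchsian boundary map, which is classically hyperconvex and Frenet. The Frenet condition is visibly open under $C^0$-perturbations of the equivariant limit map, and openness of the Anosov property gives continuity of $\rho\mapsto \mathcal{F}_\rho$. It remains to show the Frenet/hyperconvex condition is closed along paths in the Hitchin component: this is the main technical obstacle, and would be handled by uniform compactness estimates for Anosov limit maps, showing that in a limit, transversality of $k$-tuples of flags cannot degenerate.

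For the converse, suppose $\rho$ admits a $\rho$-equivariant $\mathcal{F}_\rho$ with $\mathcal{F}_\rho^{(1)}$ hyperconvex and Frenet. The Frenet transversality forces the dynamics of $\rho(\gamma)$ to exhibit the contraction properties characteristic of Anosov representations, in particular that each $\rho(\gamma)$ has distinct positive eigenvalues and that $\rho$ is discrete and faithful. Then the set $\mathcal{H}\subset \Rep_n(\pi_1(\Sigma))$ of representations admitting such a flag curve is (i) open, by structural stability of Anosov representations, and (ii) closed, again by the uniform compactness arguments used above. Hence $\mathcal{H}$ is a union of connected components, and since $\mathcal{H}$ contains the Fuchsian locus, it contains the whole Hitchin component $\Hit_n(\Sigma)$; conversely the forward direction showed $\Hit_n(\Sigma)\subset \mathcal{H}$, giving equality. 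The hardest step throughout is propagating the Frenet property in limits, which requires the quantitative transversality estimates built into the Anosov framework.
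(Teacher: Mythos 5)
Note first that the paper does not prove this statement; it is cited verbatim to Labourie, so you are reconstructing the argument from scratch. Your global skeleton (an open-and-closed argument in $\Rep_n(\pi_1(\Sigma))$ anchored at the Fuchsian locus) is the right shape, but the forward direction as you have arranged it is both circular and contains a false inference. The circularity: you feed in that every nontrivial $\gamma$ has $\rho(\gamma)$ purely loxodromic, citing Lemma \ref{NoInvariantElement}; but the proof of that lemma is itself merely a citation to Proposition 3.4 of Labourie \cite{labourie2006}, which is established \emph{using} the hyperconvex Frenet curve and the Anosov property it furnishes — i.e.\ using the theorem you are trying to prove. The false inference: even granting pointwise loxodromicity, there is no general device for ``promoting, via uniform estimates on a cocycle over the geodesic flow,'' an element-wise spectral condition to the uniform domination/contraction condition defining an Anosov representation. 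The uniformity is strictly stronger, and one can have non-Anosov representations all of whose elements are loxodromic. Labourie's actual argument runs the other way around: one establishes hyperconvexity of the limit curve first (openness via perturbation of the Frenet curve, closedness via delicate convexity and compactness estimates on the curve itself, both anchored at the Veronese/Fuchsian case), and the Anosov property is then \emph{derived} from hyperconvexity rather than being the input used to build the limit map.

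For the converse direction, showing that $\mathcal{H}$ (the set of representations admitting a hyperconvex Frenet flag curve) is a union of connected components does not by itself deliver the statement. You need that every such component contains a Fuchsian representation, i.e.\ that $\mathcal{H}$ is exactly the union of Hitchin components, and this is not automatic from open-and-closedness: a priori $\mathcal{H}$ could contain exotic components. That implication is a genuinely separate theorem (due to Guichard), and your sketch has no argument for it beyond the assertion. So both directions, as proposed, have real gaps: the forward direction misorders the logical dependencies and relies on an inference that fails, and the converse stops short of the key step.
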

Therefore if $\rho\in \Hit_n(\Sigma)$ is given, each point of $\partial_\infty\widetilde{\Sigma}$ is decorated by a flag. We define three types of invariants associated to each ideal triangle, bi-infinite leaf and closed leaf. 

\begin{figure}[ht]
\begin{tikzpicture}
\draw (0,1)--(-1,-.6)--(1,-.6)--cycle;
\draw (0,1) node[above]{$v_1$};
\draw (-1,-0.6) node[left]{$v_3$};
\draw (1,-0.6) node[right]{$v_2$};
\draw(0,0) node{$\widetilde{T}_\ell$};
\end{tikzpicture}
\caption{The local configuration of $\widetilde{T_\ell}$}\label{triangle}
\end{figure}
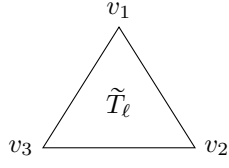

For each ideal triangle $T_\ell$, we consider its lift $\widetilde{T}_\ell$ in the universal cover. Let  $v_1, v_2, v_3$ be ideal vertices of $\widetilde{T}_\ell$  in clockwise cyclic order as in Figure \ref{triangle}. Then by Fock-Goncharov \cite{fock2006}, $\mathcal{F}_\rho(v_1)=:A$, $\mathcal{F}_\rho(v_2)=:B$, and $\mathcal{F}_\rho(v_3)=:C$ is a positive triple in the sense that the following \emph{triangle invariants}
\begin{align*}
\tau^\rho _{i,j,k}(T_\ell ,v_1) &:= \log T_{i,j,k} (A,B,C), \\
\tau^\rho _{i,j,k}(T_\ell ,v_2) &:= \log T_{i,j,k} (B,C, A), \quad \text{and}\\
\tau^\rho _{i,j,k}(T_\ell ,v_3) &:= \log T_{i,j,k} (C, A,B)
\end{align*}
for each $i,j,k>0$ with $i+j+k=n$ are well-defined. Here, 
\begin{multline*}
T_{i,j,k} (A,B,C) :=  \frac{A^{(i+1)} \wedge B^{(j)}\wedge C^{(k-1)}}{A^{(i-1)}\wedge B^{(j)}\wedge C^{(k+1)}}\times  \\ \frac{A^{(i)} \wedge B^{(j-1)}\wedge C^{(k+1)}}{A^{(i)}\wedge B^{(j+1)}\wedge C^{(k-1)}} \times \frac{A^{(i-1)} \wedge B^{(j+1)}\wedge C^{(k)}}{A^{(i+1)}\wedge B^{(j-1)}\wedge C^{(k)}}.
\end{multline*}
These invariants $\tau^\rho _{i,j,k}(T_\ell, v_1)$, $\tau^\rho _{i,j,k}(T_\ell, v_2)$,  and $\tau^\rho _{i,j,k}(T_\ell, v_3)$ are subject to the following rotation condition
\[
\tau_{i,j,k} ^\rho (T_\ell, v_1)= \tau^\rho _{j,k,i}(T_\ell, v_2) =\tau^\rho _{k,i,j}(T_\ell, v_3). 
\]

\begin{figure}[ht]
\begin{tikzpicture}
\draw (0,1.5)--(-1.5,0)--(0,-1.5)--(1.5,0)--(0,1.5)--(0,-1.5) ;
\draw (0,1.5) node[above]{$v_+$};
\draw[->,thick] (0,-.1)--(0,0);
\draw (0,-1.5) node[below]{$v_-$};
\draw (1.5,0) node[right]{$v_R$};
\draw (0,0) node[right]{$\widetilde{\gamma_\ell}$};
\draw (-1.5,0) node[left] {$v_L$};
\draw (-.5,0) node {$\widetilde{T}_L$};
\draw (.8,0) node {$\widetilde{T}_R$};
\end{tikzpicture}
\caption{The local configuration near $\widetilde{\gamma}_\ell$}\label{infquad}
\end{figure}
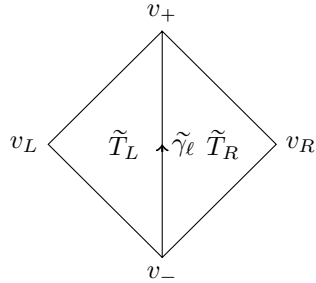

Consider an infinite leaf $\gamma_\ell$ and its lift $\widetilde{\gamma_\ell}$. There are two ideal triangles $\widetilde{T}_L$ and $\widetilde{T}_R$ sharing the edge $\widetilde{\gamma_\ell}$.  In the universal cover, it looks like Figure \ref{infquad}. Again, by Fock-Goncharov \cite{fock2006},   $\mathcal{F}_\rho(v_+)=:E$, $\mathcal{F}_\rho(v_-)=:F$, $\mathcal{F}_\rho(v_R)=:Y$ and $\mathcal{F}_\rho(v_L)=:X$ is a positive quadruple in the sense that the following \emph{shear invariants} of the infinite leaf $\gamma_\ell$
\[
\sigma_i^\rho (\gamma_\ell) := \log D_i (E,F,X,Y), \quad i=1,2,\cdots,n-1
\]
are well-defined where
\[
D_i (E,F,X,Y) := - \frac{E^{(i)}\wedge F^{(n-1-i)}\wedge X^{(1)}}{E^{(i)}\wedge F^{(n-1-i)}\wedge Y^{(1)}}\times \frac{E^{(i-1)}\wedge F^{(n-i)}\wedge Y^{(1)}}{E^{(i-1)}\wedge F^{(n-i)}\wedge X^{(1)}}.
\]

\begin{figure}[ht]
\begin{tikzpicture}
\draw (0,1.5)--(-1,-1)--(-0.5,-1.4)--cycle ;
\draw (-1,-1) node[left]{$v_L$};
\draw (0,1.5) -- (-0.3, -1.45);
\draw (0,1.5) -- (-0.2, -1.47);
\draw[->, very thick] (0,-.1)--(0,0);
\draw (0,1.5) -- (-0.1, -1.49);
\draw (.4,-.2)--(-.4,-.2);
\draw (0,1.5)--(1,-1)--(0.5,-1.4)--cycle ;
\draw (1,-1) node[right]{$v_R$};
\draw (0,1.5) -- (0.3, -1.45);
\draw (0,1.5) -- (0.2, -1.47);
\draw (0,1.5) -- (0.1, -1.49);
\draw[thick] (0,1.5) -- (0, -1.5);
\draw (0,1.5) node[above]{$v_+$};
\draw (0,-1.5) node[below]{$v_-$};
\draw (0,0) node[right]{$\widetilde{c}_\ell$};
\end{tikzpicture}
\begin{tikzpicture}
\draw (0,1.5)--(-1,-1)--(-0.5,-1.4)--cycle ;
\draw (-1,-1) node[left]{$v_L$};
\draw (0,1.5) -- (-0.3, -1.45);
\draw (0,1.5) -- (-0.2, -1.47);
\draw (0,1.5) -- (-0.1, -1.49);
\draw (.4,-.2)--(-.4,-.2);
\draw[->, very thick] (0,-.1)--(0,0);
\draw (0,-1.5)--(1,1)--(0.5,1.4)--cycle ;
\draw (1,1) node[right]{$v_R$};
\draw (0,-1.5) -- (0.3, 1.45);
\draw (0,-1.5) -- (0.2, 1.47);
\draw (0,-1.5) -- (0.1, 1.49);
\draw[thick] (0,1.5) -- (0, -1.5);
\draw (0,1.5) node[above]{$v_+$};
\draw (0,-1.5) node[below]{$v_-$};
\draw (0,0) node[right]{$\widetilde{c}_\ell$};
\end{tikzpicture}
\caption{The local configuration near $\widetilde{c}_\ell$. Note that there are two possibilities.}\label{closedquad}
\end{figure}
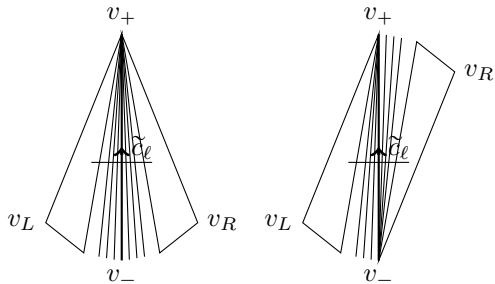

Lastly we consider a closed leaf $c_\ell$. For each $c_\ell$ there are infinite leaves that spiral to it. In the universal cover, we have configurations as in Figure \ref{closedquad}. We use small transverse arc, which is one of our topological data, to choose two ideal triangles $\widetilde{T_L}$ and $\widetilde{T_R}$ on each side of $\widetilde{c_\ell}$.  Let $v_L$ be the ideal vertex of $\widetilde{T_L}$ which is farthest from $\widetilde{c_\ell}$. We choose $v_R$ in the same fashion. Then $\mathcal{F}_\rho(v_+)=:E$, $\mathcal{F}_\rho(v_-)=:F$, $\mathcal{F}_\rho(v_R)=:Y$ and $\mathcal{F}_\rho(v_L)=:X$ is again positive quadruple of flags which allows us to compute the following \emph{shear invariants} of the closed leaf $c_\ell$:
\begin{equation}\label{ClosedLeafInvariant}
\sigma_i^\rho(c_\ell) := \log D_i (E,F,X,Y) 
\end{equation}
for $i=1,2,\cdots,n-1$.

These three types of parameters defined above are not free and must satisfy certain relations so called closed leaf equalities, closed leaf inequalities and rotation conditions. Bonahon and Dreyer argue that they the only relations among the parameters defined above. 

\begin{theorem}[Bonahon-Dreyer \cite{bonahon2014}]\label{BonahonDreyerCoordinates}
The rule $\mathcal{B}$ that assigns to each $[\rho]\in \Hit_n(\Sigma)$ the invariants $\tau^\rho _{i,j,k}(T_\ell, v_q)$, $\sigma^\rho _i(\gamma_\ell)$, and $\sigma^\rho _i (c_\ell)$ is an analytic injective map from $\Hit_n (\Sigma)$ onto the interior of a convex polytope defined by a collection of linear equations and inequalities.
\end{theorem}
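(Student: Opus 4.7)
The plan has three movements: showing $\mathcal{B}$ is well-defined and analytic, proving injectivity, and cutting out the image by explicit linear relations.

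First I would verify well-definedness and analyticity. For $[\rho] \in \Hit_n(\Sigma)$, Labourie's characterization gives a $\rho$-equivariant Frenet hyperconvex flag curve $\mathcal{F}_\rho$, and Fock--Goncharov positivity guarantees that every triple $(A,B,C)$ and quadruple $(E,F,X,Y)$ of flags arising from $\mathcal{F}_\rho$ at vertices of adjacent ideal triangles of $\Lambda$ is positive. Consequently, each ratio $T_{i,j,k}(A,B,C)$ and each minus-ratio $D_i(E,F,X,Y)$ is a positive real number, so the triangle invariants and shear invariants are well-defined real numbers. Real-analytic dependence of $\mathcal{F}_\rho$ on $\rho$, combined with the fact that the invariants are logs of rational functions of flag minors, gives analyticity of $\mathcal{B}$.

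Second, I would establish injectivity by reconstructing $\mathcal{F}_\rho$ from its invariants. Fix once and for all a preferred lift $\widetilde{T}_0 \subset \widetilde{\Sigma}$ of an ideal triangle of $\Lambda$, and fix a positive reference triple of flags $(A_0,B_0,C_0)$ decorating its three vertices; this choice absorbs the ambiguity of conjugation in $\PSL_n(\R)$. The invariants $\tau^\rho_{i,j,k}$ of $\widetilde{T}_0$ determine the Fock--Goncharov snake coordinates of $(A_0,B_0,C_0)$, and for every ideal triangle $\widetilde{T}$ adjacent to $\widetilde{T}_0$ across an edge $\widetilde{\gamma}$, the shear invariants $\sigma^\rho_i(\gamma)$ together with the triangle invariants of $\widetilde{T}$ uniquely reconstruct the new flag at the fourth vertex, via the standard snake-move. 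Iterating this procedure through the dual tree of the lamination, and traversing closed leaves using $\sigma^\rho_i(c_\ell)$ to match the two sides, determines a flag at every endpoint of a lift of a leaf of $\Lambda$; these endpoints are dense in $\partial_\infty\widetilde{\Sigma}$, and continuity of the Frenet curve extends the reconstruction uniquely. Labourie's theorem then identifies the representation, proving injectivity.

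Third, I would identify the image. Necessity of the relations decomposes into three types: the rotation conditions $\tau^\rho_{i,j,k}(T_\ell,v_1)=\tau^\rho_{j,k,i}(T_\ell,v_2)=\tau^\rho_{k,i,j}(T_\ell,v_3)$, immediate from cyclic symmetry of the definition; the closed leaf equalities, stating that the two shear parameters attached to a closed leaf $c_\ell$ encode the Jordan projection of $\rho(c_\ell)$ up to a prescribed linear combination of triangle and leaf invariants along a fundamental domain crossing $c_\ell$; and the closed leaf inequalities, which express that on each side of $c_\ell$ the partial sums of shear invariants over the spiraling leaves must be such that the endpoints of those leaves approach the attracting and repelling fixed flags of $\rho(c_\ell)$ in the correct cyclic order on $\partial_\infty\widetilde{\Sigma}$. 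For surjectivity onto the polytope interior, I would feed any tuple satisfying these relations into the reconstruction of the second paragraph, obtaining a candidate $\pi_1(\Sigma)$-equivariant flag map; positivity of the triangle data keeps the map Frenet on each triangle, the shear data glues adjacent triangles positively, and the closed leaf inequalities guarantee the correct order-preserving convergence along spiraling leaves, so the extension to $\partial_\infty\widetilde{\Sigma}$ remains Frenet and hyperconvex. Labourie's theorem then upgrades the corresponding representation to a Hitchin representation.

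The hardest step is the third one, and inside it the closed leaf inequalities. The rotation conditions and the closed leaf equalities are essentially book-keeping, but isolating the precise linear inequalities among shear invariants that characterize when the projective dynamics of $\rho(c_\ell)$ on $\Flag(\R^n)$ makes the infinitely many spiraling flags converge monotonically to the two fixed flags of $\rho(c_\ell)$ — without the cyclic order on $\partial_\infty \widetilde{\Sigma}$ being violated — requires a careful asymptotic analysis of iteration on the flag variety and the combinatorics of positive tuples near a hyperbolic fixed point, and is where the technical heart of the argument lies.
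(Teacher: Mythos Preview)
The paper does not prove this theorem: it is stated in Appendix~B as a review of results from Bonahon--Dreyer \cite{bonahon2014}, with no proof given here. So there is no ``paper's own proof'' to compare against.

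That said, your sketch is a faithful outline of the strategy Bonahon and Dreyer actually carry out in \cite{bonahon2014}: build the invariants from the Frenet flag curve, reconstruct the flag curve (hence the representation up to conjugacy) from the invariants via the snake moves to get injectivity, and then characterize the image by the rotation conditions, closed leaf equalities, and closed leaf inequalities. Your identification of the closed leaf inequalities as the most delicate point is accurate; in the original paper this is handled by a careful analysis of the slithering map and the estimates controlling how the shearing along spiraling leaves accumulates near a closed leaf. One point you gloss over is that the reconstruction in your second paragraph requires not just propagating flags across adjacent triangles but also verifying that the resulting map on a dense subset of $\partial_\infty\widetilde{\Sigma}$ extends continuously and H\"older-continuously to the whole circle; this is nontrivial and is where Bonahon--Dreyer's slithering construction and its estimates do real work. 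Apart from that, your plan matches the original argument.
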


\begin{remark} Suppose that $\Sigma$ is a hyperbolic surface with boundary. We apply the above technique to the Hitchin double $\widehat{\rho}$ of $\rho\in \Hit_n(\Sigma)$. This allows us to construct the Bonahon-Dreyer coordinates for the Hitchin component $\Hit_n(\Sigma)$ of the compact hyperbolic surface $\Sigma$. 
\end{remark}
\bibliographystyle{amsplain}

\bibliography{references_symp}

\end{document}